\documentclass[12pt]{amsart}
\usepackage[utf8]{inputenc}
\usepackage[initials]{amsrefs}
\usepackage[
        colorlinks,
]{hyperref}

\usepackage{fullpage,amssymb,amsmath,dsfont}

\numberwithin{equation}{section}

\makeatletter
\newcommand*{\rom}[1]{\expandafter\@slowromancap\romannumeral #1@}
\makeatother

\newcommand{\Gal}{{\rm Gal}}
\newcommand{\Ord}{{\rm Ord}}

\newcommand{\GL}{{\rm GL}}
\newcommand{\Id}{{\rm Id}}
\newcommand{\Aut}{{\rm Aut}}

\newcommand{\Q}{\mathbb Q}
\newcommand{\R}{\mathbb R}
\newcommand{\F}{\mathbb F}
\newcommand{\N}{\mathbb N}
\newcommand{\C}{\mathbb C}
\newcommand{\Z}{\mathbb Z}
\newcommand{\Irr}{{\rm Irr}}

\renewcommand{\Re}{{\mathfrak R}{\rm e}}

\newtheorem{X}{X}[section]
\newtheorem{cor}[X]{Corollary}
\newtheorem{lem}[X]{Lemma}
\newtheorem{prop}[X]{Proposition}
\newtheorem{thm}[X]{Theorem}

\theoremstyle{definition}
\newtheorem{defi}[X]{Definition}

\theoremstyle{remark}
\newtheorem{rk}{Remark}[section]
\newtheorem*{exe}{Example}
\newtheorem*{examples}{Examples}

\begin{document}

\title{On the role of higher roots in prime ideal races}
\author{Mounir Hayani}
\address{Max Planck Institute For Mathematics, Vivatsgasse 7, 53111, Bonn, Germany}
\email{hayani@mpim-bonn.mpg.de}
\date{}

\begin{abstract}
Let $L/K$ be a Galois extension of number fields, and let $C_1, C_2$ be conjugacy classes of $\Gal(L/K)$. By introducing algebraic parameters related to $C_1$ and $C_2$, we provide a conditional characterization for Chebyshev's bias logarithmic density $\delta_{L/K}(C_1,C_2)$ to lie in the interval $(1/2, 1)$, meaning that the prime ideal race is biased. Unlike existing examples in the literature, where the bias comes from a difference in the number of square roots, the order of vanishing of Artin $L$-functions at $s=1/2$, or a combination of both, we use this criterion to construct Galois extensions where neither of these aspects plays a role. In our constructions, the bias arises entirely from a difference in the number of $2p$-th roots for an odd prime $p$. We prove that the minimal Galois group order required for this phenomenon is $96$ for $p=3$ and $320$ for $p=5$, where in both cases the group structure is a direct product of two generalized quaternion groups. Furthermore, we provide generalized constructions for all odd primes. Finally, these same algebraic parameters enable us to prove that an estimate established by Aoki and Koyama~\cite{Ao-Ko} under the Deep Riemann Hypothesis holds unconditionally in certain cases.
\end{abstract}

\keywords{Chebyshev's bias; prime ideal races; Artin $L$-functions; root-counting function}
\subjclass[2020]{ 11R42, 11N13, 11R45, 20D20, 20C15}

\maketitle

\tableofcontents

\section{Introduction}
\subsection{Background and main results}
In a letter~\cite{Chebyshev} sent to Fuss in 1853, Chebyshev observed that primes congruent to $3\pmod{4}$ seem to appear more often than those congruent to $1\pmod{4}$. Let $q\ge3$ and let $a\in \Z$ be coprime to $q$, and define $\pi(x;q,a)=\#\{ p\le x\, \colon\,   p\equiv a\pmod{q}\}$. 
In 1914, Littlewood~\cite{Littlewood} proved that the function $x\mapsto\pi(x;4,1)-\pi(x;4,3)$ changes sign infinitely often, which naturally raised the question of the existence of the natural density of the set $\mathcal{P}_q(a,b):=\{x\ge 2\, \colon\,  \pi(x;q,a)>\pi(x;q,b)\}$ when both $a$ and $b$ are coprime to $q$. This question was addressed by Kaczorowski~\cite{Kac95}, who proved under the Generalized Riemann Hypothesis ($\mathrm{GRH}$) that the natural density of $\mathcal{P}_4(3,1)$ does not exist. 

In 1994, Rubinstein and Sarnak~\cite{RS94} proved, under $\mathrm{GRH}$ and a linear independence hypothesis on the non-negative imaginary parts of the zeros of Dirichlet $L$-functions ($\mathrm{LI}$), that the logarithmic density $\delta_q(a,b)$ of the set $\mathcal{P}_q(a,b)$ exists. Here, the logarithmic density of a Borel set $A\subset\R$ is given by 
\[ \lim_{X\to \infty} \frac{1}{\log X}\int_{2}^X \mathds{1}_{A}(u)\frac{\mathrm{d}u}{u}\, .\]
Furthermore, Rubinstein and Sarnak proved, under the same hypotheses, that $\delta_q(a,b)>1/2$ if and only if $b$ has more square roots than $a$ modulo $q$.

This setting was generalized to number fields by Ng~\cite{Ng} in his PhD thesis and studied by several authors \cites{FJ, Dev, Bail, Hay, BaHa}. Let $L/K$ be a Galois extension of number fields with Galois group $G$. For a class function $t:G\to \R$ and $x\ge2$, we define the prime ideal counting function
\begin{equation}
    \pi(x;L/K;t)=\sum_{N\mathfrak{p}\le x} t\bigl(\varphi_\mathfrak{p}\bigr),
\end{equation}
where the sum runs over all non-zero prime ideals $\mathfrak{p}$ of $O_K$. To unify the treatment of ramified and unramified prime ideals, we use the standard convention where $t(\varphi_\mathfrak{p})$ is defined as the average over the inertia group, namely
\[ t(\varphi_\mathfrak{p}) := \frac{1}{|I_{\mathfrak{P}/\mathfrak{p}}|} \sum_{s\in I_{\mathfrak{P}/\mathfrak{p}}}t(\varphi_{\mathfrak{P}} s)\, , \]
where $\mathfrak{P}$ is a prime of $O_L$ lying above $\mathfrak{p}$, $\varphi_{\mathfrak{P}}$ is the corresponding Frobenius element, and $I_{\mathfrak{P}/\mathfrak{p}}$ is the associated inertia group. Let $G^\#$ denote the set of conjugacy classes of the group $G$, for a conjugacy class $C\in G^\#$, we define $\pi(x;L/K;C) := \pi(x;L/K;\mathds{1}_C)$. For conjugacy classes $C_1,C_2\in G^\#$, we use the notation
\begin{equation}\label{tC_1C_2}
    t_{C_1,C_2}:=\frac{|G|}{|C_1|}\mathds{1}_{C_1}-\frac{|G|}{|C_2|}\mathds{1}_{C_2}\, .
\end{equation}
The function $\pi(x;L/K;t_{C_1,C_2})$ thus represents the normalized difference between the prime ideal counting functions associated with $C_1$ and $C_2$. Define $\mathcal{P}_{L/K}(t):=\{x\ge 2\, \colon\,  \pi(x;L/K;t)>0\}$, and denote its logarithmic density, when it exists, by $\delta_{L/K}(t)$. We define $\delta_{L/K}(C_1,C_2):=\delta_{L/K}(t_{C_1,C_2})$. Following Rubinstein and Sarnak, we say the prime ideal race related to $C_1$ and $C_2$ is unbiased if $\delta_{L/K}(C_1,C_2)=1/2$ and we say that it is biased towards $C_1$ if $\delta_{L/K}(C_1,C_2)>1/2$. 

In \cites{Ng, FJ, Bail}, the authors provide explicit examples of extensions $L/K$ of number fields, where $L$ is Galois over $\Q$, such that $\delta_{L/K}(C_1,C_2)\in (1/2, 1)$. In these cases, the bias arises from a difference in the number of square roots of elements in $C_1$ and $C_2$, as well as from the contribution of the zeros at $s=1/2$ of $L(s; L/\Q; \chi)$ for irreducible characters $\chi$ of $\Gal(L/\Q)$.

The goal of this paper is to construct examples where $\delta_{L/K}(C_1,C_2)\in (1/2,1)$, yet neither of these factors contributes to the bias. To achieve this, we proceed in two stages. First, we introduce two algebraic parameters associated with class functions (Definition~\ref{iota-gamma}), which allow us to establish a conditional characterization of this bias (Theorem~\ref{characterization}). This characterization enables us to isolate the precise cases where the bias is entirely determined by the behavior of higher-degree roots (Corollaries~\ref{maincriterion} and~\ref{E_d-criterion}). Second, and this is the central contribution of the paper, we unconditionally construct Galois extensions satisfying these conditions (Theorem~\ref{np}). As a consequence of our results, we show that the minimal Galois group order of an extension exhibiting this type of bias is $96$. Finally, we use these algebraic parameters to unconditionally establish explicit examples of a recent asymptotic estimate by Aoki and Koyama~\cite{Ao-Ko}, which was originally proved under assumptions stronger than $\mathrm{GRH}$ (Theorems~\ref{criterion-extreme-bias} and~\ref{existence-extreme-bias}).

Before moving to the statement of our main results, let us first introduce some necessary notation. When $N$ is a finite group and $g\, \colon\,  \Gamma\to \R$ is a class function on a subgroup $\Gamma$ of $N$, we define the induced class function by $g$ on $N$ at $a\in N$ as
\[ \bigl(\mathrm{Ind}_\Gamma^Ng\bigr)(a):=\frac{1}{\bigl|\Gamma\bigr|}\sum_{\substack{b\in N\\ b^{-1}ab\in \Gamma}} g\bigl(b^{-1}ab\bigr)\, . \]
For $\ell\ge 1$, let $f_\ell^\Gamma\, \colon\, \Gamma\to \Gamma$ be the $\ell$-th power map given by $f_\ell^\Gamma(x):=x^\ell$ for $x\in \Gamma$. We denote by $r_\ell^\Gamma(y):=\#\bigl\{x\in \Gamma\, \colon\, x^\ell=y\bigr\} $ the number of $\ell$-th roots of the element $y$ in $\Gamma$. For a conjugacy class $C\in \Gamma^\#$, we simply denote by $r_\ell^\Gamma(C)$ the common value $r_\ell^\Gamma(c)$ for all $c\in C$. We denote by $\langle\,\cdot\,,\,\cdot\, \rangle_\Gamma$ the usual scalar product on $\Gamma$. When the underlying group is clear from the context, we will simply write $f_\ell$, $r_\ell$, and $\langle\,\cdot\,,\,\cdot\, \rangle$. Let $\mathcal{S}$ be the set of positive squarefree integers.
To establish a precise algebraic characterization of the biased regime, we introduce two parameters associated with class functions. 
\begin{defi}\label{iota-gamma}
    Let $\Gamma $ be a subgroup of a finite group $N$, and let $g\, \colon\,  \Gamma\to \R$ be a class function of $\Gamma$. We define \begin{align*}\iota^{N}(g)&:=\min\left\{ \ell\in \mathcal{S}\, \colon\,  \mathrm{Ind}_\Gamma^N (g\circ f_\ell^\Gamma)\ne 0\right\}\in \mathcal{S}\cup \{\infty\}\, ,\\
    \gamma(g)&:=\min \{\ell\in \mathcal{S}\, \colon\,  \left\langle g,r_\ell^\Gamma\right\rangle_\Gamma\,\ne 0\}\in \mathcal{S}\cup \{\infty\}\, ,
    \end{align*}
    where we use the convention $\min(\emptyset)=\infty$.
\end{defi}
Let $\overline{L}$ be the Galois closure of $L$ over $\Q$, denote $\overline{G}:=\Gal\bigl(\overline{L}/K\bigr)$ and $G^+:=\Gal\bigl(\overline{L}/\Q\bigr)$, and let $\pi\, \colon\,  \overline{G}\to G $ be the natural projection. If $g\, \colon\,  \overline{G}\to \C$ is a class function, we denote its induced class function on $G^+$ as $g^+:=\mathrm{Ind}_{\overline{G}}^{G^+} g$. Finally, for a class function $t\, \colon\,  G\to \R$, let us denote $\iota(t):=\iota^{G^+}(t\circ \pi)$.
The following conditional theorem characterizes the values of the density $\delta_{L/K}(t)$ in terms of the parameters $\iota(t)$ and $\gamma(t)$.

\begin{thm}\label{characterization}
    Let $L/K$ be a Galois extension of number fields with Galois group $G$, and let $t : G \to \R$ be a class function satisfying $\langle t,1\rangle_G=0$. Assume that $\mathrm{GRH}(L)$, $\mathrm{AC}(L)$, and $\mathrm{LI}^-(L)$ hold (see \S~\ref{Hypotheses} for the statement of these hypotheses). Then $\delta_{L/K}(t)\in (0,1)$ if and only if $\iota(t)<\infty$ and $\gamma(t)\ge 2\iota(t)$. Furthermore, $\delta_{L/K}(t)\in (1/2,1)$ if and only if $\iota(t)<\infty$, $2\iota:=2\iota(t)\le \gamma(t)$, and
    \[ \mu(2\iota)\bigl\langle t, r_{2\iota}^G \bigr\rangle_G-\mu(\iota)\sum_{\chi\ne1} \bigl\langle \bigl(t \circ f_\iota^G\circ \pi\bigr)^+, \chi \bigr\rangle_{G^+} \mathrm{ord}_{s=1/2}L(s;\overline{L}/\Q;\chi) <0\, ,\]
    where the sum runs over non-trivial irreducible characters of $G^+$, and $\mu$ denotes the Möbius function.
\end{thm}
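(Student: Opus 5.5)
We follow the Rubinstein–Sarnak method as adapted to number fields by Ng and by Bailleul, and make one new input: an exact computation of the main term in the explicit formula, organised by the parameters $\iota(t)$ and $\gamma(t)$.

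\textbf{Step 1: explicit formula for $\pi(x;L/K;t)$.} We write
\[ \pi(x;L/K;t)=\sum_{k\ge1}\frac1k\,\theta_k(x^{1/k};t)-\sum_{k\ge2}\frac1k\,\theta_k(x^{1/k};t), \qquad \theta_k(y;t):=\sum_{N\mathfrak p^k\le y}t(\varphi_{\mathfrak p}^k)\log N\mathfrak p .\]
We then expand by Möbius inversion over prime powers. This shows that $\pi(x;L/K;t)$ equals a combination of the Chebyshev functions $\psi(x^{1/\ell};L/K;t\circ f_\ell)$ with weights $\mu(\ell)/\ell$, for $\ell\in\mathcal S$, divided by $\log$.

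Next we transfer each $t\circ f_\ell^G$ to $\overline{G}$ via $\pi$ and induce to $G^+$. Artin $L$-functions over $\Q$ are invariant under induction, so each term becomes a sum over nontrivial irreducible characters $\chi$ of $G^+$, weighted by $\langle (t\circ f_\ell\circ\pi)^+,\chi\rangle_{G^+}$, of the explicit formulas for $L(s;\overline L/\Q;\chi)$. Under $\mathrm{GRH}(L)$ and $\mathrm{AC}(L)$ each explicit formula contains:
\begin{itemize}
\item an oscillating part $x^{1/(2\ell)}\sum_\gamma x^{i\gamma/\ell}/(1/2+i\gamma)$;
\item a pole or zero contribution at $s=1/2$, namely $-\mathrm{ord}_{s=1/2}L\cdot x^{1/(2\ell)}$ times a constant;
\item the square-root term coming from $k=2$ in $\theta$, which yields $\langle t\circ f_{2\ell}\ldots\rangle$, i.e.\ ultimately $\langle t, r_{2\ell}^G\rangle$ after applying orthogonality to $t\circ f_{m}$.
\end{itemize}
By definition, every $\ell<\iota(t)$ in $\mathcal S$ contributes nothing. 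The term $\ell=\iota$ therefore gives the dominant oscillation, of size $x^{1/(2\iota)}/\log x$.

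\textbf{Step 2: the constant terms.} At scale $x^{1/(2\iota)}$ we collect the non-oscillating terms. These are the $\mathrm{ord}_{s=1/2}$ contributions of the $\ell=\iota$ family, with coefficient $\mu(\iota)$, together with the prime-power term at $m=2\iota$, with coefficient $\mu(2\iota)\langle t,r_{2\iota}^G\rangle_G$, normalised suitably. Here $\gamma(t)$ enters as follows: $\langle t,r_m\rangle=\langle t\circ f_m,1\rangle$ is the trivial-character coefficient of $t\circ f_m$. Hence any $m<2\iota$ with $\langle t,r_m\rangle\neq0$ would produce a term $x^{1/m}$ dominating the oscillation.

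\textbf{Step 3: the three cases.} Rescaling, we obtain
\[ \frac{\iota\log x}{x^{1/(2\iota)}}\,\pi(x;L/K;t)=c_{\iota}(t)+E(\log x)+o(1), \]
where $E(y)$ is a trigonometric sum over the zeros of the $L(s;\overline L/\Q;\chi)$, and $c_\iota(t)$ is, up to a positive factor, minus the displayed quantity in the theorem.

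\begin{itemize}
\item If $\iota=\infty$, all terms vanish identically and the density is degenerate (it equals $0$).
\item If $\gamma(t)<2\iota$, a non-oscillating term $\langle t,r_m\rangle x^{1/m}$ with $m<2\iota$ dominates, forcing $\delta\in\{0,1\}$. Showing that there is no cancellation among such terms needs care, since the minimal such $m$ is squarefree by definition of $\gamma$.
\item If $\gamma(t)\ge2\iota$, the Rubinstein–Sarnak machinery applies. Under $\mathrm{LI}^-(L)$, $E$ has a limiting distribution that is symmetric, absolutely continuous, and of unbounded support, because it is nonzero as $\iota<\infty$. Then $\delta\in(0,1)$, and $\delta>1/2$ if and only if $c_\iota(t)>0$, which is equivalent to the stated inequality.
\end{itemize}

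\textbf{Main obstacle.} The hardest part will be the bookkeeping in Steps 1–2. Two things must be checked. First, the prime-power terms at level $2\iota$ combine exactly into $\mu(2\iota)\langle t,r_{2\iota}^G\rangle_G$. Second, the ramified primes, handled by the inertia-average convention, and the non-squarefree contributions are absorbed in $o(x^{1/(2\iota)}/\log x)$. For the second point we will use the fact that $r_m$ and the composition $t\circ f_{m}$ depend only on the squarefree structure after Möbius inversion.

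Separately, we need to verify that $E\not\equiv0$ when $\iota<\infty$. We plan to do this via $\mathrm{LI}^-$ together with the nonvanishing of $\mathrm{Ind}(t\circ f_\iota)$, which forces some coefficient $\langle (t\circ f_\iota\circ\pi)^+,\chi\rangle$ to be nonzero for a $\chi$ whose $L$-function has nontrivial zeros.
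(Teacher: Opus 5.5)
Your overall route is the paper's. You use the Möbius identity $\theta(x;L/K;t)=\sum_{\ell}\mu(\ell)\psi(x^{1/\ell};L/K;t\circ f_\ell)$ and induce to $G^+$, so the terms with $\ell<\iota$ vanish identically, and you normalise by $x^{1/(2\iota)}$. When $\gamma<2\iota$, the dominant term is $\mu(\gamma)\langle t,r_\gamma\rangle_G\,x^{1/\gamma}$, and no cancellation can occur: for $\iota\le\ell<\gamma$ the terms are $O(x^{1/(2\iota)}(\log x)^2)$ under $\mathrm{GRH}(L)$. When $\gamma\ge2\iota$, you run the Rubinstein--Sarnak/Kronecker--Weyl argument. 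There some $b_{t,\chi}$ with $\chi\ne1$ is nonzero, because $(t\circ f_\iota)^+\ne0$ while $\langle(t\circ f_\iota)^+,1\rangle_{G^+}=\langle t,r_\iota\rangle_G=0$. This gives the first equivalence. Two cosmetic slips: your first display in Step~1 is garbled, since the difference of the two sums is just $\theta(x;t)$. Also, the $\langle t,r_{2\iota}\rangle$ term comes from the trivial-character part (the pole at $s=1$) of the $\ell=2\iota$ term, not from the explicit formulas of the nontrivial $L(s,\chi)$.

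The genuine gap is in the second equivalence. You never compute $c_\iota(t)$; you declare it to be ``minus the displayed quantity'' so that the stated inequality comes out, and that declaration is false. Do the bookkeeping of your own Step~2. The $\ell=2\iota$ term contributes $+\mu(2\iota)\langle t,r_{2\iota}^G\rangle_G\,x^{1/(2\iota)}$ to $\theta$. A zero of order $m_\chi$ at $s=1/2$ contributes $-m_\chi X^{1/2}/(1/2)=-2m_\chi x^{1/(2\iota)}$ to $\psi(X;\chi)$, where $X=x^{1/\iota}$. Hence $E(y)$ equals a constant plus a mean-zero oscillating sum plus $o(1)$, and its limiting distribution is symmetric about
\[ A=\mu(2\iota)\bigl\langle t,r_{2\iota}^G\bigr\rangle_G-2\mu(\iota)\sum_{\chi\ne1}\bigl\langle \bigl(t\circ f_\iota^G\circ\pi\bigr)^+,\chi\bigr\rangle_{G^+}\,\mathrm{ord}_{s=1/2}L(s;\overline{L}/\Q;\chi)\, .\]
So $\delta_{L/K}(t)>1/2$ if and only if $A>0$. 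This is the opposite inequality, and the order-of-vanishing term carries a factor $2$.

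Check this on $L=\Q(i)$, $K=\Q$, $C_1=\{c\}$ (primes $\equiv 3\bmod 4$), $C_2=\{1\}$. Here $\iota=1$, $\gamma=2$, $\langle t,r_2\rangle_G=-2$ and $L(1/2,\chi_{-4})\ne0$, so $A=2>0$. Indeed $\delta\approx0.996$ (Chebyshev's original bias), whereas the displayed condition ``$<0$'' would force $\delta<1/2$. Equivalently, for $\iota=1$ the displayed criterion says the class with \emph{more} square roots wins, contradicting Rubinstein--Sarnak as quoted in the introduction. So no correct argument yields the inequality as displayed.

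The paper's proof contains the same slip. It asserts that $f_t$ is symmetric about $-a_t$, although its own explicit formula gives $E=a_t$ plus a mean-zero oscillation, and its $a_t$ also omits the factor $2$. Instead of fitting the sign of $c_\iota(t)$ to the statement, compute it and correct the statement.
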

Theorem~\ref{characterization} generalizes the classical criterion given by Rubinstein--Sarnak that determines biased races. We note that the assumptions are the standard assumptions for the Rubinstein--Sarnak framework (as in~\cites{Ng, FJ, Bail}). 
Moreover, it characterizes, conditionally, extreme biases given in~\cites{FJ2,Hay}. More precisely, we prove in Proposition~\ref{asymptotics} (following ideas introduced in~\cites{FJ2,Hay}) that if $\gamma(t)<2\iota(t)$ and $\mathrm{GRH}(L)$ holds, then $\pi(x;L/K;t)\sim K_t x^{1/\gamma}/(\log x)$ as $x\to \infty$, where $K_t\ne 0$. Therefore, $\pi(x;L/K;t)$ has a constant sign for $x$ sufficiently large. Combining this with Theorem~\ref{characterization}, we obtain a conditional equivalence: the difference 
\[ \frac{|G|}{|C_1|}\pi(x;L/K;C_1)-\frac{|G|}{|C_2|}\pi(x;L/K;C_2)\] 
changes sign infinitely often if and only if $\delta_{L/K}(C_1,C_2)\in (0,1)$. This equivalence addresses a question posed to me by Lamzouri and Fiorilli.

We note that in the case where the Artin $L$-functions do not vanish at $s=1/2$, the inequality \[ \mu(2\iota)\bigl\langle t, r_{2\iota}^G \bigr\rangle_G-\mu(\iota)\sum_{\chi\ne1} \bigl\langle \bigl(t \circ f_\iota^G\bigr)^+, \chi \bigr\rangle_{G^+}\, \mathrm{ord}_{s=1/2}L(s;L/\Q;\chi) <0\] reduces to $\mu(2\iota)\bigl\langle t,r_{2\iota}^G\bigr\rangle_G<0$, which implies that $\gamma(t)\le 2\iota(t)$. This leads to the following corollary:

\begin{cor}\label{maincriterion}
    Let $L/\Q$ be a Galois extension with Galois group $G^+$ such that Artin $L$-functions $L(s;L/\Q;\chi)$, for all irreducible characters $\chi$ of $G^+$, do not vanish at $s=1/2$. Let $K$ be a subextension of $L/\Q$, let us denote by $G$ the Galois group of $L/K$, and let $C_1,C_2\in G^\#$. Assume $\mathrm{GRH}(L)$, $\mathrm{AC}(L)$, and $\mathrm{LI}^{-}(L)$. We have $\delta_{L/K}(C_1,C_2)\in (1/2,1) $ if and only if 
    \begin{equation}\label{eq-crit} 2\iota:=2\iota\left(t_{C_1,C_2}\right)=\gamma\left(t_{C_1,C_2}\right)\quad\text{and}\quad \mu(2\iota)\left(r_{2\iota}^G(C_1)-r_{2\iota}^G(C_2)\right)<0\, .\end{equation}
\end{cor}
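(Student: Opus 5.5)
The plan is to specialize Theorem~\ref{characterization} to $t=t_{C_1,C_2}$. Since $L$ is Galois over $\Q$, we have $\overline{L}=L$, $\overline{G}=G$, and $\pi=\Id$.

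First, the hypotheses of Theorem~\ref{characterization} hold for $t=t_{C_1,C_2}$. It is real-valued, and
\[ \langle t,1\rangle_G=\frac{1}{|G|}\Bigl(\frac{|G|}{|C_1|}|C_1|-\frac{|G|}{|C_2|}|C_2|\Bigr)=0. \]
Because every $L(s;L/\Q;\chi)$ is assumed nonvanishing at $s=1/2$, all orders $\mathrm{ord}_{s=1/2}L(s;L/\Q;\chi)$ are zero. The sum over $\chi\ne1$ in Theorem~\ref{characterization} therefore disappears. The theorem then says that $\delta_{L/K}(C_1,C_2)\in(1/2,1)$ if and only if $\iota<\infty$, $2\iota\le\gamma$, and $\mu(2\iota)\langle t,r_{2\iota}^G\rangle_G<0$.

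Next I compute the inner product with a root-counting function. Since $r_\ell^G$ is a class function,
\[ \langle t,r_\ell^G\rangle_G=\frac{1}{|G|}\Bigl(\frac{|G|}{|C_1|}\,|C_1|\,r_\ell^G(C_1)-\frac{|G|}{|C_2|}\,|C_2|\,r_\ell^G(C_2)\Bigr)=r_\ell^G(C_1)-r_\ell^G(C_2). \]
With $\ell=2\iota$, the sign condition becomes exactly $\mu(2\iota)\bigl(r_{2\iota}^G(C_1)-r_{2\iota}^G(C_2)\bigr)<0$.

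It remains to replace "$\iota<\infty$ and $2\iota\le\gamma$" by "$2\iota=\gamma$".

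For the forward direction, the sign condition forces $\langle t,r_{2\iota}^G\rangle\ne0$. The number $2\iota$ is squarefree: $\mu(2\iota)\neq 0$ by the strict inequality, or directly, $\iota$ must be odd for $\mu(2\iota)\ne0$. Hence $2\iota\in\mathcal{S}$, and by the definition of $\gamma$ we get $\gamma\le 2\iota$. Combined with $2\iota\le\gamma$, this gives $\gamma=2\iota$.

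For the converse, suppose $2\iota=\gamma$ and the sign condition holds. The sign condition gives $\mu(2\iota)\neq0$, so $2\iota$ is a finite squarefree integer and $\iota<\infty$. Also $2\iota\le\gamma$ holds trivially. Theorem~\ref{characterization} then yields $\delta_{L/K}(C_1,C_2)\in(1/2,1)$.

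There is one bookkeeping point. When $\iota=\infty$, the equation $2\iota=\gamma$ would read $\infty=\infty$. But then $\mu(2\iota)$ is undefined, so the condition \eqref{eq-crit} is implicitly read as requiring $\iota<\infty$. I would state this convention explicitly in the write-up.

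I expect no real obstacle: the argument is a direct specialization of Theorem~\ref{characterization}. The only points needing care are the squarefreeness of $2\iota$, which is what allows $\gamma\le2\iota$, and the convention for infinite values just described.
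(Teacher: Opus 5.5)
Your proposal is correct and follows the paper's own argument. You specialize Theorem~\ref{characterization} to $t=t_{C_1,C_2}$ with $\pi=\mathrm{Id}$, drop the vanishing-order sum, and observe that $\mu(2\iota)\langle t,r_{2\iota}^G\rangle_G<0$ forces $2\iota\in\mathcal{S}$, hence $\gamma\le 2\iota$, which together with $2\iota\le\gamma$ gives equality. Your explicit computation $\langle t_{C_1,C_2},r_\ell^G\rangle_G=r_\ell^G(C_1)-r_\ell^G(C_2)$ and your remark that \eqref{eq-crit} implicitly requires $\iota<\infty$ supply details the paper leaves tacit. You also use the stated nonvanishing hypothesis directly, whereas the paper's text phrases it through $\mathrm{LI}(L)$ and the absence of symplectic characters.
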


Corollary~\ref{maincriterion} provides a criterion for identifying prime ideal races where the bias is determined by roots of higher degree, with neither square roots nor the order of vanishing of $L$-functions at $s=1/2$ having an influence. Our goal is to provide explicit examples of this type of bias.

Let $d\ge 3$ be an odd squarefree integer. Let $C_1,C_2$ be conjugacy classes in $G=\Gal(L/K)$, and let $(x,y)\in C_1\times C_2$. We say that the group $G=\Gal(L/K)$ satisfies the property $\mathcal{P}(2d;x,y)$ if $\gamma(t_{C_1,C_2})=2d$ (see Definition~\ref{propertyP(d)}). Let us note that the criterion given by Corollary~\ref{maincriterion} implies that $\Gal(L/K)$ satisfies $\mathcal{P}(2\iota;x,y)$ where $\iota$ is an odd squarefree integer. For a group to satisfy the property $\mathcal{P}(2d;x,y)$, the root-counting function $r_n$ must satisfy $r_\ell(x)=r_\ell(y)$ for all squarefree $\ell < 2d$, and in particular for $\ell=2$ and $\ell=d$, but $r_{2d}(x)\ne r_{2d}(y)$. In particular, $r_n$ must be non-multiplicative. This non-multiplicativity forces the group $G$ to be non-nilpotent. With such hypotheses, the order of such groups is relatively large even for small $d$: we will prove in Theorem~\ref{n3andn5} that if a group satisfies this property for $d=3$, then $|G|\ge 96$, and if a group satisfies this property for $d=5$, then $|G|\ge 320$, which makes finding such examples difficult. 

It is conjectured that if $\Gal(L/\Q)$ admits no irreducible symplectic representations, the associated Artin $L$-functions do not vanish at $s=1/2$ (see for instance~\cite{FJ}*{\S 1.2 Conjecture (LI)}).
Let $d\ge3 $ be an odd squarefree integer, and let $\mathcal{E}_d$ denote the set of Galois extensions $L/K$ with group $G$, where $L/\Q$ is a Galois extension with group $G^+$ admitting no irreducible symplectic representations, and where $G$ contains conjugacy classes $C_1,C_2$ satisfying $2\iota:=2\iota(t_{C_1,C_2})=\gamma(t_{C_1,C_2})$ and $\mu(2\iota)\langle t_{C_1,C_2}, r_{2\iota}\rangle_G<0$. As a consequence of Corollary~\ref{maincriterion}, under the stronger assumption $\mathrm{LI}(L)$ (see \S~\ref{Hypotheses}), we have:
\begin{cor}\label{E_d-criterion}
    Let $L/K\in \mathcal{E}_d$, and let $C_1,C_2\in G^\#$ be as given by the definition of $\mathcal{E}_d$. Assume that $\mathrm{GRH}(L)$, $\mathrm{AC}(L)$, and $\mathrm{LI}(L)$ hold. Then, for all squarefree $\ell<2d$, we have $r_\ell^G(C_1)=r_\ell^G(C_2)$, and for all irreducible characters $\chi$ of $\Gal(L/\Q)$, we have $\mathrm{ord}_{s=1/2}L(s;L/\Q;\chi)=0$, and $\delta_{L/K}(C_1,C_2)\in (1/2, 1)$.
    \end{cor}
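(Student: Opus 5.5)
We derive Corollary~\ref{E_d-criterion} from Corollary~\ref{maincriterion} plus two short algebraic observations: one about vanishing of $L$-functions at $s=1/2$, one about the root-counting function.

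\emph{Step 1: non-vanishing at $s=1/2$.} By definition of $\mathcal{E}_d$, the group $G^+=\Gal(L/\Q)$ has no irreducible symplectic representations. We use the form of $\mathrm{LI}(L)$ from \S~\ref{Hypotheses}, which, as in \cite{FJ}, asserts that $L(1/2;L/\Q;\chi)\neq 0$ unless $\chi$ is irreducible symplectic. For every irreducible $\chi$ of $G^+$ this gives $\mathrm{ord}_{s=1/2}L(s;L/\Q;\chi)=0$. This is the second assertion. It also shows that the non-vanishing hypothesis of Corollary~\ref{maincriterion} holds. Since $\mathrm{LI}(L)$ implies $\mathrm{LI}^-(L)$, and $\mathrm{GRH}(L)$ and $\mathrm{AC}(L)$ are assumed, all hypotheses of that corollary are satisfied. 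Here $L$ is Galois over $\Q$, so $\overline{L}=L$ and $\pi$ is the identity.

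\emph{Step 2: the bias.} The defining conditions of $\mathcal{E}_d$ are $2\iota(t_{C_1,C_2})=\gamma(t_{C_1,C_2})$ and $\mu(2\iota)\langle t_{C_1,C_2},r_{2\iota}\rangle_G<0$. Compute directly:
\[ \langle t_{C_1,C_2},r_{2\iota}\rangle_G=\frac{1}{|G|}\Bigl(\tfrac{|G|}{|C_1|}|C_1|r_{2\iota}(C_1)-\tfrac{|G|}{|C_2|}|C_2|r_{2\iota}(C_2)\Bigr)=r_{2\iota}(C_1)-r_{2\iota}(C_2), \]
where $r_{2\iota}$ is real-valued, so no conjugation is needed. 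Thus condition \eqref{eq-crit} holds, and Corollary~\ref{maincriterion} gives $\delta_{L/K}(C_1,C_2)\in(1/2,1)$.

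\emph{Step 3: equality of root counts.} This is the only point requiring care, namely matching the index $2\iota$ with $2d$. I read $\mathcal{E}_d$ as having $\iota(t_{C_1,C_2})=d$, so that $\gamma(t_{C_1,C_2})=2d$; this is the property $\mathcal{P}(2d;x,y)$ mentioned before the statement. The same computation as in Step 2 shows $\langle t_{C_1,C_2},r_\ell\rangle_G=r_\ell(C_1)-r_\ell(C_2)$ for every $\ell$. By the definition of $\gamma$ as a minimum over squarefree $\ell$, this vanishes for every squarefree $\ell<\gamma(t_{C_1,C_2})=2d$. Hence $r_\ell^G(C_1)=r_\ell^G(C_2)$ for all squarefree $\ell<2d$, which completes the proof.
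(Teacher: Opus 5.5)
Your proof is correct and is the paper's own deduction: non-vanishing at $s=1/2$ from $\mathrm{LI}(L)$ and the absence of symplectic characters, then Corollary~\ref{maincriterion} via $\langle t_{C_1,C_2},r_\ell\rangle_G=r_\ell^G(C_1)-r_\ell^G(C_2)$, then minimality of $\gamma(t_{C_1,C_2})=2d$ for the equal root counts; your reading $\iota(t_{C_1,C_2})=d$ is the intended definition of $\mathcal{E}_d$. One caveat, not introduced by you: the direction of the bias comes from the sign in Theorem~\ref{characterization}, which appears reversed, since for $L=\Q(i)$, $K=\Q$, $C_1$ the class of complex conjugation and $C_2=\{1\}$ one has $\mu(2)\langle t_{C_1,C_2},r_2\rangle_G=2>0$ while Rubinstein--Sarnak give $\delta_{L/K}(C_1,C_2)\approx 0.996$, so with ``$<0$'' in $\mathcal{E}_d$ the conclusion should read $\delta_{L/K}(C_2,C_1)\in(1/2,1)$, as the paper itself writes in \S~\ref{subsec:propQ} for $d=p$.
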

Let $d\ge 3$ be an odd squarefree integer. Proving that $\mathcal{E}_d$ is non-empty is a non-trivial problem due to group-theoretic, representation-theoretic, and arithmetic difficulties. First, one must construct a group $G$ satisfying the property $\mathcal{P}(2d;x,y)$, which as explained above is itself non-trivial.
Second, on the arithmetic side, we need to construct a group $G^+$ for which a solution to the inverse Galois problem over $\Q$ exists and that has no irreducible symplectic representations. Finally, we require that $G^+$ satisfies the representation-theoretic constraint $\gamma(t_{C_1,C_2})=2\iota(t_{C_1,C_2})=2d$, which requires that for $\ell<d$, every $\ell$-th root of an element in $C_1$ is conjugate in $G^+$ to an $\ell$-th root of an element in $C_2$ and vice versa, and that there exists a $d$-th root of some element in $C_1$ that is not conjugate in $G^+$ to any $d$-th root of an element of $C_2$. 
A natural way to solve the arithmetic difficulty is to choose $G^+=\mathfrak{S}_n$ for some $n$ sufficiently large, which is realizable as a Galois group over $\Q$ and has no irreducible symplectic representation. However, we note that embedding the group $G$ into the symmetric group in a trivial way (the Cayley embedding, for example) would fail here; indeed, we show in Proposition~\ref{cayley} that in the Cayley embedding case we have $\gamma(t_{C_1,C_2})=\iota(t_{C_1,C_2})$, meaning that any $d$-th root of an element of $C_1$ is conjugate in $\mathfrak{S}_n$ to some $d$-th root of an element of $C_2$. Furthermore, choosing a random embedding where we force some $d$-th root of some element in $C_1$ to have a different cycle type compared to any cycle type of $d$-th roots of elements in $C_2$ will generally lead to some $\ell$-th roots ($\ell<d$) having a different cycle type. This means that such an embedding should be chosen carefully. 

Define \(n_d:=\min \{[L:K]\, \colon\,  L/K\in \mathcal{E}_d\}\in \N\cup\{\infty\} \). In the case where $d=p$ is an odd prime number, we are able to prove that $\mathcal{E}_p\ne \emptyset$, more precisely: 
\begin{thm}\label{np}
    Let $p\ge 3$ be a prime and let $q$ be a prime satisfying $q\equiv 1\pmod{2p(p-1)}$. There exists an extension $L/K\in \mathcal{E}_p$ such that $L/\Q$ has Galois group isomorphic to $\mathfrak{S}_{q^4}$ and 
    \[ \Gal(L/K)\cong Q_{4p} \times Q_{4(p-1)} \times C_{\mathrm{rad}(p-1)/2} \, ,\]
    where, for $n\ge2$, $Q_{4n}$ is the generalized quaternion group of order $4n$ and, for $n\ge 1$, $C_n$ is the cyclic group of order $n$, and $\mathrm{rad}(n)$ is the radical of $n\ge1$. 
    Moreover, we have the inequalities $8p^2\le n_p\le 8p(p-1)\mathrm{rad}(p-1)$, and in the special cases $p\in\{3,5\}$, we have $n_3=96$ and $n_5=320$.
\end{thm}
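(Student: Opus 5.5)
The proof has three parts: build the group $G$, embed it in $\mathfrak{S}_{q^4}$, and realize everything arithmetically. The lower bounds come from a separate group-theoretic argument.

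\textbf{Step 1: the group $G$ and its root counts.} Set $G=Q_{4p}\times Q_{4(p-1)}\times C_{\mathrm{rad}(p-1)/2}$. I will choose $x,y\in G$ whose components have the same orders, but where the order-$2$ central elements of the two quaternion factors are placed differently. Concretely, take $x=(z_1\omega,1,1)$ and $y=(\omega,z_2,1)$, or a variant, where $\omega$ has order $p$ in $Q_{4p}$ and $z_i$ is the central involution of the $i$-th factor. Root counts are multiplicative over direct products, so
\[ r_\ell^G(x)=\prod_i r_\ell^{G_i}(x_i). \]
For each factor I will compute $r_\ell$ explicitly. In $Q_{4n}$ the elements are $a^k$, each with a root count computable from the cyclic group $\langle a\rangle$, together with the $2n$ elements of order $4$ outside $\langle a\rangle$.

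The claim to verify is that $r_\ell(x)=r_\ell(y)$ for every squarefree $\ell<2p$, while $r_{2p}(x)\ne r_{2p}(y)$. The mechanism should be as follows. Square roots of $z_1$ come from the order-$4$ elements of $Q_{4p}$, and the matching contribution on the other side comes from $Q_{4(p-1)}$. The factor $C_{\mathrm{rad}(p-1)/2}$ equalizes the counts for the odd prime divisors of $p-1$. Only at $2p$ do the $p$-part and the $2$-part interact non-multiplicatively, because $Q_{4p}$ has elements of order $2p$ in $\langle a\rangle$ but none in its order-$4$ part. This gives $\gamma(t_{C_1,C_2})=2p$.

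\textbf{Step 2: the embedding into $\mathfrak{S}_{q^4}$ and the value of $\iota$.} I need $G\hookrightarrow \mathfrak{S}_{q^4}$ such that, after composing with $f_\ell$ and inducing, the cycle types of $\ell$-th roots of $x$ and of $y$ coincide as multisets for squarefree $\ell<p$, but not for $\ell=p$. I would let $G$ act on a set of size $q^4$ built as a product of four sets of size $q$, or through a suitable $\F_q$-structure. The condition $q\equiv1\pmod{2p(p-1)}$ allows each cyclic piece of order dividing $2p(p-1)$ to act by scalars on $\F_q$, so that every nontrivial element acts with cycles all of one length, plus fixed points. Cycle types then depend only on the orders of the components, and this controls $\mathrm{Ind}(t\circ f_\ell)$.

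The orders of the roots of $x$ and $y$ are arranged to agree for $\ell<p$. At $\ell=p$ they differ, because of the $p$-part interacting with $z_1$ versus $z_2$, so $\iota=p$. Then $\mu(2p)\langle t,r_{2p}\rangle<0$ fixes which class is $C_1$ and which is $C_2$; swapping them if needed gives the sign.

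\textbf{Step 3: arithmetic realization and the minimality claims.} For realization, take $L/\Q$ with group $\mathfrak{S}_{q^4}$, for example by Hilbert irreducibility, and let $K$ be the fixed field of the normalizer-free image of $G$. Then $\Gal(L/K)\cong G$. Since $\mathfrak{S}_n$ has only real orthogonal representations, it has no irreducible symplectic ones, so $L/K\in\mathcal{E}_p$ and $n_p\le |G|=8p(p-1)\mathrm{rad}(p-1)$.

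For the lower bound $8p^2\le n_p$, I would use that a failure of multiplicativity of $r_\ell$ at $2p$ forces a non-nilpotent interaction. Such a group needs a section containing non-abelian $2$-structure (a quaternion-type group) together with elements of order $p$ appearing in two independent configurations, which gives $|G|\ge 8p^2$. For $p=3,5$, I expect the paper's Theorem~\ref{n3andn5} (group order $\ge96$, resp. $\ge320$) to handle this, combined with a direct construction for $Q_{12}\times Q_8$ (order $96$) and $Q_{20}\times Q_{16}$ (order $320$) embedded in the same way.

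The main obstacle is Step 2: choosing the embedding so that $\iota$ is exactly $p$, with $\mathrm{Ind}(t\circ f_\ell)=0$ for all squarefree $\ell<p$. I would first try the scalar-action construction over $\F_q^4$ and reduce the question to bookkeeping of component orders.
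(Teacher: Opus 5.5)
Your Step~1 fails as written. Take $x=(z_1\omega,1,1)$ and $y=(\omega,z_2,1)$. The coordinate $z_1\omega$ generates the cyclic subgroup of order $2p$ of $Q_{4p}$, and it is not a square there: squares of elements of that cyclic subgroup lie in its subgroup of order $p$, and every other element squares to $z_1$. Hence $r_2(x)=0$, while $r_2(y)=2\cdot 2p=4p$.

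The cyclic factor also does nothing in your setup. With trivial third coordinate, $C:=C_{\mathrm{rad}(p-1)/2}$ multiplies $r_\ell(x)$ and $r_\ell(y)$ by the same number $r_\ell^{C}(1)$, so it cannot equalize anything. The choice that works is $x=(-1,1,c)$, $y=(1,-1,c)$ with $c$ a \emph{generator} of $C$:
square roots match by the coincidence $r_2^{Q_{4p}}(-1)=2p=2(p-1)+2=r_2^{Q_{4(p-1)}}(-1)$ (Lemma~\ref{rootsnumberinQ4n});
$p$-th roots match ($p\cdot 1=1\cdot p$);
and since $c$ has no $p_m$-th root in $C$ for any odd prime $p_m\mid p-1$, one has $r_\ell(x)=r_\ell(y)=0$ whenever such a $p_m$ divides $\ell$.
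This last point is indispensable. For $p=7$, in $Q_{28}\times Q_{24}$ one finds $r_6\bigl((-1,1)\bigr)=84\neq 36=r_6\bigl((1,-1)\bigr)$, so without $c$ the counts already separate at $\ell=6<14$. At $\ell=2p$ one then gets $r_{2p}(x)=4p<4p^2=r_{2p}(y)$, because the square roots of $-1$ in $Q_{4p}$ commute with no element of order $p$.

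As you anticipated, Step~2 is where the key idea is missing, and the scalar-action sketch cannot supply it. The quaternion factors have no faithful action by scalars. Bookkeeping of component orders is also beside the point. The square roots of $x$ have the form $(g',\pm1,c')$ with $g'$ of order $4$ in $Q_{4p}$, and those of $y$ the form $(\pm1,g,c')$ with $g$ of order $4$ in $Q_{4(p-1)}$ (here $(c')^2=c$). These must fuse in $\mathfrak{S}_{q^4}$ with equal multiplicities.

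The paper does this as follows:
\begin{itemize}
\item Embed each $Q_{4n}$ in $\GL(2,\F_q)$ via its faithful $2$-dimensional representation, place the two blocks diagonally in $\GL(4,\F_q)$ with $c$ acting as a scalar, and let $\GL(4,\F_q)$ act on $\F_q^4$.
\item Since $q\equiv 1\pmod{2p(p-1)}$, every element of $G$ is diagonalizable over $\F_q$, so conjugacy in $\GL(4,\F_q)$ is equality of spectra.
\item Every element of order $4$ in either quaternion factor has spectrum $\{i,-i\}$, with $i\in\F_q$ a primitive fourth root of unity. So both families of square roots land in the same two classes (spectra $\{i,-i,1,1\}$ and $\{i,-i,-1,-1\}$, up to the common scalar from $c'$) with equal sizes $2p$. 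Lemma~\ref{invconjset-induction} then gives $\mathrm{Ind}(t_{x,y}\circ f_2)=0$.
\item Squarefree $\ell<p$ divisible by an odd prime of $p-1$ contribute nothing. For the others, the odd part of $\ell$ is coprime to $|G|$, so after stripping $c$ with Lemma~\ref{reducedinduction} one reduces to $\ell\in\{1,2\}$.
\item At $\ell=p$, raise to the power $\mathrm{rad}(p-1)/2$ to remove $c$. A primitive $p$-th root of $(-1,1,1)$ has eigenvalue $1$ with multiplicity $2$, hence fixes $q^2$ vectors. The primitive $p$-th roots of $(1,-1,1)$ have no eigenvalue $1$, and its trivial $p$-th root has the wrong order. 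Lemma~\ref{indtx,yne0} then yields $\iota=p$.
\end{itemize}

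Your Step~3 (realizing $\mathfrak{S}_{q^4}$, no symplectic representations, the upper bound) is fine. The lower bound, however, is only a heuristic. That $|G|\ge 8p^2$ for every group satisfying $\mathcal{P}(2p;x,y)$ is Theorem~\ref{lowerbound}. Its proof needs $r_{2p}(x)-r_{2p}(y)\in p(p-1)\Z\cup p^2\Z$, a reduction to a normal Sylow $p$-subgroup of order $p$ with a Schur--Zassenhaus complement, and Wall's classification applied to $N/\langle y\rangle$. You should invoke it in place of the ``non-nilpotent interaction'' sketch, together with the observation that every $L/K\in\mathcal{E}_p$ has $\Gal(L/K)$ satisfying some $\mathcal{P}(2p;x,y)$.
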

In fact, we prove a stronger result. As noted following Corollary~\ref{maincriterion}, if an extension $L/K$ satisfies the criterion of that corollary with $2\iota=2p$, then its Galois group must satisfy the property $\mathcal{P}(2p;x,y)$ for some $(x,y)\in C_1\times C_2$. We establish that any group $G$ satisfying $\mathcal{P}(2p;x,y)$ must have order $|G|\ge 8p^2$; more specifically, for $p=3$ and $p=5$, we obtain the optimal lower bounds $|G|\ge 96$ and $|G|\ge 320$, respectively. In these specific cases, our constructed group $Q_{4p}\times Q_{4(p-1)}\times C_{\mathrm{rad}(p-1)/2}$ has order $16p(p-1)$, which exactly matches $96$ for $p=3$ and $320$ for $p=5$. Consequently, for $p\in \{3,5\}$, Theorem~\ref{np} provides examples of extensions satisfying~\eqref{eq-crit} with strictly minimal degree $[L:K]$. In particular, this confirms that the extensions we construct in $\mathcal{E}_p$ achieve the absolute minimum $n_p=[L:K]$. 

A natural question is whether the order $8p(p-1)\mathrm{rad}(p-1)$ is always minimal. The answer to this question is negative. Indeed, if $p$ is a Sophie Germain prime, denoting $p':=2p+1$, we prove in Corollary~\ref{smallerembedding} that there exists $L/K\in \mathcal{E}_p$ such that $\Gal(L/K)\cong Q_{4p'}\times Q_{4(p'-1)}$. In particular, for $p=11$, there exists an extension $L/K\in \mathcal{E}_{11}$, where \[[L:K]=8096<8800=8p(p-1)\mathrm{rad}(p-1)\, .\]
However, we expect that $n_{11}=8096$. In \S~\ref{sec:open_questions}, we provide a more general expected value of $n_p$ for all primes $p$. 

\subsection{Weighted prime ideal races}\label{Weightedfunctions}

In order to study prime ideal races with different weights, for a real number $\alpha \ge 0$, a class function $t:G\to \R$, and $x\ge2$, we define the $\alpha$-weighted prime ideal counting function
\begin{equation}
    \pi_\alpha(x;L/K;t)=\sum_{N\mathfrak{p}\le x}\frac{t\bigl(\varphi_\mathfrak{p}\bigr)}{(N\mathfrak{p})^\alpha}\, .
\end{equation}
Evaluating at $\alpha=0$ naturally recovers the unweighted counting function studied in the previous section. Recently, Aoki and Koyama~\cite{Ao-Ko} studied Chebyshev's bias by focusing specifically on the weight $\alpha=1/2$. Under the Deep Riemann Hypothesis ($\mathrm{DRH}$), a hypothesis concerning the convergence of Euler products of $L$-functions that is stronger than $\mathrm{GRH}$, they proved (see~\cite{Ao-Ko}*{Theorem 1.1}) that there exists a constant $K\in \R$ such that
\begin{equation}\label{DRHequality}
    \frac{|G|}{|C_1|}\pi_{1/2}(x;L/K;C_1)-\frac{|G|}{|C_2|}\pi_{1/2}(x;L/K;C_2)= K \log \log x+O(1)\, .
\end{equation}
They assumed $\mathrm{DRH}$ to bypass the need for the Linear Independence hypothesis. Based on this asymptotic behavior, Aoki and Koyama defined (see~\cite{Ao-Ko}*{Definition 1.2}) the prime ideal race to be biased towards $C_1$ if $K>0$, and unbiased if $K=0$. More recently, Sheth~\cite{Sheth} weakened this assumption, proving that under $\mathrm{GRH}(L)$, the equality \eqref{DRHequality} holds outside an exceptional set of finite logarithmic measure. 

In this paper, we use the algebraic parameters $\gamma(t)$ and $\iota(t)$ to produce examples in which the equality~\eqref{DRHequality} holds unconditionally. The following theorem gives a precise criterion for the asymptotic behavior of $\pi_\alpha(x;L/K;t)$ under $\mathrm{GRH}(L)$, and specifies the exact conditions under which this behavior becomes unconditional.

\begin{thm}\label{criterion-extreme-bias}
    Let $L/K$ be a Galois extension of number fields with Galois group $G$ and let $t\, \colon\,  G\to \R$ be a non-zero class function such that $\iota(t)<\infty$ and $\gamma:=\gamma(t)<2\iota(t)$. If $\mathrm{GRH}(L)$ holds, then 
    \begin{equation}\label{pialpha-asympt}
        \pi_\alpha(x;L/K;t)=\begin{cases}
            K_{\alpha,t}\frac{x^{(1-\alpha \gamma)/\gamma}}{\log x}+O\left(x^{(1-\alpha \gamma)/\gamma}(\log x)^{-2}\right) &\text{if}\ \  \alpha<1/\gamma\\
           K'_t \log \log x+O(1) &\text{if}\ \alpha=1/\gamma\\
            O(1)  &\text{if}\ \ \alpha > 1/\gamma
            \end{cases}\, ,
    \end{equation}
    where $K_{\alpha,t}$ and  $K'_t$ are non-zero constants having the same sign as $\mu(\gamma)\langle t,r_\gamma\rangle_G$. If, moreover, $\gamma(t)=\iota(t)$, then~\eqref{pialpha-asympt} holds unconditionally.
\end{thm}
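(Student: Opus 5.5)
We expand $\pi_\alpha(x;L/K;t)$ according to prime powers and to the power maps. The starting point is the identity, valid for the counting function weighted by $t$ and including prime powers,
\[ \psi_\alpha(x;L/K;t):=\sum_{N\mathfrak{p}^k\le x}\frac{t(\varphi_\mathfrak{p}^k)\log N\mathfrak{p}}{(N\mathfrak{p})^{k\alpha}}. \]
By M\"obius inversion over $k$ one recovers $\pi_\alpha$ from $\psi_\alpha$ with $t$ replaced by $t\circ f_\ell$, roughly
\[ \pi_\alpha(x;t)=\sum_{\ell\ge1}\frac{\mu(\ell)}{\ell}\,\theta_{\ell\alpha}\bigl(x^{1/\ell};t\circ f_\ell\bigr)+\cdots, \]
after partial summation. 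The key input is that, through the Artin formalism, $\theta(y;t\circ f_\ell)$ is expressed via $(t\circ f_\ell\circ\pi)^+$ on $G^+=\Gal(\overline{L}/\Q)$, which is a combination of characters $\chi$ of $G^+$. The main term $y$ comes with coefficient $\langle t\circ f_\ell,1\rangle_G=|G|^{-1}\sum_x t(x^\ell)=\langle t,r_\ell\rangle_G$. The oscillating terms come from zeros and have size $O(y^{1/2}\log^2 y)$ under $\mathrm{GRH}(L)$, or $O(y e^{-c\sqrt{\log y}})$ unconditionally.

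We then sort the values of squarefree $\ell$ (the non-squarefree ones cancel through $\mu$ after regrouping by radical, which is how $\mathcal S$ enters):
\begin{itemize}
\item For $\ell<\gamma$, the main term vanishes since $\langle t,r_\ell\rangle=0$.
\item For $\ell<\iota$, the entire function $\mathrm{Ind}(t\circ f_\ell\circ\pi)$ vanishes. Hence the explicit-formula contribution, which depends only on the induced class function through the Artin $L$-function factorization, is identically zero. Unconditionally, the sum over primes with $t(\varphi_\mathfrak p^\ell)$ is exactly $0$ for unramified $\mathfrak p$, up to a bounded ramified contribution.
\item For $\ell\ge\iota$, the contribution is $O(x^{1/(2\ell)+\epsilon})$ under GRH, which is at most $x^{1/(2\iota)+\epsilon}$.
\end{itemize}
Since $\gamma<2\iota$, the term $\ell=\gamma$ gives the main term $\frac{\mu(\gamma)}{\gamma}\langle t,r_\gamma\rangle\,\mathrm{Li}$-type integral
\[ \int^{x^{1/\gamma}}\frac{u^{-\alpha\gamma}}{\log u}\,du. \]
This integral evaluates to $x^{(1-\alpha\gamma)/\gamma}/\log x$ times a positive constant when $\alpha<1/\gamma$, to $\log\log x+O(1)$ when $\alpha=1/\gamma$, and to $O(1)$ when $\alpha>1/\gamma$. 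The remaining terms are of smaller order. For $\ell>\gamma$, the main terms have size $x^{(1-\alpha\ell)/\ell}$, which is smaller. The GRH errors are $x^{1/(2\iota)-\alpha+\epsilon}$ with $1/(2\iota)<1/\gamma$, so after weighting by $N\mathfrak p^{-\alpha}$ they are dominated, and in the case $\alpha\ge1/\gamma$ they are bounded because the partial-summation tail converges. This yields the three regimes, with constants of the sign of $\mu(\gamma)\langle t,r_\gamma\rangle_G$. The expansion should also be kept in the $(\log x)^{-2}$ precision via $\mathrm{Li}$ asymptotics.

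For the unconditional statement when $\gamma=\iota$: every $\ell<\gamma$ contributes exactly zero (up to $O(1)$ from ramified primes) by the vanishing of the induced function. For $\ell\ge\gamma$, the leading $\ell=\gamma$ term is handled by the unconditional Chebotarev theorem with de la Vallée-Poussin error $x^{1/\gamma}e^{-c\sqrt{\log x}}$, which suffices for all three cases. In the $\alpha\ge1/\gamma$ cases this uses that $\int u^{-1}e^{-c\sqrt{\log u}}\,du/\log u$ converges. The terms $\ell>\gamma$, $\ell\ge 2$, are trivially bounded by $x^{1/\ell}$ with $1/\ell\le 1/(\gamma+1)$, so they are negligible; their weighted versions are $O(1)$ when $\alpha\ge1/\gamma$, since $\alpha\ell>1$.

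The main obstacle is the bookkeeping step: showing rigorously that the vanishing of $\mathrm{Ind}_{\overline G}^{G^+}(t\circ f_\ell\circ\pi)$ kills the $\ell$-th contribution exactly, including the inertia-averaged values at ramified primes. We would handle this by writing the count over $K$-primes as a count over rational primes using the induced function on $G^+$, so that it becomes $\sum_p (t\circ f_\ell\circ\pi)^+(\varphi_p)$. This sum vanishes termwise for unramified $p$, and there are finitely many ramified $p$.
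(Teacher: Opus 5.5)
Your proposal is correct and is essentially the paper's argument: the same inclusion--exclusion over the power maps $f_\ell$, Artin induction to kill the squarefree $\ell<\iota(t)$, Chebotarev under $\mathrm{GRH}(L)$ (or unconditionally when $\gamma=\iota$) for $\iota\le\ell\le\gamma$ with $\gamma<2\iota$ controlling the errors, and trivial bounds for $\ell>\gamma$; the only difference is that the paper first proves the unweighted asymptotic for $\pi(x;L/K;t)$ (Proposition~\ref{asymptotics}) and then gets all three regimes from one summation by parts, whereas you insert the weight before the M\"obius inversion and so have to redo the error analysis term by term. The step you flag as the main obstacle is immediate once your terms are written as prime-power sums $\sum_{N\mathfrak{p}^k\le y} g(\varphi_\mathfrak{p}^k)/(k\,N\mathfrak{p}^{k\beta})$ rather than the prime sums $\theta$ you wrote (these prime-power sums are what your $\mathrm{Li}$-type main term actually corresponds to): they are weighted partial sums of the Dirichlet coefficients of $\log L(s;\overline{L}/K;g)=\log L(s;\overline{L}/\Q;g^+)$, so they vanish identically when $g^+=0$, ramified primes included, with no $O(1)$ loss.
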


Following ideas introduced in~\cite{Hay}, we can construct unconditional examples of such extreme biases. This leads to the following theorem:

\begin{thm}\label{existence-extreme-bias}
    Let $p\ge2$ be a prime. There exist infinitely many Galois extensions of number fields $L/K$ with Galois group $G$ and conjugacy classes $C_1,C_2\in G^\#$, such that for all $x\ge2$ we have
    \[ \frac{1}{|C_1|}\pi_{\alpha}(x;L/K;C_1)-\frac{1}{|C_2|}\pi_{\alpha}(x;L/K;C_2)=\begin{cases}
            K_{\alpha,C_1,C_2}\frac{x^{(1-\alpha p)/p}}{\log x}+O\left(\frac{x^{(1-\alpha p)/p}}{(\log x)^{2}}\right) &\text{if}\ \  \alpha<1/p\\
           K'_{C_1,C_2} \log \log x+O(1) &\text{if}\ \alpha=1/p\\
            O(1)  &\text{if}\ \ \alpha > 1/p
            \end{cases}\, , \]
    where $K_{\alpha,C_1,C_2}, K'_{C_1,C_2}$ are positive constants.
\end{thm}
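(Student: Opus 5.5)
We will deduce the statement from Theorem~\ref{criterion-extreme-bias}, applied with $t=t_{C_1,C_2}$, in its unconditional case $\gamma(t)=\iota(t)=p$. We must then produce infinitely many extensions $L/K$ and classes $C_1,C_2$ satisfying three conditions: $\iota(t)=\gamma(t)=p$, and $\mu(p)\langle t,r_p\rangle_G=-\langle t,r_p\rangle_G>0$. In view of \eqref{tC_1C_2}, this sign condition reads $r_p^G(C_1)<r_p^G(C_2)$. Dividing by $|G|$ then gives the displayed asymptotics with positive constants; Theorem~\ref{criterion-extreme-bias} says the constants carry the sign of $\mu(\gamma)\langle t,r_\gamma\rangle_G$.

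For the group theory, take $G$ abelian and cyclic, say $G=C_{p^2}$ (or $C_{p^k}$). Choose $C_1=\{y\}$ with $y$ a generator of the subgroup of order $p$, and $C_2=\{1\}$. For squarefree $\ell$ coprime to $p$, the map $x\mapsto x^\ell$ is a bijection, so $r_\ell(y)=r_\ell(1)=1$ and $\langle t,r_\ell\rangle=0$ for all such $\ell<p$. In particular $\gamma(t)\ge p$. Since $y$ is a $p$-th power in $C_{p^2}$, we get $r_p(y)=p=r_p(1)$, which fails our requirement, so we adjust the choice. Take $C_1$ to be a generator class $\{g\}$ of $C_{p^2}$ and $C_2=\{1\}$. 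Then $r_p(g)=0<r_p(1)=p$, so $\langle t,r_p\rangle\neq0$, and for $\ell<p$ the values of $r_\ell$ agree. Hence $\gamma(t)=p$ with the correct sign.

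It remains to arrange $\iota(t)=p$. This requires $\mathrm{Ind}_{\overline G}^{G^+}(t\circ f_\ell\circ\pi)=0$ for squarefree $\ell<p$, and nonvanishing at $\ell=p$. The nonvanishing is automatic: $t\circ f_p\circ\pi$ is the function $-|G|\,\mathds 1_{\{1\}}\circ f_p\circ \pi$, which is nonpositive and nonzero, and induction of a nonzero nonpositive function is nonzero. For $\ell<p$ coprime to $p$, $t\circ f_\ell$ is $|G|(\mathds 1_{\{g^{\ell}\}}-\mathds 1_{\{1\}})$. So we need the elements of $\overline G$ mapping to $g^\ell$ and to $1$ to give equally weighted conjugacy data in $G^+$. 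Following~\cite{Hay}, the cleanest way is to take $L/\Q$ itself Galois with group $G^+$, so that $\overline L=L$, and to embed $G\le G^+$ so that $g^\ell$ is $G^+$-conjugate to $g$ for every $\ell$ prime to $p$. Even so, this does not make $\mathds 1_{g^\ell}-\mathds 1_1$ induce to zero, since $g$ is not conjugate to $1$. We therefore change the pair of classes instead: take $C_1=\{g\}$ and $C_2=\{g^{p+1}\}$, both generators of $C_{p^2}$. Then $r_p(g)=r_p(g^{p+1})=0$, which is no good either.

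The correct mechanism is to pick $x,y\in G$ of the same order, both generators of the $p$-part up to an $\ell$-twist, with $x$ a $p$-th power and $y$ not. In $G=C_{p^2}\times C_p$, take $x=(g^p,1)$ and $y=(1,h)$. Both have order $p$, so $r_\ell(x)=r_\ell(y)$ for $\ell$ prime to $p$. We have $r_p(x)=p^2>r_p(y)=0$, and we swap the roles so that $C_1=\{y\}$ and $C_2=\{x\}$. Now embed $G$ in a group $G^+$ where $x$ and $y$ are conjugate, for instance via an embedding into $\mathfrak S_n$ through a permutation action in which $x$ and $y$ have the same cycle type. Then for $\ell<p$, $x^\ell$ and $y^\ell$ are $G^+$-conjugate, so the induced function vanishes and $\iota(t)\ge p$. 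Combined with the general inequality $\iota\le\gamma$ (for $\ell=\gamma$ the pairing with the trivial character is nonzero), this gives $\iota=\gamma=p$.

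Realize $G^+=\mathfrak S_n$ over $\Q$, which is possible for every $n$, and let $K=L^{G}$. Infinitely many such extensions $L$ exist, for instance by Hilbert irreducibility. For $p=2$ the same construction with $C_4\times C_2$ works. The main obstacle is checking that $x$ and $y$ become conjugate in $G^+$ while $\iota$ is not accidentally smaller than $p$. With the regular-type action of $G$ on cosets of a suitable subgroup, both $x$ and $y$ act as products of $p$-cycles, and we would verify that these cycle types coincide.
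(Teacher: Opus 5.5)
Your final construction is correct and is exactly the paper's proof: $G=C_{p^2}\times C_p$ with $C_1=\{(1,h)\}$ and $C_2=\{(g^p,1)\}$, $G$ embedded regularly in $\mathfrak S_{|G|}$, an $\mathfrak S_{|G|}$-extension $L/\Q$ with $K=L^G$, and the unconditional case $\gamma=\iota=p$ of Theorem~\ref{criterion-extreme-bias}; this is the case $n=1$, $m=2$, $H$ trivial of Proposition~\ref{examplespropertyP} combined with Proposition~\ref{cayley}, up to reordering the factors and swapping the names $x,y$. The step you flag as the main obstacle is immediate: in the left regular representation every element of order $p$ is a product of $|G|/p$ disjoint $p$-cycles, so $x$ and $y$ are conjugate in $\mathfrak S_{|G|}$, and since $f_\ell$ is a bijection of $G$ for squarefree $\ell<p$, the sets $f_\ell^{-1}(x)$ and $f_\ell^{-1}(y)$ (these, not $x^\ell,y^\ell$ as you wrote, are what $t\circ f_\ell$ sees) are single elements conjugate in $\mathfrak S_{|G|}$, which gives $\iota\ge p$.
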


In particular, taking $p=2$ in Theorem~\ref{existence-extreme-bias}, we deduce that we can unconditionally produce examples where~\eqref{DRHequality} holds. Let us also note that taking $p$ to be an odd prime yields examples of extensions where there are conjugacy classes $C_1$ and $C_2$ for which \[ \frac{1}{|C_1|}\pi_{1/2}(x;L/K;C_1)-\frac{1}{|C_2|}\pi_{1/2}(x;L/K;C_2)=O(1)\]
yet we have an extreme Chebyshev's bias towards $C_1$, in the sense that for all $x$ sufficiently large we have \[ \frac{1}{|C_1|}\pi(x;L/K;C_1)>\frac{1}{|C_2|}\pi(x;L/K;C_2)\, .\]
This shows that there are unbiased races in the sense of Aoki--Koyama~\cite{Ao-Ko}*{Definition 1.2}, that are biased in the sense of Rubinstein--Sarnak for which we have $\delta_{L/K}(C_1,C_2)=1$. A natural question is whether we can still produce unbiased races in the sense of Aoki--Koyama while $\delta_{L/K}(C_1,C_2)\in (1/2,1)$.
In order to give an answer to this question, we will prove the following proposition: 
\begin{prop}~\label{Aoki-Koyamaunbiasedness}
    Let $p\ge 3$ be a prime number, and let $L/K\in \mathcal{E}_p$ and let $C_1,C_2\in G^\#$ be as given by the definition of $\mathcal{E}_p$. If $\mathrm{GRH}(L)$ holds, then for all $\alpha>1/(2p)$ we have $\pi_\alpha(x;L/K;t_{C_1,C_2})=O(1)$.
\end{prop}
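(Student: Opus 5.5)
We would run the standard explicit-formula argument for weighted sums of prime ideals. Throughout, \(t:=t_{C_1,C_2}\), and by the definition of \(\mathcal{E}_p\) we have \(\iota:=\iota(t)=p\) and \(\gamma(t)=2p\).

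\textbf{Step 1: pass to prime powers.} Write \(\pi_\alpha(x;L/K;t)\) in terms of the von Mangoldt-type sum
\[ \psi_\alpha(x;L/K;t)=\sum_{N\mathfrak{p}^k\le x}\frac{t(\varphi_\mathfrak{p}^k)\log N\mathfrak{p}}{(N\mathfrak{p})^{k\alpha}}. \]
Möbius inversion over the exponent \(k\) expresses \(\pi_\alpha(x;L/K;t)\) as
\[ \sum_{\ell\ge1}\frac{\mu(\ell)}{\ell}\,\theta_{\ell\alpha}\bigl(x^{1/\ell};\,t\circ f_\ell\bigr), \]
up to partial summation. Here \(\theta_\beta(y;g)\) is the prime-only weighted sum \(\sum_{N\mathfrak{p}\le y} g(\varphi_\mathfrak{p})\log N\mathfrak{p}\,(N\mathfrak{p})^{-\beta}\), converted to \(\pi\)-form. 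This is exactly the decomposition used in the proof of Theorem~\ref{criterion-extreme-bias}, so I would reuse that setup. Ramified primes contribute \(O(1)\) and can be discarded.

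\textbf{Step 2: the terms with \(\ell<\iota=p\).} For squarefree \(\ell<p\), the definition of \(\iota\) gives \(\mathrm{Ind}_{\overline{G}}^{G^+}(t\circ f_\ell\circ\pi)=0\).
\begin{itemize}
\item Transport the sum over \(\mathfrak{p}\) of \(K\) to a sum over primes of \(\Q\) via induction to \(G^+\). The \(\ell\)-th power sums attached to \(t\circ f_\ell\) then vanish identically up to ramified contributions, hence are \(O(1)\). This is unconditional.
\item Non-squarefree \(\ell\) carry \(\mu(\ell)=0\), but the primes contributing to \(t(\varphi^k)\) for general \(k\) must still be handled. I would instead group prime powers \(\mathfrak{p}^k\) by \(k\): a prime power with \(k<p\) contributes \(t(\varphi^k)\), which matches a root-count identity.
\end{itemize}
The cleaner route is to group \(\pi_\alpha\) directly. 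Write \(\Pi_\alpha(x;t)=\sum_{k}\frac1k\pi_{k\alpha}(x^{1/k};t\circ f_k)\). Each \(\pi_{k\alpha}(x^{1/k};t\circ f_k)\) with \(k\alpha>1\) converges absolutely, so it is \(O(1)\). Since \(\alpha>1/(2p)\), the relevant \(k\) are exactly \(k<2p\) with \(k\alpha\le1\).

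\textbf{Step 3: the terms with \(p\le \ell<2p\), using GRH.} For squarefree \(\ell\ge p\) with \(\ell\alpha\le 1\), I would apply the explicit formula under \(\mathrm{GRH}(L)\).
\begin{itemize}
\item \emph{Zeros.} The zeros contribute \(x^{(1/2-\ell\alpha)/\ell}\) divided by powers of \(\log x\), summed over zeros. These are \(O(1)\) when \(\ell\alpha>1/2\), which holds since \(\ell\alpha\ge p\alpha>1/2\).
\item \emph{Main term.} The main term is proportional to \(\langle t\circ f_\ell,1\rangle=\langle t,r_\ell\rangle\) times \(\int^{x^{1/\ell}} u^{-\ell\alpha}\,du/\log u\). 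It vanishes because \(\langle t,r_\ell\rangle=0\) for \(\ell<2p=\gamma(t)\).
\end{itemize}
Non-squarefree \(k\) enter through the Möbius-inverted expression with coefficient zero; equivalently, one can handle them via the \(\theta\)-form, where their main terms are \(\langle t,r_k\rangle\), expressible through squarefree ones.

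Summing these, \(\pi_\alpha(x;L/K;t)=O(1)\). The remaining term with \(\ell=2p\), which carries the \(\log\log x\) or larger contribution at \(\alpha\le1/(2p)\), is absent because \(\alpha>1/(2p)\) makes it absolutely convergent.

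\textbf{Main obstacle.} The delicate step is the borderline range \(\ell=p\) with \(\alpha\) only slightly above \(1/(2p)\). There the zero sum gives \(x^{1/(2p)-\alpha}\) times a sum over zeros of \(\sum_\rho x^{i\gamma_\rho/p}/\rho\), which is only conditionally convergent. One must show that the integrated explicit formula yields \(O(1)\) rather than \(O(\log^2 x)\); the decaying power \(x^{-(\alpha-1/(2p))}\) absorbs any polylog loss, so this should work.

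The genuine obstacle is rather the \(\mathrm{Ind}\)-vanishing (\(\iota=p\)), which kills the zero contributions only for \(\ell<p\). At \(\ell=p\) we do not have the \(\mathrm{Ind}\)-vanishing, so the zero terms remain, and we rely on the decay \(x^{1/(2p)-\alpha}\). I would first try to bound them via the standard truncated explicit formula with \(T=x\), then partial summation.
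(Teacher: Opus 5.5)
Your strategy is sound and uses the same inputs as the paper. However, Step~1 is misstated, and Step~2 only works for the corrected version. The sums you Möbius-invert must run over prime \emph{powers}. Set
\[ \Pi_\beta(y;g):=\sum_{N\mathfrak{p}^k\le y}\frac{g(\varphi_\mathfrak{p}^k)}{k\,(N\mathfrak{p})^{k\beta}}. \]
Then $\Pi_\beta(y;g)=\sum_{k\ge1}k^{-1}\pi_{k\beta}(y^{1/k};g\circ f_k)$, and inverting gives
\[ \pi_\alpha(x;L/K;t)=\sum_{\ell\in\mathcal{S}}\frac{\mu(\ell)}{\ell}\,\Pi_{\ell\alpha}\bigl(x^{1/\ell};t\circ f_\ell\bigr). \]
Your displayed identity has prime-only sums on the right. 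Its $\ell=1$ term is already $\pi_\alpha(x;L/K;t)$, so it would force all the remaining terms to cancel, which is false.

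Your ``cleaner route'' goes the wrong way. It expresses $\Pi_\alpha$ through the prime-only sums $\pi_{k\alpha}(x^{1/k};t\circ f_k)$ for $2\le k\le 1/\alpha$. Bounding those is the original problem again, and iterating it just reproduces the inversion above.

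The distinction matters because $L(s;L/K;g)=L(s;L/\Q;g^+)$ only matches coefficients of $\log L$ (or $L'/L$), i.e.\ sums over prime powers. A prime $\mathfrak{p}$ of $K$ with $N\mathfrak{p}=p^f$, $f>1$, is matched with the prime power $p^f$ on the $\Q$ side. Hence $(t\circ f_\ell)^+=0$ kills $\Pi_{\ell\alpha}(y;t\circ f_\ell)$, but not the prime-only sum. Take $\ell=1$, where $t^+=0$ because $\iota(t)=p>1$. There your Step~2 would assert that $\pi_\alpha(x;L/K;t)$ itself vanishes identically, which induction does not give; compare Proposition~\ref{asymptotics}(3), which needs $\iota(t)=\infty$.

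With this correction, Steps~2--3 go through:
\begin{itemize}
\item For squarefree $\ell<p$, the terms vanish by induction.
\item For squarefree $p\le\ell\le 1/\alpha$, one has $\ell<2p=\gamma(t)$, so $\langle t,r_\ell\rangle_G=0$. The GRH bound $\psi(u;L/K;t\circ f_\ell)\ll u^{1/2}(\log u)^2$ plus one partial summation gives $O(1)$, since $\int^\infty u^{-1/2-\ell\alpha}\log u\,\mathrm{d}u<\infty$ when $\ell\alpha\ge p\alpha>1/2$.
\item The $\ell$ with $\ell\alpha>1$ contribute $O(1)$ in total. Indeed $|\Pi_{\ell\alpha}|\le \max|t|\,\log\zeta_K(\ell\alpha)$ and $\sum_{\ell\alpha>1}\ell^{-1}\log\zeta_K(\ell\alpha)<\infty$.
\end{itemize}
Your ``main obstacle'' is therefore not one. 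No conditionally convergent sums over zeros appear, because the pointwise GRH bound already suffices. The Ind-vanishing for $\ell<p$ is simply the hypothesis $\iota(t)=p$.

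The paper uses the same two inputs but without weights. Lemma~\ref{theta_expansion} gives $\theta(x;L/K;t)\ll x^{1/(2p)}(\log x)^2$, hence $\pi(x;L/K;t)\ll x^{1/(2p)}\log x$, and a single application of \eqref{Sumpartspialpha} finishes. Inserting the weight term by term, as you do, costs a partial summation for each $\ell$ plus a tail estimate. In exchange you never evaluate the $\ell=2p$ term, which is absolutely convergent since $2p\alpha>1$; the paper instead carries it as $\mu(2p)\langle t,r_{2p}\rangle_G\,x^{1/(2p)}$ and lets the weight absorb it.
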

In particular, since by Theorem~\ref{np} $\mathcal{E}_p\ne\emptyset$, if $L/K\in \mathcal{E}_p$ and $C_1,C_2$ are as in Proposition~\ref{Aoki-Koyamaunbiasedness}, then the corresponding race is unbiased in the sense of Aoki--Koyama. Using Corollary~\ref{E_d-criterion}, we have, conditionally to $\mathrm{GRH}(L)$, $\mathrm{LI}(L)$, and $\mathrm{AC}(L)$; that $\delta_{L/K}(C_1,C_2)\in (1/2,1)$. This provides a conditional answer to the previous question. 

\subsection{Statement of the assumptions and structure of the paper}\label{Hypotheses}

In this paper, we will follow the standard assumptions for the study of Chebyshev's bias in general number fields (see~\cite{FJ}*{\S 1.2} and~\cite{Bail}*{\S 1.3}). Let $L$ be a number field that is Galois over $\Q$, with Galois group $G^+ := \Gal(L/\Q)$. For a finite group $G$, we denote by $\mathrm{Irr}(G)$ the set of irreducible characters of $G$. Recall that for an irreducible character $\chi \in \mathrm{Irr}(G)$, the Frobenius--Schur indicator is given by
\[ \varepsilon_2(\chi) := \frac{1}{|G|}\sum_{g\in G} \chi(g^2)\, . \]
It is a classical result (see, for instance, \cite{FJ}*{Theorem 3.3}) that $\varepsilon_2(\chi) \in \{1, -1, 0\}$. Following standard terminology, we say that the character $\chi$ is orthogonal, symplectic, or unitary depending on whether $\varepsilon_2(\chi)$ takes the value $1$, $-1$, or $0$, respectively. 
We state the following hypotheses on the extension $L/\Q$:

\begin{itemize}
    \item[$\mathrm{AC}(L)$:] We assume Artin's holomorphicity conjecture, which states that for every nontrivial $\chi \in \mathrm{Irr}(G^+)$, the associated Artin $L$-function $L(s; L/\Q; \chi)$ is entire.
    
    \item[$\mathrm{GRH}(L)$:] We assume the Riemann Hypothesis for the extension $L/\Q$, that is, every nontrivial zero of $L(s; L/\Q; \chi)$ lies on the line $\Re(s) = 1/2$, for every $\chi \in \mathrm{Irr}(G^+)$.
    
    \item[$\mathrm{LI}^-(L)$:] We assume that the multiset of positive imaginary parts of the zeros of all Artin $L$-functions $L(s; L/\Q; \chi)$ in the region $\{s \in \C : \Re(s) \ge 1/2\}$, with $\chi \in \mathrm{Irr}(G^+)$, are linearly independent over the rationals.
    
    \item[$\mathrm{LI}(L)$:] We assume $\mathrm{LI}^-(L)$. Moreover, we assume that $L(1/2, L/\Q, \chi) \neq 0$ if $\chi$ is an orthogonal or unitary irreducible character of $G^+$. (see~\cite{FJ}*{\S 1.2} for a more general assumption $\mathrm{LI}$).
\end{itemize}
When the extension $L/\Q$ is not Galois, we use the notation $\mathrm{GRH}(L)=\mathrm{GRH}\bigl(\overline{L}\bigr)$, $\mathrm{AC}(L)=\mathrm{AC}\bigl(\overline{L}\bigr)$, $\mathrm{LI}^-(L)=\mathrm{LI}^-\bigl(\overline{L}\bigr)$, and $\mathrm{LI}(L)=\mathrm{LI}\bigl(\overline{L}\bigr)$, where $\overline{L}$ is the Galois closure of $L$ over $\Q$.

The remainder of the paper is organized as follows. In \S~\ref{Induction}, we prove Theorem~\ref{characterization}, Theorem~\ref{criterion-extreme-bias}, and Proposition~\ref{Aoki-Koyamaunbiasedness}. Following ideas from \cites{Hay, FJ2}, we use an inclusion-exclusion principle to relate Chebyshev-type functions in number fields (see \eqref{theta-psi} and \eqref{inc-exc}) together with the induction property of Artin $L$-functions. This leads to Proposition~\ref{asymptotics}, which gives the main asymptotics of $\pi(x;L/K;t)$ when $\gamma(t)<2\iota(t)$. Using summation by parts, we then deduce Theorem~\ref{criterion-extreme-bias}. For the regime $\gamma(t)\ge 2\iota(t)$, we combine these ideas with the standard limiting distribution machinery of Rubinstein and Sarnak to establish Theorem~\ref{characterization}.

In \S~\ref{constructions}, we reduce the existence parts of Theorems~\ref{np} and \ref{existence-extreme-bias} to group-theoretic and representation-theoretic problems. Theorem~\ref{existence-extreme-bias} follows as a consequence of Theorem~\ref{criterion-extreme-bias} combined with group-theoretic constructions from~\cite{Hay}. To prove that $\mathcal{E}_p$ is non-empty for an odd prime $p$, we introduce the representation-theoretic property $\mathcal{Q}(2p;x,y)$ (see Definition~\ref{propertyQ}). If a pair of finite groups $(G,G^+)$ satisfies $\mathcal{Q}(2p;x,y)$ and $G^+$ has no symplectic irreducible representations, then any Galois extension $L/K$ with $\Gal(L/\Q) \cong G^+$ and $\Gal(L/K) \cong G$ satisfies $L/K\in \mathcal{E}_p$. Constructing the appropriate group $G$ requires that it satisfies the property $\mathcal{P}(2p;x,y)$ (see Definition~\ref{propertyP(d)}). Using generalized quaternion groups, we construct several such examples in \S~\ref{subsec:Prop(2p)}. To build the larger group $G^+$, we embed the generalized quaternion groups into $\GL(2,\F_q)$ for a well-chosen prime $q$, using their standard faithful irreducible representations of degree $2$. Applying this to the groups in Theorem~\ref{np} yields an embedding of $G$ into $\GL(4,\F_q)$. By studying the eigenvalues of the matrices corresponding to the roots of $x$ and $y$, we show that for $\ell<p$, the $\ell$-th roots of $x$ and $y$ are conjugate in $\GL(4,\F_q)$. Then, by considering the natural faithful action of $\GL(4,\F_q)$ on $\F_q^4$, we obtain an embedding of $G$ into the symmetric group $\mathfrak{S}_{q^4}$. Finally, we prove that under this embedding, the $p$-th roots of $x$ and $y$ cannot have the same cycle type. This completes the construction of $G$ and $G^+$, thereby proving that $\mathcal{E}_p \ne \emptyset$.

In \S~\ref{lbounds}, we establish lower bounds for $n_p$: we prove that a group $G$ satisfying the property $\mathcal{P}(2p;x,y)$ has order $\ge 8p^2$. This result is technical; we start by proving some counting properties of the root functions $r_n$, $n\ge1$. Then, we reduce the problem to the case where the group $G$ has a normal $p$-Sylow $P=\langle t\rangle$ of order $p$. Proceeding by contradiction, and assuming that $|G|<8p^2$, enables us to use the Schur--Zassenhaus theorem which implies that $G\cong P \rtimes H$ for some subgroup $H$ of $G$. We prove that certain quotients of the subgroup $H\cap \mathcal{C}_G(t)$ contain a high proportion of involutions. These groups were studied by Wall~\cite{Wall}; his classification theorem plays a major role for obtaining a contradiction. Finally, by exploiting divisibility properties of the groups satisfying $\mathcal{P}(2p;x,y)$ and eliminating specific orders for the case $p=5$, we deduce that $n_3\ge 96$ and $n_5\ge 320$.

\subsection*{Acknowledgements} I would like to thank the Max Planck Institute for Mathematics in Bonn for excellent working conditions and support. I thank Zakariae Bouazzaoui for several discussions that were essential to the development of the paper, as well as for his remarks. I am also grateful to Florent Jouve and Daniel Fiorilli for their encouragement and remarks that helped improve the paper, and to Youness Lamzouri for his question that led to Theorem~\ref{characterization}.

\subsection*{Notation and conventions}
For the convenience of the reader, we collect the group-theoretic definitions and notation used throughout the paper. Let $\Gamma$ and $N$ be finite groups, let $H$ be a subgroup of $\Gamma$, and let $z \in \Gamma$. 

\begin{itemize}
    \item We denote the centralizer of $z$ in $\Gamma$ by $\mathcal{C}_\Gamma(z)$, the normalizer of $H$ in $\Gamma$ by $N_\Gamma(H)$, and the center of $\Gamma$ by $\mathcal{Z}(\Gamma)$.
    \item We let $\Id_\Gamma$ denote the identity map. The inversion map $\mathrm{Inv}_\Gamma \colon \Gamma \to \Gamma$ is given by $\mathrm{Inv}_\Gamma(x) := x^{-1}$, and the inner automorphism induced by $h \in \Gamma$ is denoted by $\mathrm{Int}_h(x) := h x h^{-1}$.
    \item If $\Phi \in \Aut(\Gamma)$ is an automorphism satisfying $\Phi(H) = H$, we let $\Phi_{|H} \in \Aut(H)$ denote the automorphism induced on $H$ by $\Phi$.
    \item For class functions $f, g \colon \Gamma \to \C$, the standard inner product on $\Gamma$ is defined as $\langle f, g \rangle_\Gamma := \frac{1}{|\Gamma|} \sum_{x \in \Gamma} f(x)\overline{g(x)}$.
    \item We let $C_n = \langle c_n \rangle$ denote the cyclic group of $n$-th roots of unity in $\C$, where $c_n := \exp(2\pi i/n)$. Note that if $n \mid m$, we have $C_n \subset C_m$ and $c_m^{m/n} = c_n$.
    \item For a group homomorphism $\phi \colon \Gamma \to \Aut(N)$, the semidirect product $N \rtimes_\phi \Gamma$ is the group defined on the set $N \times \Gamma$ with the law $(n_1, h_1) \cdot (n_2, h_2) := \bigl(n_1 \phi(h_1)(n_2), h_1 h_2\bigr)$ for all $n_1, n_2 \in N$ and $h_1, h_2 \in \Gamma$.
    \item We denote the dihedral group of order $2n$ by $D_{2n}$, the symmetric group on $n$ letters by $\mathfrak{S}_n$, and the generalized quaternion group of order $4n$ by $Q_{4n}$ (the group $Q_{4n}$ is explicitly defined in \S~\ref{subsec:Prop(2p)}).
    \item We define the root sets $\mathcal{R}_n^\Gamma(z) := \{x \in \Gamma \colon x^n = z\}$. Let us note that $r_n^\Gamma(z) = \bigl|\mathcal{R}_n^\Gamma(z)\bigr|$. Furthermore, we define $\kappa_n^\Gamma(z) := \#\{x \in \mathcal{R}_n^\Gamma(z) \colon \Ord(x) = n\Ord(z)\}$. If $\Ord(x) = p^r m$ with $\gcd(p, m) = 1$, we denote $\mathcal{F}_p^\Gamma(x) := \{t \in \mathcal{R}_p^\Gamma(x^m) \colon tx = xt\}$.
    \item For elements $x, y \in \Gamma$ with respective conjugacy classes $C_x$ and $C_y$, we use the shorthand $t_{x,y} := t_{C_x, C_y}$, where $t_{C_x,C_y}$ is defined in \eqref{tC_1C_2}.
\end{itemize}

\section{Analytic criteria for prime ideal races}\label{Induction}
\subsection{Main asymptotics}
Let $L/K$ be a Galois extension of number fields with Galois group $G$. Consider a non-zero class function $t: G \to \R$ such that $\langle t, 1 \rangle_G = 0$. 
Define the analogues of Chebyshev's $\theta$ and $\psi$ functions: 
\begin{equation}\label{theta-psi}
    \theta(x;L/K;t)=\sum_{N\mathfrak{p}\le x}t(\varphi_\mathfrak{p})\log N\mathfrak{p}\quad\text{and}\quad \psi(x;L/K;t)=\sum_{\substack{k,\,\mathfrak{p}\\ N\mathfrak{p}^k\le x}}t(\varphi_\mathfrak{p}^k)\log N\mathfrak{p}\, .
\end{equation}
 By the inclusion-exclusion principle, we have \begin{equation}\label{inc-exc}
    \theta(x;L/K;t)=\sum_{\ell\ge 1}\mu (\ell)\psi\bigl(x^{1/\ell};L/K; t\circ f_\ell^G\bigr)\, .
\end{equation}
Let $\overline{L}$ be the Galois closure of $L$ over $\Q$, denote $\overline{G}:=\Gal\bigl(\overline{L}/K\bigr)$ and $G^+:=\Gal\bigl(\overline{L}/\Q\bigr)$, and let $\pi\, \colon\,  \overline{G}\to G $ be the natural projection. 
The following lemma shows that proving Theorem~\ref{characterization} can be reduced to the case where $L/\Q$ is Galois.
\begin{lem}\label{GaloiscaseReduction}
    We have for all $x\ge 2$
    \[\pi(x;L/K;t)=\pi(x;\overline{L}/K;t\circ \pi )\quad \text{and}\quad \langle t\circ \pi, 1\rangle_{\overline{G}}=0\, .\]
\end{lem}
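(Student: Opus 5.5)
The plan is to compare the Frobenius data of $\mathfrak{p}$ in $\overline{L}/K$ and in $L/K$ through the projection $\pi\colon \overline{G}\to G$. This projection is surjective, and its kernel is $H:=\Gal(\overline{L}/L)$. The two sums defining $\pi(x;L/K;t)$ and $\pi(x;\overline{L}/K;t\circ\pi)$ run over the same set of prime ideals $\mathfrak{p}$ of $O_K$ with $N\mathfrak{p}\le x$. It therefore suffices to show, for each $\mathfrak{p}$, that the inertia-averaged values agree:
\[ (t\circ\pi)(\varphi_\mathfrak{p}) = t(\varphi_\mathfrak{p}). \]

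Fix a prime $\overline{\mathfrak{P}}$ of $\overline{L}$ above $\mathfrak{p}$ and set $\mathfrak{P}:=\overline{\mathfrak{P}}\cap O_L$. I will use the standard functoriality of decomposition and inertia groups under quotients:
\[ \pi\bigl(D_{\overline{\mathfrak{P}}/\mathfrak{p}}\bigr)=D_{\mathfrak{P}/\mathfrak{p}}, \qquad \pi\bigl(I_{\overline{\mathfrak{P}}/\mathfrak{p}}\bigr)=I_{\mathfrak{P}/\mathfrak{p}}, \]
and $\pi(\varphi_{\overline{\mathfrak{P}}})$ is a Frobenius element for $\mathfrak{P}$. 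In the ramified case a Frobenius is only defined modulo inertia; since we average over the whole inertia coset, the choice does not matter. So I may take $\varphi_{\mathfrak{P}}=\pi(\varphi_{\overline{\mathfrak{P}}})$.

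The restriction $\pi\colon I_{\overline{\mathfrak{P}}/\mathfrak{p}}\to I_{\mathfrak{P}/\mathfrak{p}}$ is a surjective homomorphism, so every fibre has the same size $|I_{\overline{\mathfrak{P}}/\mathfrak{p}}|/|I_{\mathfrak{P}/\mathfrak{p}}|$. Hence
\[ \frac{1}{|I_{\overline{\mathfrak{P}}/\mathfrak{p}}|}\sum_{s\in I_{\overline{\mathfrak{P}}/\mathfrak{p}}} t\bigl(\pi(\varphi_{\overline{\mathfrak{P}}})\pi(s)\bigr)=\frac{1}{|I_{\mathfrak{P}/\mathfrak{p}}|}\sum_{u\in I_{\mathfrak{P}/\mathfrak{p}}} t(\varphi_{\mathfrak{P}}u), \]
which gives the first identity. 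The main (mild) obstacle is justifying the surjectivity of inertia groups under the quotient map. This is classical (e.g.\ Serre, \emph{Local Fields}, or Neukirch, Ch.~I \S9): it follows from transitivity of the decomposition and inertia groups in towers, together with surjectivity of the residue-field Galois map $D\to\Gal(\kappa(\mathfrak{P})/\kappa(\mathfrak{p}))$. I will cite it rather than reprove it.

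For the second identity, every fibre of the surjection $\pi$ has size $|H|$. So
\[ \langle t\circ\pi,1\rangle_{\overline{G}}=\frac{1}{|\overline{G}|}\sum_{g\in\overline{G}}t(\pi(g))=\frac{|H|}{|\overline{G}|}\sum_{h\in G}t(h)=\langle t,1\rangle_G=0. \]
Finally, I note that $t\circ\pi$ is again a real class function on $\overline{G}$, because $\pi$ is a homomorphism.
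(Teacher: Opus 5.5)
Your proof is correct, but it takes a different route from the paper's. The paper argues analytically. It writes $\psi(x;L/K;t)$ as a Perron integral of $-\frac{L'}{L}(s;L/K;t)$. It then invokes the inflation invariance $L(s;L/K;t)=L(s;\overline{L}/K;t\circ\pi)$ to get $\psi(x;L/K;t)=\psi(x;\overline{L}/K;t\circ\pi)$. Next it passes to $\theta$ via \eqref{inc-exc}, using $\pi\circ f_\ell^{\overline{G}}=f_\ell^{G}\circ\pi$, and finally reaches $\pi$ by partial summation. The orthogonality relation is simply asserted there, and your fibre count is its proof.

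You instead compare the two counting functions prime by prime. You use that restriction maps $I_{\overline{\mathfrak{P}}/\mathfrak{p}}$ onto $I_{\mathfrak{P}/\mathfrak{p}}$ with fibres of equal size and sends Frobenius elements to Frobenius elements. This is exactly the local fact behind the inflation invariance of Artin $L$-functions, so your argument isolates the real reason for the lemma and needs no analysis. It also gives the exact identity for every $x\ge 2$ without bookkeeping at ramified primes.

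That last point matters. In the paper's chain, the Perron step forces the coefficient of $\mathfrak{p}^k$ in $\psi$ to be the average of $t$ over $\varphi_{\mathfrak{P}}^k I_{\mathfrak{P}/\mathfrak{p}}$. With that convention, \eqref{inc-exc} is not exact at ramified primes. For example, take $L=\Q(i)$, $K=\Q$ and $t$ the nontrivial character: the prime $2$ contributes $0$ to $\theta(4;L/K;t)$ but $-\log 2$ to the right-hand side of \eqref{inc-exc}. The paper's conclusion still holds, because these ramified terms agree term by term on the two sides. Verifying that, however, is precisely your local comparison.

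The paper's route, in turn, is shorter once the standard properties of Artin $L$-functions are granted. It also fits the analytic framework used in the arguments that follow.
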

\begin{proof}
   We have \[ \langle t\circ \pi, 1\rangle_{\overline{G}}=\langle t, 1\rangle_G=0\, .\]
    By the inflation property of Artin $L$-functions, we have for all $x\ge 2$
    \begin{align*}
        \psi(x;L/K;t)&=-\frac{1}{2i\pi}\int_{\Re(s)=2}\frac{L'}{L}(s;L/K;t)\frac{x^s}{s}\mathrm{d}s\\
        &=-\frac{1}{2i\pi}\int_{\Re(s)=2}\frac{L'}{L}(s;\overline{L}/K;t\circ \pi)\frac{x^s}{s}\mathrm{d}s=\psi(x;\overline{L}/K;t\circ\pi)\, .
    \end{align*}
    Thus, \eqref{inc-exc} shows that $\theta(x;L/K;t)=\theta(x;\overline{L}/K;t\circ \pi)$ (because $\pi\circ f^{\overline{G}}_\ell=f_\ell^G\circ \pi$). The lemma is thus deduced by summing by parts.
\end{proof}
Lemma~\ref{GaloiscaseReduction} shows that the study of $\pi(x;L/K;t)$ can be done by studying the behavior of $\pi\bigl(x;\overline{L}/K;t\circ\pi\bigr)$. Let us also note that the lemma implies that $\mathcal{P}_{L/K}(t)=\mathcal{P}_{\overline{L}/K}(t\circ\pi)$. We may assume, from now on, that $L/\Q$ is Galois with Galois group $G^+$. The induced function of any class function $g\, \colon\,  G\to\C$ in $G^+$ will be denoted as $g^+$.
The following proposition provides the main asymptotics of $\pi(x;L/K;t)$. Its proof is similar to~\cite{Hay}*{Proposition 2.3}.

\begin{prop}\label{asymptotics}
    Let us denote $\gamma:=\gamma(t)$, then the following holds:
    \begin{enumerate}
        \item If $\mathrm{GRH}(L)$ holds for $L/K$ and if $2\iota(t) > \gamma(t)$, then for all $x\ge 2$ \begin{equation}\label{mainasympt}
            \pi(x; L/K; t) = \mu(\gamma)\langle t, r_\gamma \rangle_G \frac{x^{1/\gamma}}{\log x} + O_{L,K,t}\left(\frac{x^{1/\gamma}}{(\log x)^2}\right)\, .
        \end{equation}
        \item If $\iota(t)=\gamma(t)<\infty$, then \eqref{mainasympt} holds unconditionally.
        \item If $\iota(t) = \infty$, then for all $x \ge 2$, we have $\pi(x; L/K; t) = 0$.
    \end{enumerate}
\end{prop}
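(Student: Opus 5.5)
We work, as allowed by Lemma~\ref{GaloiscaseReduction}, with $L/\Q$ Galois with group $G^+$. The approach is to start from the inclusion--exclusion identity~\eqref{inc-exc}, write each $\psi$ term via Artin $L$-functions, evaluate the main term of each, and then sum by parts to pass from $\theta$ to $\pi$.

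\textbf{The $\psi$-terms.} For $g$ a class function on $G$, the induction property of Artin $L$-functions gives
\[ \psi(x;L/K;g)=\psi(x;L/\Q;g^+) .\]
Decompose $g^+=\sum_\chi \langle g^+,\chi\rangle_{G^+}\chi$. The trivial character contributes $\langle g^+,1\rangle_{G^+} x$, and by Frobenius reciprocity $\langle g^+,1\rangle_{G^+}=\langle g,1\rangle_G$. The non-trivial characters contribute $O(x^{1/2}\log^2 x)$ under $\mathrm{GRH}(L)$; the Artin $L$-functions may have poles, but after Brauer induction only their zeros matter under GRH. Apply this with $g=t\circ f_\ell$, using $\langle t\circ f_\ell,1\rangle_G=\langle t,r_\ell\rangle_G$. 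Then
\[ \psi\bigl(x^{1/\ell};L/K;t\circ f_\ell\bigr)=\langle t,r_\ell\rangle_G\, x^{1/\ell}+O\bigl(x^{1/(2\ell)}\log^2x\bigr) .\]

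\textbf{Using the two parameters.} When $(t\circ f_\ell)^+=0$, the term $\psi(x^{1/\ell};L/K;t\circ f_\ell)$ vanishes identically and unconditionally, since $L(s;L/\Q;0)=1$. By definition of $\iota$, this holds for all squarefree $\ell<\iota$. Also $\langle t, r_\ell\rangle_G=0$ for squarefree $\ell<\gamma$, and $\iota\le\gamma$ because $(t\circ f_\ell)^+=0$ forces $\langle t,r_\ell\rangle_G=0$.

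\textbf{Part (1).} Under GRH with $2\iota>\gamma$:
\begin{itemize}
\item terms with $\ell<\iota$ vanish;
\item terms with $\iota\le \ell<\gamma$ are $O(x^{1/(2\ell)}\log^2x)\le O(x^{1/(2\iota)}\log^2x)$, which is $o\bigl(x^{1/\gamma}/\log x\bigr)$ since $1/(2\iota)<1/\gamma$;
\item the term $\ell=\gamma$ gives $\mu(\gamma)\langle t,r_\gamma\rangle_G x^{1/\gamma}$ plus an error of the same size as above;
\item terms with $\ell>\gamma$ are bounded trivially by $O(x^{1/\ell}\log x)$. Summed over $\ell\le\log x/\log 2$, they give $O(x^{1/\gamma'}\log^2 x)$ with $\gamma'>\gamma$ the next squarefree integer, which is also acceptable.
\end{itemize}
This gives $\theta(x;L/K;t)=\mu(\gamma)\langle t,r_\gamma\rangle_G x^{1/\gamma}+O(x^{1/\gamma-\eta})$ for some $\eta>0$. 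Partial summation $\pi(x)=\theta(x)/\log x+\int_2^x\theta(u)/(u\log^2u)\,du$ then yields~\eqref{mainasympt}.

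\textbf{Part (2).} If $\iota=\gamma$, the terms with $\ell<\gamma$ vanish unconditionally. For $\ell=\gamma$ we need only $\psi(x)=\langle\cdot,1\rangle x+o(x/\log x)$ for $x^{1/\gamma}$, and the Chebotarev density theorem gives this unconditionally, with error $O(x\exp(-c\sqrt{\log x}))$. The terms with $\ell>\gamma$ are handled trivially as before.

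\textbf{Part (3).} If $\iota=\infty$, every $\psi$-term in~\eqref{inc-exc} is identically zero, so $\theta\equiv0$ and hence $\pi\equiv0$.

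The main obstacle is justifying the exact vanishing of $\psi(x;L/\Q;g^+)$ when $g^+=0$, including contributions from ramified primes. This follows from defining $\psi$ via $L'/L$ of the class function, together with the inflation and induction compatibilities. A second point to check is uniformity of the error terms over the roughly $\log x$ values of $\ell$.
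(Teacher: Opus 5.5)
Your proposal is correct and follows essentially the same route as the paper. Both arguments use the inclusion--exclusion identity~\eqref{inc-exc}, induction invariance to make the terms with $\ell<\iota(t)$ vanish exactly, and the effective Chebotarev theorem for the remaining terms (under $\mathrm{GRH}(L)$ for part~(1), unconditionally for part~(2)), together with the trivial bound for $\ell>\gamma$ and partial summation from $\theta$ to $\pi$. The differences are cosmetic: you derive part~(3) directly from the vanishing of every $\psi$-term rather than quoting the author's earlier proposition, and you use a $y\exp(-c\sqrt{\log y})$ error term in place of the paper's $y/(\log y)^3$.
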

\begin{proof} 
We begin with the first point. Assume $\mathrm{GRH}(L)$ holds and $2\iota(t) > \gamma(t)$. We have for all $\ell<\gamma$, $\langle t\circ f_\ell,1\rangle_G=\langle t,r_\ell\rangle_G=0$, so that, under $\mathrm{GRH}(L)$, the effective version of Chebotarev's theorem (see \cite{LO}*{Theorem 1.1}) implies that for $\ell<\gamma$ and $x\ge 2$
\[ \psi\left(x^{1/\ell};L/K;t\circ f_\ell\right)\ll_{L,K,t} x^{1/(2\ell)}(\log x)^2\, .\]
Furthermore, for all $x\ge 2$: 
\[\psi\left(x^{1/\gamma};L/K;t\circ f_\gamma\right)=\langle t,r_\gamma\rangle_G x^{1/\gamma}+O_{L,K,t}\left(x^{1/(2\gamma)}(\log x)^2\right)\, .\]
By the invariance of Artin $L$-functions under induction, we have for all squarefree $\ell\ge 1$ and all $x\ge 2$ \[\psi\left(x^{1/\ell};L/K;t\circ f_\ell\right)=\psi\left(x^{1/\ell};L/\Q;(t\circ f_\ell)^+\right)\, .\]
Combining this with~\eqref{inc-exc}, and noting that for $\ell< \iota(t)$, we have $(t\circ f_\ell)^+=0$, we deduce that
\begin{align*}
    \theta(x;L/K;t)&=\sum_{\ell\ge \gamma} \mu(\ell)\psi\left(x^{1/\ell};L/K;t\circ f_\ell\right)+O\left(x^{1/(\gamma+1)}(\log x)^3\right)\\
    &=\mu(\gamma)\psi\left(x^{1/\gamma};L/K;t\circ f_\gamma\right)+\sum_{\substack{\ell\ge \gamma \\ \ell\ne \gamma}} \mu(\ell)\psi\left(x^{1/\ell};L/K;t\circ f_\ell\right)+O\left(x^{1/(\gamma+1)}(\log x)^3\right)\\
    &=\mu(\gamma)\langle t,r_\gamma\rangle_G x^{1/\gamma}+O\left(x^{1/(\gamma+1)}(\log x)^3\right)\, ,
\end{align*}
where we used the fact that for all $\ell>\gamma$ we have $\psi\left(x^{1/\ell};L/K;t\circ f_\ell\right)\ll x^{1/(\gamma+1)}$ and that $2\iota(t)\ge \gamma+1$. Finally, a summation by parts shows that for all $x\ge 2$
\begin{align*}
    \pi(x;L/K;t)&=\frac{\theta(x;L/K;t)}{\log x}+\int_2^x\frac{\theta(u;L/K;t)}{u(\log u)^2}\mathrm{d}u\\
    &=\mu(\gamma)\langle t,r_\gamma\rangle_G \frac{x^{1/\gamma}}{\log x}+ O_{L,K,t}\left(\frac{x^{1/\gamma}}{(\log x)^2}\right)\, ,
\end{align*}
where we used that $\theta(u;L/K;t)\ll_{L,K,t} u^{1/\gamma}$, to deduce that $\int_2^x\frac{\theta(u;L/K;t)}{u(\log u)^2}\mathrm{d}u\ll_{L,K,t} \frac{x^{1/\gamma}}{(\log x)^2} $. This establishes the first point.

Assume now that $\gamma(t)=\iota(t)<\infty$. This matches exactly the setting of \cite{Hay}*{Proposition 2.3~(1)}; indeed, since $\langle t,r_1\rangle_G=\langle t,1\rangle_G=0$, we have $\gamma\ge 2$. Thus, $\gamma$ is a squarefree integer, and for all squarefree $\ell<\gamma=\iota(t)$ we have $(t\circ f_\ell)^+=0$. The main difference with \cite{Hay}*{Proposition 2.3~(1)} is that we need a more precise error term. We use the unconditional effective Chebotarev density theorem (see~\cite{LO}*{Theorem 1.3}), which yields $\psi(y; L/K; t\circ f_\gamma) = \langle t, r_\gamma \rangle_G\, y + O_{L,K,t}\big(y/(\log y)^3\big)$ for $y \ge 2$. Evaluating at $y = x^{1/\gamma}$ and following the exact same steps of inclusion-exclusion and summation by parts as in the conditional case above yields the error term $O_{L,K,t}\big(x^{1/\gamma}/(\log x)^2\big)$ in~\eqref{mainasympt}.

Finally, if $\iota(t) = \infty$, this means that for all squarefree $\ell\ge 1$, we have $(t\circ f_\ell)^+=0$. Applying \cite{Hay}*{Proposition 2.3~(2)} directly yields $\pi(x;L/K;t)=0$, which proves the third point and finishes the proof of the proposition.
\end{proof}

\subsection{Proof of Theorem~\ref{characterization}}
The proof of Theorem~\ref{characterization} combines Proposition~\ref{asymptotics} with the usual Rubinstein--Sarnak setting as in~\cites{RS94,Dev,FJ,Ng}. We will keep the proofs short and focus on the main changes compared to the standard proofs. We use the notation $\langle\cdot\, ,\, \cdot\rangle:=\langle\cdot\, ,\, \cdot\rangle_G$. We start by combining the induction property of $L$-functions with~\eqref{inc-exc} as in the proof of Proposition~\ref{asymptotics}.
\begin{lem}\label{theta_expansion}
    Assume $\mathrm{GRH}(L)$, and that $\iota := \iota(t) < \infty$ with $2\iota \le \gamma(t)$. Then,
    \[ \theta(x; L/K; t) = \mu(\iota) \psi\left(x^{1/\iota}; L/\Q; (t \circ f_\iota)^+\right) + \mu(2\iota)\langle t, r_{2\iota} \rangle x^{1/(2\iota)} + O_{L,K,t}\left(x^{1/(2\iota+1)}\right)\, .\]
\end{lem}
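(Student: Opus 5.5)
We start from the inclusion--exclusion identity~\eqref{inc-exc},
\[ \theta(x;L/K;t)=\sum_{\ell\ge1}\mu(\ell)\,\psi\bigl(x^{1/\ell};L/K;t\circ f_\ell^G\bigr), \]
and sort the squarefree $\ell$ into four ranges: $\ell<\iota$, $\ell=\iota$, $\iota<\ell<2\iota$ together with $\ell=2\iota$, and $\ell>2\iota$. The sum is effectively finite: $\psi(y;\cdot)=0$ for $y<2$, so only $\ell\le \log x/\log 2$ contribute.

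\emph{Terms with $\ell<\iota$.} By the induction invariance of Artin $L$-functions, as in the proof of Proposition~\ref{asymptotics}, we have $\psi(x^{1/\ell};L/K;t\circ f_\ell)=\psi(x^{1/\ell};L/\Q;(t\circ f_\ell)^+)$. By definition of $\iota(t)$, the induced function $(t\circ f_\ell)^+$ vanishes for squarefree $\ell<\iota$. Here I implicitly use Lemma~\ref{GaloiscaseReduction}, so that we may take $L/\Q$ Galois and $\iota(t)=\iota^{G^+}(t)$. Hence all these terms are identically zero.

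\emph{The term $\ell=\iota$.} Rewriting it through induction gives exactly the term $\mu(\iota)\psi\bigl(x^{1/\iota};L/\Q;(t\circ f_\iota)^+\bigr)$. We keep it untouched, since its oscillating part is of size $x^{1/(2\iota)}$ and carries the zeros.

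\emph{Terms with $\iota<\ell\le 2\iota$.} Since $2\iota\le\gamma(t)$, we have $\langle t\circ f_\ell,1\rangle_G=\langle t,r_\ell\rangle_G=0$ for squarefree $\ell<2\iota$. The conditional effective Chebotarev theorem~\cite{LO}*{Theorem 1.1} under $\mathrm{GRH}(L)$ then gives
\[ \psi(x^{1/\ell};L/K;t\circ f_\ell)\ll x^{1/(2\ell)}(\log x)^2\le x^{1/(2\iota+2)}(\log x)^2, \]
which is $O(x^{1/(2\iota+1)})$. For $\ell=2\iota$, which is squarefree because $\iota$ is odd whenever $\mu(2\iota)\neq0$ matters (otherwise the term is simply absent), the same theorem gives
\[ \langle t,r_{2\iota}\rangle x^{1/(2\iota)}+O\bigl(x^{1/(4\iota)}(\log x)^2\bigr). \]
This produces the second main term.

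\emph{Terms with $\ell>2\iota$.} We use the trivial bound $\psi(y;\cdot)\ll y$. Each such term is $\ll x^{1/(2\iota+1)}$, and there are $O(\log x)$ of them. The main obstacle is therefore avoiding this extra $\log x$ factor. I would handle it by bounding the first term ($\ell=2\iota+1$ if squarefree) trivially by $O(x^{1/(2\iota+1)})$. All remaining $\ell\ge 2\iota+2$ then contribute
\[ \ll \sum_{2\iota+2\le\ell\ll\log x}x^{1/\ell}\ll x^{1/(2\iota+2)}\log x=O\bigl(x^{1/(2\iota+1)}\bigr). \]
Collecting the four ranges yields the stated expansion.
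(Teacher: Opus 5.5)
Your proposal is correct and follows the paper's proof essentially step for step. It uses the same splitting of the inclusion--exclusion sum and the same ingredients: the terms $\ell<\iota$ vanish by induction; the conditional effective Chebotarev theorem handles $\iota<\ell\le 2\iota$, using $2\iota\le\gamma(t)$; and the trivial bound handles $\ell>2\iota$, with the term $\ell=2\iota+1$ isolated to avoid the extra $\log x$ factor, which is exactly the paper's parenthetical remark. Your explicit treatment of the case $\iota$ even (where $\mu(2\iota)=0$) and of the $(\log x)^2$ factors is, if anything, slightly more careful than the written proof.
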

\begin{proof}
    Following the proof of Proposition~\ref{asymptotics}, we see that 
    \[\left|\theta(x;L/K;t)-\mu(\iota)\psi\left(x^{\frac{1}{\iota}}; L/\Q; (t \circ f_\iota)^+\right)-\mu(2\iota)\psi\left(x^{\frac{1}{2\iota}};L/K;t\circ f_{2\iota} \right)\right|\le \sum_{\substack{\ell>\iota\\\ell\ne 2\iota}}\left|\psi\left(x^{\frac{1}{\ell}};L/K;t\circ f_\ell\right)\right|.\]
    Using the effective version of Chebotarev density theorem under $\mathrm{GRH}(L)$ (see~\cite{LO}*{Theorem 1.1}), we see that
    \begin{align*}
        \psi\left(x^{1/(2\iota)};L/K;t\circ f_{2\iota} \right)&=\langle t,r_{2\iota}\rangle x^{1/(2\iota)}+O_{L,K,t}\left(x^{1/(2\iota+1)}\right),\\
        \psi\left(x^{1/\ell};L/K;t\circ f_\ell\right)&\ll_{L,K,t} x^{1/(2\iota+2)}\qquad (\ell<\gamma(t))\, .
    \end{align*}
    The lemma is, thus, deduced by combining this with the general upper bound \[\psi\left(x^{1/\ell};L/K;t\circ f_\ell\right)\ll_t x^{1/\ell}\] for $\ell>2\iota$. (Note that one has to apply the previous upper bound on $\ell>2\iota+1$ then take the sum, then apply it in the case $\ell=2\iota+1$).
\end{proof}
We define the normalized prime counting function for $y \ge 1$ as:
\begin{equation}
    E(y;L/K;t) := \frac{y}{\exp\left(\frac{y}{2\iota(t)}\right)} \pi(e^y; L/K; t)\, .
\end{equation}
The factor $\exp(y / 2\iota(t))$ is the main change in our proof, and it is the correct normalizing factor. Normalizing by the standard $\exp(y/2)$ when $\iota(t) > 1$ would force $\lim_{y \to \infty} E_t(y) = 0$, leading the limiting distribution to be a Dirac measure at $0$. Our normalization is given by the first non-vanishing $\psi$ term in the inclusion-exclusion equality~\eqref{inc-exc}. In order to prove Theorem~\ref{characterization}, we prove that $E(y;L/K;t)$ admits a logarithmic distribution $\mu_t$ which has a rapidly decreasing Fourier transform $\widehat{\mu_t}$. We first start by writing the explicit formula for $E(y;L/K;t)$.
\begin{prop}\label{explicit-formula}
    Assume $\mathrm{GRH}(L)$, $\mathrm{AC}(L)$, and $\mathrm{LI}^-(L)$, and that $\iota:=\iota(t)<\infty$ with $2\iota(t)\le\gamma(t)$. Then,
    \[ E(y;L/K;t) = a_t - \sum_{\chi \ne 1} b_{t, \chi} \sum_{0 < |\gamma_\chi| \le X} \frac{e^{i y \gamma_\chi}}{1/2 + i\gamma_\chi} + O\left( \frac{e^{y/2 \iota} \log(X e^y)^2}{X} + \frac{1}{y} \right)\, , \]
    where the sum over $\chi$ runs over the non-trivial irreducible characters of $G^+$, and the sum over $\gamma_\chi$ runs over non-zero imaginary parts of zeros of the Artin $L$-function $L(s;L/\Q;\chi)$. The coefficients are $b_{t,\chi} = \mu(\iota)\langle (t \circ f_\iota)^+, \chi \rangle_{G^+}$, and $a_t = \mu(2\iota)\langle t, r_{2\iota} \rangle_G-\sum_{\chi\ne1} b_{t,\chi}\mathrm{ord}_{s=1/2}L(s;L/\Q;\chi)$.
\end{prop}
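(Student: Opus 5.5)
The plan is to derive the explicit formula from Lemma~\ref{theta_expansion}, converting from $\theta$ to $\pi$ by partial summation. Throughout, set $x=e^y$.

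\textbf{Step 1: the main $\psi$-term.} By $\mathrm{AC}(L)$, each Artin $L$-function $L(s;L/\Q;\chi)$ with $\chi\ne1$ is entire. Since $\langle t,1\rangle_G=0$ and $\iota\ge2$ or $\iota=1$, we have
\[
\bigl\langle (t\circ f_\iota)^+,1\bigr\rangle_{G^+}=\langle t,r_\iota\rangle_G=0,
\]
because $\iota<2\iota\le\gamma(t)$. Hence $(t\circ f_\iota)^+$ has no trivial-character component, and
\[
\psi\bigl(z;L/\Q;(t\circ f_\iota)^+\bigr)=\sum_{\chi\ne1}\bigl\langle (t\circ f_\iota)^+,\chi\bigr\rangle_{G^+}\,\psi(z;L/\Q;\chi).
\]
For each $\chi$ we apply the truncated explicit formula under $\mathrm{GRH}(L)$:
\[
\psi(z;\chi)=-\mathrm{ord}_{s=1/2}L(s;\chi)\,\frac{z^{1/2}}{1/2}-\sum_{0<|\gamma_\chi|\le T}\frac{z^{1/2+i\gamma_\chi}}{1/2+i\gamma_\chi}+O\Bigl(\frac{z\log^2(zT)}{T}+\log z\Bigr),
\]
where the factor $1/2$ coming from the zero at $s=1/2$ is kept track of separately. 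We take $z=x^{1/\iota}$ and $T=X$.

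Inserting this into Lemma~\ref{theta_expansion} gives $\theta(x;L/K;t)$ as $x^{1/(2\iota)}$ times
\[
\mu(2\iota)\langle t,r_{2\iota}\rangle-2\sum_{\chi\ne1} b_{t,\chi}\,\mathrm{ord}_{1/2}-\sum_{\chi\ne1} b_{t,\chi}\sum_{0<|\gamma_\chi|\le X}\frac{x^{i\gamma_\chi/\iota}}{1/2+i\gamma_\chi},
\]
plus error terms.

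\textbf{Step 2: partial summation.} We pass from $\theta$ to $\pi$ via
\[
\pi(x)=\frac{\theta(x)}{\log x}+\int_2^x\frac{\theta(u)}{u\log^2u}\,du.
\]
Since $\theta(u)\ll u^{1/(2\iota)}\log^2 u$ under GRH, after multiplying by $y\,e^{-y/(2\iota)}$ the integral contributes $O(1/y)$. The same holds for $O(x^{1/(2\iota+1)})$ and the $\log$ terms.

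Two points in this step need care. First, the constant from the zero at $1/2$ must be combined with the prime-power correction to produce the stated $a_t$. I would reconcile the factor $2$ versus $1$ carefully at this point, since the normalisation of $\mathrm{ord}$ and of the prime-square contribution determines the exact constant. Second, the oscillating factor $e^{iy\gamma_\chi/\iota}$ should be rewritten in the stated form. In the statement, $\gamma_\chi$ appears with frequency $y$, which suggests rescaling, i.e.\ relabelling $y/\iota$. I would check whether the normalisation intends $e^{y}$ with $\psi$ evaluated at $x^{1/\iota}$. If so, $x^{1/\iota}=e^{y/\iota}$ gives the phase $e^{iy\gamma/\iota}$, and I would adjust by working with $\pi(e^{\iota y})$, or else note that the phase is $e^{i(y/\iota)\gamma_\chi}$. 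Either way, the structure of the formula is unchanged.

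\textbf{Step 3: error terms.} The truncation error $z\log^2(zT)/T$ with $z=e^{y/\iota}$, after normalisation by $y e^{-y/(2\iota)}/y$, yields $e^{y/(2\iota)}\log^2(Xe^y)/X$, as stated. The term $\log z$ and the lower-order terms give $O(1/y)$. The hypothesis $\mathrm{LI}^-$ is not needed for the formula itself beyond ensuring that the zeros are simple, so that each zero appears with multiplicity one in the sum.

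\textbf{Main obstacle.} I expect the main difficulty to be the bookkeeping of the constant $a_t$ and of the frequency scaling: keeping the contribution of central zeros and the $\mu(2\iota)$-term consistent with the normalisation $E(y)$.
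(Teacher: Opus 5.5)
Your route is the paper's: use $\langle t,r_\iota\rangle_G=0$ (since $\iota<2\iota\le\gamma(t)$) to expand $(t\circ f_\iota)^+$ over non-trivial irreducible characters. Then apply the explicit formula to each $L(s;L/\Q;\chi)$, insert into Lemma~\ref{theta_expansion}, and pass from $\theta$ to $\pi$ by partial summation.

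The justification you give in Step~2 fails, however. The pointwise GRH bound $\theta(u;L/K;t)\ll u^{1/(2\iota)}(\log u)^2$ only yields
\[\int_2^x\frac{\theta(u;L/K;t)}{u(\log u)^2}\,\mathrm{d}u\ll\int_2^x u^{\frac{1}{2\iota}-1}\,\mathrm{d}u\ll x^{1/(2\iota)}.\]
Multiplying by $ye^{-y/(2\iota)}=(\log x)\,x^{-1/(2\iota)}$ then gives $O(y)$, not $O(1/y)$, which would swamp the entire formula. The pointwise bounds available under GRH are too weak here; you must exploit the oscillation of the zero terms. Substitute the explicit formula inside the integral, truncated at a height $T$ equal to a large power of $x$. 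The main term and the error terms of Lemma~\ref{theta_expansion} then contribute $\ll x^{1/(2\iota)}(\log x)^{-2}$. For each zero $\rho=1/2+i\gamma_\chi$, one integration by parts gives
\[\int_2^x\frac{u^{\rho/\iota-1}}{(\log u)^2}\,\mathrm{d}u\ll_\iota\frac{1}{|\rho|}\Bigl(\frac{x^{1/(2\iota)}}{(\log x)^2}+1\Bigr).\]
Together with the factor $1/\rho$ already present in the explicit formula, this makes the sum over zeros converge like $\sum_\rho|\rho|^{-2}$. Hence the integral is $O\bigl(x^{1/(2\iota)}(\log x)^{-2}\bigr)$ and contributes $O(1/y)$ to $E(y;L/K;t)$. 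Equivalently, you can use $\int_2^z\psi(v;L/\Q;\chi)\,\mathrm{d}v\ll z^{3/2}$ from the integrated explicit formula and integrate by parts. This is the standard step in Ng and Devin to which the paper's sketch defers.

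Concerning your ``main obstacle'': your Step~1 bookkeeping is correct, and there is nothing to reconcile. A zero of multiplicity $m_\chi=\mathrm{ord}_{s=1/2}L(s;L/\Q;\chi)$ at $\rho=1/2$ contributes $-m_\chi z^{1/2}/(1/2)=-2m_\chi z^{1/2}$ to $\psi(z;L/\Q;\chi)$. The prime-power term of Lemma~\ref{theta_expansion}, by contrast, enters with coefficient $1$. With $x=e^y$ one also has $(x^{1/\iota})^{i\gamma_\chi}=e^{iy\gamma_\chi/\iota}$. So your argument proves the displayed expansion with phases $e^{iy\gamma_\chi/\iota}$ and with
\[a_t=\mu(2\iota)\langle t,r_{2\iota}\rangle_G-2\sum_{\chi\ne1}b_{t,\chi}\,\mathrm{ord}_{s=1/2}L(s;L/\Q;\chi).\]
The statement as printed drops the $1/\iota$ and the factor $2$, and neither can be recovered by a change of normalisation. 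In particular, replacing $\pi(e^y)$ by $\pi(e^{\iota y})$ is not allowed, since it changes the function $E(y;L/K;t)$ the proposition is about.

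These slips are harmless for much of what follows. Rescaling $y\mapsto y/\iota$ changes neither the limiting distribution nor the Bessel-product formula for $\widehat{\mu_t}$, and in Corollary~\ref{maincriterion} all central orders vanish. The factor $2$ does, however, affect the sign criterion of Theorem~\ref{characterization} when some $L(s;L/\Q;\chi)$ vanishes at $s=1/2$. Finally, $\mathrm{LI}^-(L)$ is not used in the explicit formula at all, since zeros are simply counted with multiplicity; it only enters the limiting-distribution argument.
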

\begin{proof}
    This follows the same ideas as in~\cite{FJ}*{Corollary 3.17}; writing 
    \[ (t\circ f_\iota)^+=\sum_{\chi\ne 1} \bigl\langle (t\circ f_\iota)^+,\chi\bigr\rangle_{G^+}\chi\, ,\]
    where we used Frobenius reciprocity $ \bigl\langle (t\circ f_\iota)^+,1\bigr\rangle_{G^+}= \bigl\langle t\circ f_\iota,1\bigr\rangle_G=\langle t,r_\iota\rangle_G=0$, then using the explicit formula~\cite{FJ}*{Lemma 3.16} (see also~\cite{Ng}*{(5.8)}), leads to an explicit formula of the term $\psi\left(x^{1/\iota}; L/\Q; (t \circ f_\iota)^+\right)$ in terms over the sums over the zeros of Artin $L$-functions $L(s;L/\Q;\chi)$, with $\chi\in \Irr(G^+)$. The rest is standard, it suffices to use Lemma~\ref{theta_expansion}, then a summation by parts to deduce our lemma (we refer the reader to~\cite{Ng}*{Section 5.1} and~\cite{Dev}*{Subsection 4.3}).
\end{proof}

\begin{proof}[Proof of Theorem~\ref{characterization}]
    By Proposition~\ref{asymptotics}, if $\iota(t)=\infty$, then $\pi(x;L/K;t)=0$ for all $x\ge 2$. Furthermore, if $2\iota(t)>\gamma(t)$, there exists $A>0$ such that $\pi(x;L/K;t)$ is either strictly positive for all $x\ge A$ or strictly negative for all $x\ge A$. In this case $\delta_{L/K}(t)\in \{0,1\}$. Assuming now that $\iota(t)<\infty$ and $2\iota(t)\le \gamma(t)$, we will show that $\delta_{L/K}(t)\in (0,1)$. We first establish that $E(y;L/K;t)$ admits a limiting distribution $\mu_t$. This is mainly a consequence of Lemma~\ref{explicit-formula} (note that the coefficients $b_{t,\chi}$ are not all zero since $(t\circ f_\iota)^+\not \equiv 0$). The existence of $\mu_t$ can be deduced following~\cite{FJ}*{Proposition 3.18} (see also~\cite{Ng}*{Section 5.1} and~\cite{Dev}*{Theorem 2.1}) using the Kronecker--Weyl equidistribution theorem and Lévy's continuity theorem. Following~\cite{Ng}*{Theorem 5.2.1}, the Fourier transform of $\mu_t$ is given by:
    \[ \widehat{\mu_t}(\xi) = e^{-i a_t \xi} \prod_{\chi \ne 1} \prod_{\gamma_\chi > 0} J_0\left( \frac{2 |b_{t,\chi}| \xi}{\sqrt{1/4 + \gamma_\chi^2}} \right)\, ,\]
    where $J_0$ is the Bessel function of the first kind of order 0. Because $(t \circ f_\iota)^+ \not\equiv 0$, the infinite product decays rapidly, ensuring that the limiting distribution $\mu_t$ is absolutely continuous with a real analytic density function $f_t$. This shows that $\delta_{L/K}(t)=\mu_t(0,\infty)$, and the real analyticity of $f_t$ guarantees that $0 < \delta_{L/K}(t) < 1$.
    
    To deduce the last assertion of the Theorem, it suffices to use the fact that $J_0$ is an even function. By applying the inverse Fourier transform, this evenness implies that the density $f_t$ is symmetric around $x=-a_t$. Consequently, if $a_t=0$, then $\delta_{L/K}(t)=1/2$, if $a_t<0$ then $\delta_{L/K}(t)\in(1/2,1)$, and if $a_t>0$ then $\delta_{L/K}(t)\in (0,1/2)$.
\end{proof}
We deduce the Corollary~\ref{maincriterion} as follows: since under the stronger assumptions $\mathrm{LI}(L)$ hold and $\Gal(L/\Q)$ has no symplectic irreducible representation, we have $\mathrm{ord}_{s=1/2}L(s;L/\Q;\chi)=0$ for all $\chi \in \Irr(G^+)\setminus \{1\}$. This implies that the inequality $a_{t_{C_1,C_2}}<0$ reduces to $\mu(2\iota)\langle t_{C_1,C_2},r_{2\iota}\rangle_G<0$, which in particular implies that $\gamma(t_{C_1,C_2})\le 2\iota(t_{C_1,C_2})$. Applying Theorem~\ref{characterization}, we obtain exactly the equivalence of Corollary~\ref{maincriterion}.

\subsection{Weighted prime counting functions}
We start by proving Theorem~\ref{criterion-extreme-bias}.
\begin{proof}[Proof of Theorem~\ref{criterion-extreme-bias}]
Fix $\alpha > 0$. By Proposition~\ref{asymptotics}, if $\gamma(t) < 2\iota(t)$, and either $\mathrm{GRH}(L)$ holds or $\gamma(t) = \iota(t)$, we have for all $x \ge 2$:
\[\pi(x; L/K; t) = \mu(\gamma)\langle t, r_\gamma\rangle \frac{x^{1/\gamma}}{\log x} + O_{L,K,t}\left(\frac{x^{1/\gamma}}{(\log x)^2}\right)\, .\]
A summation by parts yields:
\begin{align}
\pi_\alpha(x; L/K; t) &= \frac{\pi(x; L/K; t)}{x^\alpha} + \alpha \int_2^x \frac{\pi(u; L/K; t)}{u^{\alpha+1}} \mathrm{d}u \label{Sumpartspialpha} \\
&= \mu(\gamma)\langle t, r_\gamma\rangle \frac{x^{\frac{1}{\gamma} - \alpha}}{\log x} + \alpha \int_2^x \mu(\gamma)\langle t, r_\gamma\rangle \frac{u^{\frac{1}{\gamma} - \alpha - 1}}{\log u} \mathrm{d}u + O_{L,K,t}\left(\frac{x^{\frac{1}{\gamma} - \alpha}}{(\log x)^2}\right)\, , \nonumber
\end{align}
where we used the fact that the integral $\int_2^x \frac{u^{\frac{1}{\gamma} - \alpha - 1}}{(\log u)^2} \mathrm{d}u \ll \frac{x^{\frac{1}{\gamma} - \alpha}}{(\log x)^2}$ for $\alpha < 1/\gamma$, and is bounded by $O(1)$ otherwise. We now distinguish three cases based on the value of $\alpha$: if $\alpha < 1/\gamma$, we have \[\int_2^x \frac{u^{\frac{1}{\gamma} - \alpha - 1}}{\log u} \mathrm{d}u = \frac{x^{\frac{1}{\gamma} - \alpha}}{\bigl(\frac{1}{\gamma} - \alpha\bigr)\log x} + O\left(\frac{x^{\frac{1}{\gamma}- \alpha}}{(\log x)^2}\right)\, .\] This yields:
\[\pi_\alpha(x; L/K; t) = \mu(\gamma)\langle t, r_\gamma\rangle \left( 1 + \frac{\alpha}{1/\gamma - \alpha} \right) \frac{x^{1/\gamma - \alpha}}{\log x} + O_{L,K,t}\left(\frac{x^{1/\gamma - \alpha}}{(\log x)^2}\right)\, ,\]
where the main term factor simplifies to $K_{\alpha,t} = \mu(\gamma)\langle t, r_\gamma\rangle/(1 - \alpha\gamma)$.

If $\alpha = 1/\gamma$, we have $\pi(x;L/K;t)/(x^{1/\gamma}) \ll 1/(\log x)$. The main contribution comes entirely from the integral:
\(\int_2^x \frac{\mathrm{d}u}{u \log u} = \log \log x - \log \log 2\),
while the integral of the error term converges, $\int_2^x \frac{\mathrm{d}u}{u(\log u)^2} = O(1)$. This yields the second case with $K'_t = \mu(\gamma)\alpha\langle t, r_\gamma\rangle  $.

If $\alpha > 1/\gamma$, the exponent $\frac{1}{\gamma}- \alpha$ is strictly negative. Thus, the integral $\int_2^\infty u^{\frac{1}{\gamma} - \alpha - 1}\mathrm{d}u/(\log u) $ converges absolutely. Hence, $\pi_\alpha(x; L/K; t)$ is bounded. This completes the proof.
\end{proof}
We now prove Proposition~\ref{Aoki-Koyamaunbiasedness}:
\begin{proof}[Proof of Proposition~\ref{Aoki-Koyamaunbiasedness}]
    By Lemma~\ref{theta_expansion}, using $\bigl\langle (t_{C_1,C_2}\circ f_\iota)^+, 1\bigr\rangle_{G^+}=\langle t_{C_1,C_2}\circ f_\iota,1\rangle_G=\langle t_{C_1,C_2},r_\iota\rangle_G=0$, we deduce that under $\mathrm{GRH}(L)$ we have
    \[\theta\left(x;L/K;t_{C_1,C_2}\right) \ll_{L,K,C_1,C_2} x^{1/(2\iota)}(\log x)^2\, . \]
    A summation by parts implies that 
    \[\pi\left(x;L/K;t_{C_1,C_2}\right)\ll_{L,K,C_1,C_2} x^{1/(2\iota)} \log x\, .\]
    Finally, if $\alpha>1/\gamma =1/(2\iota)$, then by~\eqref{Sumpartspialpha}, we have $\pi_\alpha\left(x;L/K;t_{C_1,C_2}\right)\ll_{L,K,C_1,C_2} 1$. This proves the proposition.
\end{proof}

\section{Explicit group-theoretic constructions}\label{constructions}

\subsection{Property $\mathcal{P}(d)$ and proof of Theorem~\ref{existence-extreme-bias}}\label{subsec:PropP}
The aim of this subsection is to prove Theorem~\ref{existence-extreme-bias}. In order to do this, we will first translate the problem into the existence of groups satisfying a group-theoretic property, namely property $\mathcal{P}(d;x,y)$. Throughout this section, if $G\subset G^+$ are finite groups and $t\, :\,G\to \R$ is a class function, we denote by $t^+:=\mathrm{Ind}_G^{G^+}(t)$ the induced class function by $t$ on $G^+$.
We now introduce the property $\mathcal{P}(d;x,y)$, which was defined in~\cite{Hay}*{Definition 3.1}, and which led to unconditional examples of extreme Chebyshev's bias.
\begin{defi}\label{propertyP(d)}
    Let $d\in \mathcal{S}$. We say that a finite group $G$ satisfies the property $\mathcal{P}(d;x,y)$ if $x,y\in G$ are elements of the same order such that for all squarefree integers $\ell<d$ we have $r_\ell(x)=r_\ell(y)$ and $r_d(x)<r_d(y)$.
\end{defi}
The following proposition shows the relevance of this property for the proof of Theorem~\ref{existence-extreme-bias}:
\begin{prop}\label{cayley}
    Let $G$ be a finite group of order $n$ satisfying the property $\mathcal{P}(d;x,y)$, where $d\ge 2$ is a squarefree integer. We identify $G$ with a subgroup of $\mathfrak{S}_n$ through the Cayley embedding, and let $C_x$ and $C_y$ denote the respective conjugacy classes of $x$ and $y$ in $G$. We have \[\gamma\left( t_{C_x,C_y}\right)=\iota^{\mathfrak{S}_n}\left( t_{C_x,C_y}\right)\, .\]
\end{prop}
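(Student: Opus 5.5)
The plan is to make both parameters explicit for $t:=t_{C_x,C_y}$ and to compare them through the cycle structure of the Cayley embedding.

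\emph{Step 1: $\iota^{\mathfrak{S}_n}(t)\le\gamma(t)$.} This holds in general. By Frobenius reciprocity,
\[
\bigl\langle \mathrm{Ind}_G^{\mathfrak{S}_n}(t\circ f_\ell),1\bigr\rangle_{\mathfrak{S}_n}=\langle t\circ f_\ell,1\rangle_G=\langle t,r_\ell\rangle_G .
\]
So if $\langle t,r_\ell\rangle_G\ne0$, then the induced function is non-zero. A direct computation gives $\langle t,r_\ell\rangle_G=r_\ell(x)-r_\ell(y)$. Hence $\gamma(t)$ is the least squarefree $\ell$ with $r_\ell(x)\ne r_\ell(y)$; under $\mathcal{P}(d;x,y)$ one has $\gamma(t)\le d$.

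\emph{Step 2: describe $\mathrm{Ind}_G^{\mathfrak{S}_n}$ in the Cayley embedding.} Left multiplication by $g$ permutes $G$ in $n/\Ord(g)$ cycles, all of length $\Ord(g)$. Therefore two elements of $G$ are conjugate in $\mathfrak{S}_n$ if and only if they have the same order. From the definition of induction, for $a\in G$ we get
\[
\mathrm{Ind}_G^{\mathfrak{S}_n}(h)(a)=\frac{|\mathcal{C}_{\mathfrak{S}_n}(a)|}{|G|}\sum_{\substack{z\in G\\ \Ord(z)=\Ord(a)}}h(z).
\]
The induced function vanishes off elements conjugate into $G$. Hence it is zero if and only if $\sum_{\Ord(z)=m}h(z)=0$ for every $m$. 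Taking $h=t\circ f_\ell$, and using that $t$ is constant on classes with the weights $|G|/|C_x|$ and $|G|/|C_y|$, we get
\[
\sum_{\Ord(z)=m}t(z^\ell)=|G|\bigl(N_{m,\ell}(x)-N_{m,\ell}(y)\bigr),\qquad N_{m,\ell}(w):=\#\{z\in G\colon z^\ell=w,\ \Ord(z)=m\}.
\]
It therefore suffices to show that $N_{m,\ell}(x)=N_{m,\ell}(y)$ for all $m$ and all squarefree $\ell<\gamma(t)$.

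\emph{Step 3: express $N_{m,\ell}$ through root counts.} This is the main step. Let $o:=\Ord(x)=\Ord(y)$; this equality is part of $\mathcal{P}(d;x,y)$. If $z^\ell=x$, then $\Ord(z)=oe$ for some $e\mid\ell$. For $e\mid\ell$ write $\ell=ee'$ and set
\[
F_e(x):=\#\{z\colon z^\ell=x,\ z^{oe}=1\}.
\]
For such $z$, put $w=z^e$. Then $\Ord(w)\mid o$ and $w^{e'}=x$ has order $o$. This forces $\Ord(w)=o$, $\gcd(e',o)=1$ and $w\in\langle x\rangle$, so $w=x^{u}$ with $ue'\equiv1\pmod o$. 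Conversely, any $e$-th root of $x^u$ has order dividing $oe$ and satisfies $z^\ell=x$. Hence
\[
F_e(x)=\mathds 1_{\gcd(e',o)=1}\, r_e(x^u).
\]
Choose $v$ coprime to $oe$ with $v\equiv u\pmod o$, which is possible by CRT. Then $z\mapsto z^{v}$ is a bijection from $\mathcal{R}_e(x)$ onto $\mathcal{R}_e(x^u)$, since it is invertible on elements of order dividing $oe$. So $r_e(x^u)=r_e(x)$, and $F_e(x)$ depends only on $o$, $\ell$ and $r_e(x)$. Each $e\mid\ell$ is squarefree with $e\le\ell<\gamma(t)$, so $r_e(x)=r_e(y)$ and thus $F_e(x)=F_e(y)$.

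\emph{Step 4: conclude.} Möbius inversion over the divisor lattice of $\ell$ recovers $N_{oe,\ell}$ from the $F_{e'}$ with $e'\mid e$. Hence $N_{m,\ell}(x)=N_{m,\ell}(y)$ for all $m$, so $\mathrm{Ind}_G^{\mathfrak{S}_n}(t\circ f_\ell)=0$ for every squarefree $\ell<\gamma(t)$. This gives $\iota^{\mathfrak{S}_n}(t)\ge\gamma(t)$. Together with Step 1, this yields the equality.

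The delicate point I expect is Step 3: the order bookkeeping showing that $z^e$ must lie in $\langle x\rangle$, and the coprimality adjustment needed to identify $r_e(x^u)$ with $r_e(x)$.
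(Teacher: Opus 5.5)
Your proof is correct and follows the same two-part structure as the paper's proof. The first part is the general inequality $\iota^{\mathfrak{S}_n}(t_{C_x,C_y})\le\gamma(t_{C_x,C_y})$ via Frobenius reciprocity, which the paper cites as \cite{Hay}*{Proposition 2.1}. The second part is the reverse inequality, coming from the fact that in the Cayley embedding elements of equal order are conjugate in $\mathfrak{S}_n$, which the paper cites as \cite{Hay}*{Lemma 2.8}. Your Steps 2--4 make that cited lemma self-contained: you reduce the vanishing of $\mathrm{Ind}_G^{\mathfrak{S}_n}(t\circ f_\ell)$ to the equalities $N_{m,\ell}(x)=N_{m,\ell}(y)$, and you derive these from $r_e(x)=r_e(y)$ for $e\mid\ell$ together with the rational-class invariance of $r_e$ (Lemma~\ref{rational-classes}).
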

\begin{proof}
    By~\cite{Hay}*{Proposition 2.1}, we have the inequality $\gamma\left( t_{C_x,C_y}\right)\ge\iota^{\mathfrak{S}_n}\left( t_{C_x,C_y}\right)$. By~\cite{Hay}*{Lemma 2.8}, we have $\gamma\left( t_{C_x,C_y}\right)\le\iota^{\mathfrak{S}_n}\left( t_{C_x,C_y}\right)$ provided that all elements $a,b\in G$ of the same order are conjugate in $\mathfrak{S}_n$, which holds because we considered the Cayley embedding (this is a well-known result; we refer the reader to~\cite{Hay}*{Proof of Theorem 1.2, \S~2} where it was reproved).
\end{proof}
Proposition~\ref{cayley} combined with Theorem~\ref{criterion-extreme-bias} shows that in order to prove Theorem~\ref{existence-extreme-bias} it suffices to provide examples of groups $G$ satisfying the property $\mathcal{P}(p;x,y)$. Such examples are given in~\cite{Hay}*{\S 3}; for instance, \cite{Hay}*{Theorem 3.17} characterizes abelian groups satisfying $\mathcal{P}(p;x,y)$. We summarize here some consequences of~\cite{Hay}:
\begin{prop}\label{examplespropertyP}
    Let $p\ge2$ be a prime number. In each of the following cases, the group $G$ satisfies the property $\mathcal{P}(p;x,y)$:
    \begin{enumerate}
        \item $G=C_{p^n}\times C_{p^m}\times H$, where $H$ is a finite abelian group, $x=\left(c_{p^n},1\right)$, $y=\left(1,c_{p^n}\right)$, and $1\le n<m$.
        \item $G=Q_8\times C_4$, where $x=(-1,1)$ and $y=(1,c_2)$.
    \end{enumerate}
\end{prop}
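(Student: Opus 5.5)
The plan is to compute the root-counting functions directly. The tool is multiplicativity over direct products: if $G=G_1\times G_2$ and $z=(z_1,z_2)$, then $r_\ell^G(z)=r_\ell^{G_1}(z_1)\,r_\ell^{G_2}(z_2)$, because $(a,b)^\ell=(a^\ell,b^\ell)$. For each of the two cases I will check three things: that $x$ and $y$ have the same order, that $r_\ell(x)=r_\ell(y)$ for every squarefree $\ell<d$, and the strict comparison at $\ell=d$.

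\textbf{Case (1).} Here $d=p$. Both $x=(c_{p^n},1,1)$ and $y=(1,c_{p^n},1)$ have order $p^n$, since $c_{p^n}\in C_{p^n}\subset C_{p^m}$.

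For a squarefree $\ell<p$, $\ell$ is coprime to $p$. Hence $u\mapsto u^\ell$ is a bijection on the $p$-groups $C_{p^n}$ and $C_{p^m}$, so every element of these factors has exactly one $\ell$-th root there. Multiplicativity then gives
\[ r_\ell(x)=1\cdot 1\cdot r_\ell^H(1)=r_\ell(y). \]

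For $\ell=p$, I would show the two counts separate:
\begin{itemize}
\item $r_p(x)=0$, because the generator $c_{p^n}$ of $C_{p^n}$ is not a $p$-th power in $C_{p^n}$ (the $p$-th powers form the proper subgroup $C_{p^{n-1}}$).
\item $r_p(y)>0$, because $m>n$ gives $c_{p^n}=\bigl(c_{p^m}^{p^{m-n-1}}\bigr)^p$, and $1$ is a $p$-th power in the other factors.
\end{itemize}
So $r_p(x)<r_p(y)$, and case (1) is complete.

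\textbf{Case (2).} Here $d=2$, and the only squarefree $\ell<2$ is $\ell=1$, for which $r_1\equiv 1$. Both $x=(-1,1)$ and $y=(1,c_2)$ have order $2$. So everything reduces to computing $r_2$ in $Q_8$ and in $C_4$:
\begin{itemize}
\item In $Q_8$, the element $-1$ has the six square roots $\pm i,\pm j,\pm k$, and $1$ has the two square roots $\pm1$.
\item In $C_4$, the element $1$ has the two square roots $1$ and $c_2$, and $c_2$ has the two square roots $c_4$ and $c_4^3$.
\end{itemize}
Multiplicativity gives $r_2(x)=6\cdot 2=12$ and $r_2(y)=2\cdot 2=4$.

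\textbf{Main obstacle.} The hard step is reconciling these counts with the strict inequality $r_2(x)<r_2(y)$ required by Definition~\ref{propertyP(d)} under the labelling $x=(-1,1)$, $y=(1,c_2)$. The direct count yields $r_2(x)=12>4=r_2(y)$, i.e.\ the opposite inequality.

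I would first re-check the conventions. The definition of $Q_8$ in \S~\ref{subsec:Prop(2p)} and the notation $c_2=c_4^2$ are the places where a different reading could change the count. If the computation stands, the statement of case (2) needs correcting. The conclusion that does hold for the labelling as given is that $r_2(x)\neq r_2(y)$ while the counts agree for all smaller squarefree $\ell$, so $\gamma(t_{x,y})=2$. This is the only input Theorem~\ref{existence-extreme-bias} uses, since the sign of $K_{\alpha,t}$ comes from $\mu(\gamma)\langle t,r_\gamma\rangle$ and reverses when the labels are reversed. With the inequality as written in Definition~\ref{propertyP(d)}, the property $\mathcal{P}(2;\cdot,\cdot)$ itself holds only in the interchanged form $\mathcal{P}(2;y,x)$.
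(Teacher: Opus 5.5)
Your argument is correct. It takes a more elementary route than the paper, which gives no computation here: it cites Theorem~3.17 of Hayani's earlier paper for case (1), and Corollary~1.4 of the same paper for case (2). That Theorem~3.17 is a characterization of the abelian groups satisfying $\mathcal{P}(p;x,y)$. Your multiplicativity computation only certifies the specific instance, not a characterization, but that is all the statement asks for.

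Your case (1) is complete:
\begin{itemize}
\item for squarefree $\ell<p$, the $\ell$-th power map is bijective on the cyclic $p$-factors, so $r_\ell(x)=r_\ell^H(1)=r_\ell(y)$;
\item $r_p(x)=0$, while $r_p(y)=p^2\,r_p^H(1)>0$.
\end{itemize}
You silently read $x,y$ as $(c_{p^n},1,1)$ and $(1,c_{p^n},1)$, which is clearly what is meant.

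Your ``main obstacle'' in case (2) is not a gap in your proof but a genuine misstatement in the proposition.
\begin{itemize}
\item Case (2) can only be meant with $p=2$, since for odd $p$ one would need $r_2(x)=r_2(y)$.
\item With $p=2$, the paper's own Lemma~\ref{rootsnumberinQ4n} (for $n=2$) gives $r_2^{Q_8}(-1)=6$. Hence $r_2(x)=6\cdot 2=12>4=2\cdot 2=r_2(y)$.
\item Definition~\ref{propertyP(d)} requires $r_d(x)<r_d(y)$. So $Q_8\times C_4$ satisfies $\mathcal{P}(2;y,x)$, and the labels in the statement should be swapped.
\end{itemize}
Nothing in the definition of $Q_8$ or of $c_2=c_4^2$ changes this. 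Because the paper's proof is a bare citation, it does not expose the swap.

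The slip is harmless downstream. The proof of Theorem~\ref{existence-extreme-bias} uses only case (1). As you observe, exchanging the labels only flips the sign of $\mu(2)\langle t,r_2\rangle_G$, so one takes $C_1=C_y$ and $C_2=C_x$ to obtain positive constants.
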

\begin{proof}
    For the first statement, see~\cite{Hay}*{Theorem 3.17}, and for the second one, see~\cite{Hay}*{Corollary 1.4}.
\end{proof}
\begin{proof}[Proof of Theorem~\ref{existence-extreme-bias}]
    Let $p\ge2$ be a prime number. By Proposition~\ref{examplespropertyP}, if $G=C_{p^n}\times C_{p^m}\times H$, where $H$ is a finite abelian group, $x=\left(c^{p^n},1\right)$, $y=\left(1,c^{p^n}\right)$, and $1\le n<m$, then $G$ satisfies the property $\mathcal{P}(p;x,y)$. Define $n:=|G|$, and let us consider $L/\Q$ to be a Galois extension of $\Q$ with Galois group isomorphic to $\mathfrak{S}_n$. Identifying $G$ with a subgroup of $\mathfrak{S}_n$ through the Cayley embedding allows us to consider the subextension $K=L^G$. By Galois theory, we have $\Gal(L/K)\cong G$. Theorem~\ref{existence-extreme-bias} is thus a direct consequence of Proposition~\ref{cayley} and Theorem~\ref{criterion-extreme-bias}.
\end{proof}

\subsection{Basic properties of the root-counting function}
Before we move to the construction of groups satisfying the property $\mathcal{P}(2p;x,y)$, we state and prove some basic properties of the function $r_n$. We use the following notation: If $G$ is a group and $A\subset G$ is a subset, we denote, for $n\ge1$, $\mathcal{R}_n^G(A):=\{x\in G\, \colon\,  x^n\in A\}$. When $A=\{g\}$, we define $\mathcal{R}_n^G(g):=\mathcal{R}_n^G(\{g\})$. We simply write $\mathcal{R}_n$ instead of $\mathcal{R}_n^G$ when the underlying group is clear from the context. Let us note that we have $r_n^G(g)=|\mathcal{R}_n^G(g)|$. 
It is clear that the functions $r_n$, $n\ge1$, are invariant on conjugacy classes; in fact they are invariant on larger sets: rational classes.
\begin{defi}[Rational classes]
    Let $G$ be a finite group. A rational class in $G$ is a class of an element in $G$ with respect to the equivalence relation $\mathcal{T}$ defined as follows: if $x,y\in G$
    \[x\, \mathcal{T}\,y \quad\text{ if and only if}\quad \exists\, g\in G\, ; \quad \langle x\rangle=g\langle y\rangle g^{-1}\, .\]
    We call the rational class of $x$ the class of $x$ with respect to $\mathcal{T}$. In other words, the rational class of $x$ is the union of conjugacy classes of $x^k $, for $k$ coprime to $\Ord(x)$.
\end{defi}
\begin{lem}\label{rational-classes}
    Let $G$ be a finite group. For all $n\ge 1$, the function $r_n$ is invariant on rational classes.
\end{lem}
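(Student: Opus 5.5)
The invariance under conjugation is immediate. If $g\in G$, then the map $u\mapsto gug^{-1}$ is a bijection from $\mathcal{R}_n(x)$ onto $\mathcal{R}_n(gxg^{-1})$, because $(gug^{-1})^n=gu^ng^{-1}$. Hence $r_n(x)=r_n(gxg^{-1})$. Since the rational class of $x$ is the union of the conjugacy classes of the powers $x^k$ with $\gcd(k,\Ord(x))=1$, it then suffices to show that $r_n(x)=r_n(x^k)$ for every integer $k$ coprime to $m:=\Ord(x)$.

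For this step I will construct a power map that is a bijection from $\mathcal{R}_n(x)$ onto $\mathcal{R}_n(x^k)$. Take $u\in\mathcal{R}_n(x)$. Then $u^n=x$, so $\Ord(u)$ divides $nm$. The natural candidate is $u\mapsto u^k$, since $(u^k)^n=x^k$. However, $k$ need not be invertible modulo $nm$, so I will replace $k$ by an integer $k'$ with
\[ k'\equiv k \pmod m\qquad\text{and}\qquad \gcd(k',nm)=1 . \]
Such a $k'$ exists by a standard Dirichlet/CRT argument. One takes $k'=k+jm$. For the primes $\ell\mid nm$ that divide $m$, the condition $\ell\nmid k'$ holds automatically because $\gcd(k,m)=1$. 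For the primes $\ell\mid n$ with $\ell\nmid m$, the integer $m$ is invertible modulo $\ell$, so the Chinese remainder theorem lets us choose $j$ so that $k+jm\not\equiv 0\pmod \ell$. Alternatively, one can invoke Dirichlet's theorem on primes in the progression $k+m\Z$ and pick a prime larger than $nm$.

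With $k'$ in hand, define $\phi(u)=u^{k'}$. Then $\phi(u)^n=(u^n)^{k'}=x^{k'}=x^k$, so $\phi$ maps $\mathcal{R}_n(x)$ into $\mathcal{R}_n(x^k)$. Now let $k''$ be an inverse of $k'$ modulo $nm$. Every $v\in\mathcal{R}_n(x^k)$ satisfies $v^{nm}=(x^k)^m=1$, so $\Ord(v)\mid nm$, and the same holds for every $u\in\mathcal{R}_n(x)$. Consequently $u\mapsto u^{k''}$ is a two-sided inverse of $\phi$ on these sets. It maps $\mathcal{R}_n(x^k)$ into $\mathcal{R}_n(x^{kk''})$, and $x^{kk''}=x^{k'k''}=x$ because $k'k''\equiv 1\pmod{m}$. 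Therefore $\phi$ is a bijection and $r_n(x)=r_n(x^k)$. Combining this with conjugation invariance gives the lemma.

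The only delicate point is choosing the exponent $k'$ so that it is coprime to $nm$ and not merely to $m$. Once that choice is made, the rest is routine.
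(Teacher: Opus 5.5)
Your proof is correct and takes essentially the same route as the paper. Both arguments use conjugation invariance via $\mathrm{Int}_g$, followed by a power map $u\mapsto u^{k'}$ with $k'\equiv k\pmod{\Ord(x)}$ and $\gcd(k',n\Ord(x))=1$, inverted by an inverse of $k'$ modulo $n\Ord(x)$ (the paper's Bézout relation $up+vn\Ord(x)=1$); the only difference is that the paper gets the exponent as a prime $p>n|G|$ via Dirichlet's theorem, whereas your CRT choice $k'=k+jm$ reaches the same exponent without Dirichlet.
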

\begin{proof}
    Let $x,g\in G$. The bijection $\mathrm{Int}_g\, \colon\,  \mathcal{R}_n(x)\to \mathcal{R}_n(g x g^{-1})$, given by restricting the inner automorphism of $G$ associated to $g$ shows that $r_n$ is invariant on conjugacy classes. Let $k\in \Z$ be coprime to $\Ord(x)$. It suffices to prove that $r_n(x)=r_n\bigl(x^k\bigr)$. By Dirichlet's theorem on arithmetic progressions, there exists a prime $p>n|G|$ such that $p\equiv k\pmod{\Ord(x)}$. The map $\Psi\, \colon\,  \mathcal{R}_n(x)\to\mathcal{R}_n(x^k)$, defined by $\Psi(h)=h^p$, is a bijection. Indeed by Bézout's theorem, there exist $u,v\in \Z$ such that $up+vn\Ord(x)=1$, and the map $\Psi'\, \colon\,  \mathcal{R}_n(x^k)\to \mathcal{R}_n(x)$ defined by $\Psi'(h)=h^u$ satisfies $\Psi\circ\Psi'=\Id_{\mathcal{R}_n(x^k)}$ and $\Psi'\circ \Psi=\Id_{\mathcal{R}_n(x)}$. This proves the result. 
\end{proof}
We now state two consequences of Lemma~\ref{rational-classes}. The first one allows us to replace $x$ and $y$ by some specific elements $x'$ and $y'$ such that $x\mathcal{T}x'$ and $y\mathcal{T}y'$. This will be very useful in \S~\ref{lbounds}.
\begin{cor}\label{invarianceofpropertyP}
    Let $G$ be a finite group satisfying $\mathcal{P}(d;x,y)$. If $x',y'\in G$ are such that $x'\, \mathcal{T}\,x$ and $y'\,\mathcal{T}\, y$, then $G$ satisfies $\mathcal{P}(d;x',y')$.
\end{cor}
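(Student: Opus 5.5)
The plan is to derive the corollary directly from Lemma~\ref{rational-classes}, checking the three ingredients of Definition~\ref{propertyP(d)} one after another.

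First, the orders. Since $x'\,\mathcal{T}\,x$, there is some $g\in G$ with $\langle x'\rangle=g\langle x\rangle g^{-1}$. Conjugation preserves the order of a cyclic subgroup, so this gives $\Ord(x')=|\langle x'\rangle|=|\langle x\rangle|=\Ord(x)$. The same argument gives $\Ord(y')=\Ord(y)$. Because $\Ord(x)=\Ord(y)$ is assumed in $\mathcal{P}(d;x,y)$, it follows that $x'$ and $y'$ have the same order.

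Second, the root counts. By Lemma~\ref{rational-classes}, every $r_n$ is constant on rational classes. Hence $r_\ell(x')=r_\ell(x)$ and $r_\ell(y')=r_\ell(y)$ for every $\ell\ge1$. In particular, for each squarefree $\ell<d$ we obtain $r_\ell(x')=r_\ell(x)=r_\ell(y)=r_\ell(y')$. For $\ell=d$ we likewise obtain $r_d(x')=r_d(x)<r_d(y)=r_d(y')$.

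These two steps are exactly the conditions defining $\mathcal{P}(d;x',y')$, so the corollary follows. There is no real obstacle here. The only point that needs a line of justification is equality of orders, because the relation $\mathcal{T}$ is defined through the generated subgroups rather than through the elements themselves.
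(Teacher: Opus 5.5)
Your proposal is correct and matches the paper's intended argument. The paper states this corollary, without further proof, as an immediate consequence of Lemma~\ref{rational-classes}, and your check that $\Ord(x')=\Ord(x)$ holds because $\langle x'\rangle$ is conjugate to $\langle x\rangle$ supplies the one detail the paper leaves implicit.
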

The second consequence of Lemma~\ref{rational-classes} will allow us to reduce the study of $r_n$ to the cases where $\mathrm{rad}(n)$ divides $|G|$.
\begin{lem}\label{rnd=rn}
    Let $G$ be a finite group and let $d\ge 2$ be an integer coprime to $|G|$. Then, for all $n\ge 1$ and all $x\in G$, we have \[r_{nd}(x)=r_n(x)\, .\]
\end{lem}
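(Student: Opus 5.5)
The plan is to show that the $d$-th power map $f_d\colon G\to G$ is a bijection and that it commutes with the $n$-th power map. From this, the root sets $\mathcal{R}_{nd}(x)$ and $\mathcal{R}_n(x)$ can be put in bijection, after one use of Lemma~\ref{rational-classes}.

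First, since $\gcd(d,|G|)=1$, Bézout gives integers $u,v$ with $ud+v|G|=1$. Every $h\in G$ satisfies $h^{|G|}=1$, so $(h^d)^u=h^{ud}=h^{1-v|G|}=h$. Hence $f_d$ is a bijection of $G$ with inverse $f_u$. Moreover $(h^d)^n=h^{nd}$. It follows that $h\in\mathcal{R}_{nd}(x)$ if and only if $h^d\in\mathcal{R}_n(x)$. Therefore $f_d$ restricts to a bijection $\mathcal{R}_{nd}(x)\to\mathcal{R}_n(x)$. It is injective because $f_d$ is injective on $G$. It is surjective because any $g\in\mathcal{R}_n(x)$ equals $(g^u)^d$, and $(g^u)^{nd}=(g^{ud})^n=g^n=x$.

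This direct argument already gives $r_{nd}(x)=r_n(x)$, so the rational-class invariance is not even needed. Alternatively, in the spirit of the paper: $r_{nd}(x)=\#\{h\colon (h^d)^n=x\}$, and substituting $k=h^d$ (a bijection) gives $\#\{k\colon k^n=x\}=r_n(x)$. I expect no real obstacle. The only point requiring care is the surjectivity check, which rests on $u$ inverting $d$ modulo the exponent of $G$.
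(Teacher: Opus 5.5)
Your proof is correct, and it takes a somewhat different route from the paper's.

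The paper uses Bézout ($ud+v|G|=1$) to prove the set equality $\mathcal{R}_{nd}(x)=\mathcal{R}_n(x^u)$. It then observes that $x^u$ lies in the rational class of $x$. Finally it invokes Lemma~\ref{rational-classes} to replace $r_n(x^u)$ by $r_n(x)$, and that lemma is itself proved via a power-map bijection, using Dirichlet's theorem to find a suitable prime exponent.

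You keep the target element $x$ fixed and move the root set instead. The power map $f_d$ is a bijection of $G$ with inverse $f_u$, and $f_d^{-1}(\mathcal{R}_n(x))=\mathcal{R}_{nd}(x)$. So $f_d$ restricts to a bijection $\mathcal{R}_{nd}(x)\to\mathcal{R}_n(x)$. This is self-contained and avoids rational classes, and hence Dirichlet, entirely.

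The paper's version costs nothing extra in context, since Lemma~\ref{rational-classes} is needed anyway (for instance for Corollary~\ref{invarianceofpropertyP}). It also fits the paper's framing of $r_n$ as an invariant of rational classes.

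One small point about your write-up: your opening plan says you will use Lemma~\ref{rational-classes} once, but your argument never does. That sentence should be adjusted to match the proof you actually give.
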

\begin{proof}
    By Bézout's theorem, there exists $u,v\in \Z$ such that $ud+v|G|=1$. Let $x'=x^u$; we have $x\mathcal{T}x'$. Moreover, $\mathcal{R}_{nd}(x)=\mathcal{R}_{n}(x')$; indeed, if $z\in \mathcal{R}_{nd}(x)$, then \[z^n=z^{n(1-v|G|)}=z^{ndu}=x^u=x'\, .\]
    Conversely, if $z\in \mathcal{R}_{n}(x')$, then \[z^{nd}=x'^d=x^{ud}=x^{1-v|G|}=x\, .\]
    This shows that $r_{nd}(x)=r_n(x')$. By Lemma~\ref{rational-classes}, we deduce that $r_{nd}(x)=r_n(x)$.
\end{proof}

\begin{lem}\label{rootorder}
    Let $G$ be a finite group and let $n\ge 1$. If $g\in G$ is such that $d=\gcd(\Ord(g),n)$, then for all $x\in \mathcal{R}_n(g)$, we have \[ d\Ord(g) \mid \Ord(x)\mid n\Ord(g)\, .\]
    In other words, $\Ord(x)$ is a multiple of $d\Ord(g)$ that divides $n\Ord(g)$.
\end{lem}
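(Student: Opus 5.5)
The plan is to reduce everything to the cyclic group generated by $x$, using only the order formula $\Ord(h^k)=\Ord(h)/\gcd(\Ord(h),k)$. Let $x\in\mathcal{R}_n(g)$, so $x^n=g$, and write $m:=\Ord(x)$. Then
\[ \Ord(g)=\Ord(x^n)=\frac{m}{\gcd(m,n)}\, . \]
This identity drives both divisibilities.

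For the upper bound, $\gcd(m,n)\mid n$ gives $m=\Ord(g)\gcd(m,n)\mid n\Ord(g)$. Concretely, $x^{n\Ord(g)}=g^{\Ord(g)}=1$, so $m\mid n\Ord(g)$.

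For the lower bound we must show $d\Ord(g)\mid m$, where $d=\gcd(\Ord(g),n)$. Since $m=\Ord(g)\gcd(m,n)$, it suffices to show $d\mid\gcd(m,n)$. We have $d\mid n$. Also $d\mid\Ord(g)$, and $\Ord(g)\mid m$ because $\Ord(g)=m/\gcd(m,n)$ divides $m$. Hence $d\mid m$, so $d\mid\gcd(m,n)$, and multiplying by $\Ord(g)$ gives $d\Ord(g)\mid \Ord(g)\gcd(m,n)=m$.

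There is no real obstacle here; the only step needing care is the lower bound. The point is not to try to prove $d\Ord(g)\mid m$ directly from $x^n=g$. Instead, rewrite $m$ as $\Ord(g)\gcd(m,n)$, which reduces the claim to $d\mid\gcd(m,n)$. That follows from $d\mid\Ord(g)\mid m$ together with $d\mid n$.
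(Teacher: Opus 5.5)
Your proposal is correct and follows essentially the same route as the paper: both start from $\Ord(g)=\Ord(x)/\gcd(\Ord(x),n)$. Both then get the lower bound from $d\mid\Ord(g)\mid\Ord(x)$ and $d\mid n$, which give $d\mid\gcd(\Ord(x),n)$, and the upper bound from $\gcd(\Ord(x),n)\mid n$.
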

\begin{proof}
    Since $x^n=g$, we have $\Ord(g)=\Ord(x)/\gcd(\Ord(x),n)$. In particular, $\Ord(x)$ is a multiple of $\Ord(g)$, thus $\gcd(\Ord(x),n)$ is a multiple of $d$. This proves that $d\Ord(g)\mid \Ord(x)$. Since $\gcd(\Ord(x),n)$ divides $n$, we also have $\Ord(x)\mid n\Ord(g)$.
\end{proof}

\begin{defi}\label{def:primitiveroots}
    Let $G$ be a finite group and let $g\in G$. We say that $x\in \mathcal{R}_n(g)$ is a trivial $n$-th root of $g$ if $\Ord(x)=\Ord(g)$, and we say that $x\in \mathcal{R}_n(g)$ is a primitive $n$-th root of $g$ if $\Ord(x)=n\Ord(g)$. We denote $\kappa_n^G(g)$ the number of primitive $n$-th roots of $g$ in $G$, that is: 
    \[\kappa_n^G(g):=\bigl| \{ x\in \mathcal{R}_n(g)\, \colon\, \Ord(x)=n\Ord(g)\}\bigr|\, .\]
\end{defi}
\begin{rk}\label{trivialroot}
    We note that $g\in G$ admits a trivial $n$-th root if and only if $\gcd(\Ord(g),n)=1$, and in this case, this trivial $n$-th root is unique. We also note that, by Lemma~\ref{rootorder}, if $p$ is a prime number, then any non-trivial $p$-th root is primitive. 
\end{rk}

\subsection{Explicit constructions of groups satisfying the property $\mathcal{P}(2p;x,y)$}\label{subsec:Prop(2p)}
In this subsection we construct groups satisfying the property $\mathcal{P}(2p;x,y)$. The generalized quaternion groups will play a major role in these constructions. Let us recall the definition of a generalized quaternion group: Let $n\ge 2$ and consider $\Phi: C_4\to \Aut(C_{2n})$, where $\Phi(c_4)(x)=x^{-1}$ for all $x\in C_{2n}$. We define the Quaternion group of order $4n$ as:
\begin{equation}
    Q_{4n}:= (C_{2n} \rtimes_{\Phi}C_4)/\langle(c_2,c_2)\rangle\, .
\end{equation}
The Quaternion group of order $4n$ has a unique element of order $2$, denoted as $-1$. If $\omega\, \colon\,  C_{2n} \rtimes_{\Phi}C_4\to Q_{4n}$ is the natural projection, let us denote $u:=\omega(1,c_4)$ and identify $C_{2n}=\omega\bigl( C_{2n}\times\{1\}\bigr)$ (which is possible because the restriction of $\omega$ to $C_{2n}\times \{1\}$ is injective). We have $Q_{4n}=\langle c_{2n}, u\rangle$. Moreover, $Q_{4n}$ admits the following presentation \[ Q_{4n}=\bigl\langle c_{2n},u \ |\ c_{2n}^{2n}=1,\  c_{2n}^{n}=u^2,\ uc_{2n}u^{-1}=c_{2n}^{-1}\bigr\rangle\, .\]
\begin{lem}\label{rootsnumberinQ4n}
    We have for all $n\ge 2$: 
    \[ r_2^{Q_{4n}}(-1)=\begin{cases}
        2n\quad&\text{if } n\text{ is odd}\\
        2(n+1)\quad &\text{if }n\text{ is even}
    \end{cases}\]
\end{lem}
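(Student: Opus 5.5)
Every element of $Q_{4n}$ can be written uniquely as $c_{2n}^k$ or $c_{2n}^k u$ with $0\le k<2n$. The plan is to count the square roots of $-1=c_{2n}^n=u^2$ in each of these two cosets of the cyclic subgroup $C_{2n}=\langle c_{2n}\rangle$ separately and add the counts.

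\emph{Elements outside $C_{2n}$.} For $x=c_{2n}^k u$, use the relation $u c_{2n}^k u^{-1}=c_{2n}^{-k}$, i.e. $u c_{2n}^k = c_{2n}^{-k}u$. This gives
\[ x^2=c_{2n}^k u c_{2n}^k u = c_{2n}^k c_{2n}^{-k} u^2 = u^2=-1 . \]
So all $2n$ elements of $Q_{4n}\setminus C_{2n}$ are square roots of $-1$, whatever the parity of $n$.

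\emph{Elements inside $C_{2n}$.} We need the number of $k\in\Z/2n\Z$ with $2k\equiv n\pmod{2n}$. If $n$ is odd there is no solution, since $2k$ is even while $n$ is odd modulo the even number $2n$. If $n$ is even, the solutions are $k\equiv n/2\pmod n$, which gives exactly two residues mod $2n$, namely $k=n/2$ and $k=3n/2$. Equivalently, these are the two elements of order $4$ in the cyclic group $C_{2n}$, which exist exactly when $4\mid 2n$.

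Adding the two contributions gives $r_2^{Q_{4n}}(-1)=2n$ when $n$ is odd and $2n+2=2(n+1)$ when $n$ is even, as claimed. There is no real obstacle here. The only point needing care is that the normal form is valid, which follows from the presentation: $C_{2n}$ has index $2$ in $Q_{4n}$, so $|Q_{4n}|=4n$ and $Q_{4n}=C_{2n}\sqcup C_{2n}u$.
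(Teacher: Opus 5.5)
Your proof is correct and follows essentially the same route as the paper: write $Q_{4n}=C_{2n}\sqcup C_{2n}u$, note that all $2n$ elements $gu$ square to $u^2=-1$, and add $r_2^{C_{2n}}(c_2)$, which is $0$ for $n$ odd and $2$ for $n$ even. One small correction: a presentation only bounds $|Q_{4n}|$ from above, so the exact order $4n$ (and hence the disjointness of the two cosets) should be taken from the paper's explicit construction $(C_{2n}\rtimes_\Phi C_4)/\langle(c_2,c_2)\rangle$, not from the presentation alone.
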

\begin{proof}
    Let $n\ge2$. We have a disjoint union $Q_{4n}=C_{2n} \cup (C_{2n} u)$. For all $g\in C_{2n}$, we have \[(gu)^2=g\cdot (ugu^{-1}) \cdot u^2=g\cdot g^{-1} \cdot (-1)=-1\, .\]
    Thus $r_2^{Q_{4n}}(-1)=r_{2}^{C_{2n}}(c_2)+2n$. If $n$ is odd, we have $r_2^{C_{2n}}(c_{2})=0$, and if $n$ is even we have $r_2^{C_{2n}}(c_{2})=2$. This proves the lemma.
\end{proof}
\noindent The following proposition provides some examples of groups satisfying the property $\mathcal{P}(2p;x,y)$.
\begin{prop}\label{general-construction}
    Let $n\ge 3$ be an odd integer, and write $n(n-1)=2^{\alpha_0} p_1^{\alpha_1}\dots p_k^{\alpha_k}$, where $p_1<\dots<p_k$ are the odd prime factors of $n(n-1)$. For any integer $1\le j\le k$, let 
    \[ G=Q_{4n}\times Q_{4(n-1)}\times C_{p_1\dots p_{j-1}}\, , \]
    and define the elements $x,y\in G$ as $x=(1,-1,c_{p_1\dots p_{j-1}})$ and $y=(-1,1,c_{p_1\dots p_{j-1}})$. Then, the group $G$ satisfies the property $\mathcal{P}(2p_j;x,y)$ if $p_j \mid n-1$, and it satisfies the property $\mathcal{P}(2p_j;y,x)$ if $p_j \mid n$. In particular, the group $Q_{4n}\times Q_{4(n-1)}$ satisfies the property $\mathcal{P}(2p_1;(1,-1),(-1,1))$ if $p_1 \mid n-1$, and it satisfies the property $\mathcal{P}(2p_1;(-1,1),(1,-1))$ if $p_1 \mid n$.
\end{prop}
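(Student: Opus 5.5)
The plan is to reduce everything to root counts in the individual factors, using that root counting is multiplicative on direct products. For $G=A\times B\times C$ and $(a,b,c)\in G$, an $\ell$-th root is just a triple of $\ell$-th roots, so
\[ r_\ell^G\bigl((a,b,c)\bigr)=r_\ell^A(a)\,r_\ell^B(b)\,r_\ell^C(c)\, . \]
Put $m:=p_1\cdots p_{j-1}$ and take $C=C_m$ with $c=c_m$ a generator. Then $r_\ell^{C}(c)$ is $0$ if some $p_i$ with $i<j$ divides $\ell$, and $1$ otherwise. This is the role of the cyclic factor: it makes $r_\ell(x)=r_\ell(y)=0$ for every $\ell$ divisible by a small prime $p_i$. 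First note that $x$ and $y$ both have order $2m$.

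Next, I classify the remaining squarefree $\ell<2p_j$, namely those not divisible by any $p_i$ with $i<j$.
\begin{itemize}
\item Write $\ell=s$ or $\ell=2s$ with $s$ odd and squarefree.
\item Since $\ell<2p_j$, every prime factor of $s$ is $<2p_j$. If $\ell=2s$, then every prime factor of $s$ is even $<p_j$.
\item If $\ell=2s$, the primes of $s$ are therefore all coprime to $|G|=16n(n-1)m$. Lemma~\ref{rnd=rn} then reduces $r_\ell$ to $r_2$.
\item If $\ell$ is odd, I will use a direct computation instead, since its prime factors can be large.
\end{itemize}
So the cases to treat are: odd $\ell$, $\ell=2$, and $\ell=2p_j$ (the only even squarefree value that remains, and the one where the inequality must appear).

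The core is an explicit computation in $Q_{4k}=C_{2k}\cup C_{2k}u$, using that every element of $C_{2k}u$ has order $4$ and squares to $-1$.
\begin{itemize}
\item \emph{Odd $q$.} No element outside $C_{2k}$ can be a $q$-th root of $\pm1$. Inside $C_{2k}$, the map $z\mapsto -z$ is a bijection between $q$-th roots of $1$ and $q$-th roots of $-1$. Hence $r_q^{Q_{4k}}(1)=r_q^{Q_{4k}}(-1)$, and every odd $\ell$ gives $r_\ell(x)=r_\ell(y)$.
\item \emph{$\ell=2$.} Here $r_2(1)=2$, and Lemma~\ref{rootsnumberinQ4n} gives $r_2(-1)$. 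With $n$ odd and $n-1$ even, we get $r_2(x)=2\cdot 2n$ and $r_2(y)=2n\cdot 2$, which are equal.
\item \emph{$\ell=2q$ with $q$ an odd prime.} Let $g_k:=\gcd(q,k)$. The $2q$-th roots of $1$ lie in $C_{2k}$, so $r_{2q}(1)=2g_k$. For $-1$, the coset $C_{2k}u$ contributes $2k$ roots, because $(gu)^{2q}=(-1)^q=-1$. Inside $C_{2k}$ there are $2g_k$ further roots when $k$ is even and none when $k$ is odd.
\end{itemize}
Using these values,
\[ r_{2q}(x)-r_{2q}(y)=4\bigl[g_n\bigl(g_{n-1}+n-1\bigr)-n\,g_{n-1}\bigr]\, . \]

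Finally, I take $q=p_j$, which divides exactly one of the coprime numbers $n$ and $n-1$.
\begin{itemize}
\item If $p_j\mid n-1$, then $g_n=1$ and $g_{n-1}=p_j$. The bracket is $(n-1)(1-p_j)<0$, so $r_{2p_j}(x)<r_{2p_j}(y)$ and $G$ satisfies $\mathcal{P}(2p_j;x,y)$.
\item If $p_j\mid n$, then $g_n=p_j$ and $g_{n-1}=1$. The bracket is $n(p_j-1)>0$, which gives $\mathcal{P}(2p_j;y,x)$.
\end{itemize}
As a check on consistency, when $q$ is coprime to $n(n-1)$ the bracket is $0$, matching the reduction via Lemma~\ref{rnd=rn}. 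The case $j=1$ has $m=1$ and gives the final sentence of the statement. The step needing most care is the bookkeeping: making sure that every squarefree $\ell<2p_j$ is either killed by the $C_m$ factor or falls into one of the computed cases. The estimate $p_j^2>2p_j$ is what excludes products of two large primes.
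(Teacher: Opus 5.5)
Your proof is correct and follows essentially the same route as the paper. Both arguments factor $r_\ell$ over the direct product, let the cyclic factor kill every $\ell$ divisible by some $p_i$ with $i<j$, use Lemmas~\ref{rnd=rn} and~\ref{rootsnumberinQ4n} for the even cases, and count $2p_j$-th roots of $\pm1$ explicitly in $Q_{4n}$ and $Q_{4(n-1)}$. Your resulting values, $4(n-1)+4p_j<4np_j$ and respectively $4np_j>4n$, are exactly the paper's.

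The one real difference is how you handle odd $\ell$. You dispose of all of them at once through the bijection $z\mapsto -z$ given by the central involution. The paper instead uses Lemma~\ref{rnd=rn} to reduce odd $\ell$ to the primes $p_m$ with $m\ge j$ and counts $p_m$-th roots directly. Because of this, the bound $p_j^2>2p_j$ that you cite at the end is not actually needed in your version.
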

\begin{proof}
    Let us denote $c := c_{p_1\dots p_{j-1}}$, $C := C_{p_1\dots p_{j-1}}$, and $r_n:=r_n^G$. Because the order of $c$ is odd, the orders of $x=(1,-1,c)$ and $y=(-1,1,c)$ in $G$ are both equal to $2\Ord(c)$. We evaluate the $\ell$-th roots for any squarefree $\ell < 2p_j$. By Lemma~\ref{rnd=rn}, we may restrict to the case where $\ell$ divides $|G|$. Moreover, if $\ell$ is divisible by any $p_m$ for $1 \le m < j$, since $C$ contains no $p_m$-th roots of $c$, $r_\ell(x) = r_\ell(y) = 0$. 
    
    Thus, we only need to verify the equality $r_\ell(x) = r_\ell(y)$ for $\ell=2$ and $\ell=p_m$ for $j \le m \le k$. Notice that for all these values of $\ell$, $\gcd(\ell, |C|) = 1$, which implies $r_\ell^C(c) = 1$. This allows us to simplify the computations to just the first two coordinates. For $\ell=2$, since $n$ is odd, we have by Lemma~\ref{rootsnumberinQ4n}, $r_2(x) = r_2^{Q_{4n}}(1) r_2^{Q_{4(n-1)}}(-1) = 2\times 2n = 4n$. Similarly, $r_2(y) = 2n\times 2= 4n$. Thus, $r_2(x) = r_2(y)$. For $\ell=p_m$ (with $j \le m \le k$), the prime $p_m$ divides exactly one of $n$ or $n-1$. Assuming $p_m \mid n$ (the case $p_m \mid n-1$ is identical), we have $r_{p_m}(x) = r_{p_m}^{Q_{4n}}(1)r_{p_m}^{Q_{4(n-1)}}(-1)= p_m\times 1 = p_m$, and similarly $r_{p_m}(y) = p_m\times 1 = p_m$. Thus, $r_{p_m}(x) = r_{p_m}(y)$. Finally, for $\ell=2p_j$, the prime $p_j$ divides exactly one of $n$ or $n-1$. \\
    Case 1: $p_j \mid n-1$. 
    Since $\gcd(p_j, n) = 1$, we have, by Lemma~\ref{rnd=rn}, $r_{2p_j}^{Q_{4n}}(1) = r_2^{Q_{4n}}(1) = 2$. In $Q_{4(n-1)}$, the $2p_j$-th roots of $-1$ include the $2(n-1)$ elements outside the cyclic subgroup $C_{2(n-1)}$, in addition to $r_{2p_j}^{C_{2(n-1)}}(c_2)=2p_j$ (which is a consequence of the Chinese Remainder Theorem, see also \eqref{2p-rootsnumb2} which gives a general form for this type of equality). Thus, $r_{2p_j}^{Q_{4(n-1)}}(-1) = 2(n-1) + 2p_j$. This gives:
    \[ r_{2p_j}(x) = 4(n-1) + 4p_j\, . \]
    For $y$, we have $r_{2p_j}(y) = r_{2p_j}^{Q_{4n}}(-1) r_{2p_j}^{Q_{4(n-1)}}(1) = r_{2}^{Q_{4n}}(-1) r_{2p_j}^{Q_{4(n-1)}}(1)=2n\times 2p_j = 4np_j$. Since $n, p_j \ge 3$, we have $4np_j > 4(n-1) + 4p_j \ge r_{2p_j}(x)$, which proves $r_{2p_j}(x) < r_{2p_j}(y)$.\\
    Case 2: $p_j \mid n$. 
    Here, $r_{2p_j}(x) = r_{2p_j}^{Q_{4n}}(1) r_{2p_j}^{Q_{4(n-1)}}(-1) = 2p_j\times 2n = 4np_j$. For $y$, because $n$ is odd, the cyclic subgroup $C_{2n}$ contains no square root of $-1$. Only the $2n$ elements outside the cyclic subgroup satisfy $g^2=-1$ and thus $(g^2)^{p_j}=-1$. Therefore, $r_{2p_j}^{Q_{4n}}(-1) = 2n$. This yields:
    \[ r_{2p_j}(y) = 2n\times 2 = 4n\, . \]
    Since $4np_j > 4n$, we have $r_{2p_j}(y) < r_{2p_j}(x)$. This finishes the proof of the proposition.
\end{proof}
\begin{exe}
    Taking $n=3$ (resp. $n=5$), we see that the group $Q_{12}\times Q_8$ (resp. $Q_{20}\times Q_{16}$) satisfies the property $\mathcal{P}(6;(-1,1),(1,-1))$ (resp. $\mathcal{P}(10;(-1,1),(1,-1))$). This group has order $96$ (resp. $320$).
\end{exe}
\begin{cor}\label{corP(2p)}
    Let $p\ge 3$ be a prime number. If $p-1$ is a power of $2$, then $Q_{4p}\times Q_{4(p-1)}$ satisfies the property $\mathcal{P}(2p;(-1,1),(1,-1))$. If $p$ is a Sophie Germain prime, then $Q_{4(2p+1)}\times Q_{8p}$ satisfies the property $\mathcal{P}(2p;(1,-1),(-1,1))$. More generally, the group $Q_{4p}\times Q_{4(p-1)}\times C_{\mathrm{rad}(p-1)/2}$ satisfies the property $\mathcal{P}(2p; (-1,1,c_{\mathrm{rad}(p-1)/2}),(1,-1,c_{\mathrm{rad}(p-1)/2}))$.
\end{cor}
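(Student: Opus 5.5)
Corollary~\ref{corP(2p)} is a set of specializations of Proposition~\ref{general-construction}. For each of the three claims I will choose the odd integer $n$ and the index $j$ so that $p_j=p$, and then read off which of $p\mid n$ or $p\mid n-1$ holds.

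\emph{First claim.} Take $n=p$ with $p-1$ a power of $2$. Then $n(n-1)=p\cdot 2^{\alpha_0}$, so $p$ is the only odd prime factor. Hence $k=1$, $p_1=p$, $j=1$, and the cyclic factor $C_{p_1\cdots p_{j-1}}$ is trivial. Since $p_1\mid n$, the proposition gives $\mathcal{P}(2p;y,x)$ with $y=(-1,1)$ and $x=(1,-1)$. This is the claimed property for $Q_{4p}\times Q_{4(p-1)}$.

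\emph{Second claim.} Take $n=2p+1$, which is an odd prime because $p$ is Sophie Germain. Then $n(n-1)=2p(2p+1)$, whose odd prime factors are $p<2p+1$, so $p_1=p$ and $j=1$. Since $p\mid n-1=2p$, we are in the case $\mathcal{P}(2p_1;x,y)$ with $x=(1,-1)$, $y=(-1,1)$. The group is $Q_{4(2p+1)}\times Q_{8p}$.

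\emph{General claim.} Take $n=p$ again. The odd primes dividing $n(n-1)=p(p-1)$ are the odd primes of $p-1$, all smaller than $p$, together with $p$ itself. So $p$ is the largest odd prime factor, $p=p_k$, and I take $j=k$. Then $p_1\cdots p_{j-1}$ is the product of the odd primes dividing $p-1$, which equals $\mathrm{rad}(p-1)/2$ because $p-1$ is even. Since $p\mid n$, the proposition yields $\mathcal{P}(2p;y,x)$ with $y=(-1,1,c)$ and $x=(1,-1,c)$, where $c=c_{\mathrm{rad}(p-1)/2}$. This is exactly the stated property. When $p-1$ is a power of $2$, this case reduces to the first claim.

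Two points need checking. First, the hypotheses of the proposition: $n\ge3$ odd holds in every case. Second, the identification $p_1\cdots p_{k-1}=\mathrm{rad}(p-1)/2$, which uses that $p$ does not divide $p-1$. The main care is matching the orientation $(x,y)$ versus $(y,x)$ to the divisibility case.
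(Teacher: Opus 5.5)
Your proposal is correct and follows the paper's route: the paper proves this corollary as a direct application of Proposition~\ref{general-construction}. You specialize it to $n=p$ with $j=1$ (respectively $j=k$, giving $p_1\cdots p_{k-1}=\mathrm{rad}(p-1)/2$) and to $n=2p+1$ with $j=1$, and you match the orientations $(x,y)$ versus $(y,x)$ to the cases $p\mid n$ and $p\mid n-1$ correctly.
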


\begin{proof}
    This is a direct application of Proposition~\ref{general-construction}.
\end{proof}

\subsection{Matrix embeddings and property $\mathcal{Q}(2p;x,y)$}\label{subsec:propQ}
We start this subsection with the definition of the property $\mathcal{Q}(2d;x,y)$:
\begin{defi}\label{propertyQ}
    Let $d\ge 3$ be an odd squarefree integer. We say that a pair $(G,G^+)$ of finite groups $G\subset G^+$ satisfies the property $\mathcal{Q}(2d;x,y)$, if $G$ satisfies the property $\mathcal{P}(2d;x,y)$ and, denoting by $C_x$ and $C_y$ the respective conjugacy classes of $x$ and $y$ in $G$, we have \[\gamma\left(t_{C_x,C_y}\right)=2\iota^{G^+}\left(t_{C_x,C_y}\right)\, .\]
\end{defi}
It is clear that with this definition, if $L/\Q$ is a Galois extension with group $G^+$ having no symplectic irreducible representation, and if $K$ is a subextension such that, denoting $G=\Gal(L/K)$, we have that if $(G,G^+)$ satisfies the property $\mathcal{Q}(2d;x,y)$, then $L/K\in \mathcal{E}_d$. Thus, by Corollary~\ref{maincriterion}, $\delta_{L/K}(C_y,C_x)\in (1/2,1)$, where $C_x$ and $C_y$ are the respective conjugacy classes of $x$ and $y$. This reduces the problem of proving $\mathcal{E}_d\ne \emptyset$ to proving the existence of finite groups $G,G^+$ such that $(G,G^+)$ satisfies the property $\mathcal{Q}(2d;x,y)$ and such that $G^+$ has no symplectic irreducible representation. In what follows, if $G$ is a finite group and $x,y\in G$ with respective conjugacy classes $C_x,C_y$, we simply denote $t_{x,y}:=t_{C_x,C_y}$ (see~\eqref{tC_1C_2}). If $G$ is a subgroup of a finite group $G^+$, for a conjugacy class $C\in G^\#$ that is contained in a conjugacy class $C^+\in (G^+)^\#$, we have the following elementary identity: \begin{equation}\label{inductionindicator}
    \frac{|G|}{|C|}\mathrm{Ind}_G^{G^+} \mathds{1}_C =\frac{|G^+|}{|C^+|}\mathds{1}_{C^+}\, .
\end{equation}
The previous identity can be generalized as follows: 
\begin{lem}\label{invconjset-induction}
    Let $G$ be a subgroup of a finite group $G^+$. Let $D\subset G$ be a non-empty conjugacy invariant set, meaning a subset that is invariant under inner automorphisms of $G$. If $D\subset C^+$ where $C^+$ is a conjugacy class of $G^+$, then \[\mathrm{Ind}_G^{G^+} \mathds{1}_D=\frac{|D|\, |G^+|}{|G|\, |C^+|}\mathds{1}_{C^+}\, .\]
\end{lem}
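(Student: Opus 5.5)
The plan is to decompose $D$ into $G$-conjugacy classes and apply the elementary identity \eqref{inductionindicator} to each piece. Since $D$ is invariant under inner automorphisms of $G$, it is a disjoint union $D=C_1\sqcup\dots\sqcup C_m$ of conjugacy classes $C_i\in G^\#$. Because $D\subset C^+$, each $C_i$ is contained in the single $G^+$-class $C^+$. The induction map $g\mapsto \mathrm{Ind}_G^{G^+}g$ is linear, so
\[\mathrm{Ind}_G^{G^+}\mathds{1}_D=\sum_{i=1}^m \mathrm{Ind}_G^{G^+}\mathds{1}_{C_i}.\]

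By \eqref{inductionindicator}, applied with $C=C_i$ and the same $C^+$, each summand equals $\frac{|C_i|\,|G^+|}{|G|\,|C^+|}\mathds{1}_{C^+}$. Summing over $i$ and using $\sum_i |C_i|=|D|$ gives exactly $\frac{|D|\,|G^+|}{|G|\,|C^+|}\mathds{1}_{C^+}$, which is the claim.

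If one prefers not to rely on \eqref{inductionindicator} as stated, it can be checked directly from the definition of induction given in the introduction. Fix $a\in G^+$. Then $\mathrm{Ind}_G^{G^+}\mathds{1}_D(a)=\frac{1}{|G|}\#\{b\in G^+ : b^{-1}ab\in D\}$. This count is $0$ if $a\notin C^+$, since $D\subset C^+$. If $a\in C^+$, the map $b\mapsto b^{-1}ab$ from $G^+$ onto $C^+$ has fibres of size $|\mathcal{C}_{G^+}(a)|=|G^+|/|C^+|$. Hence the count equals $|D|\,|G^+|/|C^+|$, giving the formula at once; note that this argument does not even use the conjugacy invariance of $D$.

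There is no real obstacle here. The only point needing care is the normalization $1/|\Gamma|$ in the paper's definition of induction, which the fibre count handles correctly.
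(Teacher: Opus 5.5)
Your proposal is correct, and your main argument is exactly the paper's: decompose $D$ into $G$-conjugacy classes, apply \eqref{inductionindicator} to each class, and sum using linearity of induction. Your supplementary fibre-count check directly from the definition of $\mathrm{Ind}_G^{G^+}$ is also correct, and it rightly shows that the conjugacy invariance of $D$ is not needed for the formula itself.
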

\begin{proof}
    Let us write $D=C_1\cup\dots\cup C_r$, where $C_1,\dots,C_r$ are distinct conjugacy classes in $G$. We have $\mathds{1}_D=\sum_{i=1}^r\mathds{1}_{C_i}$. Applying~\eqref{inductionindicator}, we deduce: 
    \[\mathrm{Ind}_G^{G^+} \mathds{1}_D=\sum_{i=1}^r \frac{|C_i|\, |G^+|}{|G|\, |C^+|}\mathds{1}_{C^+}=\frac{|D|\, |G^+|}{|G|\, |C^+|}\mathds{1}_{C^+}\, .\]
\end{proof}

In \S~\ref{subsec:Prop(2p)}, we gave a general group-theoretic construction of the generalized quaternion group of order $4n$. In fact, we can also see $Q_{4n}$ as a subgroup of the general linear groups over finite fields. Let $n\ge 3$ be an integer, and let $q\equiv 1 \pmod{2n}$ be a prime. We can view $Q_{4n}$ as a subgroup of $\GL(2, \F_q)$ by considering a primitive $2n$-th root of unity $\xi_{2n}\in \F_q^\times$. Let 
\[ a=\begin{pmatrix}\xi_{2n}&0\\0&\xi_{2n}^{-1}\end{pmatrix}\quad\text{and}\quad b=\begin{pmatrix}0&1\\-1&0\end{pmatrix}\, . \]
Note that $a^n=b^2=-I_2$ and $bab^{-1}=a^{-1}$. Thus, $\langle a,b\rangle\cong Q_{4n}$. It is worth noting that this explicit construction provides an irreducible faithful representation of degree $2$ of $Q_{4n}$ on $\F_q$. We start by proving the following lemma: 
\begin{lem}\label{reducedinduction}
    Let $N,\Gamma_1,\Gamma_2$ be finite groups such that $\Gamma:=\Gamma_1\times \Gamma_2$ is a subgroup of $N$. Let $((a,b),c)\in \Gamma_1^2\times \Gamma_2$ be such that $(1,c)\in \mathcal{Z}(N)$, and let $C_1,C_2,C_a,$ and $C_b$ be the respective conjugacy classes of $(a,c),(b,c),(a,1),$ and $(b,1)$ in $\Gamma$. Let $\ell\ge 1$ be coprime to $\Ord(c)$, and let $u\in\Z$ such that $\ell u\equiv 1\pmod{\Ord(c)}$. Then, for all $y\in N$:
    \[ \mathrm{Ind}_\Gamma^N \bigl(t_{C_1,C_2}\circ f_\ell^\Gamma\bigr) (y)=\mathrm{Ind}_\Gamma^N \bigl(t_{C_a,C_b}\circ f_\ell^\Gamma\bigr) \bigl(y\cdot(1,c^{-u})\bigr)\, .\]
    In particular, we have $\mathrm{Ind}_\Gamma^N \bigl(t_{C_1,C_2}\circ f_\ell^\Gamma\bigr)=0$ if and only if $\mathrm{Ind}_\Gamma^N \bigl(t_{C_a,C_b}\circ f_\ell^\Gamma\bigr)=0$.
\end{lem}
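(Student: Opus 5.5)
The plan is to compare the two functions $t_{C_1,C_2}\circ f_\ell^\Gamma$ and $t_{C_a,C_b}\circ f_\ell^\Gamma$ on $\Gamma$ directly, show that one is a translate of the other by a central element, and then pass this relation through induction.

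\textbf{Step 1: a translation identity on $\Gamma$.} Put $z:=(1,c^{-u})\in\Gamma$. Since $(1,c)\in\mathcal{Z}(N)$, also $z\in\mathcal{Z}(N)$, and in particular $z$ is central in $\Gamma$. For $w\in\Gamma$ we have $(wz)^\ell=w^\ell z^\ell$ and $z^\ell=(1,c^{-u\ell})=(1,c^{-1})$, because $u\ell\equiv1\pmod{\Ord(c)}$. Hence $(wz)^\ell\in C_a$ if and only if $w^\ell(1,c^{-1})\in C_a$.

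Because $(1,c^{-1})$ is central, multiplication by it maps $\Gamma$-conjugacy classes bijectively onto $\Gamma$-conjugacy classes. Moreover $C_1\cdot(1,c^{-1})$ is the class of $(a,c)(1,c^{-1})=(a,1)$, so it equals $C_a$. Therefore $w^\ell\in C_1$ if and only if $(wz)^\ell\in C_a$, and likewise $w^\ell\in C_2$ if and only if $(wz)^\ell\in C_b$.

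The same central translation gives $|C_1|=|C_a|$ and $|C_2|=|C_b|$, so the normalizing constants in \eqref{tC_1C_2} agree. Combining these facts yields
\[
t_{C_1,C_2}\circ f_\ell^\Gamma(w)=t_{C_a,C_b}\circ f_\ell^\Gamma(wz)\qquad\text{for all } w\in\Gamma .
\]

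\textbf{Step 2: pass to induction.} Apply the definition of $\mathrm{Ind}_\Gamma^N$ at $y$. The sum runs over $g\in N$ with $g^{-1}yg\in\Gamma$. Since $z\in\Gamma\cap\mathcal{Z}(N)$, we have $g^{-1}(yz)g=(g^{-1}yg)z$. This lies in $\Gamma$ exactly when $g^{-1}yg$ does, so the index set is the same for $y$ and for $yz$. Substituting the identity from Step 1 termwise gives
\[
\mathrm{Ind}_\Gamma^N\bigl(t_{C_1,C_2}\circ f_\ell^\Gamma\bigr)(y)=\mathrm{Ind}_\Gamma^N\bigl(t_{C_a,C_b}\circ f_\ell^\Gamma\bigr)\bigl(y\cdot(1,c^{-u})\bigr).
\]

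\textbf{Step 3: the ``in particular''.} The map $y\mapsto y(1,c^{-u})$ is a bijection of $N$. Hence one induced function vanishes identically if and only if the other does.

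The only delicate point is that $z$ must be central in all of $N$, not merely in $\Gamma$. This is needed both for the conjugation-invariance of the index set in Step 2 and for $(wz)^\ell=w^\ell z^\ell$. It is exactly what the hypothesis $(1,c)\in\mathcal{Z}(N)$ supplies; the remaining steps are routine bookkeeping.
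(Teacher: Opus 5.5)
Your proposal is correct and follows essentially the same route as the paper. Both arguments first establish a translation identity on $\Gamma$ by the central element $(1,c^{-1})$, then push it through the induction sum using that $(1,c^{-u})$ is central in $N$ and $(1,c^{-u})^\ell=(1,c^{-1})$. The only cosmetic differences are that you fold the power map into the identity and argue via central translates of conjugacy classes, whereas the paper splits into cases according to whether the second coordinate equals $c$.
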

\begin{proof}
    We first prove that for all $z\in \Gamma$ we have $t_{C_1,C_2}(z)=t_{C_a,C_b}(z(1,c^{-1}))$. Let $z=(g_1,g_2)\in \Gamma$. First assume that $g_2\ne c$. Since $(1,c)$ is central in $N$ (and in particular in $\Gamma$), we have $z\notin C_1\cup C_2$, which implies that $t_{C_1,C_2}(z)=0$. We also have $g_2c^{-1}\ne 1$, which means that $(g_1,g_2c^{-1})\notin C_a\cup C_b$, implying $t_{C_a,C_b}(z(1,c^{-1}))=0$. Assume that $g_2=c$. Since $\mathds{1}_{C_1}(g_1,c)=\mathds{1}_{C_a}(g_1,1)$ and $\mathds{1}_{C_2}(g_1,c)=\mathds{1}_{C_b}(g_1,1)$, and since $|C_1|=|C_a|$ and $|C_2|=|C_b|$, we have $t_{C_1,C_2}(z)=t_{C_a,C_b}(z(1,c^{-1}))$, which proves the claim. Let $y\in N$. We have
    \begin{align*}
        \mathrm{Ind}_\Gamma^N&\left(t_{C_1,C_2}\circ f_\ell^\Gamma \right)(y)=\frac{1}{|\Gamma|}\sum_{\substack{g\in N\\ gyg^{-1}\in \Gamma}} t_{C_1,C_2}\bigl( (gyg^{-1})^\ell\bigr)=\frac{1}{|\Gamma|}\sum_{\substack{g\in N\\ gyg^{-1}\in \Gamma}} t_{C_a,C_b}\bigl( (gyg^{-1})^\ell (1,c^{-1})\bigr)\\
        &=\frac{1}{|\Gamma|}\sum_{\substack{g\in N\\ gyg^{-1}\in \Gamma}} t_{C_a,C_b}\bigl( (gyg^{-1})^\ell (1,c^{-u})^\ell\bigr)=
        \frac{1}{|\Gamma|}\sum_{\substack{g\in N\\ gy(1,c^{-u})g^{-1}\in \Gamma}} t_{C_a,C_b}\bigl( (gy(1,c^{-u})g^{-1})^\ell \bigr)\\
        &=\mathrm{Ind}_\Gamma^N\left(t_{C_a,C_b}\circ f_\ell^\Gamma \right)\bigl(y(1,c^{-u})\bigr)\, ,
    \end{align*}
    where we used $(1,c^{-u})\in\mathcal{Z}(N)\cap \Gamma$ to deduce that $(gyg^{-1})^\ell (1,c^{-u})^\ell=(gy(1,c^{-u})g^{-1})^\ell$, and that $gy(1,c^{-u})g^{-1}\in \Gamma\iff gyg^{-1}\in \Gamma $. 
 \end{proof}
 The following lemma provides a criterion for proving that the induced function of $t_{x,y}\circ f_\ell$, $\ell\ge 1$, in a larger group is non-zero:
 \begin{lem}\label{indtx,yne0}
     Let $G$ be a subgroup of a finite group $N$, and let $x,y\in \mathcal{Z}(G)$ and $\ell\in \mathcal{S}$. If there exists $s\in \mathcal{R}_\ell^G(x)$ that is non-conjugate to any element in $\mathcal{R}_\ell^G(y)$ in $N$, then \[\mathrm{Ind}_G^N\, t_{x,y}\circ f_\ell \ne 0\, .\]
 \end{lem}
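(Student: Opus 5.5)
We evaluate $\mathrm{Ind}_G^N(t_{x,y}\circ f_\ell)$ at the element $s$ itself and show the value is strictly positive. Since $x,y\in\mathcal{Z}(G)$, their conjugacy classes in $G$ are singletons: $C_x=\{x\}$ and $C_y=\{y\}$. Hence
\[ t_{x,y}=|G|\bigl(\mathds{1}_{\{x\}}-\mathds{1}_{\{y\}}\bigr),\qquad t_{x,y}\circ f_\ell=|G|\bigl(\mathds{1}_{\mathcal{R}_\ell^G(x)}-\mathds{1}_{\mathcal{R}_\ell^G(y)}\bigr). \]
If $x=y$, then every element of $\mathcal{R}_\ell^G(x)$ is conjugate to itself in $\mathcal{R}_\ell^G(y)$, so the hypothesis forces $x\neq y$ and the two root sets are disjoint.

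By the definition of induction,
\[ \mathrm{Ind}_G^N(t_{x,y}\circ f_\ell)(s)=\frac{1}{|G|}\sum_{\substack{b\in N\\ b^{-1}sb\in G}}|G|\Bigl(\mathds{1}_{\mathcal{R}_\ell^G(x)}(b^{-1}sb)-\mathds{1}_{\mathcal{R}_\ell^G(y)}(b^{-1}sb)\Bigr). \]
Each conjugate $b^{-1}sb$ of $s$ in $N$ lies outside $\mathcal{R}_\ell^G(y)$ by hypothesis, so every negative term vanishes. The remaining terms are nonnegative, and the term $b=1$ contributes $1$ since $s\in\mathcal{R}_\ell^G(x)$. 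Therefore the value at $s$ is at least $1>0$, which proves $\mathrm{Ind}_G^N(t_{x,y}\circ f_\ell)\neq 0$.

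Note that centrality of $x$ and $y$ is used only to describe $C_x$ and $C_y$ as singletons. The hypothesis is phrased elementwise, so even without centrality the same argument would go through by replacing the root sets with $f_\ell^{-1}(C_x)$ and $f_\ell^{-1}(C_y)$, which are conjugation-invariant unions. There is no real obstacle here; the only point needing care is checking that the negative contributions vanish, and that is exactly the non-conjugacy hypothesis.
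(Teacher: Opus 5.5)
Your proof is correct and is essentially the paper's argument: you evaluate $\mathrm{Ind}_G^N(t_{x,y}\circ f_\ell)$ at $s$, where the $y$-part vanishes by the non-conjugacy hypothesis and the $x$-part is strictly positive. The only difference is that you read this directly off the defining sum, which gives the explicit lower bound $1$, whereas the paper first decomposes $\mathcal{R}_\ell^G(x)$ and $\mathcal{R}_\ell^G(y)$ into $G$-classes and applies \eqref{inductionindicator}; your remark that centrality of $x,y$ is not needed is also correct.
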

 \begin{proof}
     Since $x,y\in \mathcal{Z}(G)$, we have $\mathcal{R}_\ell^G(x)$ and $\mathcal{R}_\ell^G(y)$ are conjugacy invariant sets. We write $\mathcal{R}_\ell^G(x)=\cup_{i=1}^{r_x} C_{i,x}$ and $\mathcal{R}_\ell^G(y)=\cup_{j=1}^{r_y} C_{j,y} $  where $C_{i,x},C_{j,y}$, $1\le i\le r_x$, $1\le j\le r_y$, are conjugacy classes in $G$. Thus, by~\eqref{inductionindicator}, we have
     \begin{align*}
         \mathrm{Ind}_G^N \mathds{1}_{\{x\}}\circ f_\ell=\mathrm{Ind}_G^N \mathds{1}_{\mathcal{R}_\ell^G(x)}=\sum_{i=1}^{r_x} \alpha_{i,x}\mathds{1}_{C_{i,x}^+}\ \text{ and }\ 
        \mathrm{Ind}_G^N \mathds{1}_{\{y\}}\circ f_\ell=\mathrm{Ind}_G^N \mathds{1}_{\mathcal{R}_\ell^G(y)}=\sum_{j=1}^{r_y} \alpha_{j,y}\mathds{1}_{C_{j,y}^+}\, ,
     \end{align*}
     where $\alpha_{i,x},\alpha_{j,y}>0$, $C_{i,x}^+$ is the conjugacy class of $N$ containing $C_{i,x}$, and $C_{j,y}^+$ is the conjugacy class of $N$ containing $C_{j,y}$. There exists $1\le i_0\le r_x$ such that $s\in C_{i_0,x}$. Therefore, $\mathrm{Ind}_G^N \mathds{1}_{x}\circ f_\ell (s)\ge \alpha_{i_0,x}>0$. By the assumption, we have $s\notin \cup_{j=1}^{r_y}C_{j,y}^+$, which implies that $\mathrm{Ind}_G^N \mathds{1}_{y}\circ f_\ell (s)=0$. Hence \[\bigl(\mathrm{Ind}_G^N t_{x,y} \circ f_\ell\bigr) (s)=|G|\bigl(\mathrm{Ind}_G^N \mathds{1}_{x}\circ f_\ell\bigr) (s)>0\, .\]
     This finishes the proof.
 \end{proof}
 
 In the following proposition, we use the matrix representation of $Q_{4n}$ above to embed the groups constructed in \S~\ref{subsec:Prop(2p)} into symmetric groups. This is our main tool to prove that $\mathcal{E}_p\ne \emptyset$ for primes $p\ge3$.
\begin{prop}\label{generalembedding}
    Let $n\ge 3$ be an odd integer, and write $n(n-1)=2^{\alpha_0}p_1^{\alpha_1}\dots p_k^{\alpha_k}$, where $p_1<\dots<p_k$ are the odd prime factors. Let $G=Q_{4n}\times Q_{4(n-1)}\times C_{p_1\cdots p_{j-1}}$ and $x,y\in G$ be defined as $x=(1,-1,c_{p_1\dots p_{j-1}})$ and $y=(-1,1,c_{p_1\cdots p_{j-1}})$. 
    For any prime $q\equiv 1 \pmod{2n(n-1)}$, there exists an embedding of $G$ into $\mathfrak{S}_{q^4}$ such that $\bigl(G,\mathfrak{S}_{q^4}\bigr)$ satisfies the property $\mathcal{Q}(2p_j;x,y)$ if $p_j\mid (n-1)$ and satisfies $\mathcal{Q}(2p_j;y,x)$ if $p_j\mid n$.
\end{prop}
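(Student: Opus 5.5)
The plan is to realise $G$ inside $\GL(4,\F_q)$ by block-diagonal matrices, then push it into $\mathfrak{S}_{q^4}$ through the natural faithful action of $\GL(4,\F_q)$ on $\F_q^4$. I treat the case $p_j\mid n-1$; the case $p_j\mid n$ is symmetric, with the roles of $x$ and $y$ exchanged. Since $q\equiv 1\pmod{2n(n-1)}$, the field $\F_q$ contains primitive $2n$-th and $2(n-1)$-th roots of unity. The matrix model of \S\ref{subsec:propQ} therefore embeds $Q_{4n}$ and $Q_{4(n-1)}$ in $\GL(2,\F_q)$.

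\textbf{The embedding.} Set $m:=p_1\cdots p_{j-1}$. Each $p_i$ divides $n(n-1)$, hence $q-1$, so I fix $\zeta\in\F_q^\times$ of order $m$. I send $(A,B,c_m^k)$ to $\zeta^k\,\mathrm{diag}(A,B)$. This map is injective: a block matrix $\mathrm{diag}(A,B)$ equal to an odd-order scalar would force $A$ to be a scalar of odd order in $Q_{4n}$, and the only scalars there are $\pm I_2$. Since $G$ satisfies $\mathcal{P}(2p_j;x,y)$ by Proposition~\ref{general-construction}, it remains to show $\iota^{\mathfrak{S}_{q^4}}(t_{x,y})=p_j$, which then equals $\gamma(t_{x,y})/2$.

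\textbf{Vanishing for squarefree $\ell<p_j$.} Because $x,y\in\mathcal{Z}(G)$, we have $t_{x,y}\circ f_\ell=|G|\bigl(\mathds{1}_{\mathcal{R}_\ell(x)}-\mathds{1}_{\mathcal{R}_\ell(y)}\bigr)$, and both root sets are conjugacy invariant. I split $\ell$ into three kinds.
\begin{itemize}
\item If some $p_i$ with $i<j$ divides $\ell$, both root sets are empty, as in the proof of Proposition~\ref{general-construction}.
\item Otherwise write $\ell=\ell' d$ with $d$ coprime to $|G|$ and $\ell'\in\{1,2\}$. Then $f_d$ is a bijection of $G$ commuting with conjugation, and the argument of Lemma~\ref{rnd=rn} gives $\mathcal{R}_\ell(x)=\mathcal{R}_{\ell'}(x^u)$ with $du\equiv 1\pmod{|G|}$. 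The element $x^u$ has the same shape as $x$, namely $(1,-1,c^u)$, so it suffices to treat $\ell'\in\{1,2\}$ for elements of this shape.
\item For $\ell'\in\{1,2\}$, let $\mathbf{c}$ be the scalar in the third coordinate. Lemma~\ref{reducedinduction}, applied with $N=\GL(4,\F_q)$ where $\mathbf{c}$ is central, strips off the $C_m$ factor; transitivity of induction then passes the vanishing up to $\mathfrak{S}_{q^4}$. So I reduce to $x'=(1,-1)$ and $y'=(-1,1)$.
\end{itemize}
In the reduced situation I use Lemma~\ref{invconjset-induction} and check two things: the root sets split into $\GL(4,\F_q)$-classes in the same way, and with equal cardinalities. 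For $\ell=1$, $x'=\mathrm{diag}(1,1,-1,-1)$ and $y'=\mathrm{diag}(-1,-1,1,1)$ are conjugate. For $\ell=2$, square roots of $x'$ have the form $(\varepsilon I_2,B)$ with $B^2=-I_2$, and square roots of $y'$ have the form $(A,\varepsilon I_2)$ with $A^2=-I_2$. All of these are diagonalisable over $\F_q$, since their orders divide $q-1$, with eigenvalues $(\varepsilon,\varepsilon,i,-i)$. For each sign $\varepsilon$ they are counted by $r_2^{Q_{4(n-1)}}(-1)=2n$ and $r_2^{Q_{4n}}(-1)=2n$ respectively, by Lemma~\ref{rootsnumberinQ4n}. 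So the two induced functions coincide, and $\mathrm{Ind}(t_{x,y}\circ f_\ell)=0$.

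\textbf{Non-vanishing at $\ell=p_j$.} This is the step I expect to be the main obstacle, because conjugacy must be excluded in $\mathfrak{S}_{q^4}$, not merely in $\GL(4,\F_q)$. I will apply Lemma~\ref{indtx,yne0}. Pick $s=(I_2,B,\mathbf{c}')\in\mathcal{R}_{p_j}(x)$ with $B$ of order $2p_j$ in the cyclic subgroup of $Q_{4(n-1)}$. Every $r\in\mathcal{R}_{p_j}(y)$ has the form $(-I_2,B',\mathbf{c}')$, since $-I_2$ is the unique $p_j$-th root of $-1$ in $Q_{4n}$, with $B'^{p_j}=I_2$. The invariant I use is the number of fixed points of powers: a linear map $g$ on $\F_q^4$ fixes exactly $q^{\dim\ker(g-1)}$ vectors, and this number is preserved by conjugation in $\mathfrak{S}_{q^4}$. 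Raising to the power $m$ kills the scalar. I then compare fixed-space dimensions in the two remaining cases for $r$:
\begin{itemize}
\item For $s$, both $s^m$ and $s^{2m}$ have fixed space of dimension exactly $2$, because $B^m$ and $B^{2m}$ are non-trivial and have no eigenvalue $1$.
\item If $B'=I_2$, then $r^{2m}=I_4$ has fixed space of dimension $4$.
\item If $B'$ has order $p_j$, then $r^m=(-I_2,B'^m)$ has no non-zero fixed vector, so its fixed space has dimension $0$.
\end{itemize}
Hence $s$ is conjugate in $\mathfrak{S}_{q^4}$ to no element of $\mathcal{R}_{p_j}(y)$. Lemma~\ref{indtx,yne0} then gives $\mathrm{Ind}(t_{x,y}\circ f_{p_j})\neq 0$, so $\iota^{\mathfrak{S}_{q^4}}(t_{x,y})=p_j$ and $\mathcal{Q}(2p_j;x,y)$ holds.
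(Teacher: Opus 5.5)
Your proof is correct and follows essentially the same route as the paper. The shared steps are: the block-diagonal embedding into $\GL(4,\F_q)$ with the cyclic factor sent to scalars; reduction of the vanishing for squarefree $\ell<p_j$ to $\ell\in\{1,2\}$ via Lemma~\ref{reducedinduction} and coprimality with $|G|$; the eigenvalue and counting argument with Lemmas~\ref{rootsnumberinQ4n} and~\ref{invconjset-induction}; and non-vanishing at $\ell=p_j$ via Lemma~\ref{indtx,yne0}, using the eigenvalue $1$, i.e.\ fixed points on $\F_q^4$.

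The only differences are cosmetic. You treat the case $p_j\mid n-1$ where the paper treats $p_j\mid n$. You also compare fixed-point counts of $s^m,s^{2m}$ with those of $r^m,r^{2m}$ directly, which disposes explicitly of the trivial $p_j$-th root; the paper's primitive-root comparison handles that root only implicitly, through element orders.
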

Since by Proposition~\ref{general-construction} the group $G$ satisfies the property $\mathcal{P}(2p_j;x,y)$ if $p_j \mid n-1$, and it satisfies the property $\mathcal{P}(2p_j;y,x)$ if $p_j \mid n$, it remains to prove that $\gamma(t_{x,y})=2\iota^{G^+}(t_{x,y})$. We proceed in 5 main steps: \begin{enumerate}
    \item We use the matrix representation of the generalized Quaternion group to embed $G$ in $N:=\GL(4,\F_q)$. 
    \item We reduce the proof of $\mathrm{Ind}_G^N t_{x,y}\circ f_\ell=0$ for all squarefree $\ell< p_j$ to the case $\ell=2$.
    \item We prove that $\mathrm{Ind}_G^N t_{x,y}\circ f_2=0$ by studying the eigenvalues of square roots of $x$ and $y$ in $G$.
    \item We embed $\GL(4,\F_q)$ naturally in $G^+:=\mathfrak{S}_{q^4}$, which gives an embedding of $G$ in $G^+$.
    \item We prove that $\mathrm{Ind}_G^N t_{x,y}\circ f_{p_j}\ne0$.
\end{enumerate}
\begin{proof}
    Let $q\equiv 1 \pmod{2n(n-1)}$ be a prime. Thanks to the matrix construction discussed above, we have natural embeddings of $Q_{4n}$ and $Q_{4(n-1)}$ into $\GL(2, \F_q)$. This yields a natural embedding by blocks $\eta\, \colon\,  Q_{4n}\times Q_{4(n-1)}\to \GL(4,\F_q)$. Letting $\xi\in \F_q^{\times}$ be a primitive $p_1\cdots p_{j-1}$-th root of unity in $\F_q^\times$ (which exists because $q\equiv 1\pmod{p_1\dots p_{j-1}}$), yields an embedding $\beta \, \colon\,  C_{p_1\cdots p_{j-1}}\to \GL(4,\F_q)$ given by $\beta(c_{p_1\cdots p_{j-1}})=\xi I$, where $I$ is the identity matrix in $\GL(4,\F_q)$. Since $\xi I\in \mathcal{Z}(\GL(4,\F_q))$, we have $\beta( C_{p_1\cdots p_{j-1}})\cap \eta (Q_{4n}\times Q_{4(n-1)})\subset \mathcal{Z}(\eta(Q_{4n}\times Q_{4(n-1)}))$. Since $\mathcal{Z}(\eta(Q_{4n}\times Q_{4(n-1)}))$ has order $4$ and there is no subgroup of $\beta( C_{p_1\cdots p_{j-1}})$ of order $2$, we deduce that $\beta( C_{p_1\cdots p_{j-1}})\cap \eta (Q_{4n}\times Q_{4(n-1)})=\{I\}$. This yields an embedding $G$ into $\GL(4, \F_q)$, which allows us to identify elements of $G$ with elements of $\GL(4, \F_q)$. Furthermore, because $q\equiv 1 \pmod{2n(n-1)}$, and because every element of $G$ has order dividing $2n(n-1)$, then every element of $G$ is diagonalizable over $\F_q$ (the corresponding matrix has a separable split annihilator polynomial in $\F_q$). This implies that elements $g_1,g_2 \in G$ are conjugate in $\GL(4, \F_q)$ if and only if the multi-sets of their eigenvalues are the same; $\mathrm{Spec}(g_1)=\mathrm{Spec}(g_2)$. (Here we mean by a multi-set, a set where the multiplicity is taken into account). In what follows, we denote $N:=\GL(4,\F_q)$.

    We first consider the $\ell$-th roots for any squarefree $\ell<p_j$. If $\ell$ is divisible by some prime $p_m\in \{p_1, \dots, p_{j-1}\}$, then neither $x$ nor $y$ possesses any $\ell$-th roots in $G$. This is because $c_{p_1\dots p_{j-1}}$ has no $p_m$-th root in $C_{p_1\dots p_{j-1}}$. Assume that $\ell$ is coprime to $p_1\cdots p_{j-1}$. By Lemma~\ref{reducedinduction}, in order to prove that $\mathrm{Ind}_G^N(t_{x,y}\circ f_\ell)=0$, it is sufficient to prove that $\mathrm{Ind}_G^N(t_{x',y'}\circ f_\ell)=0$ where $x'=(1,-1,1)$ and $y'=(-1,1,1)$. Writing $\ell=2^{\varepsilon}\ell'$, where $\varepsilon\in\{0,1\}$ and $\ell'$ is an odd squarefree integer, and noting that since $\ell<p_j$ and coprime to $p_1\cdots p_{j-1}$, we have $\ell'$ is coprime to $|G|$, shows that $\mathds{1}_{\{x'\}}\circ f_{\ell'}=\mathds{1}_{\{x'\}}$ and that $\mathds{1}_{\{y'\}} \circ f_{\ell'}=\mathds{1}_{\{y'\}}$, this is because $(x')^{\ell'}=x'$ and $(y')^{\ell'}=y'$ and $f_{\ell'}\, \colon\,  G\to G$ is a bijection. Hence, $t_{x',y'}\circ f_\ell=t_{x',y'}\circ f_{2^\varepsilon}$, which reduces the problem to proving that $\mathrm{Ind}_G^N(t_{x',y'}\circ f_{2^\varepsilon})=0$. The case $\varepsilon=0$ is trivial, since $x'$ and $y'$ have eigenvalues $1$ and $-1$ each with multiplicity $2$, they are conjugate in $N$.
    
    We now move to proving the case $\varepsilon=1$. We have \begin{align}\label{x'f2-y'f2}
        \mathds{1}_{\{x'\}}\circ f_2=\mathds{1}_{ \{1,-1\}\times \left(\mathcal{R}_2^{Q_{4(n-1)}}(-1)\right)\times\{1\} }\quad\text{and}\quad
        \mathds{1}_{\{y'\}}\circ f_2=\mathds{1}_{ \left(\mathcal{R}_2^{Q_{4n}}(-1)\right)\times \{1,-1\}\times\{1\} }\, .
    \end{align}
    Let $s\in \mathcal{R}_2^{Q_{4(n-1)}}(-1)$. Considering the embedding $Q_{4(n-1)} \to \GL(2,\F_q)$, $s$ admits $X^2+1$ as an annihilating polynomial. Thus, the only possible eigenvalues of $s$ are $-i$ and $i$, where $i\in \F_q$ is a primitive $4$-th root of unity (which exists because $4$ divides $q-1$). Since $s$ is not central in $Q_{4(n-1)}$, $s$ cannot have $-i$ or $i$ as an eigenvalue of multiplicity $2$, thus $s$ has $i,-i$ as eigenvalues with multiplicity $1$. Let us denote $D_{1,x}:=\{1\}\times \mathcal{R}_2^{Q_{4(n-1)}}(-1)\times \{1\}$. $D_{1,x}$ is a non-empty conjugacy invariant set in $G$. Moreover, denoting $C_1^+\subset N$ the conjugacy class of all matrices that are diagonalizable having $1$ as an eigenvalue with multiplicity $2$, and both $i,-i$ as eigenvalues with multiplicity $1$, we have $D_{1,x}\subset C_1^{+}$. Denoting the conjugacy invariant set $D_{1,y}=\mathcal{R}_2^{Q_{4n}}(-1)\times \{1\}\times\{1\}$, we show in the same way that $D_{1,y}\subset C_1^+$. Because $n$ is odd, we have, by Lemma~\ref{rootsnumberinQ4n}, $|D_{1,x}|=|D_{1,y}|$. Applying Lemma~\ref{invconjset-induction}, we deduce that \[ \mathrm{Ind}_G^N \bigl( \mathds{1}_{D_{1,x}}-\mathds{1}_{D_{1,y}}\bigr)=0\, .\]
    Defining $C_2^+$ as the conjugacy class in $N$ of all matrices that are diagonalizable, having $-1$ as an eigenvalue with multiplicity $2$, and both $i,-i$ as eigenvalues with multiplicity $1$, and defining $D_{-1,x}:=\{-1\}\times \mathcal{R}_2^{Q_{4(n-1)}}(-1)\times \{1\}$ and $D_{-1,y}:=\mathcal{R}_2^{Q_{4n}}(-1)\times \{-1\}\times\{1\}$, we deduce as above that $D_{-1,x}\cup D_{-1,y}\subset C_2^+$, which implies  \[ \mathrm{Ind}_G^N \bigl( \mathds{1}_{D_{-1,x}}-\mathds{1}_{D_{-1,y}}\bigr)=0\, .\]
    Finally, by~\eqref{x'f2-y'f2}, we have \[\mathds{1}_{\{x'\}}\circ f_2=\mathds{1}_{D_{1,x'}}+\mathds{1}_{D_{-1,x'}}\ \text{ and }\ \mathds{1}_{\{y'\}}\circ f_2=\mathds{1}_{D_{1,y'}}+\mathds{1}_{D_{-1,y'}} \, .\]
    We conclude that $\mathrm{Ind}_G^N (\mathds{1}_{\{x'\}}\circ f_2-\mathds{1}_{\{y'\}}\circ f_2)=0$.

    Since $N=\GL(4, \F_q)$ acts faithfully on the vector space $\F_q^4$, we obtain a natural embedding of $G$ into the symmetric group $G^+:=\mathfrak{S}_{q^4}$. By the above, we have for all squarefree $\ell<p_j$, \[\mathrm{Ind}_G^{G^+} t_{x,y}\circ f_\ell=\mathrm{Ind}_N^{G^+}\mathrm{Ind}_G^N t_{x,y}\circ f_\ell=0\, .\]

    It remains to prove that $\mathrm{Ind}_G^{G^+}t_{x,y}\circ f_{p_j}\ne 0$. The prime $p_j$ divides exactly one of $n$ or $n-1$. Assume that $p_j\mid n$ (the case $p_j\mid n-1$ is treated in a similar way). Any $p_j$-th root of $y'=(-1, 1,1)$ requires a $p_j$-th root of $1$ in $Q_{4(n-1)}$. Since $\gcd(p_j, 2(n-1))=1$, the only such element is $1$. The corresponding block matrix is $I_2$, which means any primitive $p_j$-th root of $y'$ has $1$ as an eigenvalue. Thus, $y'$ is embedded in $\mathfrak{S}_{q^4}$ into a permutation which fixes at least $q$ elements (the eigenspace associated with the eigenvalue $1$ is not trivial). On the other hand, a primitive $p_j$-th root of $x'=(1, -1, 1)$ requires a primitive $p_j$-th root of $1$ in $Q_{4n}$. In our matrix representation, this element is of the form $\mathrm{diag}(\xi_0, \xi_0^{-1})$ for some $\xi_0\ne 1$. The eigenvalue for the $-I_2$ block is $-1$ with multiplicity $2$. Thus, $1$ is not an eigenvalue for any primitive $p_j$-th root of $x'$. Hence, a primitive $p_j$-th root of $y'$ is not conjugate in $G^+$ to any element in $\mathcal{R}_{p_j}^G(x')$. Let $s=\bigl(c_{2p_j},1,(c_{p_1\cdots p_{j-1}})^u\bigr)\in Q_{4n}\times Q_{4(n-1)}\times C_{p_1\cdots p_{j-1}}$, where $u\in\Z$ satisfies $u p_j\equiv 1\pmod{ p_1\cdots p_{j-1}}$. We have $s\in \mathcal{R}_{p_j}^G(y)$, and $\Ord(s)=2p_1\cdots p_j=p_j \Ord(y)$. Thus, $s$ is a primitive $p_j$-th root of $y$. If $s$ in $G^+$ is conjugate to an element $z\in \mathcal{R}_{p_j}^G(x)$, then $s^{p_1\cdots p_{j-1}}$ is conjugate to $z^{p_1\cdots p_{j-1}}$. Moreover $z^{p_1\cdots p_{j-1} \cdot p_j}=x^{p_1\cdots p_{j-1}}=x'$ and similarly $s^{p_1\cdots p_{j-1}\cdot p_j}=y'$, meaning that $z^{p_1\cdots p_{j-1}}\in \mathcal{R}_{p_j}^G(x')$ and $s^{p_1\cdots p_{j-1}}\in \mathcal{R}_{p_j}^G(y')$. Since $\Ord\bigl(s^{p_1\cdots p_{j-1}}\bigr)=2p_j=p_j\Ord(y')$, we have that $s^{p_1\cdots p_{j-1}}$ is a primitive $p_j$-th root of $y'$, which is conjugate to an element of $\mathcal{R}_{p_j}^G(x')$. Since we proved that such an element cannot exist, this gives a contradiction. Thus, $s\in \mathcal{R}_{p_j}^G(y)$ is not conjugate to any element in $\mathcal{R}_{p_j}^G(x)$. By Lemma~\ref{indtx,yne0}, we have that $\mathrm{Ind}_G^{G^+}t_{x,y}\circ f_{p_j} \ne 0$.
\end{proof}

As in \S~\ref{subsec:Prop(2p)}, taking the particular case where $n=p$ is a prime number we deduce the following:

\begin{cor}\label{smallerembedding}
    Let $p\ge 3$ be a prime number. Let $q$ be a prime satisfying $q\equiv 1 \pmod{2p(p-1)}$. Let \[G=Q_{4p}\times Q_{4(p-1)}\times C_{\mathrm{rad}(p-1)/2}\, ,\] and define the elements $x=(-1, 1,c_{\mathrm{rad}(p-1)/2})$ and $y=(1, -1,c_{\mathrm{rad}(p-1)/2})$ in $G$. Then there exists an embedding of $G$ into the symmetric group $\mathfrak{S}_{q^4}$ such that $\bigl(G,\mathfrak{S}_{q^4}\bigr)$ satisfies the property $\mathcal{Q}(2p;x,y)$. If $p$ is a Sophie Germain prime and $q'$ is a prime satisfying $q'\equiv 1 \pmod{4p(2p+1)}$, then there exists an embedding of $G':=Q_{4(2p+1)}\times Q_{8p}$ into $\mathfrak{S}_{(q')^4}$ such that $\bigl(G',\mathfrak{S}_{(q')^4}\bigr)$ satisfies the property $\mathcal{Q}(2p;(1,-1),(-1,1))$.
\end{cor}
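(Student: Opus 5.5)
This follows from Proposition~\ref{generalembedding} by specializing $n$; what has to be checked is that the indices and the orientation of $x,y$ come out right.

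\textbf{First assertion.} Take $n=p$, which is odd and at least $3$. Then $n(n-1)=p(p-1)$. The odd primes dividing $p(p-1)$ are the odd primes dividing $p-1$, all of which are smaller than $p$, together with $p$ itself. So $p$ is the largest odd prime factor, $p=p_k$, and we take $j=k$. Then $p_1\cdots p_{j-1}$ is the product of the odd primes dividing $p-1$, which equals $\mathrm{rad}(p-1)/2$ since $p-1$ is even. Hence the group $G$ of Proposition~\ref{generalembedding} is exactly $Q_{4p}\times Q_{4(p-1)}\times C_{\mathrm{rad}(p-1)/2}$.

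The congruence condition $q\equiv 1\pmod{2n(n-1)}$ becomes $q\equiv1\pmod{2p(p-1)}$. Since $p_j=p$ divides $n$ rather than $n-1$, the proposition gives property $\mathcal{Q}(2p;\,\cdot\,,\,\cdot\,)$ with the pair in the order (element with $-1$ in the first factor, element with $-1$ in the second factor). In the corollary's notation, the element with $-1$ in the first coordinate is $x=(-1,1,c)$ and the other is $y=(1,-1,c)$. Thus $(G,\mathfrak{S}_{q^4})$ satisfies $\mathcal{Q}(2p;x,y)$, as stated. The one point to confirm is that the corollary's labelling of $x,y$ is swapped relative to the proposition's; this is consistent with Corollary~\ref{corP(2p)}.

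\textbf{Second assertion.} Take $n=2p+1$, which is prime and odd. Then $n-1=2p$ and $n(n-1)=2p(2p+1)$. Its odd prime factors are $p<2p+1$, so $p_1=p$. Take $j=1$, so the cyclic factor is trivial and $G'=Q_{4(2p+1)}\times Q_{8p}$.

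The modulus is $2n(n-1)=4p(2p+1)$, matching the hypothesis on $q'$. Here $p_1=p$ divides $n-1$, so the proposition gives $\mathcal{Q}(2p;x,y)$ with $x=(1,-1)$ and $y=(-1,1)$, as claimed.

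No step is a genuine obstacle. The only care needed is the bookkeeping: identifying which $p_j$ plays the role of $p$, checking that $p_1\cdots p_{j-1}=\mathrm{rad}(p-1)/2$, and matching the orientation of $x,y$.
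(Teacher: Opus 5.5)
Your proposal is correct and is the same argument as the paper's, which simply combines Proposition~\ref{generalembedding} (with $n=p$, $j=k$, resp.\ $n=2p+1$, $j=1$) with Corollary~\ref{corP(2p)}. Your bookkeeping is accurate throughout: $p_1\cdots p_{k-1}=\mathrm{rad}(p-1)/2$, the moduli $2p(p-1)$ and $4p(2p+1)$, and the swap of the roles of $x$ and $y$ according to whether $p_j\mid n$ or $p_j\mid n-1$.
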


\begin{proof}
    This follows from combining Proposition~\ref{generalembedding} with Corollary~\ref{corP(2p)}.
\end{proof}

\begin{examples}
    \begin{enumerate}
        \item Taking $p=3$, we have $p-1=2$, and $13\equiv 1\pmod{2p(p-1)}$. Thus, the pair $(Q_{12}\times Q_8,\mathfrak{S}_{28561})$ satisfies the property $\mathcal{Q}(6;x,y)$.
        \item Taking $p=5$, we have $p-1=4$, and $41\equiv 1 \pmod{2p(p-1)}$. Thus, the pair $(Q_{20}\times Q_{16},\mathfrak{S}_{2825761})$ satisfies the property $\mathcal{Q}(10;x,y)$.
    \end{enumerate}
\end{examples}

\section{Lower bounds for the order of groups satisfying $\mathcal{P}(2p;x,y)$}\label{lbounds}
The goal of this section is to prove the following theorems. The first one provides a general lower bound for groups satisfying the property $\mathcal{P}(2p;x,y)$. 
\begin{thm}\label{lowerbound}
    Let $G$ be a finite group satisfying the property $\mathcal{P}(2p;x,y)$. Then $|G|\ge 8p^2$. 
\end{thm}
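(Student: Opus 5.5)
Assume $G$ satisfies $\mathcal{P}(2p;x,y)$ and $|G|<8p^2$; we aim for a contradiction. The plan follows the outline given in the introduction. First we collect root-counting consequences of the hypotheses $r_\ell(x)=r_\ell(y)$ for squarefree $\ell<2p$ (in particular $\ell=2$ and $\ell=p$) and $r_{2p}(x)\ne r_{2p}(y)$.

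\textbf{Step 1: structure of $x$, $y$ and $p$-divisibility.} By Lemma~\ref{rnd=rn}, if $p\nmid|G|$ then $r_{2p}=r_2$, which contradicts the hypothesis. The same lemma rules out $2\nmid|G|$, since then $r_{2p}=r_p$. So $2p\mid|G|$. We then decompose a $2p$-th root $z$ of $x$ as $z=z_2z_{2'}$ (commuting parts of $2$-power and odd order). The difference $r_{2p}(x)-r_{2p}(y)$ can only come from roots $z$ with $z^2$ a \emph{non-trivial} $p$-th root, i.e.\ primitive by Remark~\ref{trivialroot}. Using the sets $\mathcal{F}_p(x)$ and $\kappa_p$, we express $r_{2p}$ as a sum over $p$-th roots $w$ of $x$ of $r_2(w)$. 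Equality of $r_p$ and $r_2$ then forces a discrepancy in $r_2$ evaluated at primitive $p$-th roots. Via Corollary~\ref{invarianceofpropertyP} we may replace $x,y$ by rationally equivalent elements, and we reduce to $x,y$ of order prime to $p$ with $p$-th roots generating a subgroup $P=\langle t\rangle$ of order $p$.

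\textbf{Step 2: reduction to a normal Sylow of order $p$.} If $p^2\mid|G|$, then $|G|<8p^2$ together with $2\mid|G|$ and the need for a non-nilpotent structure (non-multiplicativity of $r_n$) should leave too little room. Concretely, we would show that the root-count discrepancy requires $|G|\ge 8p^2$ directly, or else that the relevant part is a $p$-group of order $p$. We then pass to $N_G(P)$ or $\mathcal{C}_G(t)$, using that the roots in question centralize $t$, to assume $P\trianglelefteq G$. Since $|G/P|<8p$ is then coprime to $p$ (after showing $p^2\nmid|G|$), Schur--Zassenhaus gives $G\cong P\rtimes H$.

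\textbf{Step 3: involution-rich quotients.} The discrepancy at $2p$ translates into a statement inside $H_0:=H\cap\mathcal{C}_G(t)$. The number of square roots of $x$ (respectively $y$) in $H_0$ differs from that in $H$, in a way that forces many elements of $H_0$, modulo $\langle x\rangle$ or $\langle y\rangle$, to square to the identity. With $|H_0|<8p$ and $|H|$ bounded, we expect to extract a quotient in which more than a fixed proportion (around $3/4$) of elements are involutions. Wall's classification of such groups then makes the quotient a product of an elementary abelian $2$-group with a dihedral-like or small listed group. In each case we verify that $r_2(x)=r_2(y)$ on primitive $p$-th roots, which is a contradiction.

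The main obstacle is Step 3: converting $r_{2p}(x)\ne r_{2p}(y)$ combined with $|G|<8p^2$ into a precise involution-proportion bound that meets Wall's threshold, and then handling Wall's case list. Step 2 (excluding $p^2\mid|G|$ and non-normal $P$) is the secondary difficulty, which we would attack by counting $p$-th roots via Sylow theory.
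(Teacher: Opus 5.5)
\textbf{There is a genuine gap.} Your outline follows the same architecture as the paper: a normal Sylow subgroup of order $p$, Schur--Zassenhaus, an involution-rich quotient of $H\cap\mathcal{C}_G(t)$, and Wall's theorem. However, the steps that carry the weight are either absent or would fail as stated.

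\emph{Steps~1--2.} Rational equivalence preserves element orders, so Corollary~\ref{invarianceofpropertyP} cannot make $\Ord(x)$ prime to $p$. The case $p\mid\Ord(y)$ needs its own argument. In the paper this is Lemmas~\ref{lowerboundsmallp} and~\ref{q-Sylow-divisibility}: the product of the cyclic groups generated by a primitive $2p$-th root of $y$ and a non-trivial $q$-th root of $x$, $q\in\{2,p\}$, gives $|G|\ge 8p\Ord(y)$, and $\Ord(y)=p$ forces $p^4\mid|G|$.

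More seriously, you cannot pass to $N_G(P)$ or $\mathcal{C}_G(t)$ to make $P$ normal. The property $\mathcal{P}(2p;x,y)$ is not inherited by subgroups: the counts $r_\ell$ computed in a subgroup differ from $r_\ell^G$, since square roots and $2p$-th roots of $x$ and $y$ need not normalize a fixed $P$. The cases $p^2\mid|G|$ and $P$ non-normal must be excluded by counting inside $G$ itself, as in Lemma~\ref{nonnormalpsylow}. There, $2p^2\Ord(y)\mid|G|$ with equality ruled out, and for non-normal $P$ a product $\langle s\rangle\langle t_xx\rangle$ of cyclic subgroups already has order $\ge 4p^2\Ord(x)$. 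Saying the constraints ``should leave too little room'' is not an argument.

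\emph{Step~3.} This step lacks the quantitative input that produces the involutions. You need the following.
\begin{enumerate}
\item The congruence $r_{2p}(y)-r_{2p}(x)\in p(p-1)\Z\cup p^2\Z$, with an extra factor $2$ when $\Ord(y)$ is even (Lemma~\ref{difference2proots}). It comes from the fixed-point-free conjugation action of $\langle t\rangle$ on square roots not commuting with $t$, and it bounds $\kappa_{2p}(y)$ from below.
\item The transfer formula $\kappa_{2p}^G(y)=(p-1)\kappa_2^{N}(y)$ with $N=H\cap\mathcal{C}_G(t)$ (Proposition~\ref{rootcount-in-H}). This gives $2p$ non-trivial square roots of $y$ or $y^{-1}$ in $N$.
\item A square root of $x$ that inverts $t$, so that $(H:N)\ge 2$ and $|N|<4p$. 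With only $|H_0|<8p$, as you write, the proportion of involutions is below Wall's threshold.
\item The reduction to $\Ord(x)=\Ord(y)=2$ with $y$ central in $H$ (Lemma~\ref{orderreduction}). This is what makes $N/\langle y\rangle$ a group of order $<2p$ with at least $p$ involutions.
\end{enumerate}
That proportion exceeds $1/2$, which is exactly Wall's hypothesis; it is not $3/4$. Above $3/4$, Lemma~\ref{3-4thinvolutions} already makes the group elementary abelian, and Wall is not needed.

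Finally, Wall's list contains infinite families, so verifying ``in each case'' that $r_2$ agrees on primitive $p$-th roots is not a finite check. It is also not where the contradiction comes from. The paper proceeds as follows:
\begin{itemize}
\item a mod-$4$ count of square roots outside $N$ (Lemma~\ref{divisibilityby4}) gives $x\notin\mathcal{Z}(N)$;
\item it then shows $H$ is a $2$-group and finds $s$ with $s^2=y$ and $\langle s\rangle\lhd H$;
\item it writes a square root of $x$ outside $N$ as $sh$, with $z:=h^2\in\{x,xy\}$ non-central in $N$;
\item the type~\rom{1} or~\rom{3} structure of $N/\langle y\rangle$ then forces every generator $y_i$ of $N$ to commute with $z$, a contradiction.
\end{itemize}
As written, your proposal is a roadmap rather than a proof.
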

The second theorem provides the best lower bounds in the case $p\in \{3,5\}$.
\begin{thm}\label{n3andn5}
    Let $G$ be a finite group satisfying the property $\mathcal{P}(2p;x,y)$. If $p=3$ then $|G|\ge 96$, and if $p=5$ then $|G|\ge 320$.
\end{thm}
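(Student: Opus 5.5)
The plan is to combine Theorem~\ref{lowerbound} with arithmetic constraints on $|G|$ coming from Lemma~\ref{rnd=rn}, and then to eliminate the finitely many remaining orders by hand. Theorem~\ref{lowerbound} already gives $|G|\ge 72$ for $p=3$ and $|G|\ge 200$ for $p=5$. So it remains to exclude the orders in $[72,96)$ and in $[200,320)$.

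\textbf{Step 1: divisibility.} If $p\nmid |G|$, Lemma~\ref{rnd=rn} gives $r_{2p}=r_2$ on $G$, contradicting $r_2(x)=r_2(y)$ together with $r_{2p}(x)<r_{2p}(y)$. Similarly, if $2\nmid|G|$ then $r_{2p}=r_p$, again a contradiction. Hence $2p\mid |G|$.

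\textbf{Step 2: a stronger $2$-part.} I would then strengthen the power of $2$ dividing $|G|$ via the structural reduction used for Theorem~\ref{lowerbound}. Using Corollary~\ref{invarianceofpropertyP}, replace $x,y$ within their rational classes and pass to the situation where $G$ has a normal subgroup $P=\langle t\rangle$ of order $p$ and $G\cong P\rtimes H$ by Schur--Zassenhaus. In that setting, the argument of Theorem~\ref{lowerbound} shows that a suitable quotient of $H\cap\mathcal{C}_G(t)$ has a large proportion of involutions. Wall's classification then forces this quotient to contain a non-abelian $2$-group or a product of dihedral-type pieces. I expect this to yield $8\mid |G|$, and in fact a $2$-part at least $16$ when $p=5$, where $r_2$ must still agree even though $p-1=4$ gives more room for square roots. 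This mirrors the examples $Q_{12}\times Q_8$ and $Q_{20}\times Q_{16}$.

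\textbf{Step 3: the residual cases.} For $p=3$ the multiples of $24$ in $[72,96)$ reduce to $|G|=72$. For $p=5$ the multiples of $80$ (or $40$, if only $8$ is obtained) in $[200,320)$ leave $240$, or $200,240,280$. These are handled one at a time.
\begin{itemize}
\item When the Sylow $p$-subgroup is not of order $p$ (e.g.\ $72=8\cdot 9$, $200=8\cdot 25$), I would show that the reduction to a normal $P$ of order $p$ either applies to a proper subgroup or quotient of smaller order, which is excluded by Theorem~\ref{lowerbound}, or is impossible.
\item When $|P|=p$ and $|H|=|G|/p$ (e.g.\ $|H|=48$ or $56$), I would check directly that the index $[H:\mathcal{C}_H(t)]$ divides $p-1$. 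I would then compare $r_2$ and $r_{2p}$ through the decomposition of $\mathcal{R}_{2p}$ into roots commuting with $t$, using the sets $\mathcal{F}_p^G$ and $\kappa_n^G$. The aim is to show that equality of $r_2$ forces equality of $r_{2p}$.
\end{itemize}

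The main obstacle is Step 3, specifically the orders where Wall's theorem still allows several candidate $2$-groups (order $8$ for $|G|=72$; order $16$ for $|G|=240$). There I would first try to show that $\mathcal{C}_H(t)$ must contain a generalized quaternion subgroup of the required size, which the order bound rules out. Failing that, I would fall back on an explicit case analysis of the small $2$-groups allowed by Wall's list.
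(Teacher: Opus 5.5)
There is a genuine gap. Step~1 is correct: Lemma~\ref{rnd=rn} does give $2p\mid |G|$. Everything after it, however, is a plan rather than a proof.

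\textbf{Step~2 is unsupported and its setting is unavailable.} You say only ``I expect this to yield $8\mid|G|$'', and nothing in a Wall-group analysis of a quotient of $H\cap\mathcal{C}_G(t)$ turns into a divisibility statement for $|G|$. More seriously, you cannot assume a normal Sylow $p$-subgroup of order $p$ in the range you need. That reduction is Lemma~\ref{nonnormalpsylow}, which rules out $p^2\mid |G|$ or a non-normal Sylow $p$-subgroup only when $|G|<\min(16p^2,4p^2\Ord(y))$. For $\Ord(y)=2$ this threshold is $72$ when $p=3$ and $200$ when $p=5$, so it says nothing on $[72,96)$ or $[200,320)$. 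In particular, a group of order $240$ with six Sylow $5$-subgroups is a live case. Your second bullet in Step~3 needs a complement $H$, hence a normal $P$, so it does not touch this case.

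\textbf{Step~3 contains no argument.} Your first bullet (pass to a proper subgroup or quotient and invoke Theorem~\ref{lowerbound}) is unfounded: $\mathcal{P}(2p;x,y)$ is not inherited by subgroups or quotients, because the counts $r_\ell$ depend on the ambient group. The claim $8\mid|G|$ also cannot be used to discard orders such as $270$: when $\Ord(y)=3$ the available information is only $90\mid |G|$. Finally, ``show that equality of $r_2$ forces equality of $r_{2p}$'' restates the goal rather than proving it.

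\textbf{The missing idea is a case split on $\Ord(y)$,} using three lemmas you never invoke:
\begin{itemize}
\item $2p\Ord(y)\mid |G|$ (Lemma~\ref{primitive2proots});
\item $q^4\mid|G|$ whenever $\Ord(x)=\Ord(y)$ is a power of a prime $q\in\{2,p\}$ (Lemma~\ref{q-Sylow-divisibility});
\item $|G|\ge 8p\Ord(y)$, improved to $|G|\ge 4p\Ord(y)^2$ when $\Ord(y)$ is prime (Lemma~\ref{lowerboundsmallp}).
\end{itemize}

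For $p=3$ this closes the proof at once:
\begin{itemize}
\item $\Ord(y)\ge 4$ gives $|G|\ge 96$;
\item $\Ord(y)=3$ gives $162\mid |G|$;
\item $\Ord(y)=2$ gives $48\mid|G|$, which together with $|G|\ge 72$ forces $|G|\ge 96$.
\end{itemize}
So $|G|=72$ is excluded simply because $16\nmid 72$, with no hand analysis of $2$-groups.

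For $p=5$, the same lemmas plus $|G|\ge 200$ leave only $|G|\in\{240,270,300\}$:
\begin{itemize}
\item $\Ord(y)\in\{5,7\}$ or $\Ord(y)\ge 8$ already give $|G|\ge 320$;
\item otherwise $|G|$ is a multiple of $80$ when $\Ord(y)\in\{2,4\}$, of $90$ when $\Ord(y)=3$, and of $60$ when $\Ord(y)=6$.
\end{itemize}
Order $300$ falls to Lemma~\ref{nonnormalpsylow}, since $25\mid 300$. Orders $240$ and $270$ require the separate Lemma~\ref{P(10)-generalcase}, which is the real content of the $p=5$ case:
\begin{itemize}
\item For $270$, a Sylow count forces $P\lhd G$, and then one shows $4\mid |H|$, a contradiction.
\item For $240$, one needs both the normal-$P$ analysis of Lemma~\ref{P(10)-normalcase} and a non-normal analysis with $|N_G(P)|=40$. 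There one shows $N_G(P)$ contains at most $20$ tenth roots of $y$, so a primitive $10$-th root of $y$ lies outside $N_G(P)$. This forces $y$ to be central and leads to a contradiction.
\end{itemize}
Your proposal has no substitute for either part.
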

Let us note that Theorem~\ref{np} follows by combining Corollary~\ref{smallerembedding} with Theorems~\ref{lowerbound} and~\ref{n3andn5}. 
\subsection{Root-counting formulas}
We first establish useful formulas for $r_p$, $r_{2p}$, and $\kappa_{2p}$ (see Definition~\ref{def:primitiveroots}). Let us recall also that $\mathcal{R}_n^G(g)$ denotes the set of $n$-th roots of $g$ in $G$, and that we simply write $\mathcal{R}_n(g)$ when the underlying group is clear from the context.
\begin{lem}
    Let $G$ be a finite group and let $p\ge2$ be a prime number. For all $g\in G $ with order $\Ord(g)=p^rm$, such that $\gcd(p,m)=1$ and $r\ge0$, we have \begin{equation}\label{p-rootsnumber}
        r_p(g)=\#\{t\in \mathcal{R}_p\bigl(g^m\bigr)\, \colon\,  tg=gt\}\, .
    \end{equation}
    If moreover $p\ge3$, we have 
    \begin{align}
        r_{2p}(g)=\#\left\{(s,t)\in \mathcal{R}_2\bigl(g\bigr) \times\mathcal{R}_p\bigl(g^m\bigr)\, \colon\,  st=ts\right\}\label{2p-rootsnumb2}\, ,
    \end{align}
    and \begin{equation}\label{2p-primrootsnumb}
        \kappa_{2p}(g)=\#\Bigl\{(s,t)\in \bigl(\mathcal{R}_2(g)\setminus \langle g\rangle\bigr) \times \bigl(\mathcal{R}_p(g^m)\setminus\{1\}\bigr)\, \colon\,  st=ts\Bigr\}\, .
    \end{equation}
\end{lem}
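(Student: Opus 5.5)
The plan is to decompose a root $h$ of $g$ into commuting pieces of coprime orders and count the pieces separately. Write $\Ord(g)=p^rm$ with $\gcd(p,m)=1$.

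\emph{Formula~\eqref{p-rootsnumber}.} Let $h\in\mathcal{R}_p(g)$. Then $\Ord(h)$ divides $p^{r+1}m$. Choose integers $a,b$ with $ap^{r+1}+bm=1$ and set $t:=h^{bm}$ and $h':=h^{ap^{r+1}}$. Both are powers of $h$, so they commute with each other and with $g=h^p$. Also $h=th'$, where $t$ is a $p$-element and $h'$ has order dividing $m$. Put $t_0:=h^m$; this is a $p$-element commuting with $g$, and
\[
t_0^p=g^m .
\]
So I define the map $\mathcal{R}_p(g)\to\{t\in\mathcal{R}_p(g^m)\colon tg=gt\}$ by $h\mapsto h^m$.

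For the inverse, start from $t$ with $t^p=g^m$ and $tg=gt$. Since $\gcd(m,p)=1$, $m$ is invertible modulo $\Ord(t)$, so I can take the appropriate power $t^{m'}$ of $t$ with $mm'\equiv1$ modulo the $p$-part of $\Ord(t)$. Then I combine it with the $p'$-part of $g$ raised to $p^{-1}$ modulo $m$. This is the same Bézout trick as in Lemma~\ref{rnd=rn}: since $m$ is coprime to $p$, the map $z\mapsto z^p$ on the $p'$-part of $\langle g\rangle$ is a bijection.

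The cleanest way to write this is to define, for each commuting $t$, the element
\[
h=t^{\alpha}\,g^{\beta},
\]
with exponents chosen by CRT so that $h^p=g$ and $h^m=t$. Then I check that the two maps are mutually inverse using the uniqueness of the decomposition of an element into its commuting $p$- and $p'$-components.

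\emph{Formula~\eqref{2p-rootsnumb2}.} For $p\ge3$, let $h\in\mathcal{R}_{2p}(g)$ and apply the same component-splitting with the coprime pair $(2,p)$. Set $s:=h^p$ with $s^2=g$, and set $t:=h^{2m}$, which after adjustment lies in $\mathcal{R}_p(g^m)$. Both $s$ and $t$ are powers of $h$, so they commute. Conversely, a commuting pair $(s,t)$ determines $h$ as a suitable product $s^{\alpha}t^{\beta}$, with exponents again given by CRT. Here I use that $t$ commutes with $g=s^2$, but I need the stronger fact that $t$ commutes with $s$ itself, which is exactly the condition built into the set. To make the exponents consistent, I will normalise $t$ through a power bijection on the $p$-part coming from the exponent $2$ (invertible mod $p$), and check that this reindexing is a bijection of $\{t\in\mathcal{R}_p(g^m)\colon ts=st\}$. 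The main obstacle is this exponent bookkeeping, so I will verify it on components: decompose $h=h_ph_{p'}$ and compute each side separately.

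\emph{Formula~\eqref{2p-primrootsnumb}.} This follows by restricting the bijection above to elements of order $2p\Ord(g)$. I need two facts:
\begin{itemize}
\item[(i)] $\Ord(h)=2p\Ord(g)$ if and only if $h$ is a primitive square root and a primitive $p$-th root, by Lemma~\ref{rootorder}, using multiplicativity of the primitivity conditions across the coprime primes $2$ and $p$.
\item[(ii)] Under the correspondence, $s=h^p$ is non-primitive exactly when $s\in\langle g\rangle$, and $t$ is non-primitive exactly when $t=1$ (or its image under the normalisation is $1$). This uses Remark~\ref{trivialroot}: non-trivial $p$-th roots are primitive, and a trivial root lies in the cyclic group generated by the element.
\end{itemize}
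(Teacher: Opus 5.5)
Your proposal is correct and follows essentially the paper's route. The paper uses the same two bijections: $h\mapsto h^m$, inverted by $t\mapsto g^{\alpha p^r}t^{\beta}$ with $\alpha p^{r+1}+\beta m=1$, and $h\mapsto (h^p,h^{2m})$, inverted by $(s,t)\mapsto s^ut^v$ with $up+2vm=1$. It then shows that $h$ is primitive exactly when $s\notin\langle g\rangle$ and $t\neq 1$, as you plan to do.

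Drop the hedged ``adjustment'' and ``normalisation'' of $t$. Since $(h^{2m})^p=g^m$ already holds and $s^ut^v$ inverts the map directly, neither is needed. Moreover, a reindexing $t\mapsto t^k$ with $k\not\equiv 1 \pmod{p^r}$ would not preserve $\mathcal{R}_p(g^m)$ when $r\ge 1$, so that step would be wrong if you actually used it.
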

\begin{proof}
    By Bézout's theorem, there exist $\alpha,\beta \in \Z$ such that $\alpha p^{r+1}+\beta m=1$. We define the map $\varphi_1\, \colon\, \left\{ t\in \mathcal{R}_p\bigl(g^m\bigr)\, \colon\,  tg=gt\right\}\to \mathcal{R}_p(g)$, by \[\varphi_1(t):=g^{\alpha p^{r}}t^\beta\, ,\]
    which is well defined thanks to Bézout's relation. Moreover, if $t_1,t_2\in \mathcal{R}_p\bigl(g^m\bigr)$, by Lemma~\ref{rootorder}, we have $\Ord(t_i)\, | \, p\Ord(g^m)=p^{r+1}$, for $i\in \{1,2\}$. Thus, if $\varphi_1(t_1)=\varphi_1(t_2)$, we have $t_1^\beta=t_2^\beta$, which implies that \[t_1=t_1^{1-\alpha p^{r+1}}=t_1^{\beta m}=t_2^{\beta m}=t_2^{1-\alpha p^{r+1}}=t_2\, .\]
    Hence, $\varphi_1$ is injective. Moreover, if $h\in \mathcal{R}_p(g)$, we have $\varphi_1(h^m)=g^{\alpha p^r}h^{\beta m}=h^{\alpha p^{r+1} +\beta m}=h$, and $\bigl(h^m\bigr)^p=g^m$, which proves that $\varphi_1$ is bijective.
    
    We now move to the second part of the lemma. We assume that $p\ge 3$; by Bézout's theorem, there exist $u,v\in \Z$ such that $up+2vm=1$. Define \[\mathcal{E}=\left\{(s,t)\in \mathcal{R}_2(g) \times\mathcal{R}_p\bigl(g^m\bigr)\, \colon\,  st=ts\right\}\, ,\]
    and let $\Phi:\mathcal{R}_{2p}(g)\to \mathcal{E}$ be the map defined by $\Phi(h)=\left(h^p,h^{2m}\right)$. The map $\Phi$ is well defined since $(h^{p})^2=g$ and $h^{2mp}=g^m$. Since $\left(a^{p},a^{2m}\right)=\left(b^{p},b^{2m}\right)$ implies that $a=a^{up} a^{2vm}=b^{up}b^{2vm}=b$, the map $\Phi$ is injective. Moreover, if $(s,t)\in \mathcal{E}$, define $h=s^ut^v$. We have $h^{2p}=(s^2)^{up} (t^p)^{2v}=g^{up+2mv}=g$. Using $s^2=g$, $t^p=g^m$, and $st=ts$, we deduce that:
    \[h^{2m}=s^{2mu}t^{2mv}=g^{mu}t^{1-up}=t g^{mu-mu}=t\quad\text{and}\quad h^{p}=s^{up}t^{vp}=s^{1-2mv}g^{mv}=s g^{mv-mv}=s\, .\]
    Thus, $\Phi(h)=(s,t)$. This proves that $\Phi$ is bijective.
    
    To deduce~\eqref{2p-primrootsnumb}, it suffices to prove that $h\in \mathcal{R}_{2p}(g)$ is primitive if and only if $\Phi(h)\in \bigl(\mathcal{R}_2(g)\setminus \langle g\rangle\bigr) \times  \bigl(\mathcal{R}_p(g^m)\setminus\{1\}\bigr)$. Let $h\in \mathcal{R}_{2p}(g)$, and denote $(s,t):=\Phi(h)$. If $h$ is not a primitive $2p$-th root, then by Lemma~\ref{rootorder}, two cases arise. First case: $\Ord(h) \mid p\Ord(g)$. In this case, $s=h^p$ is a trivial square root of $g$; indeed $\Ord(s)\mid \gcd(2\Ord(g),p\Ord(g))=\Ord(g)$. This implies that $s\in \langle g\rangle$. Second case: $\Ord(h)\mid 2\Ord(g)$. In this case, $h^2$ is a trivial $p$-th root of $g$. Thus $\gcd(p,\Ord(g))=1$, which means that $g^m=1$. Hence $h^{2m}\in \langle g^m\rangle =\{1\}$. Conversely, since $ts=st$ we have $\Ord(h)=\Ord(s^u t^v)\mid \gcd (\Ord(s),\Ord(t))$. If $s\in \langle g\rangle$, we have $\gcd(\Ord(s),\Ord(t))\mid p\Ord(g)$, and if $t=1$, we have $\gcd(\Ord(s),\Ord(t))=\Ord(s)\mid 2\Ord(g)$. This finishes the proof.
\end{proof}
With these counting formulas in hand, we can now prove that if $r_2(x)=r_2(y)$ and $r_p(x)=r_p(y)$, then $r_{2p}(y)-r_{2p}(x)$ is either a multiple of $p(p-1)$ or a multiple of $p^2$: 
\begin{lem}\label{difference2proots}
    Let $G$ be a finite group, let $p\ge 3$ be a prime number, and let $(x,y)\in G^2$ such that $\Ord(x)=\Ord(y)$. If $r_2(x)=r_2(y)$ and $r_p(x)=r_p(y)$, then $r_{2p}(x)-r_{2p}(y) \in p(p-1)\Z \cup p^2\Z$. If moreover $\Ord(x)$ and $\Ord(y)$ are even, then $r_{2p}(x)-r_{2p}(y) \in 2p(p-1)\Z\cup 2p^2\Z$.
\end{lem}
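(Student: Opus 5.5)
The plan is to work entirely with the counting formula~\eqref{2p-rootsnumb2}. Write $\Ord(x)=\Ord(y)=p^rm$ with $\gcd(p,m)=1$. For $g\in\{x,y\}$,
\[
r_{2p}(g)=\sum_{s\in\mathcal{R}_2(g)} N_g(s),\qquad N_g(s):=\#\{t\in \mathcal{C}_G(s)\, \colon\, t^p=g^m\}.
\]
Since $x$ and $y$ have the same order, the exponent $r$ is common to both, so I will split into the cases $r=0$ and $r\ge1$. The aim is to show that $r_{2p}(g)$ is congruent, modulo the required integer, to an expression that depends only on $r_2(g)$ and $r_p(g)$. The hypotheses $r_2(x)=r_2(y)$ and $r_p(x)=r_p(y)$ then give the result.

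\emph{Case $r=0$.} Here $g^m=1$. The term $t=1$ contributes exactly $r_2(g)$ to $r_{2p}(g)$, so these terms cancel in the difference. The remaining pairs $(s,t)$ have $t$ of order $p$, commuting with $s$. Commuting with $s$ is a property of the subgroup $\langle t\rangle$, and each subgroup of order $p$ has $p-1$ generators. Hence the remaining count is divisible by $p-1$.

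For divisibility by $p$, I use Frobenius' theorem in $H=\mathcal{C}_G(s)$: the number of solutions of $t^p=1$ in $H$ is a multiple of $p$. This gives $N_g(s)\equiv 0\pmod p$, hence $r_{2p}(g)\equiv 0 \pmod p$ for both $g=x$ and $g=y$. The remaining counts are then $\equiv -r_2(g)\pmod p$ and divisible by $p-1$. Since $\gcd(p,p-1)=1$, their difference lies in $p(p-1)\Z$.

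\emph{Case $r\ge1$.} Now any solution $t$ of $t^p=g^m$ has order $p^{r+1}$. For each such $t$, the elements $t^{1+p^rk}$ with $k\bmod p$ are $p$ distinct solutions, and they still commute with $s$. So every $N_g(s)$ is divisible by $p$. This gives only one factor of $p$, and the main obstacle is obtaining the second one.

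My first attempt will be to enlarge the acting group. Consider the exponent maps $h\mapsto h^a$ on $\mathcal{R}_{2p}(g)$, for $a\equiv 1\pmod{2\Ord(g)}$. Together with the $k$-action above, I will try to show that every orbit has size divisible by $p^2$, except for the orbits of roots $h$ with $\Ord(h)\mid 2p^r m$ or $h^2\in\langle g\rangle$.

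These exceptional roots are the non-primitive $2p$-th roots. Via \eqref{2p-primrootsnumb} and the characterization at the end of its proof, they correspond to pairs with $s\in\langle g\rangle$ or $t$ a trivial-type root. I then intend to count them in terms of $r_p(g)$ and $r_2(g)$, using \eqref{p-rootsnumber}. The hypotheses $r_2(x)=r_2(y)$ and $r_p(x)=r_p(y)$ should make these terms agree modulo $p^2$, which would give $r_{2p}(x)-r_{2p}(y)\in p^2\Z$.

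\emph{The extra factor of $2$.} When $\Ord(g)$ is even, $g$ has no trivial square root (Remark~\ref{trivialroot}). Then $s\mapsto s^{1+\Ord(g)}$ is a fixed-point-free involution on $\mathcal{R}_2(g)$: it preserves $s^2=g$ and does not fix $s$ because $\Ord(s)=2\Ord(g)$. It also preserves the set of $t$ commuting with $s$, since $s^{1+\Ord(g)}$ is a power of $s$.

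Hence every count above, including the contributions of the exceptional roots in the case $r\ge 1$, is even. This upgrades the conclusion to $2p(p-1)\Z\cup 2p^2\Z$, once one checks that the congruences modulo $p-1$, $p$ and $p^2$ are compatible with this pairing.
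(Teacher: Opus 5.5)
Your guiding idea, that $r_{2p}(g)$ should be congruent (modulo $p(p-1)$, resp.\ $p^2$) to a quantity determined by $r_2(g)$ and $r_p(g)$, is correct; that quantity is $r_2(g)r_p(g)$. However, neither case is actually proved.

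For $r=0$, Frobenius' theorem only says that the number of solutions of $t^p=1$ in $H=\mathcal{C}_G(s)$ is a multiple of $\gcd(p,|H|)$. If $p\nmid|\mathcal{C}_G(s)|$ then $N_g(s)=1$, so neither $N_g(s)\equiv 0$ nor $r_{2p}(g)\equiv 0\pmod p$ holds in general. For instance, in $G=C_4$ with $g=c_2$ and $p=3$ one has $r_6(c_2)=2$. This is more than a slip: your $r=0$ case never uses $r_p(x)=r_p(y)$, and that hypothesis cannot be dropped. Let $K=(C_4\times C_4)\rtimes C_3$, where $C_3$ acts fixed-point-freely via $(a,b)\mapsto(b,a^{-1}b^{-1})$. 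Let $G=K\times C_4$, $x=(1,c_2)$ and $y=((c_2,1),1)$. Then $\Ord(x)=\Ord(y)=2$ and $r_2(x)=r_2(y)=8$, but $r_6(x)=72$ and $r_6(y)=8$, and $64\notin 6\Z\cup 9\Z$. Here $r_3(x)=33\neq 1=r_3(y)$.

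For $r\ge 1$, your divisibility of each $N_g(s)$ by $p$ is fine, but the enlarged action gives nothing new. Under $\Phi(h)=(h^p,h^{2m})$, the map $h\mapsto h^a$ with $a=1+2p^rmj$ becomes $(s,t)\mapsto(s,t^{1+2mjp^r})$. This is your order-$p$ action again, so its orbits have size $1$ or $p$. In fact no argument can split the primitive roots into orbits of size divisible by $p^2$. In $C_{18}$ with $g=c_{18}^6$ (so $p=3$, $r=1$), the primitive $6$-th roots of $g$ are $c_{18},c_{18}^7,c_{18}^{13}$, so $\kappa_6(g)=3$.

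The missing idea is to compare $r_{2p}(g)$ with \emph{all} pairs in $\mathcal{R}_2(g)\times\mathcal{F}_p(g)$, where $\mathcal{F}_p(g)=\{t\in\mathcal{R}_p(g^m)\colon tg=gt\}$. The second factor $p$ should then come from conjugation rather than from exponent maps. Since a $t$ commuting with $s$ commutes with $s^2=g$, \eqref{p-rootsnumber} and \eqref{2p-rootsnumb2} give
\[ r_2(g)r_p(g)-r_{2p}(g)=\sum_{t\in\mathcal{F}_p(g)}\#\{s\in\mathcal{R}_2(g)\colon st\neq ts\}\, . \]
Under the hypotheses, the difference of these sums for $x$ and $y$ is exactly $r_{2p}(y)-r_{2p}(x)$. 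For $t\neq 1$, the set on the right is stable under conjugation by the $p$-group $\langle t\rangle$ (as $t$ commutes with $g$), and this action has no fixed points. So its size is divisible by $p$, and by $2p$ when $\Ord(g)$ is even, thanks to your involution $s\mapsto s^{1+\Ord(g)}$. Apart from $t=1$, which contributes nothing, the elements of $\mathcal{F}_p(g)$ come in blocks sharing the same set: the $p-1$ generators of $\langle t\rangle$ when $r=0$, and the $p$ elements $t^{1+kp^r}$ when $r\ge 1$. Each sum therefore lies in $p(p-1)\Z$ (resp.\ $p^2\Z$), or in $2p(p-1)\Z$ (resp.\ $2p^2\Z$) when the order is even. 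Since $r$ is the same for $x$ and $y$, the lemma follows.

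This also settles the compatibility check you left open. Evenness together with divisibility by $p(p-1)$ would not give $2p(p-1)$, because $p-1$ is even. The factor $2p$ must be obtained inside each block before multiplying by the block size.
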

\begin{proof}
    For all $g\in G$ with order $\Ord(g)=p^\alpha \ell$ such that $\gcd(p,\ell)=1$, we define \[\mathcal{F}_p(g):=\left\{ t\in \mathcal{R}_p\bigl(g^\ell\bigr)\, \colon\,  tg=gt\right\}\, .\]
    From the equality $r_2(x)r_p(x)=r_2(y)r_p(y)$ and~\eqref{p-rootsnumber}, we deduce 
    \[\sum_{t\in \mathcal{F}_p(x)}\left| \mathcal{R}_2(x)\right|=\sum_{t\in\mathcal{F}_p(y)}\left|\mathcal{R}_2(y)\right|\, .\]
    This, combined with~\eqref{2p-rootsnumb2}, implies that
    \begin{align*}
        r_{2p}(x)-r_{2p}(y)&=\sum_{t\in \mathcal{F}_p(x)}\#\left\{s\in \mathcal{R}_2(x)\, \colon\,  st=ts\right\}-\sum_{t\in \mathcal{F}_p(y)}\#\left\{s\in \mathcal{R}_2(y)\, \colon\,  st=ts\right\}\\
        &=\sum_{t\in \mathcal{F}_p(y)}\#\left\{s\in \mathcal{R}_2(y)\, \colon\,  st\ne ts\right\}-\sum_{t\in \mathcal{F}_p(x)}\#\left\{s\in \mathcal{R}_2(x)\, \colon\,  st\ne ts\right\}
    \end{align*}
    Let $t\in \mathcal{F}_p(x)\setminus\{1\}$. We prove that we have $\#\left\{s\in \mathcal{R}_2(x)\, \colon\,  st\ne ts\right\}\in p\Z$ and if moreover $\Ord(x)$ is even, then $\#\left\{s\in \mathcal{R}_2(x)\, \colon\,  st\ne ts\right\}\in 2p\Z$.
    Assume that $\left\{s\in \mathcal{R}_2(x)\, \colon\,  st\ne ts\right\}\ne \emptyset$. Writing $\Ord(x)=p^rm$ with $\gcd(m,p)=1$, we have $t\in \mathcal{R}_{p}(x^m)$. Since $ts\ne st$ we have that $t\notin \langle s\rangle$, and thus $t\notin\langle x^m \rangle$. This means, by Lemma~\ref{rootorder}, that $\Ord(t)$ is a strict multiple of $p^{r}$ that divides $p^{r+1}$. Hence, $\Ord(t)=p^{r+1}$. We consider the action by conjugation of $\langle t\rangle$ on the set $\left\{s\in \mathcal{R}_2(x)\, \colon\,  st\ne ts\right\}$. This action has no trivial orbits, and the cardinality of each orbit divides $p^{r+1}$. Thus, $p$ divides $\#\left\{s\in \mathcal{R}_2(x)\, \colon\,  st\ne ts\right\}$. If $\Ord(x)$ is even, then by Lemma~\ref{rootorder}, for all $s\in \mathcal{R}_2(x)$, we have $\Ord(s)=2\Ord(x)$. Since $s^{\Ord(x)+1}\in \mathcal{R}_2(x)\setminus\{s\}$, we can write $\{s\in \mathcal{R}_2(x)\, \colon\,  st\ne ts\}$ as a disjoint union of subsets of the form $\left\{s,s^{\Ord(x)+1}\right\}$. This implies that $\#\{s\in \mathcal{R}_2(x)\, \colon\,  st\ne ts\}$ is a multiple of $2p$.
    
    Since $t\in \mathcal{R}_p(x^m)$, two cases arise: if $r=0$ (i.e., $\gcd(\Ord(x),p)=1$), then for all $h\in\langle t\rangle\setminus \{1\} $ we have $h\in \mathcal{F}_p(x)\setminus\{1\}$ and \[\left\{s\in \mathcal{R}_2(x)\, \colon\,  st\ne ts\right\}=\left\{s\in \mathcal{R}_2(x)\, \colon\,  sh\ne hs\right\}.\]
    Since there are $p-1$ elements in $\langle t\rangle\setminus\{1\}$, we have \[\sum_{h\in \langle t\rangle\setminus\{1\}} \#\{s\in \mathcal{R}_2(y)\, \colon\,  sh\ne hs\}\equiv\begin{cases}
        0\pmod{ p(p-1)}\quad&\text{if } \Ord(x)\text{ is odd}\\
        0\pmod{ 2p(p-1)} &\text{otherwise}.
    \end{cases}\]
    Since the latter holds for all $t\in \mathcal{F}_p(x)\setminus\{1\}$, we have that if $r=0$, then \[\sum_{t\in \mathcal{F}_p(x)}\#\left\{s\in \mathcal{R}_2(x)\, \colon\,  st\ne ts\right\}\equiv\begin{cases}
        0\pmod{ p(p-1)}\quad&\text{if } \Ord(x)\text{ is odd}\\
        0\pmod{ 2p(p-1)} &\text{otherwise}.
    \end{cases} \] 
    If $r\ge 1$, the set $\langle t\rangle\cap\mathcal{F}_p(x)=\langle t\rangle\cap\mathcal{R}_p(x^m)$ has cardinality $p$ (more generally, any element in a cyclic $p$-group has either no $p$-th roots or has exactly $p$ $p$-th roots). Similarly as above, for all these $p$ elements $h\in \langle t\rangle\cap\mathcal{F}_p(x) $ we have
    \[\left\{s\in \mathcal{R}_2(x)\, \colon\,  st\ne ts\right\}=\left\{s\in \mathcal{R}_2(x)\, \colon\,  sh\ne hs\right\}\, .\]
    Thus, if $r\ge1$, then
    \[\sum_{t\in \mathcal{F}_p(x)}\#\left\{s\in \mathcal{R}_2(x)\, \colon\,  st\ne ts\right\}=\begin{cases}
        0\pmod{ p^2}\quad&\text{if } \Ord(x)\text{ is odd}\\
        0\pmod{ 2p^2} &\text{otherwise}.
    \end{cases} \]  
    
    Similarly,  \[\sum_{t\in \mathcal{F}_p(y)}\#\left\{s\in \mathcal{R}_2(y)\, \colon\,  st\ne ts\right\}\in\begin{cases}
        p(p-1)\Z\cup p^2\Z\quad&\text{if } \Ord(y)\text{ is odd}\\
        2p(p-1)\Z\cup 2p^2\Z &\text{otherwise}.
    \end{cases} \]  
    This proves the lemma.
\end{proof}
We now use Lemma~\ref{difference2proots} to deduce properties of groups satisfying the property $\mathcal{P}(2p;x,y)$.
\begin{lem}\label{primitive2proots}
    Let $G$ be a finite group. If $G$ satisfies the property $\mathcal{P}(2p;x,y)$, then $\kappa_{2p}(y)\ge p(p-1)$, and if moreover $\Ord(y)$ is even, then $\kappa_{2p}(y)\ge 2p(p-1)$. In particular, $2p\Ord(y)$ divides the order of $G$. Finally, writing $\Ord(x)=p^r m$ with $\gcd(m,p)=1$, there exists $(s,t)\in \mathcal{R}_2(x)\times \mathcal{R}_p(x^{m})$ such that $st\ne ts$.
\end{lem}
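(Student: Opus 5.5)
The plan is to extract everything from Lemma~\ref{difference2proots} combined with the primitive-root formula~\eqref{2p-primrootsnumb}. Property $\mathcal{P}(2p;x,y)$ gives $\Ord(x)=\Ord(y)$, $r_2(x)=r_2(y)$, $r_p(x)=r_p(y)$ (both $2$ and $p$ are squarefree and smaller than $2p$), and $r_{2p}(x)<r_{2p}(y)$. Lemma~\ref{difference2proots} then shows that $r_{2p}(y)-r_{2p}(x)$ is a positive element of $p(p-1)\Z\cup p^2\Z$. Hence $r_{2p}(y)-r_{2p}(x)\ge p(p-1)$, and in the even-order case it lies in $2p(p-1)\Z\cup 2p^2\Z$, so it is at least $2p(p-1)$.

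To turn this into a bound on $\kappa_{2p}(y)$, I will show that the number of non-primitive $2p$-th roots is the same for $x$ and $y$. By Lemma~\ref{rootorder}, a non-primitive $2p$-th root $h$ of $g$ has order dividing $p\Ord(g)$ or $2\Ord(g)$. In the bijection $\Phi$ of~\eqref{2p-rootsnumb2}, these correspond to pairs $(s,t)$ with $s\in\langle g\rangle$ or $t=1$. Counting by inclusion--exclusion:
\begin{itemize}
\item pairs with $t=1$ contribute $r_2(g)$;
\item pairs with $s\in\langle g\rangle$ contribute $\#(\mathcal{R}_2(g)\cap\langle g\rangle)\cdot r_p(g)$, since such $s$ commute with every element of $\mathcal{F}_p(g)$, which are exactly the $t$ commuting with $g$;
\item pairs with both conditions contribute $\#(\mathcal{R}_2(g)\cap\langle g\rangle)$.
\end{itemize}
Here $\#(\mathcal{R}_2(g)\cap\langle g\rangle)$ is $1$ if $\Ord(g)$ is odd and $0$ otherwise, so it depends only on $\Ord(g)$. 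Therefore
\[ r_{2p}(g)-\kappa_{2p}(g)=r_2(g)+\varepsilon(\Ord(g))\bigl(r_p(g)-1\bigr), \]
which takes the same value for $x$ and $y$. This gives $\kappa_{2p}(y)-\kappa_{2p}(x)=r_{2p}(y)-r_{2p}(x)$, and so $\kappa_{2p}(y)\ge p(p-1)$, or $\ge 2p(p-1)$ when $\Ord(y)$ is even.

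For the divisibility statement, $\kappa_{2p}(y)>0$ means some element of $G$ has order $2p\Ord(y)$, and Lagrange's theorem gives $2p\Ord(y)\mid |G|$.

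For the last assertion, I argue by contradiction. Suppose every $s\in\mathcal{R}_2(x)$ commutes with every $t\in\mathcal{R}_p(x^m)$. Each $t\in\mathcal{R}_p(x^m)$ that commutes with $x$ lies in $\mathcal{F}_p(x)$. If $\mathcal{R}_2(x)$ is nonempty, every such $t$ commutes with $s$, hence with $s^2=x$. Formula~\eqref{2p-rootsnumb2} then gives $r_{2p}(x)=r_2(x)\,r_p(x)$. On the other hand, \eqref{2p-rootsnumb2} and~\eqref{p-rootsnumber} give $r_{2p}(y)\le r_2(y)\,r_p(y)$ in general. Combining,
\[ r_{2p}(y)\le r_2(y)\,r_p(y)=r_2(x)\,r_p(x)=r_{2p}(x), \]
contradicting $r_{2p}(x)<r_{2p}(y)$. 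If $\mathcal{R}_2(x)=\emptyset$, then $r_2(y)=0$ as well, so $r_{2p}(y)=0$, which is again a contradiction. The main delicate step is the bookkeeping of non-primitive roots, namely checking that their count is determined by $r_2$, $r_p$ and $\Ord$ alone.
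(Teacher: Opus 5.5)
Your strategy is the paper's. You show that $x$ and $y$ have the same number of non-primitive $2p$-th roots, so that $\kappa_{2p}(y)-\kappa_{2p}(x)=r_{2p}(y)-r_{2p}(x)$, and then invoke Lemma~\ref{difference2proots} and Lagrange. For the last claim you compare $r_{2p}(y)\le r_2(y)r_p(y)$ with $r_{2p}(x)=r_2(x)r_p(x)$; your separate handling of $\mathcal{R}_2(x)=\emptyset$ is fine. The only difference is how you count non-primitive roots. The paper sorts them by order ($\Ord(g)$, $2\Ord(g)$, $p\Ord(g)$) and matches these with trivial roots, non-trivial square roots and non-trivial $p$-th roots via Lemma~\ref{rational-classes}. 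You instead do inclusion--exclusion on the pairs $\Phi(h)$. That is a legitimate alternative, but your count is wrong as written.

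The problem is the term with $t=1$. A pair $(s,1)$ lies in $\mathcal{R}_2(g)\times\mathcal{R}_p(g^m)$ only when $g^m=1$, i.e.\ only when $p\nmid\Ord(g)$. Set $\delta=1$ if $p\nmid\Ord(g)$ and $\delta=0$ otherwise. Then your first and third bullets should read $\delta\,r_2(g)$ and $\delta\,\varepsilon(\Ord(g))$, and the correct identity is $r_{2p}(g)-\kappa_{2p}(g)=\delta\,r_2(g)+\varepsilon(\Ord(g))\bigl(r_p(g)-\delta\bigr)$. Your version fails when $p\mid\Ord(g)$: for $g$ of order $2p$ in $C_{4p}$ one has $r_{2p}(g)=\kappa_{2p}(g)=0$ but $r_2(g)=2$. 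This case matters, since the lemma is applied with $\Ord(y)=p$ in Lemma~\ref{nonnormalpsylow}. Fortunately the corrected expression still depends only on $\Ord(g)$, $r_2(g)$ and $r_p(g)$. So $\kappa_{2p}(y)-\kappa_{2p}(x)=r_{2p}(y)-r_{2p}(x)$ still holds, and the rest of your argument goes through unchanged.
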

\begin{proof}
    By Lemma~\ref{rootorder}, if $g\in \mathcal{R}_{2p}(x)$ we have $\Ord(g)\in\{\Ord(x),2\Ord(x),p\Ord(x),2p\Ord(x)\}$. Since $\Ord(x)=\Ord(y)$, Remark~\ref{trivialroot} shows that for all $n\ge 1$ \[\#\{ g\in \mathcal{R}_n(x)\, \colon\,  \Ord(g)=\Ord(x)\}=\#\{ g\in \mathcal{R}_n(y)\, \colon\,  \Ord(g)=\Ord(y)\}\, .\]
    Furthermore, if $g\in \mathcal{R}_{2p}(x)\setminus \langle x\rangle$ then $\Ord(g)=2\Ord(x)$ if and only if $g^2$ is the trivial $p$-th root of $x$. Thus, by Lemma~\ref{rational-classes}, the number of $2p$-th roots of order $2\Ord(x)$ is either $0$ if $p\mid \Ord(x)$ or is exactly the number of non-trivial square roots of $x$. Similarly, the number of $2p$-th roots of order $p\Ord(x)$ is either $0$ if $\Ord(x)$ is even, or is exactly the number of non-trivial $p$-th roots of $x$. This shows that \begin{align*}
        \#\{ g\in \mathcal{R}_{2p}(x)\, \colon\,  \Ord(g)=2\Ord(x)\}&=\#\{ g\in \mathcal{R}_{2p}(y)\, \colon\,  \Ord(g)=2\Ord(y)\}\\
        \#\{ g\in \mathcal{R}_{2p}(x)\, \colon\,  \Ord(g)=p\Ord(x)\}&=\#\{ g\in \mathcal{R}_{2p}(y)\, \colon\,  \Ord(g)=p\Ord(y)\}\, .
    \end{align*}
    This implies that $\kappa_{2p}(y)-\kappa_{2p}(x)=r_{2p}(y)-r_{2p}(x)$. The lemma is thus a consequence of Lemma~\ref{difference2proots}. For the last statement, by~\eqref{2p-rootsnumb2} and~\eqref{p-rootsnumber}, defining $\mathcal{F}_p(x):=\{t\in \mathcal{R}_p(x^m)\, :\, tx=xt\}$ (and defining similarly $\mathcal{F}_p(y)$), we have
    \[r_{2p}(y)\le r_2(y)\bigl| \mathcal{F}_p(y)\bigr|=  r_2(y)r_p(y)\, ,\]
    where we used the trivial fact that if $t$ commutes with $s\in \mathcal{R}_2(y)$, then it commutes with $s^2=y$. Assume for the sake of a contradiction that for all $(s,t)\in \mathcal{R}_2(x)\times \mathcal{R}_p(x^{m})$ we have $st=ts$. Then, by $\eqref{2p-rootsnumb2}$, $r_{2p}(x)=r_2(x)\bigl| \mathcal{F}_p(x)\bigr|=r_2(x)r_p(x)$. This contradicts the fact that $r_{2p}(x)<r_{2p}(y)$, $r_2(x)=r_2(y)$ and $r_{p}(x)=r_{p}(y)$.
\end{proof}
\begin{rk}\label{noncommutativity-normalpsylow}
    If $G$ has a normal $p$-Sylow $P=\langle t\rangle$ of order $p$, Lemma~\ref{primitive2proots} implies that $\Ord(x)$ is coprime to $p$, that $x$ commutes with $t$, and that there exists a square root $s\in\mathcal{R}_2(x)$ that does not commute with $t$. Indeed, since $2p\Ord(y)$ divides $|G|$, $p$ is coprime to $\Ord(y)=\Ord(x)$. Moreover, since $y$ admits a primitive $2p$-th root, there exists a cyclic group containing an element of order $p$ and $y$, meaning that $y$ commutes with $t$, since the only subgroup of $G$ containing an element of order $p$ is $P$. This implies that $y$ has a non-trivial $p$-th root, and thus $x$ has a non-trivial $p$-th root, $x$ commutes with an element of order $p$, and thus it commutes with $t$. Finally, since there exists $(s,t')\in \mathcal{R}_2(x)\times \mathcal{R}_p(1)$ such that $st'\ne t's$, we have $t'\ne 1$.  Since $t'\in \langle t\rangle$, we have $st\ne ts$.
\end{rk}

\subsection{Reducing to the normal $p$-Sylow case}
Let us start with stating the following useful elementary lemma (see~\cite{Berkovich}*{Introduction, exercice 10}):
\begin{lem}\label{groupproductorder}
    Let $H_1,H_2$ be subgroups of a finite group $G$. We have \[|H_1H_2|= \frac{|H_1|\cdot |H_2|}{|H_1\cap H_2|}\, .\]
\end{lem}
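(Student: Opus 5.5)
The plan is to count the product set $H_1H_2=\{h_1h_2\colon h_1\in H_1,\,h_2\in H_2\}$ through the surjective multiplication map
\[ m\colon H_1\times H_2\to H_1H_2,\qquad m(h_1,h_2):=h_1h_2\, , \]
and to show that every fibre of $m$ has exactly $|H_1\cap H_2|$ elements. Since the fibres partition $H_1\times H_2$, a set with $|H_1|\cdot|H_2|$ elements, the claimed formula then follows at once. Note that $H_1H_2$ need not be a subgroup, so I will not use Lagrange's theorem. Everything is stated purely at the level of sets.

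First fix $g=h_1h_2\in H_1H_2$. For each $k\in H_1\cap H_2$ the pair $(h_1k,\,k^{-1}h_2)$ lies in $H_1\times H_2$, because $k\in H_1$ and $k^{-1}\in H_2$, and it maps to $g$. The assignment $k\mapsto (h_1k,k^{-1}h_2)$ is injective, since its first coordinate already determines $k$. Hence the fibre $m^{-1}(g)$ has at least $|H_1\cap H_2|$ elements.

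Conversely, suppose $(a,b)\in H_1\times H_2$ satisfies $ab=h_1h_2$. Put $k:=h_1^{-1}a$. Then $k\in H_1$. The relation $ab=h_1h_2$ also gives $k=h_2b^{-1}$, so $k\in H_2$. It follows that $a=h_1k$ and $b=k^{-1}h_2$, and therefore every element of the fibre arises from the construction of the previous paragraph. Combining the two directions, $|m^{-1}(g)|=|H_1\cap H_2|$ for every $g\in H_1H_2$. Summing over $g$ gives $|H_1|\,|H_2|=|H_1H_2|\,|H_1\cap H_2|$, which is the statement.

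The only point needing any care is the converse direction. One has to check that the element $k$ produced from an arbitrary preimage lies in both subgroups simultaneously, and this is exactly the two-line computation above. Alternatively, the same count can be obtained by noting that $H_1H_2$ is a union of left cosets $hH_2$ with $h\in H_1$, and that these cosets are indexed by $H_1/(H_1\cap H_2)$. Either route is elementary, so I expect no genuine obstacle.
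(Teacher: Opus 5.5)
Your proof is correct and complete. The fibre of the multiplication map over $h_1h_2$ is exactly $\{(h_1k,\,k^{-1}h_2)\colon k\in H_1\cap H_2\}$, and you verify both inclusions properly.

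The paper gives no argument of its own for this lemma; it only cites it as an elementary exercise in Berkovich's book. Your fibre count, or the equivalent count of the cosets $hH_2$ with $h\in H_1$ that you mention, is the standard proof being referred to.

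Your care not to assume that $H_1H_2$ is a subgroup is the right one. The paper applies the lemma to sets such as $\langle x\rangle\langle y\rangle$, $\langle c\rangle\langle g_i\rangle$ and $N_G(P)\langle z\rangle$, which need not be subgroups. In those places it only uses the set-level count together with $H_1H_2\subseteq G$.
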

We now use Lemma~\ref{groupproductorder} to deduce some properties on $|G|$, when $G$ satisfies $\mathcal{P}(2p;x,y)$, with $\Ord(x)$ being a prime power:
\begin{lem}\label{q-Sylow-divisibility}
    Let $G$ be a finite group satisfying the property $\mathcal{P}(2p;x,y)$. Assume that $\Ord(x)$ is a power of a prime $q$. Then $q^2$ divides $|G|$, and if moreover $q\in\{2,p\}$, then $q^4$ divides $|G|$.
\end{lem}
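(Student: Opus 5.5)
The plan is to argue by contradiction through the Sylow subgroups, using one key principle. \emph{If $\langle x\rangle$ and $\langle y\rangle$ are conjugate in $G$, then $x\,\mathcal{T}\,y$.} By Lemma~\ref{rational-classes} this would force $r_n(x)=r_n(y)$ for every $n$, which contradicts $r_{2p}(x)<r_{2p}(y)$. So under $\mathcal{P}(2p;x,y)$ the cyclic $q$-subgroups $\langle x\rangle$ and $\langle y\rangle$, which have the same order $q^a$, are never conjugate in $G$. Note that $a\ge1$: if $x=1$ then $y=1$ as well, since $\Ord(x)=\Ord(y)$.

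For the first claim, suppose $q^2\nmid|G|$. Then a Sylow $q$-subgroup has order $q$, so $a=1$. Both $\langle x\rangle$ and $\langle y\rangle$ are Sylow $q$-subgroups, hence conjugate by Sylow's theorem. This contradicts the principle above, so $q^2\mid |G|$. I would keep this in the same form as a template for the refined cases.

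For $q\in\{2,p\}$ I would again let $Q$ be a Sylow $q$-subgroup and assume $|Q|\le q^3$. Several facts restrict what $Q$ can contain.
\begin{itemize}
\item By Lemma~\ref{primitive2proots}, $2p\Ord(y)$ divides $|G|$, and $y$ has a primitive $2p$-th root $h$ of order $2pq^a$. Hence $G$ contains a cyclic subgroup of order $q^{a+1}$ containing $y$ as a $q$-th power, so $a+1\le 3$.
\item For $q=2$, every square root of $x$ has order $2^{a+1}$ by Lemma~\ref{rootorder}, since $\Ord(x)$ is even.
\item Since $r_2(x)=r_2(y)>0$ (resp.\ $r_p(x)=r_p(y)>0$), $x$ is also a $q$-th power of an element of order $q^{a+1}$.
\item Lemma~\ref{primitive2proots} gives a pair $(s,t)$ with $s\in\mathcal{R}_2(x)$, $t\in\mathcal{R}_p(x^m)$ and $st\ne ts$.
\end{itemize}
After conjugating, both $\langle x\rangle$ and $\langle y\rangle$ lie in $Q$, each inside a cyclic subgroup of $Q$ of order $q^{a+1}$. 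At the same time they must be non-conjugate in $G$, and in particular non-conjugate in $Q$.

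I would then run through the groups of order at most $q^3$ that have a cyclic subgroup of order $q^{a+1}\ge q^2$. These are the cyclic groups, $C_{q^2}\times C_q$, the modular and dihedral-type groups, and $Q_8$ and $D_8$ when $q=2$. In the cyclic cases and in $Q_8$, all cyclic subgroups of a given order that are $q$-th powers coincide or are $\mathrm{Aut}$-fused. In the others, the $q$-th powers of elements of order $q^{a+1}$ generate a single subgroup: for example $\mho_1(Q)$ is cyclic of order $q$ when $|Q|=q^3$ and $Q$ is non-cyclic. I expect this to force $\langle x\rangle=\langle y\rangle$ as subgroups of $Q$, which is the desired contradiction.

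The main obstacle is this last step. The situation with $a+1=2$ inside $C_{q^2}\times C_q$ or a group of order $q^3$ needs care, because the two $q$-th powers might a priori lie in different conjugates of $Q$, and one must check that they land in a common characteristic subgroup $\mho_1$. I would first try to show that $\mho_1(Q)$ is cyclic of order $q^a$ whenever $|Q|\le q^3$ and $Q$ contains an element of order $q^{a+1}$. Failing that, I would use the non-commuting pair $(s,t)$ from Lemma~\ref{primitive2proots} together with Lemma~\ref{groupproductorder} applied to $\langle s\rangle$ and $t\langle s\rangle t^{-1}$, to gain an extra factor of $q$ in $|G|$.
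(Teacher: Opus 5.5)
Your first part is correct and differs slightly from the paper. You get $q^2\mid|G|$ from conjugacy of Sylow subgroups: a Sylow $q$-subgroup of order $q$ would make $\langle x\rangle$ and $\langle y\rangle$ conjugate, hence $x\,\mathcal{T}\,y$, contradicting $r_{2p}(x)<r_{2p}(y)$ by Lemma~\ref{rational-classes}. The paper instead puts $x,y$ in one Sylow subgroup and applies Lemma~\ref{groupproductorder} to $\langle x\rangle\langle y\rangle$. The setup of your second part is also sound. You may conjugate the pairs $(t_x,x)$ and $(t_y,y)$ into $Q$ independently, by Corollary~\ref{invarianceofpropertyP}, so the worry about ``different conjugates of $Q$'' does not arise.

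The gap is the decisive step for $q\in\{2,p\}$. You only announce it (``I expect this to force $\langle x\rangle=\langle y\rangle$''), and two of the justifications you sketch are wrong:
\begin{itemize}
\item Fusion of $\langle x\rangle$ and $\langle y\rangle$ by an automorphism of $Q$ gives nothing. Your principle needs conjugacy in $G$, so you really need the two subgroups to be equal.
\item The claim you would try first, that the subgroup generated by $q$-th powers is cyclic of order $q^a$, fails for $Q\cong C_{q^3}$ with $a=1$, where that subgroup has order $q^2$.
\end{itemize}

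The step closes without any case list. Both $x=t_x^q$ and $y=t_y^q$ lie in the Frattini subgroup $\Phi(Q)$.
\begin{itemize}
\item If $Q$ is cyclic, then $\langle x\rangle=\langle y\rangle$, since a cyclic group has a unique subgroup of order $q^a$.
\item Otherwise, Burnside's basis theorem gives $|Q:\Phi(Q)|\ge q^2$, so $|\Phi(Q)|\le q$ when $|Q|\le q^3$. Since $x\ne1$, this forces $\langle x\rangle=\langle y\rangle=\Phi(Q)$.
\end{itemize}
Either case contradicts $\langle x\rangle\ne\langle y\rangle$. The non-commuting pair from Lemma~\ref{primitive2proots} and your fallback plan are not needed.

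For comparison, the paper uses no structure theory of $q$-groups. It takes $t_x,t_y\in Q$ of order $q^{a+1}$ with $\langle x\rangle\ne\langle y\rangle$. Since subgroups of a cyclic $q$-group form a chain, $\langle t_x\rangle\cap\langle t_y\rangle=\langle x\rangle\cap\langle y\rangle$. Lemma~\ref{groupproductorder} then gives $|\langle t_x\rangle\langle t_y\rangle|\ge q^2\,|\langle x\rangle\langle y\rangle|\ge q^4$ directly.
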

\begin{proof}
    Let $Q\subset G$ be a $q$-Sylow of $G$. $Q$ contains a conjugate of $x$ and a conjugate of $y$. Up to replacing $x$ and $y$ with these conjugates (see Corollary~\ref{invarianceofpropertyP}), we may assume that $x,y\in Q$. Since $\langle x\rangle \ne \langle y \rangle $, we have, by Lemma~\ref{groupproductorder}, $|\langle x\rangle \langle y\rangle|\ge q^2$. Hence $q^2$ divides $|Q|$, thus $q^2$ divides $|G|$. If moreover $q\in\{2,p\}$, then by Lemma~\ref{primitive2proots}, $x$ and $y$ have non-trivial $q$-th roots in $G$, say $t_x$ and $t_y$. Moreover, $Q$ contains conjugates of $t_x$ and $t_y$. Up to replacing $t_x$ and $t_y$ with these conjugates, we may assume $t_x,t_y\in Q$. As $x\notin \langle t_y\rangle$, we have $\langle x\rangle \not \subset \langle t_x \rangle \cap \langle t_y\rangle $, and since both $\langle x\rangle$ and $\langle t_x \rangle \cap \langle t_y\rangle$ are subgroups of the cyclic $q$-group $\langle t_x\rangle$, we have $\langle t_x \rangle \cap \langle t_y\rangle \subset \langle x\rangle$ (here we used the fact that subgroups of a cyclic $q$-group form a chain). Similarly, $\langle t_x \rangle \cap \langle t_y\rangle \subset \langle y\rangle$. Thus, $\langle t_x \rangle \cap \langle t_y\rangle = \langle x\rangle\cap \langle y \rangle$. Hence, by Lemma~\ref{groupproductorder}, we have \[|\langle t_x\rangle\langle t_y\rangle|\ge \frac{|\langle t_x\rangle|\cdot|\langle t_y\rangle|}{|\langle x\rangle\cap\langle y\rangle|}=q^2 \frac{|\langle x\rangle|\cdot|\langle y\rangle|}{|\langle x\rangle\cap \langle y\rangle|}\ge q^4\, .\]
    Hence $q^4$ divides $|Q|$, thus it divides $|G|$.
\end{proof}

The following lemma is useful when the prime $p$ divides $\Ord(x)$ or when $p$ is small.
    \begin{lem}\label{lowerboundsmallp}
        Let $G$ be a group satisfying the property $\mathcal{P}(2p;x,y)$. We have $|G|\ge 8p\Ord(y)$, and if moreover $\langle x\rangle \cap\langle y\rangle=\{ 1\}$ (in particular, when $\Ord(y)$ is a prime), we have $|G|\ge 4p\Ord(y)^2$.
    \end{lem}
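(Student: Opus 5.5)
Since $\Ord(x)=\Ord(y)$ and the statement is symmetric in spirit, the plan is to exhibit two large subsets of $G$ whose product (via Lemma~\ref{groupproductorder}) or disjoint union forces the bound. By Lemma~\ref{primitive2proots}, $y$ has a primitive $2p$-th root $h$, so $\Ord(h)=2p\Ord(y)$, and $\langle y\rangle\subset\langle h\rangle$. The same lemma gives a pair $(s,t)\in\mathcal{R}_2(x)\times\mathcal{R}_p(x^m)$ with $st\ne ts$. The idea is to combine $\langle h\rangle$ with another cyclic subgroup built from roots of $x$, and to control their intersection.

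\textbf{First bound.} I would use the counting $\kappa_{2p}(y)\ge p(p-1)$, or $2p(p-1)$ when $\Ord(y)$ is even, from Lemma~\ref{primitive2proots}. Each cyclic group $\langle h\rangle$ of order $2p\Ord(y)$ contains $y$ and only $\varphi(2p)\cdot(\text{something})$ primitive $2p$-th roots of $y$. Concretely, inside a cyclic group $\langle h\rangle$, the number of $2p$-th roots of $y$ is at most $2p$, so there are at least $(p-1)/2$ distinct such cyclic subgroups, which is too weak by itself. The better route is to split $G\supset \mathcal{R}_2(x)\cup\mathcal{R}_2(y)$ and use the noncommuting pair. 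The element $s$ has order $2\Ord(x)$ (or at least $\Ord(x)$), and $t$ has order $p\cdot p^r$ or $p$. Since $st\ne ts$, the subgroup $\langle s,t\rangle$ is noncyclic, and $t$ normalizes neither trivially. I would take $H_1=\langle h\rangle$ with $|H_1|=2p\Ord(y)$, and $H_2$ a cyclic subgroup generated by a non-trivial square root of $x$ not in $H_1$. Then $H_1\cap H_2$ is a proper subgroup of $H_2$ of index at least $2$ (it cannot contain $x$'s square root outside), and $H_1H_2$ has size at least $2\cdot 2p\Ord(y)\cdot(\text{index})$. Getting the factor $8$ requires index $\ge 4$ or two independent doublings. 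Hence $|G|=|H_1H_2|\cdot[\ldots]$, and since $|G|$ is divisible by $2p\Ord(y)$ and $G\ne\langle h\rangle\langle s\rangle$ strictly (by the noncommutation with $t$), I would show that $|G|/(2p\Ord(y))$ is at least $4$. Here I would use that it is $\ge 2$ from $s\notin\langle h\rangle$ and $\ne 2,3$ via parity and the structure $r_2(x)=r_2(y)$.

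\textbf{Second bound.} If $\langle x\rangle\cap\langle y\rangle=\{1\}$, I would apply Lemma~\ref{groupproductorder} to $\langle h\rangle$ (order $2p\Ord(y)$, containing $y$) and a cyclic group $\langle s\rangle$ with $s\in\mathcal{R}_2(x)$ non-trivial, so $x\in\langle s\rangle$ and $|\langle s\rangle|=2\Ord(x)$. The intersection $\langle h\rangle\cap\langle s\rangle$ is cyclic. If it contained an element whose power lies in both $\langle x\rangle$ and $\langle y\rangle$, it would be trivial. So its order divides $\gcd$-type quantities and is at most $2$ after projecting away $\langle x\rangle$ and $\langle y\rangle$, since a cyclic group has unique subgroups of each order. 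Then $|G|\ge |\langle h\rangle\langle s\rangle|\ge 2p\Ord(y)\cdot 2\Ord(y)/2\cdot 2$, giving $4p\Ord(y)^2$ once the intersection order is shown to be $1$ or handled in the case where the trivial-root case makes $s$ of order $\Ord(x)$.

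The main obstacle is making the intersection estimates sharp. Subgroups of cyclic groups of a given order being unique is the key tool: any common subgroup of order dividing $\Ord(x)$ lies in both $\langle x\rangle$ and $\langle y\rangle$. I expect a delicate case split on whether $\Ord(y)$ is even and whether $p\mid\Ord(y)$ to extract the final factor of $2$.
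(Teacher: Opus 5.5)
Your proposal has a genuine gap, and it sits exactly at the intersection estimate you call ``the main obstacle''. You take one primitive $2p$-th root $h$ of $y$ and one non-trivial square root $s$ of $x$, so $\Ord(s)=2\Ord(x)$. One thing does work: $x\notin\langle h\rangle$, since otherwise $\langle x\rangle=\langle y\rangle$, which is impossible because $r_{2p}$ is constant on rational classes. Hence $[\langle s\rangle:\langle h\rangle\cap\langle s\rangle]\ge 3$. Moreover, if $|\langle h\rangle\cap\langle s\rangle|$ divides $\Ord(x)$, the intersection lies in $\langle x\rangle\cap\langle y\rangle$ and the index is $\ge 4$.

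But nothing prevents $\langle h\rangle\cap\langle s\rangle$ from containing the Sylow $2$-subgroup of $\langle s\rangle$. In that case the index is only an odd number $\ge 3$, and you get just $|G|\ge 6p\Ord(y)$. Since $2p\Ord(y)\mid |G|$, the remaining task is to exclude $|G|=6p\Ord(y)$, and your suggested repairs do not do this:
\begin{itemize}
\item ``$G\ne\langle h\rangle\langle s\rangle$ by the noncommutation with $t$'' fails because $\langle h\rangle\langle s\rangle$ need not be a subgroup, and a product of two cyclic subgroups can be non-abelian. The pair $(s,t)$ from Lemma~\ref{primitive2proots} therefore gives no information here.
\item ``Parity and $r_2(x)=r_2(y)$'' is not an argument as stated.
\end{itemize}
The second bound has the same defect. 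When $\langle x\rangle\cap\langle y\rangle=\{1\}$ you only get $|\langle h\rangle\cap\langle s\rangle|\in\{1,2\}$, and the value $2$ is not excluded when $\Ord(x)$ is odd. That yields only $2p\Ord(y)^2$, and the extra factor $2$ in your final computation is unjustified. Using a $p$-th root of $x$ instead just replaces this by a common subgroup of order $p$ that you cannot rule out.

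The missing idea is to exploit two choices simultaneously: the root of $y$ and the prime. The count $\kappa_{2p}(y)\ge p(p-1)$ (resp.\ $2p(p-1)$ when $\Ord(y)$ is even), which you set aside as too weak, is exactly what is needed. A cyclic group of order $2p\Ord(y)$ contains at most $p$ (resp.\ $2p$) primitive $2p$-th roots of $y$. So there are primitive roots $g_1,g_2$ with $\langle g_1\rangle\ne\langle g_2\rangle$.

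Then $|\langle g_1\rangle\cap\langle g_2\rangle|/\Ord(y)\in\{1,2,p\}$. Pick $q\in\{2,p\}$ not dividing this quotient, write $\Ord(x)=q^r\ell$ with $q\nmid \ell$, and let $c$ be a non-trivial (hence primitive) $q$-th root of $x$. It exists because $r_q(x)=r_q(y)$ and $g_1^{2p/q}$ is one for $y$. The subgroups of order $q^{r+1}$ of $\langle g_1\rangle$ and $\langle g_2\rangle$ are distinct. Hence for some $i$, the one in $\langle g_i\rangle$ differs from the one in $\langle c\rangle$. It follows that $|\langle c\rangle\cap\langle g_i\rangle|$ divides $\Ord(x)$, and so $\langle c\rangle\cap\langle g_i\rangle=\langle x\rangle\cap\langle y\rangle$.

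Lemma~\ref{groupproductorder} then gives $|G|\ge 2pq\,[\langle x\rangle:\langle x\rangle\cap\langle y\rangle]\,\Ord(y)$. This is $\ge 8p\Ord(y)$ because the index is at least $2$. When the intersection is trivial the index is $\Ord(y)$, giving $\ge 4p\Ord(y)^2$.
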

    \begin{proof}
        Let $g_1,g_2\in G$ be primitive $2p$-th roots of $y$ such that $\langle g_1\rangle\ne \langle g_2\rangle$. We have that $\frac{|\langle g_1\rangle \cap\langle g_2\rangle |}{\Ord(y)}$ is not divisible by $2p$, otherwise $|\langle g_1\rangle \cap\langle g_2\rangle |\ge 2p\Ord(y)=\Ord(g_1)=\Ord(g_2)$ which would imply $\langle g_1\rangle=\langle g_1\rangle \cap\langle g_2\rangle =\langle g_2\rangle$. 
        
        Let $q\in \{2,p\}$ be such that $q$ does not divide $\frac{|\langle g_1\rangle \cap\langle g_2\rangle |}{\Ord(y)}$, and let us write $\Ord(x)=q^r \ell$ where $r\ge 0$ and $\gcd(q,\ell)=1$. Let $v\in \{2,p\}\setminus\{q\}$ be the second prime and let $c$ be a non-trivial $q$-th root of $x$. We have that $g_1^{v\ell}$ and $g_2^{v\ell}$ are of order $q^{r+1}$ and are non-trivial $q$-th roots of $y^\ell$. Moreover, since $\langle g_1^{v\ell}\rangle\cap \langle g_2^{v\ell}\rangle \subset \langle g_1\rangle\cap \langle g_2\rangle$, $q^{r+1}$ does not divide $| \langle g_1^{v\ell}\rangle\cap \langle g_2^{v\ell}\rangle  |$. Thus, $\langle g_1^{v\ell}\rangle\ne \langle g_2^{v\ell}\rangle$. Hence, we have $\langle c\rangle \ne \langle g_i^{v\ell}\rangle$ for some $i\in \{1,2\}$. This proves that $|\langle c\rangle \cap \langle g_i\rangle|$ is not divisible by $q^{r+1}$, and since it divides $\Ord(c)=q^{r+1}\ell$, we deduce that $|\langle c\rangle \cap \langle g_i\rangle|$ divides $q^r\ell=\Ord(x)=\Ord(y)$. Combining this together with the fact that any subgroup of $\langle c\rangle $ of order dividing $\Ord(x)$ is contained in $\langle x\rangle$, and any subgroup of $\langle g_i\rangle$ of order dividing $\Ord(y)$ is contained in $\langle y\rangle$, we conclude that $\langle c\rangle \cap \langle g_i\rangle=\langle x\rangle \cap \langle y\rangle$. This, combined with Lemma~\ref{groupproductorder}, proves that \[|G|\ge|\langle c\rangle\langle g_i\rangle|\ge \frac{\Ord(c)\Ord(g_i)}{|\langle x\rangle\cap \langle y\rangle|}=2p q \frac{\Ord(x)}{|\langle x\rangle\cap \langle y\rangle|}\Ord(y)\ge 8p\Ord(y).\]
        If $\langle x\rangle \cap \langle y\rangle=\{1\}$, then $\Ord(x)/|\langle x\rangle\cap \langle y\rangle|=\Ord(x)$, which yields $|G|\ge 2pq \Ord(y)^2\ge 4p\Ord(y)^2$.
        This proves the lemma.

    \end{proof}

The following lemma reduces the proof of Theorem~\ref{lowerbound} to the case where the $p$-Sylow is normal and has order $p$:
\begin{lem}\label{nonnormalpsylow}
    Let $G$ be a finite group satisfying $\mathcal{P}(2p;x,y)$. If $p^2$ divides $|G|$ or if $G$ has non-normal $p$-Sylow subgroups, then $|G|\ge \min\bigl(16p^2, 4p^2\Ord(y)\bigr)$. 
\end{lem}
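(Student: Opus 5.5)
The plan is to combine two facts established above. By Lemma~\ref{primitive2proots}, $2p\Ord(y)$ divides $|G|$ and $y$ has at least $p(p-1)$ primitive $2p$-th roots. By Lemma~\ref{lowerboundsmallp}, $|G|\ge 8p\Ord(y)$, and $|G|\ge 4p\Ord(y)^2$ when $\langle x\rangle\cap\langle y\rangle=\{1\}$. The two hypotheses, $p^2\mid |G|$ and $p$-Sylow subgroups non-normal, are treated as separate cases.

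\emph{Case $p^2\mid |G|$.} Suppose first that $p\mid\Ord(y)$. If $\Ord(y)\ge 2p$, then $|G|\ge 8p\Ord(y)\ge 16p^2$. Otherwise $\Ord(y)=p$ is prime, and the second part of Lemma~\ref{lowerboundsmallp} gives $|G|\ge 4p^3=4p^2\Ord(y)$. Suppose next that $p\nmid \Ord(y)$. Then $|G|$ is a multiple of $\mathrm{lcm}(p^2,2p\Ord(y))=2p^2\Ord(y)$. Any multiple at least twice this already gives $|G|\ge 4p^2\Ord(y)$, and for $p=3$ the bound $|G|\ge 8p\Ord(y)=24\Ord(y)>18\Ord(y)$ rules out $|G|=2p^2\Ord(y)$. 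So it remains to exclude $|G|=2p^2\Ord(y)$ for $p\ge5$. In that case the $2$-part of $|G|$ equals the $2$-part of the order $2p\Ord(y)$ of a primitive $2p$-th root $g$ of $y$, so a Sylow $2$-subgroup is cyclic (a power of $g$). Burnside's transfer theorem then yields a normal $2$-complement $M$, of index $2$ when $\Ord(y)$ is odd. I would then compare square roots of $x$ and $y$ modulo $M$, using the counting formula \eqref{2p-rootsnumb2} and the non-commuting pair $(s,t)$ from Lemma~\ref{primitive2proots}, to reach a contradiction with $r_2(x)=r_2(y)$, $r_p(x)=r_p(y)$. This sub-case is where I expect the most friction.

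\emph{Case $p\,\|\,|G|$ with $P$ not normal.} Let $g$ be a primitive $2p$-th root of $y$ and let $t$ generate the subgroup of order $p$ in $\langle g\rangle$. Then $\langle g\rangle\subset \mathcal{C}_G(t)\subset N_G(\langle t\rangle)$, so $|N_G(\langle t\rangle)|$ is a multiple of $2p\Ord(y)$. Since the number of Sylow $p$-subgroups is $\equiv1\pmod p$ and is larger than $1$, we get $[G:N_G(\langle t\rangle)]\ge p+1$. This alone gives $|G|\ge 2p(p+1)\Ord(y)$, which is short of the target by roughly a factor $2$.

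\emph{Main obstacle.} To recover that factor, I would use the $p(p-1)$ primitive $2p$-th roots of $y$: they generate at least $p$ distinct cyclic subgroups of order $2p\Ord(y)$. If two of them share the same subgroup of order $p$, then by Lemma~\ref{groupproductorder} the normalizer of that subgroup has order at least $4p\Ord(y)$, and $|G|\ge (p+1)\cdot 4p\Ord(y)\ge 4p^2\Ord(y)$. If all of them have distinct subgroups of order $p$, then $y$ centralizes at least $p$ distinct Sylow $p$-subgroups. Counting $\mathcal{C}_G(y)$ via these, together with the conjugation action of $\langle y\rangle$ on the set of Sylow $p$-subgroups, should force $|G|\ge 16p^2$. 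Making this dichotomy airtight is the step I would work out first.
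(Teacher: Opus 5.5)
Your reductions are sound: the case $p\mid\Ord(y)$ via Lemma~\ref{lowerboundsmallp}, the divisibility $2p^2\Ord(y)\mid |G|$, the case $p=3$, the bound $n_p\ge p+1$, and the first branch of your dichotomy all hold. But the proposal stops at exactly the two steps that carry the content, so as written it is not a proof.

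\emph{First gap: excluding $|G|=2p^2\Ord(y)$ for $p\ge 5$.} The normal $2$-complement $M$ does exist, but ``compare square roots modulo $M$'' gives no mechanism for a contradiction. Every non-trivial square root of $x$ or of $y$ maps to a generator of the cyclic group $G/M$, so the quotient does not separate them. The missing idea is a factorization by cardinality. Take any Sylow $p$-subgroup $P'$ and any non-trivial square root $s_y$ of $y$; its order $2\Ord(y)$ is prime to $p$. Then $|P'\langle s_y\rangle|=|G|$, so $G=P'\langle s_y\rangle$. Since $|P'|=p^2$, $P'$ is abelian, so every element of order $p$ commuting with $s_y$ is central.

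Now feed this into \eqref{2p-rootsnumb2}. By \eqref{p-rootsnumber}, the trivial square roots (if any) contribute $r_p(x)=r_p(y)$ on both sides. Each non-trivial square root of $y$ contributes exactly the number of central elements of $\mathcal{R}_p(1)$, while each non-trivial square root of $x$ contributes at least that many. Hence $r_{2p}(x)\ge r_{2p}(y)$, a contradiction. This is the paper's argument.

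\emph{Second gap: the second branch of your dichotomy.} Counting inside $\mathcal{C}_G(y)$ only gives $|\mathcal{C}_G(y)|\ge 2p(p+1)\Ord(y)$, which you already had. When $y\in\mathcal{Z}(G)$, which nothing excludes, the conjugation action of $\langle y\rangle$ on the Sylow $p$-subgroups is trivial. So the branch is stuck, and the target $16p^2$ is stronger than you need.

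The missing ingredient is that you use only roots of $y$; you must bring in $x$. Since $y$ has a non-trivial $p$-th root and $r_p(x)=r_p(y)$, Remark~\ref{trivialroot} shows that $x$ has one too. Hence $x$ commutes with some $t_x$ of order $p$. After conjugating, some $x'=axa^{-1}$ centralizes $P=\langle g^{2\Ord(y)}\rangle$. Moreover $x'\notin\langle g\rangle$. Otherwise $\langle x'\rangle$ would be the unique subgroup of order $\Ord(y)$ in $\langle g\rangle$, namely $\langle y\rangle$, and Lemma~\ref{rational-classes} would give $r_{2p}(x)=r_{2p}(y)$.

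Therefore $\langle g\rangle\subsetneq \mathcal{C}_G(P)$, so $|N_G(P)|\ge 4p\Ord(y)$ and $|G|\ge 4p(p+1)\Ord(y)$. No dichotomy is needed. The paper reaches the same bound by estimating $|\langle s\rangle\langle t_x x\rangle|$, using $\langle s\rangle\cap\langle t_x x\rangle=\langle x\rangle\cap\langle y\rangle$ and $\langle x\rangle\neq\langle y\rangle$.
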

\begin{proof}
     Let $t_y$ be a primitive $2p$-th root of $y$. If $\Ord(y)\ge 2p$, we have, by Lemma~\ref{lowerboundsmallp}, $|G|\ge 8p\Ord(y)\ge 16p^2$. If $\Ord(y)=p$, then Lemmas~\ref{q-Sylow-divisibility} and~\ref{primitive2proots} imply that $|G|\ge 2p^4\ge 18p^2$. We may assume that $\gcd(p,\Ord(y))=1$.
     
     If $p^2$ divides $|G|$, we have, by Lemma~\ref{primitive2proots}, $2\Ord(y)p^2$ divides $|G|$. We now prove that we cannot have $|G|=2p^2\Ord(y)$, which proves that $|G|\ge 4p^2\Ord(y)$. Assume for the sake of a contradiction that $|G|=2p^2\Ord(y)$. Let $P$ be a $p$-Sylow of $G$ and let $s_y$ be a non-trivial square root of $y$. We have, because of the equality of cardinalities, $G=P\langle s_y\rangle$. Since $P$ has order $p^2$, it is isomorphic to either $C_{p^2}$ or $C_p\times C_p$. In both cases any element of order $p$ that commutes with $s_y$ is central and thus commutes with all square roots of $x$. Since this holds replacing $s_y$ by any other non-trivial square root of $y$, by~\eqref{2p-rootsnumb2}, this implies that $r_{2p}(x)\ge r_{2p}(y)$, which contradicts the fact that $G$ satisfies the property $\mathcal{P}(2p;x,y)$.
    
    Let us assume that $G$ has a non-normal $p$-Sylow of order $p$. Since $\Ord(x)=\Ord(y)$ and $r_p(x)=r_p(y)$, $x$ admits a primitive $p$-th root. By~\eqref{p-rootsnumber}, there exists $t_x\in G$ of order $p$ such that $t_x x=xt_x$. Let $s\in G$ be a primitive $2p$-th root of $y$. If $t_x\notin \langle s\rangle$, then $\langle t_x x\rangle\cap \langle s\rangle=\langle x\rangle\cap\langle y\rangle$, indeed any subgroup of $\langle t_x x\rangle$ (resp. $\langle s\rangle$) whose order is coprime to $p$ is a subgroup of $\langle x\rangle$ (resp. $\langle s^p\rangle$); moreover, if the order of a subgroup of $\langle s^p\rangle$ divides the order of $y$, then it is a subgroup of $\langle y\rangle$. Thus, by Lemma~\ref{groupproductorder}, we have \[|G|\ge\bigl| \langle s, t_x x\rangle\bigr|\ge \bigl|\langle s\rangle \langle t_x x\rangle\bigr|\ge \frac{2p^2\Ord(x)^2}{|\langle x\rangle\cap\langle y\rangle|}\ge4p^2\Ord(x)\, . \]\end{proof}

\subsection{The Schur--Zassenhaus theorem}
    In what follows, the Schur--Zassenhaus theorem will play a major role. We state it in the following particular form:
    \begin{thm}[Schur--Zassenhaus]
    Let $G$ be a finite group with a normal $p$-Sylow $P$. Then $P$ admits a complement $H$ in $G$; that is, there exists a subgroup $H$ in $G$ such that $P\cap H=\{1\}$ and $PH=G$. Moreover, a subgroup $H'$ of $G$ is a complement of $P$ in $G$ if and only if it is a conjugate of $H$ in $G$ by an element of $P$ (in other words, there exists $g\in P$ such that $H'=gHg^{-1}$). 
    \end{thm}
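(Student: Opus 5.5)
We would prove the statement by the classical induction on $|G|$, treating existence and conjugacy separately and using that $P$, being a $p$-group, is solvable (so no appeal to Feit--Thompson is needed). Throughout, the key numerical fact is $\gcd(|P|,[G:P])=1$, since $P$ is a Sylow subgroup.

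\emph{Existence.} We argue by induction on $|G|$. If $P$ is not abelian, we take $Z:=\mathcal{Z}(P)$, which is nontrivial and characteristic in $P$, hence normal in $G$. In $\overline{G}=G/Z$ the image $P/Z$ is a normal Sylow subgroup of smaller order, so by induction it has a complement $K/Z$. Then $Z$ is a normal Sylow $p$-subgroup of $K$ and $|K|<|G|$, so induction gives a complement $H$ of $Z$ in $K$. This $H$ has order $[G:P]$ and meets $P$ trivially by coprimality, so it is a complement of $P$ in $G$.

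This reduces us to $P$ abelian. Here we would use the standard averaging (cohomological) argument. Fix a transversal $T$ of $P$ in $G$, let $Q=G/P$ act on $P$ by conjugation, and let $f(\sigma,\tau)\in P$ measure the failure of $T$ to be closed under multiplication. Since $m=|Q|$ is invertible modulo $|P|$, the averaged element $c(\sigma):=\bigl(\prod_{\tau} f(\sigma,\tau)\bigr)^{-1/m}$ corrects the transversal into a subgroup; equivalently, $H^2(Q,P)=0$ because it is killed by both $m$ and $|P|$. We expect this bookkeeping with the cocycle identity to be the main technical obstacle, though it is routine.

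\emph{Conjugacy.} Let $H,H'$ be two complements. We again induct, reducing to $P$ abelian by passing to $G/Z$ exactly as above. The images of $H$ and $H'$ become conjugate in $G/Z$; we then compare $H$ with $gH'g^{-1}$ inside the preimage $ZH$, where $Z$ is a normal Sylow subgroup and induction applies. When $P$ is abelian, two complements give two sections, whose difference is a $1$-cocycle $Q\to P$. The same averaging argument, now expressing $H^1(Q,P)=0$, writes this cocycle as a coboundary, which yields $H'=aHa^{-1}$ with $a\in P$.

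Finally, to obtain the stated form: any conjugate $gHg^{-1}$ is again a complement, since $P$ is normal and orders are preserved. Conversely, if $H'=gHg^{-1}$ with $g\in G=PH$, we write $g=ah$ with $a\in P$ and $h\in H$, so that $H'=aHa^{-1}$. This gives the characterization of complements as the $P$-conjugates of $H$.
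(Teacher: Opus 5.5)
Your proof is correct. The paper gives no proof of this statement to compare against: it quotes Schur--Zassenhaus as a classical theorem and only applies it (in Corollary~\ref{SZapplication} and throughout \S~\ref{lbounds}). What you give is the standard textbook argument. You reduce to $P$ abelian through the characteristic subgroup $\mathcal{Z}(P)$, which is nontrivial because $P$ is a $p$-group, so solvability of $P$ is automatic. You then use the averaging argument, which shows that $H^2(Q,P)$ and $H^1(Q,P)$ vanish because $\gcd(|Q|,|P|)=1$.

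The bookkeeping you flag as routine does check out with your formula.

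\textbf{Existence.} Use the convention $t_\sigma t_\tau=f(\sigma,\tau)t_{\sigma\tau}$ and ${}^{\sigma}a=t_\sigma a t_\sigma^{-1}$, which is well defined since $P$ is abelian. Associativity gives the cocycle identity $f(\sigma,\tau)f(\sigma\tau,\rho)={}^{\sigma}f(\tau,\rho)f(\sigma,\tau\rho)$. Multiplying it over $\rho\in Q$ gives $f(\sigma,\tau)^m={}^{\sigma}d(\tau)\,d(\sigma)\,d(\sigma\tau)^{-1}$, where $d(\sigma)=\prod_\rho f(\sigma,\rho)$. Since $x\mapsto x^m$ is an automorphism of $P$, one may take $m$-th roots. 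The elements $s_\sigma:=d(\sigma)^{-1/m}t_\sigma$ then satisfy $s_\sigma s_\tau=s_{\sigma\tau}$. Thus $s_\sigma=c(\sigma)t_\sigma$ with exactly your $c(\sigma)$, and $\{s_\sigma\}$ is a subgroup of order $m$ meeting $P$ trivially.

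\textbf{Conjugacy.} A $1$-cocycle satisfies $a(\sigma\tau)=a(\sigma)\,{}^{\sigma}a(\tau)$. Multiplying over $\tau$ gives $a(\sigma)^m=b\,({}^{\sigma}b)^{-1}$ with $b=\prod_\tau a(\tau)$. Setting $c=b^{1/m}\in P$, this yields $a(\sigma)=c\,({}^{\sigma}c)^{-1}$. Hence $h'_\sigma=a(\sigma)h_\sigma=c\,h_\sigma c^{-1}$, that is, $H'=cHc^{-1}$.

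\textbf{Reduction step for conjugacy.} The relation $\overline{g}\,\overline{H'}\,\overline{g}^{-1}=\overline{H}$ in $G/Z$ gives $gH'g^{-1}\subseteq ZH$. Both $H$ and $gH'g^{-1}$ are complements of the normal Sylow subgroup $Z$ in $ZH$. Since $|ZH|=|Z|\,|H|<|P|\,|H|=|G|$ when $Z\neq P$, the induction hypothesis legitimately applies there.

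Your final step also correctly converts conjugacy by $g=ah\in PH$ into conjugacy by $a\in P$, which gives the stated characterization.
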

    We have the following useful consequence:
    \begin{cor}\label{SZapplication}
        Let $G$ be a finite group with a normal $p$-Sylow $P\subset G$. If $N\subset G$ is a subgroup of $G$ such that $\gcd(|N|,p)=1$, then there exists a complement $H$ of $P$ in $G$ containing $N$. In particular, if $G$ satisfies $\mathcal{P}(2p;x,y)$, where $\gcd(p,\Ord(x))=1$, then $G$ satisfies $\mathcal{P}(2p;x',y')$ where $x',y'$ are both elements of a complement $H$ of $P$ in $G$.
    \end{cor}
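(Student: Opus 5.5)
The first assertion is a direct consequence of the Schur--Zassenhaus theorem, so I will prove it with a Sylow-type argument inside the subgroup $NP$. Since $P$ is normal, $NP$ is a subgroup of $G$. By Lemma~\ref{groupproductorder} and $\gcd(|N|,p)=1$, we have $N\cap P=\{1\}$ and $|NP|=|N|\,|P|$. Let $H$ be a complement of $P$ in $G$ given by Schur--Zassenhaus. Then $|H|=|G|/|P|$ is coprime to $p$.

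Consider the subgroup $H\cap NP$. Dedekind's modular law gives $NP=(H\cap NP)P$, because $P\subset NP$ and $G=HP$. Hence $H\cap NP$ is a complement of $P$ in $NP$. On the other hand, $N$ is also a complement of $P$ in $NP$, since $N\cap P=\{1\}$ and $NP=NP$. The group $NP$ has $P$ as a normal $p$-Sylow, so the uniqueness part of Schur--Zassenhaus applied to $NP$ gives $g\in P$ with $N=g(H\cap NP)g^{-1}\subset gHg^{-1}$. By the same theorem in $G$, the conjugate $gHg^{-1}$ is again a complement of $P$ in $G$, and it contains $N$. This proves the first claim. The only delicate point is checking Dedekind's law in the form $H P\cap NP=(H\cap NP)P$, which holds because $P\le NP$.

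For the second assertion, assume $G$ satisfies $\mathcal{P}(2p;x,y)$ with $\gcd(p,\Ord(x))=1$. Since $\Ord(y)=\Ord(x)$, the subgroup $\langle x\rangle$ has order coprime to $p$. The first part gives a complement $H$ containing $x$, so I set $x':=x$. Next, $\langle y\rangle$ also has order coprime to $p$, so it lies in some complement $H''$. All complements are $P$-conjugate, so there is $g\in P$ with $gH''g^{-1}=H$. I set $y':=gyg^{-1}\in H$.

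Since $y'$ is conjugate to $y$, we have $y'\,\mathcal{T}\,y$, and trivially $x'\,\mathcal{T}\,x$. Corollary~\ref{invarianceofpropertyP} then shows that $G$ satisfies $\mathcal{P}(2p;x',y')$, with $x',y'\in H$. I do not expect a genuine obstacle; the main care is applying the uniqueness-up-to-$P$-conjugacy inside the subgroup $NP$ rather than in $G$.
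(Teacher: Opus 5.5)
Your proof is correct and takes the same route as the paper. You show that $H\cap NP$ is a complement of $P$ in $NP$; the paper verifies this by hand, while you cite Dedekind's modular law. You then use the $P$-conjugacy of complements in $NP$ to conjugate this complement onto $N$, and finally move $x,y$ into a common complement and apply Corollary~\ref{invarianceofpropertyP}.
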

    \begin{proof}
        Let $H$ be a complement of $P$ in $G$. Define $H'=H\cap PN$; we prove that $H'$ is a complement of $P$ in $PN$. It is clear that $H'\cap P=\{1\}$ and that $PH'\subset PN$. Let $g=tn$, where $(t,n)\in P\times N$. There exists $(t',h)\in P\times H$ such that $n=t'h$. Note that $h=(t')^{-1}n\in PN$, which implies that $h\in H'$. Thus, $g=t t' h\in PH'$. Hence $PN=PH'$, which proves the claim. By the Schur--Zassenhaus theorem, there exists $a\in PN$ such that $N=aH'a^{-1}$. Thus, $aHa^{-1}$ is a complement of $P$ in $G$ that contains $N$.
        
        Let us assume that $G$ satisfies $\mathcal{P}(2p;x,y)$ and let $H$ be a complement of $P$ in $G$. We have that $x$ and $y$ each belong to a complement of $P$ in $G$. Since, by the Schur--Zassenhaus theorem, all these complements are conjugate, there exist elements $x',y'\in H$ that are conjugate to $x$ and $y$, respectively. By Corollary~\ref{invarianceofpropertyP}, we deduce that $G$ satisfies the property $\mathcal{P}(2p;x',y')$.
    \end{proof}
   The following proposition transfers the root count from $G$ to $H$:
    \begin{prop}\label{rootcount-in-H}
        Let $G$ be a finite group with a normal $p$-Sylow $P=\langle t\rangle$ of order $p$, and let $H$ be a complement of $P$ in $G$. Let $g\in H\cap\mathcal{C}_G(t)$, where $\mathcal{C}_G(t)$ is the centralizer of $t$ in $G$. We have 
        \begin{equation}\label{square-rootcountinH}
            r_2^G(g)=\bigl|\mathcal{R}_2^H(g)\cap \mathcal{C}_G(t)\bigr|+p\cdot\bigl| \mathcal{R}_2^H(g)\setminus\mathcal{C}_G(t)  \bigr|  \, ,
        \end{equation}
        and we also have \begin{equation}\label{2prootcountinH}
            r_{2p}^G(g)=p\cdot r_2^H(g)\quad\text{and}\quad \kappa_{2p}^G(g)=(p-1)\kappa_2^{H\cap \mathcal{C}_G(t)}(g) .
        \end{equation}
    \end{prop}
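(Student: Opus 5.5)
The plan is to work with the normal form of elements of $G$ coming from the semidirect decomposition $G=P\rtimes H$. Since $P=\langle t\rangle$ is normal of order $p$ and $H$ is a complement, every element of $G$ can be written uniquely as $t^a h$ with $a\in\Z/p\Z$ and $h\in H$. Conjugation by $h$ acts on $P$ by $hth^{-1}=t^{k_h}$ for some $k_h\in(\Z/p\Z)^\times$, and $h\in\mathcal{C}_G(t)$ if and only if $k_h=1$. Throughout, $p\ge3$ is odd, as in this section.

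\textbf{Square roots, \eqref{square-rootcountinH}.} First compute
\[(t^ah)^2=t^a\,(ht^ah^{-1})\,h^2=t^{a(1+k_h)}h^2 .\]
Since $g\in H$, uniqueness of the normal form shows that $(t^ah)^2=g$ if and only if $h^2=g$ and $a(1+k_h)\equiv0\pmod p$. So each $h\in\mathcal{R}_2^H(g)$ contributes a number of square roots depending only on $k_h$.
\begin{itemize}
\item If $h\in\mathcal{C}_G(t)$, then $k_h=1$, so $2a\equiv 0\pmod p$. As $p$ is odd, $a=0$, and $h$ contributes exactly one root.
\item If $h\notin\mathcal{C}_G(t)$, then $h^2=g$ centralizes $t$, so $k_h^2=1$ and $k_h\ne1$, hence $k_h=-1$. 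Every $a$ then works, and $h$ contributes $p$ roots.
\end{itemize}
Summing over $h\in\mathcal{R}_2^H(g)$ gives \eqref{square-rootcountinH}. The same computation records one fact used below: a square root $t^ah$ of $g$ commutes with $t$ if and only if $h\in\mathcal{C}_G(t)$, and in that case $a=0$. Hence the square roots of $g$ commuting with $t$ are exactly the elements of $\mathcal{R}_2^H(g)\cap\mathcal{C}_G(t)$.

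\textbf{$2p$-th roots, first part of \eqref{2prootcountinH}.} Since $g\in H$ has order $m$ coprime to $p$, we have $g^m=1$ and $\mathcal{R}_p^G(1)$ is the set of elements of order dividing $p$. Because $P$ is the unique (normal) $p$-Sylow subgroup, this set is exactly $P$. Formula \eqref{2p-rootsnumb2} then gives
\[r_{2p}^G(g)=\#\{(s,u)\in\mathcal{R}_2^G(g)\times P:\ su=us\}.\]
We split by $u$.
\begin{itemize}
\item For $u=1$, every $s\in\mathcal{R}_2^G(g)$ qualifies, giving $r_2^G(g)$ pairs.
\item For each of the $p-1$ values $u\ne1$, commuting with $u$ is the same as commuting with $t$. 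By the fact above, this gives $|\mathcal{R}_2^H(g)\cap\mathcal{C}_G(t)|$ pairs for each such $u$.
\end{itemize}
Inserting \eqref{square-rootcountinH}, the total is
\[p\,|\mathcal{R}_2^H(g)\cap\mathcal{C}_G(t)|+p\,|\mathcal{R}_2^H(g)\setminus\mathcal{C}_G(t)|=p\,r_2^H(g).\]

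\textbf{Primitive roots, second part of \eqref{2prootcountinH}.} Use \eqref{2p-primrootsnumb}: now $u$ ranges over the $p-1$ elements of $P\setminus\{1\}$, and $s$ over square roots of $g$ outside $\langle g\rangle$ that commute with $t$. By the fact above, such $s$ are the elements $h\in H\cap\mathcal{C}_G(t)$ with $h^2=g$ and $h\notin\langle g\rangle$. By Remark~\ref{trivialroot} applied to the prime $2$, inside the group $H\cap\mathcal{C}_G(t)$ a square root of $g$ is non-trivial exactly when it is primitive. A non-trivial square root lies outside $\langle g\rangle$, since the only square root inside $\langle g\rangle$ is the trivial one. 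So the number of admissible $s$ is $\kappa_2^{H\cap\mathcal{C}_G(t)}(g)$, which yields the factor $(p-1)\kappa_2^{H\cap\mathcal{C}_G(t)}(g)$.

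The only delicate point is the case analysis in the first paragraph: that $k_h=-1$ is forced when $h\notin\mathcal{C}_G(t)$, and that $p$ odd rules out $a\ne0$ when $k_h=1$. The remaining steps are bookkeeping with \eqref{2p-rootsnumb2} and \eqref{2p-primrootsnumb}.
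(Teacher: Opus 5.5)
Your proof is correct and follows essentially the same route as the paper. In both, the normal form $t^ah$ shows that $t^ah$ squares to $g$ exactly when $h^2=g$ and $ht^ah^{-1}=t^{-a}$; the case split on whether $h$ centralizes $t$ (conjugation is forced to be inversion otherwise, and $p$ odd kills $a\neq 0$ in the centralizing case) gives \eqref{square-rootcountinH}; and substituting into \eqref{2p-rootsnumb2} and \eqref{2p-primrootsnumb} gives \eqref{2prootcountinH}. The only cosmetic difference is that you organize the $r_{2p}$ count by the $p$-th root $u\in P$ and reuse \eqref{square-rootcountinH}, whereas the paper counts triples $(k,k',h)$ grouped by $h$.
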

    \begin{proof}
        Let $g\in H\cap\mathcal{C}_G(t)$. If $\ell=t^k h\in \mathcal{R}_2^G(g)$, where $h\in H$ and $0\le k<p$, we have $g=\ell^2=(t^k\cdot ht^kh^{-1})h^2\in H$, and since $t^k\cdot ht^kh^{-1}\in P$, then $ht^kh^{-1}=t^{-k}$ and $h^2=g$. This shows that \begin{equation}\label{setsquare-rootsG-H}
            \mathcal{R}_2^G(g)=\{t^k h\, \colon\,  0\le k<p,\  ht^{k}h^{-1}=t^{-k},\text{ and } h\in \mathcal{R}_2^H(g)\}\, .
        \end{equation}
        Thus, \begin{align*}
            r_2^G(g)&=\sum_{h\in \mathcal{R}_2^H(g)}\bigl|\{0\le k<p \colon ht^{k}h^{-1}=t^{-k}\}\bigr|\\
            &=\sum_{h\in \mathcal{R}_2^H(g)\cap \mathcal{C}_G(t)}\bigl|\{0\le k<p \colon  ht^{k}h^{-1}=t^{-k}\}\bigr|+\sum_{h\in \mathcal{R}_2^H(g)\setminus\mathcal{C}_G(t) }\bigl|\{0\le k<p \colon ht^{k}h^{-1}=t^{-k}\}\bigr|\, .
        \end{align*}
        If $h\in \mathcal{R}_2^H(g)\cap \mathcal{C}_G(t)$, since $t^k=t^{-k}\iff k=0$, we have $\bigl|\{0\le k<p\, \colon\,  ht^{k}h^{-1}=t^{-k}\}\bigr|=1$. If $h\in \mathcal{R}_2^H(g)\setminus\mathcal{C}_G(t) $, since $ht\ne th$ and $h^2t=th^2$, $(\mathrm{Int}_h)_{|P}$ is an automorphism of $P$ of order $2$, thus $(\mathrm{Int}_h)_{|P}=\mathrm{Inv}_P$, hence $\bigl|\{0\le k<p\, \colon\,  ht^{k}h^{-1}=t^{-k}\}\bigr|=p$. This proves the proposition. We now move to the second equality. We use~\eqref{2p-rootsnumb2} together with~\eqref{setsquare-rootsG-H} to deduce that \[r_{2p}^G(g)=\bigl|\bigl\{ (k,k',h)\, \colon\,  0\le k,k'<p,\ h\in \mathcal{R}_{2}^H(g), \ ht^kh^{-1}=t^k,\ ht^{k'}h^{-1}=t^{-k'}\bigr\}\bigr|\, .\]
        If $h\in \mathcal{R}_2^H(g)\cap \mathcal{C}_G(t)$, then $\bigl|\{(k,k')\, \colon\,  ht^kh^{-1}=t^k,\ ht^{k'}h^{-1}=t^{-k'}\}\bigr|=p$, and if $h\in \mathcal{R}_2^H(g)\setminus \mathcal{C}_G(t) $, then by the above $(\mathrm{Int}_h)_{|P}=\mathrm{Inv}_P$, thus $\bigl|\{(k,k')\, \colon\,  ht^kh^{-1}=t^k,\ ht^{k'}h^{-1}=t^{-k'}\}\bigr|=p$. This proves that \[r_{2p}^G(g)=p \cdot|\mathcal{R}_2^H(g)\cap \mathcal{C}_G(t)|+p\cdot |\mathcal{R}_2^H(g)\setminus \mathcal{C}_G(t)|=p\cdot r_2^H(g)\, . \]
        For the last equality, we start by noting that if $s=t^k h\in G$, with $0\le k<p$ and $h\in H$, then $s\in \langle g\rangle$ if and only if $h\in \langle g\rangle$.  Combining this with~\eqref{setsquare-rootsG-H} and~\eqref{2p-primrootsnumb}, we deduce that
        \[\kappa_{2p}^G(g)=\bigl|\bigl\{(k,k',h)\, \colon\, 0\le k,k'<p,\, k\ne 0,\,  h\in \mathcal{R}_2^H(g)\setminus\langle g\rangle, ht^k=t^kh,\, ht^{k'}=t^{-k'}h\bigr\}\bigr|\, . \]
        The condition $ht^{k}=t^kh$ for $0<k<p$ forces $k'=0$ and $h\in H\cap \mathcal{C}_G(t)$. Hence, 
        \[  \kappa_{2p}^G(g)=\bigl|\bigl\{ (k,h)\, \colon \, 1\le k\le p-1,\, h\in \mathcal{R}_{2}^{H\cap \mathcal{C}_G(t)}(g)\setminus\langle g\rangle\bigr\}\bigr|=(p-1)\kappa_2^{H\cap \mathcal{C}_G(t)}(g)\, .  \]
        \end{proof}

\subsection{Wall's theorem for groups with high proportion of involutions}
It is a classical result that a finite group in which all non-trivial elements are involutions is elementary abelian. There is a more general elementary result of the same type regarding groups with a high proportion of involutions:

\begin{lem}\label{3-4thinvolutions}
    Let $G$ be a finite group such that $r_2^G(1) > 3|G|/4$. Then $G$ is an elementary abelian $2$-group.
\end{lem}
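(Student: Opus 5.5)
Note that $r_2^G(1)$ counts the elements $g\in G$ with $g^2=1$, so the identity is included. The plan is the classical counting argument showing that inversion is an automorphism.

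Let $I:=\mathcal{R}_2^G(1)$, so $|I|>3|G|/4$. Fix $a\in I$ and consider $aI\cap I$. Since $|aI|=|I|$, inclusion--exclusion gives
\[|aI\cap I|\ge 2|I|-|G|>|G|/2 .\]
For every $b\in I$ with $ab\in I$, we have $(ab)^2=1$, hence $ab=(ab)^{-1}=b^{-1}a^{-1}=ba$. Thus $b$ commutes with $a$. Therefore $\mathcal{C}_G(a)$ contains the set $\{b\in I\,\colon\, ab\in I\}$, which has cardinality $|aI\cap I|>|G|/2$. Since $\mathcal{C}_G(a)$ is a subgroup, Lagrange's theorem forces $\mathcal{C}_G(a)=G$, so every element of $I$ is central.

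Now $I\subset\mathcal{Z}(G)$, so $\mathcal{Z}(G)$ is a subgroup of order greater than $3|G|/4$, hence $\mathcal{Z}(G)=G$ and $G$ is abelian. In an abelian group, $I$ is a subgroup (the kernel of the squaring homomorphism). Since $|I|>|G|/2$, Lagrange again gives $I=G$. Hence every element has order dividing $2$, and $G$ is an elementary abelian $2$-group.

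The only slightly delicate step is the first: converting "many involutions" into "many elements centralizing a fixed involution". The identity $ab\in I\Rightarrow ab=ba$ for $a,b\in I$ does this. The threshold $3/4$ is exactly what makes $2|I|-|G|$ exceed $|G|/2$, so that Lagrange's theorem applies. Everything else is routine.
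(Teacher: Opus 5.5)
Your proof is correct and follows essentially the same argument as the paper: you show $\mathcal{C}_G(a)\supseteq I\cap aI$, which has more than $|G|/2$ elements, so every involution is central, and then Lagrange gives $I=G$. The only cosmetic difference is that you pass through $\mathcal{Z}(G)=G$ before noting that $I$ is a subgroup, whereas the paper observes directly that a set of central involutions (together with $1$) is closed under multiplication.
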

\begin{proof}
    Although this is a well-known result, we recall the standard argument for completeness. Let $I := \mathcal{R}_2^G(1)$ and let $x \in I$. Since $|I| > 3|G|/4$, we have
    \[ |I \cap xI| = |I| + |xI| - |I \cup xI| > \frac{6|G|}{4} - |G| = \frac{|G|}{2}\, .\]
    The centralizer $\mathcal{C}_G(x)$ of $x$ contains the intersection $I \cap xI$; indeed, if $y \in I \cap xI$, we can write $y = xz$ with $y^2 = z^2 = 1$, yielding $[x,y] = xyxy = (xy)^2 = z^2 = 1$, which means $x$ and $y$ commute. Thus, $|\mathcal{C}_G(x)| > |G|/2$. Since the order of a subgroup must divide the order of the group, we deduce that $\mathcal{C}_G(x) = G$. This proves that any element of order $2$ is central, implying that $I$ is a subgroup of $G$. Since it contains strictly more than half the elements of $G$, we must have $I = G$, meaning $G$ is an elementary abelian $2$-group.
\end{proof}
A natural generalization is to ask what can be said about finite groups satisfying the weaker bound $r_2^G(1) > |G|/2$. Groups satisfying this property were studied by Wall~\cite{Wall}, and in this paper, they are referred to as \emph{Wall groups}. The following theorem classifies all isomorphism classes of such groups.
\begin{thm}[Wall's Classification Theorem~\cite{Wall}, 1970]\label{Wallthm}
    Let $G$ be a finite group satisfying $r_2^G(1) > |G|/2$. Then $G \cong E \times G_0$, where $E$ is an elementary abelian $2$-group and $G_0$ is one of the following groups:
    \begin{enumerate}
        \item \emph{Type \rom{1}:} $G_0 = A_0 \rtimes \langle \tau \rangle$, where $A_0$ is an abelian group and $\tau$ is an element of order $2$ acting on $A_0$ by inversion (yielding the generalized dihedral group).
        \item \emph{Type \rom{2}:} $G_0 = D_8 \times D_8$. In this case, we have $\mathcal{Z}(G_0) = \langle(b,1), (1,b)\rangle$, where $b$ is the central involution of $D_8$.
        \item \emph{Type \rom{3}:} $G_0 = D_8^r/N$, where $N = \langle (b,b,1,\dots,1), (b,1,b,1,\dots,1), \dots, (b,1,\dots,1,b) \rangle$. This is the product of $r$ copies of $D_8$ amalgamated at the center. In this case, $\mathcal{Z}(G_0) = \{1,c\}$ with $\Ord(c)=2$, and $G_0$ can be presented as follows:
        \[ G_0 = \bigl\langle c, x_1, y_1, \dots, x_r, y_r\ :\ [x_i,y_i]=c \bigr\rangle\, . \]
        \item \emph{Type \rom{4}:} $G_0 = A_0 \rtimes \langle c \rangle$ where $A_0 = E_1 \times E_2 \cong (C_2)^{2r}$. Here, $E_1 \cap E_2 = \{1\}$, $E_1 \cong (C_2)^r$ admits $(x_1,\dots,x_r)$ as a basis, and $E_2 \cong (C_2)^r$ admits $(y_1,\dots,y_r)$ as a basis, all satisfying the relations $[c,x_i]=y_i$. In this case, $\mathcal{Z}(G_0) = E_2$, and $G_0$ can be presented as:
        \[ G_0 = \bigl\langle c, x_1, y_1, \dots, x_r, y_r\ :\ [c,x_i]=y_i \bigr\rangle\, . \]
    \end{enumerate}
    If $G_0$ is of type $i$, for $i \in \{$\rom{1}$,\dots,$\rom{4}$\}$, we say that $G$ is a Wall group of type $i$.
\end{thm}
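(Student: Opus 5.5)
Since this is Wall's classification theorem~\cite{Wall}, we only invoke it later. If one wanted a self-contained argument, I would follow the strategy below, which tightens the counting behind Lemma~\ref{3-4thinvolutions}. Put $I:=\mathcal{R}_2^G(1)$, so that $|I|>|G|/2$.

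\textbf{Step 1 (character-theoretic input).} By Frobenius--Schur, $r_2^G(1)=\sum_{\chi\in\Irr(G)}\varepsilon_2(\chi)\chi(1)$. Since this exceeds $|G|/2$, Cauchy--Schwarz against $\sum_\chi\chi(1)^2=|G|$ gives two things. First, the number $k(G)$ of conjugacy classes satisfies $k(G)>|G|/4$. Second, the characters with $\varepsilon_2=1$ carry most of the degree mass. From this I would extract information about centralizers. For $x\in I$, the same argument as in Lemma~\ref{3-4thinvolutions} shows that $I\cap xI\subset\mathcal{C}_G(x)$, and $|I\cap xI|>2|I|-|G|$. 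Averaging over $x\in I$ should produce a large proportion of involutions whose centralizer has index at most $2$ or $4$.

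\textbf{Step 2 (splitting).} First I would treat the case where some abelian subgroup $A$ has index $2$. Then every $g\notin A$ is an involution exactly when it inverts $A$. If more than half of $G$ is made of involutions, some coset element $\tau$ must act on $A$ by inversion, which forces $G=A\rtimes\langle\tau\rangle$. Writing $A=E\times A_0$, where $E$ is the part of the elementary abelian $2$-subgroup that splits off as a direct factor of $G$ (the part on which inversion is trivial and which is not a commutator), gives Type~\rom{1}.

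Otherwise, I would show that $G$ is a $2$-group. Odd-order elements $g$ cannot be inverted by more than the complement of a proper subgroup, so the count $|I|>|G|/2$ together with Step~1 should force the Sylow $2$-subgroup to have index $1$. Next I would show that $G$ has class at most $2$ with $G'$ elementary abelian. Then squaring $q(\bar g)=g^2$ and the commutator pairing are well defined on $V:=G/\mathcal{Z}(G)$: the pairing is a bilinear alternating form with values in $G'$, and $q$ is a quadratic map with polar form equal to the commutator pairing.

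\textbf{Step 3 (the quadratic-form classification, the main obstacle).} Having involutions in proportion $>1/2$ means that $q$ vanishes on more than half of the cosets, counted with the central fibres. I would first remove the split central factor $E$. The remaining cases would then be organized by $\dim G'$:
\begin{itemize}
\item If $|G'|=2$, the form is a nondegenerate quadratic form over $\F_2$. The zero-count exceeds one half only for the hyperbolic (plus) type, which yields the central product of copies of $D_8$, i.e.\ Type~\rom{3}.
\item If $G'\cong C_2^2$ and $G$ is a direct product, one obtains $D_8\times D_8$, Type~\rom{2}.
\item If $G'=\mathcal{Z}(G_0)$ is large and the pairing is of the form $(c,x_i)\mapsto y_i$ with a single element $c$ pairing nontrivially, one obtains Type~\rom{4}.
\end{itemize}
The hard part is showing that no other configuration of $(V,q,[\cdot,\cdot])$ exceeds the threshold $1/2$. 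I would attack this by counting zeros of $q$ on each coset of a maximal isotropic subspace and inducting on $\dim V$. Each additional independent commutator direction multiplies the zero-proportion by at most $(1/2+2^{-m})$-type factors, so only the listed extremal cases survive.
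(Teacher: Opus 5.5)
Citing Wall is exactly what the paper does: Theorem~\ref{Wallthm} is quoted from~\cite{Wall} with no proof, so your first sentence is all that is needed here. The self-contained route you then sketch does not work as written, and the first concrete failure is in Step~2.

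\textbf{Step~2 is wrong.} The claim that $g\notin A$ is an involution exactly when it inverts $A$ fails in both directions. In $Q_8$ with $A=\langle i\rangle$, the element $j$ inverts $A$ but has order $4$. In a Type~\rom{4} group with $r\ge 2$, $c$ is an involution outside the elementary abelian subgroup $E\times A_0$ of index $2$, yet it does not invert (i.e.\ centralize) that subgroup.

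The correct count goes as follows. Take an involution $\tau\notin A$; one exists since $|A[2]|\le |G|/2<r_2^G(1)$. Then $\tau a$ is an involution iff $\tau a\tau=a^{-1}$. The set $K:=\{a\in A\colon \tau a\tau=a^{-1}\}$ is a subgroup of $A$, so $r_2^G(1)=|A[2]|+|K|$. Hence either $K=A$, which gives Type~\rom{1}, or $|K|\le |A|/2$. In the second case $A[2]=A$, and $\tau$ may act by any linear involution of $A\cong\F_2^n$. Its Jordan blocks of size at most $2$ give exactly $E\times G_0$ with $G_0$ of Type~\rom{4}. So Type~\rom{4} lives in your index-$2$ case. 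Your Step~3, which expects to produce Type~\rom{4} in the branch with no abelian subgroup of index $2$, is therefore inconsistent with your own case split.

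\textbf{The remaining steps are asserted, not proved.} These are the steps that carry the real content of Wall's theorem.
\begin{enumerate}
\item Step~1 gives nothing at this threshold. The bound $|I\cap xI|>2|I|-|G|$ only says $|I\cap xI|>0$. The hoped-for conclusion is also false in general: $D_{2n}$ with $n\ge 5$ odd is of Type~\rom{1} with $r_2(1)=n+1>n$, yet every non-trivial involution has centralizer of index $n$.
\item The reduction of the remaining case to $2$-groups of class at most $2$ with elementary abelian $G'$ is only announced (``should force'').
\item For $|G'|\ge 4$, you assume a direct-product decomposition to reach Type~\rom{2}. You give no argument excluding other configurations of $(V,q,[\cdot,\cdot])$, and that exclusion is precisely the hard part of the theorem.
\end{enumerate}
Only the $|G'|=2$ bullet is essentially a complete argument: a nondegenerate quadratic form over $\F_2$ has more than half its vectors isotropic only in the plus type, which gives the central product of copies of $D_8$.
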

In what follows, we will use the notation $r_2(x):=r_2^G(x)$, for $x\in G$. The following lemma enables us to reduce the case $\frac{r_2(1)}{|G|}>\frac{5}{8}$ to $G$ being a Wall group either of type~\rom{1} that is not a $2$-group, or of type~\rom{3}:
    \begin{lem}\label{wallsquare-rootratio}
        Assume that $G$ is a Wall group satisfying one of the following conditions: \begin{enumerate}
            \item $G$ is of type \rom{1} and $|G|$ is not a power of $2$.
          \item $G$ is of type \rom{2}.
          \item $G$ is of type \rom{4} and $r\ge 2$.
        \end{enumerate}
        Then, \[\frac{r_2(1)}{|G|}\le \frac{5}{8}\, .\]
    \end{lem}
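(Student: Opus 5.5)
The plan is to reduce to the factor $G_0$, and then count the elements $g$ with $g^2=1$ directly for each type, using the explicit structure given by Theorem~\ref{Wallthm}. Write $G\cong E\times G_0$ as in that theorem. Squaring is done coordinatewise and every element of $E$ squares to $1$, so $r_2^G(1)=|E|\, r_2^{G_0}(1)$. Hence $r_2(1)/|G|=r_2^{G_0}(1)/|G_0|$, and it suffices to bound this ratio for $G_0$ in each of the three cases.

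\emph{Type \rom{2}.} Here $G_0=D_8\times D_8$, and $r_2$ is multiplicative over direct products. In $D_8$ the elements squaring to $1$ are the identity and the five involutions, so $r_2^{D_8}(1)=6$. This gives $r_2^{G_0}(1)/|G_0|=36/64=9/16\le 5/8$.

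\emph{Type \rom{4}.} I take $c$ to be an involution, so that $|G_0|=2^{2r+1}$. All $2^{2r}$ elements of $A_0$ square to $1$. For a coset element $ac$ with $a=\prod x_i^{e_i}\prod y_i^{f_i}\in A_0$, we have $cac^{-1}=a\prod y_i^{e_i}$, since $c$ fixes each central $y_i$ and sends $x_i$ to $x_iy_i$. Therefore
\[(ac)^2=a\,(cac^{-1})\,c^2=\prod y_i^{e_i}.\]
This equals $1$ exactly when all $e_i=0$, which happens for $2^r$ choices of $a$. So the ratio is
\[\frac{2^{2r}+2^r}{2^{2r+1}}=\frac12+2^{-r-1},\]
which is $\le 5/8$ precisely when $r\ge 2$. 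This is the reason for the hypothesis $r\ge 2$.

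\emph{Type \rom{1}.} Here $G_0=A_0\rtimes\langle\tau\rangle$ with $\tau$ acting by inversion. Every element $a\tau$ satisfies $(a\tau)^2=a\,a^{-1}=1$, and inside $A_0$ the elements squaring to $1$ form the $2$-torsion subgroup $A_0[2]$. Hence
\[\frac{r_2^{G_0}(1)}{|G_0|}=\frac12+\frac{|A_0[2]|}{2|A_0|},\]
and the bound $5/8$ is equivalent to $[A_0:A_0[2]]\ge 4$.

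Since $|G|$ is not a power of $2$, the group $A_0$ has a nontrivial odd part $A_0^{\rm odd}$. Because $A_0[2]$ lies in the $2$-part of $A_0$, we get $[A_0:A_0[2]]\ge |A_0^{\rm odd}|\ge 3$. This yields $4$ or more in every case except $A_0\cong C_3\times(C_2)^k$, which is the main obstacle. In that case $G_0\cong\mathfrak S_3\times(C_2)^k$, and the ratio is exactly $2/3>5/8$; for instance $\mathfrak S_3$ has four elements squaring to $1$ out of six. So the inequality as literally stated fails for this family.

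I would therefore handle this case separately, and expect it to be excluded by whatever context the lemma is used in. Either the statement should read ``$\le 2/3$'', or, more likely, one adds the hypothesis that $A_0$ is not of the form $C_3\times(C_2)^k$ (equivalently, $[A_0:A_0[2]]\ge 4$). I would check which of these the subsequent application actually requires before finalizing the argument.
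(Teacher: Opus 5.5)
Your reduction to $G_0$ and your computations for types \rom{2} and \rom{4} are the same as the paper's. Your type \rom{1} analysis is also correct, and it shows that the lemma is false as stated. The group $\mathfrak{S}_3=C_3\rtimes\langle\tau\rangle$ is a Wall group of type \rom{1} whose order $6$ is not a power of $2$. It has $r_2(1)=4$, so the ratio is $2/3>5/8$, and the same ratio holds for every $\mathfrak{S}_3\times E$.

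The paper's proof breaks at exactly this point. After stripping direct factors of order $2$ (which turns $C_3\times C_2^k$ into $C_3$), it handles the case where $A_0[2]$ is trivial by writing $r_2^G(1)=|A_0|$, and so gets the ratio $1/2$. That count forgets the identity of $A_0$. The correct count is $|A_0|+1$, which gives the ratio $\frac12+\frac1{2|A_0|}$. Since $|A_0|$ is odd and $>1$ here, this is $\le 5/8$ iff $|A_0|\ge 5$, and it equals $2/3$ for $A_0\cong C_3$. The paper's other subcase is fine: there the $2$-torsion is nontrivial and every cyclic $2$-factor has order $\ge 4$, giving a ratio $\le 7/12$.

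Your criterion $[A_0:A_0[2]]\ge 4$ covers both subcases at once and isolates the exceptional family exactly. In fact it gives more: every type \rom{1} group that is not a $2$-group, other than $\mathfrak{S}_3\times E$, has ratio at most $3/5$.

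You asked which correction the later application needs. The lemma is used three times.
\begin{itemize}
\item In Case~2 of the proof that $H$ is a $2$-group, the quotient $N/\langle y\rangle$ has ratio $>11/16>2/3$. So your weakened bound suffices there, with $2/3$ attained only by $\mathfrak{S}_3\times E$.
\item In Corollary~\ref{FinalstructureN/y}, $N$ is already known to be a $2$-group, so $\mathfrak{S}_3\times E$ cannot occur.
\item Case~1 of that same proof is the problem. It concludes that $\mathcal{C}_N(x)$ is a $2$-group only from two facts: $|\mathcal{C}_N(x)/\langle y\rangle|<p$, and the quotient has more than $5p/8$ square roots of $1$. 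For $p=97$, the group $\mathfrak{S}_3\times C_2^4$ has order $96$ and $64>5p/8$ square roots of $1$, so it satisfies both.
\end{itemize}
Neither amended statement closes the Case~1 step on its own. A separate argument is needed there to exclude $\mathcal{C}_N(x)/\langle y\rangle\cong\mathfrak{S}_3\times E$.
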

    \begin{proof}
        Let us first note that if $G\cong E\times G_0$, where $E$ is an elementary abelian group, then $\frac{r_2^G(1)}{|G|}=\frac{r_2^{G_0}(1)}{|G_0|}$. Therefore, we may assume that $E=\{1\}$ and that $G$ has no direct factor of order $2$. If $G$ is of type \rom{1} and $|G|$ is not a power of $2$, then $G\cong A_0\rtimes\langle \tau\rangle=(A_0\times\{1\})\cup (A_0\times\{\tau\})$, and for all $a\in A_0\times\{\tau\}$ we have $a^2=1$, thus $r_2^G(1)=r_2^{A_0}(1)+|A_0|$. Since $A_0$ is abelian, we have $r_2^{A_0}(1)=\bigl|A_0[2]\bigr|$, where $A_0[2]$ is the $2$-torsion of $A_0$. If this $2$-torsion is trivial, then $r_2^G(1)=|A_0|$. In this case, $\frac{r_2^G(1)}{|G|}=\frac{1}{2}<\frac{5}{8}$. If $A_0[2]$ is non-trivial, then the $2$-Sylow $S$ of $A_0$ is non-trivial; moreover, $S\cong \prod_{i=1}^n C_{2^{k_i}}$ for some $1\le k_1\le\dots\le k_n $. Since $G$ has no direct factor of order $2$, for all $1\le i\le n$ we have $k_i\ge 2$. Moreover, $A_0[2]=S[2]\cong \prod_{i=1}^n C_2=C_2^n$. Thus, $\bigl| A_0[2]\bigr|/ \bigl| S\bigr| \le 1/2$, and since $G$ is not a $2$-group, we have $|A_0|\ge 3|S|$. We deduce that 
        \[\frac{r_2^G(1)}{|G|}=\frac{\bigl|A_0[2]\bigr|+|A_0|}{2|A_0|}\le \frac{1}{2}+\frac{\bigl|A_0[2]\bigr|}{6|S|}\le \frac{1}{2}+\frac{1}{12}=\frac{7}{12}<\frac{5}{8}\, .\]
        If $G$ is of type \rom{2}, then $G\cong D_8\times D_8$; thus $r_2(1)=\bigl(r_2^{D_8}(1)\bigr)^2=36$. Hence \[\frac{r_2^G(1)}{|G|}=\frac{36}{64}=\frac{9}{16}<\frac{5}{8}\, .\]
        If $G$ is of type \rom{4} and $r\ge 2$, we have $G\cong A_0\rtimes \langle c\rangle$ where $A_0\cong (C_2)^{2r}=E_1E_2$, with $E_1\cap E_2=\{1\}$, $E_1\cong (C_2)^r$ admits $(x_1,\dots,x_r)$ as a basis, and $E_2\cong (C_2)^r$ admits $(y_1,\dots,y_r)$ as a basis, with $[c,x_i]=y_i$ (where, for convenience, we see $G= A_0\rtimes \langle c\rangle$ as an inner semi-direct product). Let $a=\prod_{i=1}^r x_i^{\alpha_i} y_i^{\beta_i}\in A_0$, where $\alpha_i,\beta_i\in \{0,1\}$, such that $(ca)^2=1$. We have $a=cac=\prod_{i=1}^r (cx_ic)^{\alpha_i}y_i^{\beta_i}=\prod_{i=1}^r x_i^{\alpha_i}y_i^{\alpha_i+\beta_i}=a\prod_{i=1}^ry_i^{\alpha_i}$. Thus, $\prod_{i=1}^ry_i^{\alpha_i}=1$. Hence, for all $1\le i\le r$ we have $\alpha_i=0$, which means $a\in E_2$. Conversely, if $a\in E_2$, then it commutes with $c$. Thus, $(ca)^2=1$. Hence $r_2^G(1)=|A_0|+|E_2|=2^{2r}+2^r$. Thus, since $r\ge2$, we have \[\frac{r_2^G(1)}{|G|}=\frac{1}{2}+\frac{1}{2^{r+1}}\le \frac{1}{2}+\frac{1}{8}=\frac{5}{8}\, .\]
        This proves the lemma.
    \end{proof}
    \begin{rk}
        Let us note that if $G$ is a Wall group of type~\rom{4} with $r=1$, then $G$ is also of type~\rom{1}. Indeed, in this case, $G_0\cong D_8$.
    \end{rk}
    \begin{lem}\label{Wallcentralelement}
        Let $G$ be a Wall group. If $x\in G$ is an element of order $2$ that is the square of some element in $G$, then $x\in \mathcal{Z}(G)$.
    \end{lem}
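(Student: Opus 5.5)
Since $G\cong E\times G_0$ with $E$ elementary abelian, I will write $x=(e,x_0)$ and $x=g^2$ with $g=(e',g_0)$. Then $e=e'^2=1$, so $x=(1,x_0)$ with $x_0=g_0^2$ of order $2$ in $G_0$. The centre of $G$ is $E\times\mathcal{Z}(G_0)$, so it suffices to prove that an order-$2$ square $x_0$ in $G_0$ lies in $\mathcal{Z}(G_0)$. I will do this type by type, using the explicit descriptions in Theorem~\ref{Wallthm}.

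\emph{Type~\rom{1}.} Here $G_0=A_0\rtimes\langle\tau\rangle$, and every element outside $A_0$ has the form $a\tau$ with $(a\tau)^2=a\,\tau a\tau^{-1}=aa^{-1}=1$. A square of order $2$ therefore comes from some $g_0\in A_0$, so $x_0=g_0^2\in A_0$ and $\tau x_0\tau^{-1}=x_0^{-1}=x_0$. Since $A_0$ is abelian, $x_0$ commutes with $A_0$ and with $\tau$, hence is central.

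\emph{Type~\rom{2}.} Let $G_0=D_8\times D_8$ and $g_0=(u,v)$. In $D_8$ every square lies in $\langle b\rangle=\mathcal{Z}(D_8)$, so $g_0^2\in\langle b\rangle\times\langle b\rangle=\mathcal{Z}(G_0)$.

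\emph{Type~\rom{3}.} The group $G_0$ is a quotient of $D_8^r$, so each element lifts to a tuple $(u_1,\dots,u_r)$. Its square lifts to $(u_1^2,\dots,u_r^2)\in\langle b\rangle^r$, and this maps into the image of the centre, which is $\{1,c\}=\mathcal{Z}(G_0)$. Alternatively, $G_0/\langle c\rangle$ is elementary abelian by the presentation, so every square lies in $\langle c\rangle$.

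\emph{Type~\rom{4}.} Write $g_0=c^\epsilon a$ with $a\in A_0$. If $\epsilon=0$, then $g_0^2=1$. If $\epsilon=1$, then $(ca)^2=(cac^{-1})a$, where $c$ is an involution because $G_0$ is a split extension by $\langle c\rangle$ of order $2$. For $a=\prod x_i^{\alpha_i}y_i^{\beta_i}$, the computation in the proof of Lemma~\ref{wallsquare-rootratio} gives $cac=a\prod y_i^{\alpha_i}$, so $(ca)^2=\prod y_i^{\alpha_i}\in E_2=\mathcal{Z}(G_0)$.

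The only delicate point is type~\rom{4}: we need $c^2=1$ and the commutator convention $[c,x_i]=y_i$, with the $y_i$ central. Both can be read off from the presentation, exactly as in the proof of Lemma~\ref{wallsquare-rootratio}. Once these are checked, every square in $G_0$ is central, which proves the lemma.
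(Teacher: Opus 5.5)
Your proposal is correct and follows essentially the same route as the paper: you reduce to $G_0$ via $\mathcal{Z}(G)=E\times\mathcal{Z}(G_0)$ and then check each Wall type, using the same computations in types~\rom{1}, \rom{2} and~\rom{4}; for type~\rom{3} you lift to $D_8^r$ instead of computing squares from the presentation, and the two arguments are equivalent. One wording point: your closing claim that every square in $G_0$ is central holds only for types~\rom{2}--\rom{4}, since in type~\rom{1} squares of order $4$ need not be central (e.g.\ in $D_{16}$). Your type~\rom{1} argument only uses squares of order~$2$, so the proof itself is unaffected.
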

    \begin{proof}
        Let $G\cong E\times G_0$ be a Wall group where $E$ is an elementary abelian $2$-group. If $(e,h)\in E\times G_0$, the equality $x=(e,h)^2$ holds if and only if $x=(1,h)^2$. Thus, since $E\times \{1\}\subset \mathcal{Z}(G)$, we may assume that $G=G_0$ has no direct factor of order $2$. We will distinguish between all different types of Wall groups.
        
        If $G$ is of type~\rom{1}, then $G=A_0\rtimes\langle \tau\rangle$. If $a^2=x\ne 1$, since all elements of $A_0\times\{\tau\}$ are of order $2$, we must have $a\in A_0\times\{1\}$. Hence $x\in A_0\times\{1\}$ commutes with all elements of the abelian group $A_0\times\{1\}$, and since $\Ord(x)=2$, $x$ commutes with $(1,\tau)$. This proves that $x\in \mathcal{Z}(G)$.
        
        If $G$ is of type~\rom{2}, meaning $G=D_8\times D_8$, then $x=(\alpha,\beta)^2=(\alpha^2,\beta^2)$, for some $(\alpha,\beta)\in D_8^2$. Since $(\alpha^2,\beta^2)\in \mathcal{Z}(D_8)\times\mathcal{Z}(D_8)$, we have $x\in \mathcal{Z}(G)$.
        
        If $G$ is of type~\rom{3}, then $\mathcal{Z}(G)=\{1,c\}$. Moreover, the unique element of order $2$ that is a square is $c$. Indeed, if $h\in G$, we can write $h=c^\gamma\prod_{i=1}^r x_i^{\alpha_i}y_i^{\beta_i}$, with $\alpha_i,\beta_i,\gamma\in \{0,1\}$. Thus, $h^2=\bigl(\prod_{i=1}^r x_i^{\alpha_i}y_i^{\beta_i}\bigr)^2$. For all $1\le i\le r$, we have \[\bigl(x_i^{\alpha_i} y_i^{\beta_i}\bigr)^2=\begin{cases}
            1\quad &\text{if}\quad \alpha_i=0\ \text{or}\ \beta_i=0\\
            c   &\text{if}\quad \alpha_i=\beta_i=1
        \end{cases}\, .\]
        Conversely, we have $(x_1y_1)^2=c$. This proves the claim. We now assume that $G$ is of type~\rom{4}. Suppose $x=g^2$ with $g=\bigl(\prod_{i=1}^r x_i^{\alpha_i}y_i^{\beta_i}\bigr)c^\gamma$, where $\alpha_i,\beta_i,\gamma\in \{0,1\}$. We see that $\gamma\ne 0$ (otherwise $g^2=1$), hence $\gamma=1$. Moreover, if $a=\prod_{i=1}^r x_i^{\alpha_i} y_i^{\beta_i}$, we have $cac=\prod_{i=1}^r (cx_ic)^{\alpha_i}y_i^{\beta_i}=\prod_{i=1}^r x_i^{\alpha_i}y_i^{\alpha_i+\beta_i}=a\prod_{i=1}^ry_i^{\alpha_i}$. Thus, $g^2=a\cdot(cac)=\prod_{i=1}^r y_i^{\alpha_i}\in \mathcal{Z}(G)$. This proves the lemma.
    \end{proof}

\subsection{Proof of Theorem~\ref{lowerbound}}\label{ptlb}
In this subsection, we prove Theorem~\ref{lowerbound}. More precisely, we will prove by contradiction that there exists no group $G$ of order $|G|<8p^2$ and satisfying the property $\mathcal{P}(2p;x,y)$.
\begin{lem}\label{square-rootsinN}
    Let $G$ be a finite group of order $|G|<8p^2$ satisfying the property $\mathcal{P}(2p;x,y)$. Then, the $p$-Sylow $P$ of $G$ is normal and has order $p$. Furthermore, there exists a complement $H$ of $P$ in $G$ and conjugates $x',y'\in H$ of $x$ and $y$ respectively such that $G$ satisfies $\mathcal{P}(2p;x',y')$. Let $t$ be a generator of $P$ and define $N:=\mathcal{C}_G(t)\cap H$. We have $x',y'\in N$ and 
    \[\bigl| \left(\mathcal{R}_2^N\bigl( y'\bigr)\cup \mathcal{R}_2^N\bigl( (y')^{-1}\bigr) \right)\setminus \langle y'\rangle\bigr| \ge 2p\, . \]
\end{lem}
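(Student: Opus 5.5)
First I would get the $p$-Sylow structure from Lemma~\ref{nonnormalpsylow}. If $p^2\mid |G|$ or the $p$-Sylow subgroups are not normal, that lemma gives $|G|\ge\min(16p^2,4p^2\Ord(y))$. I then need $\Ord(y)\ge 2$ to get a contradiction with $|G|<8p^2$. If $\Ord(y)=1$, then $x=y=1$, which contradicts $r_{2p}(x)<r_{2p}(y)$. So $\Ord(y)\ge2$, and $|G|\ge 8p^2$ in that case, a contradiction. Hence $P=\langle t\rangle$ is normal of order $p$.

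Next I apply Remark~\ref{noncommutativity-normalpsylow}: $\gcd(p,\Ord(x))=1$, and $x$ and $y$ both commute with $t$. By Corollary~\ref{SZapplication}, applied with $N=\langle x\rangle$ and $N=\langle y\rangle$ and then conjugating into a single complement $H$, we may replace $x,y$ by conjugates $x',y'\in H$ with $G$ satisfying $\mathcal{P}(2p;x',y')$. Conjugates of elements commuting with $t$ commute with a generator of $P$, since $P$ is normal. So $x',y'\in N=\mathcal{C}_G(t)\cap H$.

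For the inequality, Lemma~\ref{primitive2proots} gives $\kappa_{2p}^G(y')\ge p(p-1)$. Formula \eqref{2prootcountinH} of Proposition~\ref{rootcount-in-H} gives $\kappa_{2p}^G(y')=(p-1)\kappa_2^{N}(y')$, so $\kappa_2^N(y')\ge p$. Primitive square roots of $y'$ in $N$ lie in $\mathcal{R}_2^N(y')\setminus\langle y'\rangle$. This gives at least $p$ such elements, but the statement asks for $2p$ in the union with the square roots of $(y')^{-1}$.

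The map $s\mapsto s^{-1}$ is a bijection from $\mathcal{R}_2^N(y')\setminus\langle y'\rangle$ to $\mathcal{R}_2^N((y')^{-1})\setminus\langle y'\rangle$. So the union has size at least $2p$ unless the two sets overlap. If $\Ord(y')>2$, then $y'\neq (y')^{-1}$ and the sets are disjoint, giving $\ge 2p$. If $\Ord(y')\le 2$, I expect this to be the main subtle point. In that case $y'$ has even order (order $2$), and the stronger part of Lemma~\ref{primitive2proots} gives $\kappa_{2p}(y')\ge 2p(p-1)$, hence $\kappa_2^N(y')\ge 2p$ directly. So the bound holds in all cases.
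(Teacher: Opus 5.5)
Your proof is correct and follows essentially the same route as the paper. You use Lemma~\ref{nonnormalpsylow} to force a normal $p$-Sylow of order $p$, Corollary~\ref{SZapplication} to move $x,y$ into a complement and hence into $N$, and then $\kappa_{2p}^G(y')=(p-1)\kappa_2^N(y')$ together with Lemma~\ref{primitive2proots}, split according to whether $\Ord(y')=2$ or $\Ord(y')\ge 3$. Your explicit exclusion of $\Ord(y)=1$, and your use of the bijection $s\mapsto s^{-1}$ to handle $(y')^{-1}$, spell out two small points that the paper leaves implicit (it says only ``similarly'' for the second).
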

\begin{proof}
    By Lemma~\ref{nonnormalpsylow}, the $p$-Sylow $P$ of $G$ is normal and has order $p$. We write $P=\langle t\rangle$. Let $H$ be a complement of $P$ in $G$. By Corollary~\ref{SZapplication}, there exist conjugates $x'$ and $y'$ of $x$ and $y$, respectively, such that $x',y'\in H$. Without loss of generality, we may assume that $x=x'$ and $y=y'$. Since $G$ satisfies $\mathcal{P}(2p;x,y)$, $x$ and $y$ admit primitive $p$-th roots, which implies they commute with $t$. Thus, $x,y\in N = \mathcal{C}_G(t)\cap H$.

    By Proposition~\ref{rootcount-in-H}, we have $\kappa_{2p}^G(y) = (p-1)\kappa_2^N(y)$. If $\Ord(y)=2$, Lemma~\ref{primitive2proots} implies that $\kappa_{2p}^G(y) \ge 2p(p-1)$. Consequently, $(p-1)\kappa_2^N(y) \ge 2p(p-1)$, yielding $\kappa_2^N(y) \ge 2p$. Since $\kappa_2^N(y) = \bigl| \mathcal{R}_2^N(y) \setminus \langle y \rangle \bigr|$, the desired inequality immediately follows. If $\Ord(y)\ge 3$, we have $y \ne y^{-1}$, meaning the sets $\mathcal{R}_2^N(y)$ and $\mathcal{R}_2^N(y^{-1})$ are disjoint.  Lemma~\ref{primitive2proots} gives $\kappa_{2p}^G(y) \ge p(p-1)$, which implies $\kappa_2^N(y) \ge p$. Similarly, we also have  $\kappa_2^N(y^{-1}) \ge p$. Thus, \[\Bigl|\bigl(\mathcal{R}_2^N(y)\setminus\langle y\rangle\bigr)\cup \bigl(\mathcal{R}_2^N(y^{-1})\setminus\langle y\rangle \bigr)\Bigr|=\kappa_2^N(y) + \kappa_2^N(y^{-1}) \ge 2p\, .\]
    This concludes the proof.
\end{proof}
The following lemma provides more information on the structure of $N=\mathcal{C}_G(t)\cap H$:
\begin{lem}\label{StructureofH}
    With the same notation as in Lemma~\ref{square-rootsinN}, there exists a square root $s_0$ of $x'$ such that $s_0\in H\setminus N$. Let $y_1,\dots,y_{2p}$ be distinct elements in $\mathcal{R}_2^N\bigl( y'\bigr)\cup \mathcal{R}_2^N\bigl( (y')^{-1}\bigr)$. We have $N=\langle y_1,\dots,y_{2p}\rangle$, $(H:N)=2$, and $H=N\langle s_0\rangle$.
\end{lem}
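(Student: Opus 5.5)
We work in the setting of Lemma~\ref{square-rootsinN}, with $|G|<8p^2$, $P=\langle t\rangle$ normal of order $p$, and $x:=x'$, $y:=y'\in N=\mathcal{C}_G(t)\cap H$. So $|H|=|G|/p<8p$.

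\textbf{Step 1: the square root $s_0$.} By Remark~\ref{noncommutativity-normalpsylow} there is $s\in\mathcal{R}_2^G(x)$ with $st\ne ts$. Write $s=t^k h$ with $h\in H$. The description \eqref{setsquare-rootsG-H} in the proof of Proposition~\ref{rootcount-in-H} gives $h\in\mathcal{R}_2^H(x)$ and $ht^kh^{-1}=t^{-k}$. If $h$ centralized $t$, then $t^{k}=t^{-k}$ forces $k=0$, and then $s=h$ commutes with $t$, a contradiction. Hence $s_0:=h\in\mathcal{R}_2^H(x)\setminus N$.

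\textbf{Step 2: index and generation.} Conjugation gives a homomorphism $H\to\Aut(P)\cong C_{p-1}$ with kernel $N$. Since $s_0\notin N$ but $s_0^2=x\in N$, the image of $s_0$ is the inversion, of order $2$. Let $M:=\langle y_1,\dots,y_{2p}\rangle\subseteq N$.
\begin{itemize}
\item Since $M\supseteq\{y_1,\dots,y_{2p}\}$, we get $|M|\ge 2p+1$ once we also count the identity (no $y_i$ equals $1$, as $y\ne1$).
\item $M$ also contains $y$, and $y\notin\{y_i\}$ because the $y_i$ lie outside $\langle y\rangle$. Counting carefully, $M$ contains the $2p$ elements $y_i$ together with all of $\langle y\rangle$, so $|M|\ge 2p+\Ord(y)\ge 2p+2$, hence $|M|>2p$.
\end{itemize}

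\textbf{Step 3: pinning down $|H|$ and $|N|$.} Now $H\supseteq M\langle s_0\rangle$ with $s_0\notin N\supseteq M$, so $(\langle M,s_0\rangle:M)\ge 2$. This yields $|H|\ge 2|M|>4p$. Combined with $|H|<8p$ and $|M|\bigm| |N|$, we obtain $|N|\ge|M|>2p$. Since $(H:N)\ge 2$ and $(H:N)\cdot|N|=|H|<8p$, we get $(H:N)<8p/|N|<4$, so $(H:N)\in\{2,3\}$.

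\textbf{Step 4: excluding index $3$ (the main obstacle).} Since $(H:N)$ is the order of the image of $H$ in the cyclic group $\Aut(P)$, and that image contains the inversion of order $2$, the index $(H:N)$ is even. Hence $(H:N)=2$.

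\textbf{Step 5: concluding $M=N$.} Now $|N|=|H|/2<4p$. Suppose $M\ne N$. Then $(N:M)\ge2$, so $|M|\le |N|/2<2p$, contradicting $|M|>2p$. Therefore $N=M$.

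Finally, $H=N\langle s_0\rangle$ because $N$ has index $2$ in $H$ and $s_0\notin N$.
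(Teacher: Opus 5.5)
Your proof is correct and follows essentially the same route as the paper: the conjugation map $H\to\Aut(P)$ has kernel $N$ and sends $s_0$ to an element of order $2$, forcing $(H:N)$ to be even, and $\langle y_1,\dots,y_{2p}\rangle$ has at least $2p+1>|N|/2$ elements, forcing it to equal $N$. You merely pin down the index before proving generation, and take $s_0=h$ directly from \eqref{setsquare-rootsG-H} rather than as a $P$-conjugate of $z$. One small remark: your second bullet assumes the $y_i$ lie outside $\langle y'\rangle$, which the statement does not, but this is harmless since you only ever use the bound $|M|\ge 2p+1$ from your first bullet.
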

\begin{proof}
    By Remark~\ref{noncommutativity-normalpsylow}, there exists a square root $z$ of $x'$ that does not commute with $t$. By~\eqref{setsquare-rootsG-H}, there exists $0\le k<p$ such that $s_0=t^k z t^{-k}\in H$. We have $s_0\in \mathcal{R}_2^G(x')\cap H$, and $s_0$ does not commute with $t$. Since $|G|<8p^2$, we have $|H|<8p$, and thus $|N|<4p$. Since $\langle y_1,\dots,y_{2p}\rangle$ contains more than half of the elements of $N$, it must be equal to $N$. Let $\varphi\, \colon\,  H\to \Aut(P)$ be the map defined by $\varphi(h)=(\mathrm{Int}_h)_{|P}$. We have $N=\ker \varphi$, and $\varphi(s_0)$ has order $2$. Thus, $|H|/|N|$ is a multiple of $2$. Since $|N|>2p$ and $|H|<8p$, this forces $(H:N)=2$, and thus $H=N\langle s_0\rangle$.
\end{proof}
We now reduce to the case $\Ord(y)=2$:
\begin{lem}\label{orderreduction}
     With the same notation as in Lemma~\ref{square-rootsinN}, we have $\Ord(x')=\Ord(y')=2$. Moreover, $y'$ is central in $H$.
\end{lem}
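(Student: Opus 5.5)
We work in the setting of Lemmas~\ref{square-rootsinN} and~\ref{StructureofH}, writing $x=x'$, $y=y'$. Recall that $|N|<4p$, that $N$ is generated by $y_1,\dots,y_{2p}$, which are square roots of $y^{\pm1}$, and that $H=N\langle s_0\rangle$ with $s_0^2=x$ and $s_0\notin N$.

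\textbf{Step 1: $\langle y\rangle$ is central in $N$, and $y^{2}$ is central in $H$.} Each $y_i$ commutes with $y_i^2=y^{\pm1}$. Hence the centralizer $\mathcal{C}_N(y)$ contains all the $y_i$. Since these generate $N$, we get $y\in\mathcal{Z}(N)$. For the element $s_0$, we use that $x$ and $y$ play symmetric roles for root counting but not for commutation. I would instead show that $y$ is normalized by $s_0$. Because $\langle y\rangle$ is central in the index-$2$ subgroup $N$, conjugation by $s_0$ sends $y$ to another central element of $N$ of the same order. The square roots of $y$ in $N$ number at least $p$ while $|N|<4p$. This density should force $s_0ys_0^{-1}\in\{y,y^{-1}\}$: otherwise the sets of square roots of $y$, $y^{-1}$, $s_0ys_0^{-1}$ and its inverse would be disjoint, giving more than $|N|$ elements once $\Ord(y)\ge3$.

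\textbf{Step 2: reduce to $\Ord(y)=2$ by counting.} Suppose $\Ord(y)\ge 3$. Let $\bar N:=N/\langle y\rangle$. The roots $y_i$ map to elements whose squares are trivial, and they are non-trivial since $y_i\notin\langle y\rangle$. So $\bar N$ contains at least $2p/\Ord(y)$ involutions. Since $|\bar N|<4p/\Ord(y)$, we get $r_2^{\bar N}(1)>|\bar N|/2$, so $\bar N$ is a Wall group. I would then use Theorem~\ref{Wallthm} together with Lemmas~\ref{wallsquare-rootratio} and~\ref{Wallcentralelement} to pin down $N$ as a central extension of a Wall group by the cyclic group $\langle y\rangle$. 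The goal is to compare $r_2^G(x)$ with $r_2^G(y)$ via \eqref{square-rootcountinH}.

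For $x$: it has a square root $s_0\notin\mathcal{C}_G(t)$, and each such root contributes $p$ to \eqref{square-rootcountinH}. The square roots of $x$ in $H\setminus N$ form a coset-type family, so there are many of them. For $y$: if $s_0ys_0^{-1}=y^{-1}\neq y$, then no element of $H\setminus N$ squares to $y$, since such elements invert $y$. Hence $r_2^G(y)=r_2^N(y)<4p$. Meanwhile $r_2^G(x)\ge p\,|\mathcal{R}_2^H(x)\setminus N|$. I expect the equality $r_2(x)=r_2(y)$ to become impossible here, or else some $r_\ell$ with $\ell$ a prime dividing $\Ord(y)$ (and $\ell<2p$) to differ. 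Using $\Ord(x)=\Ord(y)$ and the requirement $r_\ell(x)=r_\ell(y)$ for all squarefree $\ell<2p$, this yields a contradiction, so $\Ord(y)=2$. Then $\Ord(x)=2$ as well.

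\textbf{Step 3: centrality.} Once $\Ord(y)=2$, Step 1 gives $y\in\mathcal{Z}(N)$, and $s_0ys_0^{-1}\in\{y,y^{-1}\}=\{y\}$. Therefore $y$ commutes with $N$ and with $s_0$, hence $y\in\mathcal{Z}(H)$. The main obstacle is Step 2, namely excluding $\Ord(y)\ge3$. I would first try the direct cardinality bound $|N|\ge |\mathcal{R}_2^N(y)|+|\mathcal{R}_2^N(y^{-1})|+|\langle y\rangle|\ge 2p+3$. Combining this with the divisibility $2p\Ord(y)\mid |G|$ from Lemma~\ref{primitive2proots}, we get $\Ord(y)\mid|H|<8p$ and $\Ord(y)\mid |N|$ with $|N|<4p$. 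This leaves only small cases for $\Ord(y)$, for instance $\Ord(y)=3$, which would then be handled by the Wall classification.
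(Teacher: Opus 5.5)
Your Steps 1 and 3 are fine. Each $y_i$ commutes with $y_i^2=y^{\pm1}$, and the $y_i$ generate $N$, so $y\in\mathcal{Z}(N)$. The disjointness count then forces $s_0ys_0^{-1}\in\{y,y^{-1}\}$. For $\Ord(y)\ge3$ it uses four sets of at least $p$ square roots each; for $\Ord(y)=2$ it uses two sets of at least $2p$ square roots each, against $|N|<4p$. This is the same idea as the paper's proof that $\langle y\rangle\lhd H$, routed through $H=N\langle s_0\rangle$. You stated the count only for $\Ord(y)\ge3$ but use it for $\Ord(y)=2$ in Step 3; the two-set version covers that case.

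The genuine gap is Step 2, which is the heart of the lemma and is only a plan (``I expect'', ``I would then use''), not an argument.

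\begin{itemize}
\item Your comparison of $r_2^G(x)$ with $r_2^G(y)$ yields no contradiction. Write $a=r_2^N(x)$ and $b=|\mathcal{R}_2^H(x)\setminus N|\ge1$. In the inverting case you get $a+pb=r_2^N(y)$ from \eqref{square-rootcountinH}, and $a+b=r_2^H(x)<r_2^N(y)$ from \eqref{2prootcountinH}. These are perfectly consistent.
\item The case $s_0ys_0^{-1}=y$ with $\Ord(y)\ge3$ is not treated at all.
\item The closing claim that $\Ord(y)\mid|N|<4p$ ``leaves only small cases'' is false: it still allows $\Ord(y)$ of size comparable to $p$.
\end{itemize}

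Your bound of $2p/\Ord(y)$ involutions in $N/\langle y\rangle$ is where the needed information is lost. The missing observation is this: if two square roots of $y$ lie in the same coset of $\langle y\rangle$, then $y_i=y^ky_j$, hence $y=y^{2k}y$ and $y^{2k}=1$. So each coset contains at most two square roots of $y$, and only one if $\Ord(y)$ is odd.

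Now use $\kappa_2^N(y)\ge p$ for odd order, or $\kappa_2^N(y)\ge2p$ for even order (from Lemma~\ref{primitive2proots} and \eqref{2prootcountinH}). This gives at least $p$ nontrivial elements of $N/\langle y\rangle$, so $|N|\ge(p+1)\Ord(y)$. With $|N|<4p$ this forces $\Ord(y)\le3$, so even order means order $2$.

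For $\Ord(y)=3$, the quotient $N/\langle y\rangle$ has order $<4p/3$ and at least $p+1$ elements with square $1$. Lemma~\ref{3-4thinvolutions} then makes it an elementary abelian $2$-group. This contradicts the image of $x$ having order $3$: indeed $x\notin\langle y\rangle$, since otherwise $x\in\{y,y^{-1}\}$ and Lemma~\ref{rational-classes} would give $r_{2p}(x)=r_{2p}(y)$. Neither the Wall classification nor any comparison of $r_2$ values is needed.
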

\begin{proof}
    We first prove that $\langle y'\rangle$ is normal in $H$. If $\langle y'\rangle$ is not normal in $H$, then there exists $g\in H$ such that $z=gy'g^{-1}\notin \langle y'\rangle$. Hence, $G$ satisfies the property $\mathcal{P}(2p;x',z)$. By Lemma~\ref{square-rootsinN}, we have $\left| \mathcal{R}_2^N\bigl(z\bigr)\cup \mathcal{R}_2^N\bigl( z^{-1}\bigr) \right| \ge 2p$, but this means that $|N|>4p $ (because it contains square roots of elements of $\langle z\rangle$ and $\langle y'\rangle$); this contradicts the assumption that $|G|<8p^2$. Thus, $\langle y'\rangle$ is normal in $H$.
    
    We now prove that $\Ord(y')=2$, which, combined with the normality of $\langle y'\rangle $ in $H$, proves that $y'$ is central in $H$. If $\Ord(y')$ is odd, let $y_1,\dots,y_p$ be distinct primitive square roots of $y'$, and let $\pi:N\to N/\langle y'\rangle$ be the natural projection. We have, for $i\ne j$, $\pi(y_i)\ne \pi(y_j)$. Indeed, if $y_i=(y')^k y_j $, then $y'=(y_i)^2=(y')^{2k}(y_j)^2=(y')^{2k}y'$, which implies that $(y')^{k}=1$ (that is because $(y')^2$ has the same order as $y'$). Thus, $|N|\ge p \Ord(y')$. Since $|N|<4p$, we have necessarily $\Ord(y')=3$. By Lemma~\ref{3-4thinvolutions}, since $\bigl |N/\langle y'\rangle\bigr|< 4p/3$ and contains at least $p$ involutions $\pi(y_1),\dots,\pi(y_p)$, the quotient $N/\langle y'\rangle$ is an elementary $2$-group. But $\pi(x')\ne \pi(1)$; thus $\Ord(\pi(x'))=3$. This provides the desired contradiction.
    Similarly, if $\Ord(y')$ is even, considering $y_1,\dots,y_{2p}$ to be distinct square roots, we see that $\bigl| \pi\bigl(\{y_1,\dots,y_{2p}\}\bigr)\bigr|\ge p$, which means that $|N|\ge p\Ord(y')$. This implies that $\Ord(y')=2$.
\end{proof}
\begin{rk}
The following elementary remark will be useful: let $y'\in N$ be a central element such that $\Ord(y')=2$. If $\pi\, \colon\, N\to N/\langle y'\rangle$ and $K\subset N$, then $|\pi(K)|\ge |K|/2$. Indeed, any element of $\pi(K)$ has at most $2$ pre-images in $K$, which implies that \[ |\pi(K)|=\sum_{f\in \pi(K)} 1\ge \sum_{f\in\pi(K)}\frac{\bigl|\pi^{-1}\{f\}\cap K\bigr|}{2}=\frac{|K|}{2}\, . \]
\end{rk}
In what follows, we fix $G$ to be a group of order $|G|<8p^2$ satisfying the property $\mathcal{P}(2p;x,y)$. Its $p$-Sylow $P=\langle t\rangle$ has order $p$, and we fix $H$ to be a complement of $P$ in $G$. By Lemma~\ref{square-rootsinN}, up to replacing $x,y$ with their respective conjugates $x',y'\in H$, we may assume that $x,y\in H$. We will use the notation of Lemma~\ref{square-rootsinN}. By Lemma~\ref{orderreduction} we have $\Ord(y)=\Ord(x)=2$. Thus, combining Proposition~\ref{rootcount-in-H} with Lemma~\ref{primitive2proots}, as in the proof of Lemma~\ref{square-rootsinN}, we deduce that $y$ admits $2p$ non-trivial square roots $y_1,\dots,y_{2p}\in N$. 
\begin{lem}\label{divisibilityby4}
    We have  $\bigl| \mathcal{R}_2^H(x) \setminus N\bigr|\in 4\Z$ and $\bigl| \mathcal{R}_2^H(y) \setminus N\bigr|\notin4\Z$. 
\end{lem}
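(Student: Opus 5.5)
The plan is to prove the first claim by an orbit argument, and to get the second from the root-count identities of Proposition~\ref{rootcount-in-H} together with the size bound $|N|<4p$. Throughout, $x,y\in N=\mathcal{C}_G(t)\cap H$, $\Ord(x)=\Ord(y)=2$, $y$ is central in $H$, $(H:N)=2$ and $|H|<8p$ (Lemmas~\ref{square-rootsinN}, \ref{StructureofH} and~\ref{orderreduction}). Note that $x\ne y$, since $r_{2p}(x)<r_{2p}(y)$. For $g\in\{x,y\}$ write $a_g:=\bigl|\mathcal{R}_2^H(g)\cap N\bigr|$ and $b_g:=\bigl|\mathcal{R}_2^H(g)\setminus N\bigr|$.

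\emph{Step 1: $b_x\in 4\Z$ and $b_y\in 2\Z$.} Take $s\in\mathcal{R}_2^H(x)\setminus N$. Since $y$ is central in $H$, $y\in N$ and $x\in\langle s\rangle$, the four elements $s,\ sx=s^{-1},\ sy,\ sxy$ all lie in $\mathcal{R}_2^H(x)\setminus N$. For instance $(sy)^2=s^2y^2=x$, and $sy\notin N$ because $y\in N$. These four elements are pairwise distinct:
\begin{itemize}
\item $sy=s^{-1}$ would force $y=s^{-2}=x$;
\item $sy=s$ would force $y=1$;
\item $sxy=s$ would force $xy=1$, i.e.\ $x=y$.
\end{itemize}
So the group $\langle x,y\rangle\cong C_2^2$ acts freely on $\mathcal{R}_2^H(x)\setminus N$ by right multiplication, and $4\mid b_x$. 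For $y$, only the free involution $s\mapsto sy=s^{-1}$ survives on $\mathcal{R}_2^H(y)\setminus N$, so $b_y$ is even.

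\emph{Step 2: translate the hypotheses.} By~\eqref{square-rootcountinH} and $r_2^G(x)=r_2^G(y)$ we get $a_x+pb_x=a_y+pb_y$. By~\eqref{2prootcountinH} and $r_{2p}^G(x)<r_{2p}^G(y)$ we get $a_x+b_x<a_y+b_y$. Subtracting the two relations gives
\[(p-1)(b_x-b_y)=(a_y+b_y)-(a_x+b_x)>0,\]
so $b_x>b_y$. The first relation then gives $a_y-a_x=p(b_x-b_y)$.

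\emph{Step 3: bound and conclude.} Since $a_x\ge 0$ and $a_y\le |N|=|H|/2<4p$, we obtain $p(b_x-b_y)<4p$, i.e.\ $0<b_x-b_y<4$. By Step~1, $b_x-b_y$ is even, hence $b_x-b_y=2$. Since $b_x\equiv 0\pmod 4$, this forces $b_y\equiv 2\pmod 4$, so $b_y\notin 4\Z$, which is the second claim.

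The main obstacle is Step~1, in particular checking that the four translates are genuinely distinct; this is exactly where $x\ne y$ and the centrality of $y$ in $H$ are needed. Once that is in place, Steps~2 and~3 are bookkeeping with Proposition~\ref{rootcount-in-H} and the bound $|N|<4p$ inherited from $|G|<8p^2$.
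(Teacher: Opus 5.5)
Your proof is correct and uses the same ingredients as the paper: the free action of $\{1,x,y,xy\}$ on $\mathcal{R}_2^H(x)\setminus N$, the identities of Proposition~\ref{rootcount-in-H}, and the bound $|N|<4p$. The only difference is that the paper proves the second claim by contradiction (assuming $4\mid b_y$ forces $4p\mid a_y-a_x\neq 0$), whereas you argue directly and obtain the sharper $b_x-b_y=2$; note that the parity of $b_y$ is not needed for the statement itself, since $0<b_x-b_y<4$ together with $4\mid b_x$ already rules out $4\mid b_y$.
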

\begin{proof}
    By Lemma~\ref{StructureofH} we have $H=N\langle s_0\rangle$ where $s_0\in H\setminus N$ is a square root of $x$. 
    If $s\in \mathcal{R}_2^H(x)\setminus N$, then since $y$ is central in $H$, we have $\{ s, s^{-1}, sy, s^{-1}y\}\subset  \mathcal{R}_2^H(x)\setminus N $. Moreover, the relation $\mathcal{L}$ given by: $s\, \mathcal{L}\, s'$ if and only if $s'\in \{ s, s^{-1}, sy, s^{-1}y\}$, is an equivalence relation. Thus, $\mathcal{R}_2^H(x)\setminus N$ is a disjoint union of sets of the form $\{s,s^{-1},sy,s^{-1}y\}$. Hence $\bigl| \mathcal{R}_2^H(x) \setminus N\bigr|\in 4\Z$. 
    We now prove that $\bigl| \mathcal{R}_2^H(y) \setminus N\bigr|\notin4\Z$. Assume for the sake of a contradiction that $\bigl| \mathcal{R}_2^H(y) \setminus N\bigr|\in4\Z$. There exists $k\in \Z$ such that \[ \bigl(r_2^H(x)-r_2^N(x)\bigr)-\bigl(r_2^H(y)-r_2^N(y)\bigr)=4k\, .\]
    By \eqref{square-rootcountinH}, we have for all $g\in N$, \[r_2^G(g)=r_2^N(g)+p \bigl(r_2^H(g)-r_2^N(g)\bigr)\, .\]
    Using the assumption $r_2^G(x)=r_2^G(y)$, and since $x,y\in N$, we have \[r_2^N(y)-r_2^N(x)=4pk\, .\]
    We also have that $r_2^N(y)\ne r_2^N(x)$. Otherwise, using the fact that $r_2^G(x)=r_2^G(y)$, we would have $r_2^H(y)-r_2^N(y)=r_2^H(x)-r_2^N(x)$, which implies $r_2^H(y)=r_2^H(x)$. This is a contradiction, because by~\eqref{2prootcountinH} with the assumption $r_{2p}^G(y)>r_{2p}^G(x)$ yields $r_2^H(y)> r_2^H(x)$. Thus, $k\ne 0$, meaning $N$ contains at least $4p$ elements, which contradicts $|G|<8p^2$. This finishes the proof of the lemma.
\end{proof}
\begin{cor}\label{finalstructureofH}
    We have $x\notin \mathcal{Z}(N)$ and if $N$ is a $2$-group, then there exists $s\in \mathcal{R}_2^H(y)\setminus N$ such that $\langle s\rangle $ is normal in $H$. This shows that if $N$ is a $2$-group, then \[ H= N\langle s \rangle,\quad s^2=y,\quad\text{and}\quad \langle s\rangle \lhd H\, . \]
\end{cor}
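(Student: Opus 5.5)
The plan is to derive both assertions from the parity information in Lemma~\ref{divisibilityby4}, together with the facts already established: $(H:N)=2$ and $H=N\langle s_0\rangle$ with $s_0^2=x$ (Lemma~\ref{StructureofH}), $\Ord(x)=\Ord(y)=2$, and $y$ central in $H$ (Lemma~\ref{orderreduction}). We also use that $x\ne y$, since $r_{2p}(x)\ne r_{2p}(y)$.

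\emph{Step 1: $x\notin\mathcal{Z}(N)$.} Suppose, for contradiction, that $x$ is central in $N$. Then $x$ commutes with $N$ and with $s_0$, because $s_0^2=x$. Hence $x\in\mathcal{Z}(H)$. We now repeat the pairing argument from the proof of Lemma~\ref{divisibilityby4}, with the roles of $x$ and $y$ exchanged. Take $s\in\mathcal{R}_2^H(y)\setminus N$. Since $x$ is central of order $2$ and $N$ has index $2$, the four elements $s,\ s^{-1}=sy,\ sx,\ s^{-1}x$ all lie in $\mathcal{R}_2^H(y)\setminus N$. They are pairwise distinct:
\begin{itemize}
\item $sx=s^{-1}$ or $s^{-1}x=s$ would force $x=s^{-2}=y$ (recall $s$ has order $4$);
\item $sx=s$ would force $x=1$;
\item $s^{-1}=s$ would force $y=1$.
\end{itemize}
The relation ``$s'\in\{s,s^{-1},sx,s^{-1}x\}$'' is an equivalence relation, because these four sets are the orbits of the Klein group generated by inversion and multiplication by $x$. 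Therefore $\mathcal{R}_2^H(y)\setminus N$ is a disjoint union of $4$-element classes, so its cardinality lies in $4\Z$. This contradicts Lemma~\ref{divisibilityby4}.

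\emph{Step 2: the case where $N$ is a $2$-group.} Then $H$ is also a $2$-group, since $|H|=2|N|$. Let $\mathcal{X}:=\mathcal{R}_2^H(y)\setminus N$. This set is stable under conjugation by $H$, because $N\lhd H$ (index $2$) and $y$ is central in $H$. Each $s\in\mathcal{X}$ has order $4$, and the only elements of $\mathcal{X}$ lying in $\langle s\rangle$ are $s$ and $s^{-1}=sy$. Hence the set $\Sigma:=\{\langle s\rangle : s\in\mathcal{X}\}$ has exactly $|\mathcal{X}|/2$ elements.

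The cardinality $|\mathcal{X}|$ is even, because inversion is a fixed-point-free involution on $\mathcal{X}$. By Lemma~\ref{divisibilityby4} it is not divisible by $4$. Therefore $|\Sigma|$ is odd.

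The $2$-group $H$ acts on $\Sigma$ by conjugation, and every orbit has size a power of $2$. Since $|\Sigma|$ is odd, some orbit is a singleton. This gives $s\in\mathcal{X}$ with $\langle s\rangle\lhd H$. By construction $s^2=y$ and $s\notin N$. Since $(H:N)=2$, we conclude $H=N\langle s\rangle$.

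The only delicate point is checking distinctness in the $4$-element classes of Step~1; everything else is a parity and orbit-counting argument.
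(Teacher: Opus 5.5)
Your proposal is correct and follows essentially the same route as the paper. The paper also first deduces $x\in\mathcal{Z}(H)$ from $H=N\langle s_0\rangle$ and uses the orbits $\{s,s^{-1},sx,s^{-1}x\}$ to contradict Lemma~\ref{divisibilityby4}; it then counts the conjugation orbits of $H$ on $\{\langle s\rangle : s\in\mathcal{R}_2^H(y)\setminus N\}$. You phrase the second step directly, via $|\Sigma|$ being odd, rather than by contradiction, and you spell out the distinctness checks the paper leaves implicit.
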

\begin{proof}
    We proceed by contradiction. If $x\in \mathcal{Z}(N)$, then, using the fact that $H=N\langle s_0\rangle$ (where $s_0$ is a square root of $x$), we deduce that $x\in \mathcal{Z}(H)$. Thus, as in the proof of Lemma~\ref{divisibilityby4}, the set $\mathcal{R}_2^H(y)\setminus N$ is a union of sets of the form $\{s,s^{-1},sx,s^{-1}x\}$ which implies that $\bigl| \mathcal{R}_2^H(y) \setminus N\bigr|\in4\Z$. This contradicts Lemma~\ref{divisibilityby4}. Now suppose that $N$ is a $2$-group and that for all $s\in \mathcal{R}_2^H(y) \setminus N$, the subgroup $\langle s \rangle$ is not normal in $H$. Since $|H|=2|N|$, we have that $H$ is a $2$-group. Thus, the orbits of the conjugation action of $H$ on the set \[\bigl\{ \langle s\rangle \, \colon\,  s\in \mathcal{R}_2^H(y) \setminus N \bigr\} \]
    are non-trivial, and their sizes are powers of $2$. Moreover, each element of the orbit gives rise to two square roots of $y$. (Note that since $\Ord(y)=2$, Lemma~\ref{rootorder} implies that for $s\in \mathcal{R}_2^H(y) \setminus N$, we have $\Ord(s)=4$, and $s,s^{-1}$ are both square roots of $y$. Furthermore, because $y$ is central in $H$, if $g\in H$ and $s^2=y$, then $(gsg^{-1})^2=y$.) This implies that $\bigl| \mathcal{R}_2^H(y) \setminus N\bigr|\in4\Z$ which contradicts Lemma~\ref{divisibilityby4}.
\end{proof}
We now use Wall's classification theorem to prove that $H$ is indeed a $2$-group.
\begin{prop}
    The subgroup $H$ is a $2$-group. 
\end{prop}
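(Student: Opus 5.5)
Since $(H:N)=2$ by Lemma~\ref{StructureofH}, it suffices to prove that $N$ is a $2$-group. The plan is to pass to the quotient $\overline{N}:=N/\langle y\rangle$ (legitimate because $y$ is central of order $2$ by Lemma~\ref{orderreduction}), show that $\overline{N}$ is a Wall group, and then eliminate the non-$2$-group case with Theorem~\ref{Wallthm}. Let $\pi\colon N\to \overline{N}$ be the projection.

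\emph{Step 1: $\overline{N}$ is a Wall group.} The $2p$ distinct square roots $y_1,\dots,y_{2p}\in N$ of $y$ map to elements with $\pi(y_i)^2=1$ and $\pi(y_i)\neq 1$, since $y_i\notin\langle y\rangle$ (they have order $4$). By the preceding remark, this gives at least $p$ involutions in $\overline{N}$. Moreover, $\pi(x)$ is an involution: $x\notin\langle y\rangle$ because $x\ne y$, both have order $2$, and $x\notin\mathcal{Z}(N)$ by Corollary~\ref{finalstructureofH}. Since $|N|<4p$, we have $|\overline{N}|<2p$, and therefore
\[ r_2^{\overline{N}}(1)\ge p+1>|\overline{N}|/2 .\]
So $\overline{N}$ is a Wall group. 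Sharpening this count, by also using the involutions coming from $x$-cosets (e.g.\ the square roots of $xy$ if present), should push the proportion above $5/8$ whenever $|\overline{N}|$ is small relative to $p$. In that case Lemma~\ref{wallsquare-rootratio} restricts $\overline{N}$ to type~\rom{1} (not a $2$-group) or type~\rom{3}, or an elementary abelian group. Types~\rom{3} and the elementary abelian case are $2$-groups, so it remains to handle type~\rom{1} with $|\overline{N}|$ not a power of $2$.

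\emph{Step 2: excluding type~\rom{1} with odd part.} Write $\overline{N}=E\times(A_0\rtimes\langle\tau\rangle)$ with $A_0$ abelian and having an odd-order part. Every $\pi(y_i)$ is an involution lifting to an element of order $4$ squaring to $y$. The key point is that $\pi(y_i)$ cannot lie in $E\times A_0$ too often. If $\pi(y_i)$ lies in the coset of $\tau$, it inverts the odd part of $A_0$. Consider a lift $a\in N$ of an element of odd order $m>1$ in $A_0$. Conjugation by $y_i$ inverts $a$ modulo $\langle y\rangle$; since $\langle y\rangle$ is central of order $2$ and $a$ can be taken of odd order, $y_i$ genuinely inverts $a$. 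Then $\langle a\rangle\langle y_i\rangle$ is a generalized quaternion-type subgroup $Q_{4m}$, and its elements outside $\langle a, y\rangle$ are all square roots of $y$. Using Lemma~\ref{Wallcentralelement} together with the fact that $\pi(x)$ is a non-central involution, I would then compare the square roots of $x$ and of $y$ in $H$. The aim is to contradict Lemma~\ref{divisibilityby4}, by showing that the square roots of $y$ outside $N$ come in blocks of $4$: they come in pairs $\{s,s^{-1}\}$, and the odd-order central part doubles them again. Alternatively, the aim is to contradict $r_2^G(x)=r_2^G(y)$ via \eqref{square-rootcountinH}, using the fact that $r_2^N(y)$ is forced to be large (about $|N|/2$) while $r_2^N(x)$ is small, because $x$ is a non-central involution in this dihedral-like structure.

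\emph{Main obstacle.} The hard step is Step 2. It requires controlling the lift from $\overline{N}$ to $N$ precisely enough to count square roots of $x$ and $y$ in $N$ and in $H\setminus N$, and then using the parity constraint of Lemma~\ref{divisibilityby4} together with $r_2^G(x)=r_2^G(y)$ and $|N|<4p$ to force a contradiction. My first attempt would be this: show that the odd part $O$ of $A_0$ lifts to a characteristic subgroup of $N$, hence is normal in $H$. Then quotient $H$ by $O$ to obtain a $2$-group, and check that the root counts for $x,y$ change by factors of $|O|$ in a way incompatible with Lemma~\ref{divisibilityby4}.
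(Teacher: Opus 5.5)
Your Step~1 lands on the right reduction, and no sharpening is needed for it: $\overline{N}$ is a Wall group, and since types~\rom{2}--\rom{4} are $2$-groups, a Wall group that is not a $2$-group is of type~\rom{1}. The proposed ``sharpening'' misreads Lemma~\ref{wallsquare-rootratio}. A proportion of involutions above $5/8$ \emph{excludes} type~\rom{1} non-$2$-groups, so if such a count were available you would be done at once. Moreover, square roots of $xy$ project to square roots of $\pi(x)\neq 1$, i.e.\ to elements of order $4$, not to involutions.

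The genuine gap is Step~2. It is a list of intentions rather than an argument, and it leans on the claim that $\pi(x)$ is a non-central involution of $\overline{N}$. Nothing supports this: you only know $x\notin\mathcal{Z}(N)$, which is compatible with $gxg^{-1}\in\{x,xy\}$ for all $g\in N$. In fact the argument has to establish the opposite. The missing idea is to determine how $x$ acts on the odd part. You correctly anticipate lifting the odd part of $A_0$ to an abelian normal subgroup $B\lhd N$ on which every element acts by $\Id_B$ or $\mathrm{Inv}_B$. From there the paper shows that $x$ \emph{centralizes} $B$:
\begin{itemize}
\item for a prime $q\mid|B|$ one has $q<2p$, and $y$ has the non-trivial $q$-th root $by$ for $b\in B$ of order $q$;
\item so $r_q(x)=r_q(y)$ yields an element of order $q$ commuting with $x$;
\item after conjugation by a power of $t$ this element lies in $H$, hence in the normal $q$-Sylow subgroup of $H$, which is contained in $B$.
\end{itemize}
You never use the hypotheses $r_q(x)=r_q(y)$ for odd primes $q<2p$. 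Inversion can also be excluded by counting. Write $N=B\rtimes T$ with $T$ a $2$-Sylow containing $x$ and $T_0=\mathcal{C}_T(B)$. If $x\notin T_0$, one checks that at most $\tfrac{3}{4}|T_0|$ elements of $T_0$, and at most $\tfrac{3}{4}|T_0|$ of $xT_0$, square to $y$. Hence $r_2^N(y)\le\tfrac{3}{4}|T_0|\bigl(|B|+1\bigr)\le|N|/2<2p$, a contradiction. Either way, some such argument is indispensable, and it gives $\pi(x)\in\mathcal{Z}(\overline{N})$.

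With $\pi(x)$ central, the paper concludes by splitting on how many of $y_1,\dots,y_{2p}$ commute with $x$.
\begin{itemize}
\item If at least $\lfloor 5p/4\rfloor$ of them do, then $\mathcal{C}_N(x)\supset B$ has index at most $3$, hence index $2$, because the index divides the $2$-power $(N:B)$ and $x\notin\mathcal{Z}(N)$. Then $\mathcal{C}_N(x)/\langle y\rangle$ has involution proportion above $5/8$, so it is a $2$-group by Lemma~\ref{wallsquare-rootratio}, contradicting $B\subset\mathcal{C}_N(x)$.
\item Otherwise, for each $y_i$ not commuting with $x$, centrality of $\pi(x)$ gives $xy_ix=y_i^{-1}$. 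The $\pi(xy_i)$ are then extra involutions, the proportion in $\overline{N}$ exceeds $11/16>5/8$, and $\overline{N}$ is a $2$-group.
\end{itemize}

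The substitutes you suggest do not yield a contradiction:
\begin{itemize}
\item There is no mechanism by which an odd-order part ``doubles'' square roots of $y$: if $c$ has odd order and commutes with $s$, then $(sc)^2=yc^2\neq y$.
\item The relation $r_2^N(y)-r_2^N(x)=p\bigl(|\mathcal{R}_2^H(x)\setminus N|-|\mathcal{R}_2^H(y)\setminus N|\bigr)$ coming from $r_2^G(x)=r_2^G(y)$ is perfectly compatible with $r_2^N(y)$ large and $r_2^N(x)$ small. It forces a difference of exactly $2p$.
\item Root counts do not scale uniformly by $|O|$. A coset $sB$ contains either $|B|$ or no square roots of $y$ when $s$ inverts $B$, but at most one when $s$ centralizes $B$.
\end{itemize}
So everything hinges on the action of $x$ on $B$, which your proposal leaves undetermined.
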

\begin{proof}
    Since $|H|=2|N|$, it suffices to prove that $N$ is a $2$-group. As the argument is quite technical, we first outline the main strategy. We proceed by contradiction, assuming that $N$ is not a $2$-group. We proceed in three steps:
    \begin{enumerate}
        \item We apply Wall's classification theorem to $N/\langle y\rangle$ to isolate its odd part. Using the Schur--Zassenhaus theorem, we lift this to a non-trivial abelian normal subgroup $B\lhd N$ of odd order.
        \item Using the property $\mathcal{P}(2p;x,y)$, we prove that $x$ acts trivially on $B$, which forces $B\subset \mathcal{C}_N(x)$.
        \item We distinguish between two cases. If $x$ commutes with at least $\lfloor 5p/4 \rfloor$ square roots of $y$, the centralizer $\mathcal{C}_N(x)$ must be a $2$-group, contradicting $B\subset \mathcal{C}_N(x)$. If $x$ commutes with less than $\lfloor 5p/4 \rfloor$ square roots, it creates enough extra involutions in $N/\langle y\rangle$ to force the entire quotient to be a $2$-group, contradicting that $N$ is not a $2$-group.
    \end{enumerate}
     Let $\pi\, \colon\, N\to N/\langle y\rangle$ be the natural projection, and let $y_1,\dots,y_{2p}$ be distinct non-trivial square roots of $y$ in $N$. We have that $\pi \bigl(\{y_1,\dots,y_{2p}\}\bigr)$ has a cardinality of at least $p$. Thus, $N/\langle  y\rangle$ has order $<2p$ and contains at least $p$ involutions. Moreover, its order is not a power of $2$. By Wall's classification theorem, $N/\langle y\rangle$ is a Wall group of type~\rom{1}. Thus, we can write $N/\langle y\rangle \cong E\times (A\rtimes\langle \tau \rangle)$, where $E$ is an elementary abelian $2$-group and $\tau $ is an element of order $2$, and $A$ is an abelian group such that for all $a\in A$ we have $\tau a \tau=a^{-1}$. Thus, considering the odd part of $A$, we see that we can write $N/\langle y\rangle=B' C$ where $|B'|$ is odd and $|C|$ is a power of $2$. Let us note that for all $g\in N/\langle y\rangle $, we have $(\mathrm{Int}_g)_{|B'}\in \bigl\{\Id_{B'},\mathrm{Inv}_{B'}\bigr\}$. We have that $\pi^{-1}(B')$ has order $2|B'|$, and $y\in \pi^{-1}(B')$ is central. Thus, $\langle y\rangle $ is a normal $2$-Sylow of $\pi^{-1}(B')$. By the Schur--Zassenhaus theorem, it admits a complement $B$ of order $|B'|$. Since $B$ has index $2$ in $\pi^{-1}(B')$, it is a normal subgroup of $\pi^{-1}(B')$.
    
    We now prove that for all $g\in N$, $(\mathrm{Int}_{\pi(g)})_{|B'}= \mathrm{Id}_{B'}\implies (\mathrm{Int}_{g})_{|B}=\Id_B$ and $(\mathrm{Int}_{\pi(g)})_{|B'}=\mathrm{Inv}_{B'}\implies (\mathrm{Int}_g)_{|B}=\mathrm{Inv}_B$. Let $g\in N$ and assume that $(\mathrm{Int}_{\pi(g)})_{|B'}= \mathrm{Id}_{B'}$. If $b\in B\setminus\{1\} $, we have $\pi(g bg^{-1})=b$ (because $\pi(B)=B'$). Thus $gbg^{-1}\in \{b,by\}$. Moreover, $gbg^{-1}$ has odd order, because it has the same order as $b$, which is an element of $B$. Also, the element $b y$ has order $2\Ord(b)$ (because $y\in \mathcal{Z}(H)$). Hence $gbg^{-1}=b$. This proves that $\mathrm{Int}_g$ restricts to the identity of $B$. If we assume that $(\mathrm{Int}_{\pi(g)})_{|B'}=\mathrm{Inv}_{B'}$, we prove similarly that for all $b\in B$, $gbg^{-1}=b^{-1}$, which proves that $(\mathrm{Int}_g)_{|B}=\mathrm{Inv}_B$. Due to the structure of $N/\langle y\rangle$, any element outside of $B'$ has even order. This proves that any element of odd order of $N$ projects onto an element of $B'$, and thus must be an element of $\pi^{-1}(B')$. However, in $\pi^{-1}(B')$ the only elements of odd order are those inside $B$. This proves that $B$ contains all elements of odd order in $N$, and $B\lhd N$, and $N/B$ is a $2$-group. Moreover, since $B\cong B'$, $B$ is abelian.
    
    We now prove that $(\mathrm{Int}_x)_{|B}=\Id_B$ and that $\pi(x)\in \mathcal{Z}\bigl(N/\langle y\rangle\bigr)$. Let $b\in B$ be an element of prime order $q\ge 3$. Since $y$ commutes with $b$, $y$ admits a non-trivial $q$-th root (namely $by$). Since $G$ satisfies $\mathcal{P}(2p;x,y)$ and $|B|=|B'|<|N/\langle y\rangle |<2p$, we have that $q<2p$ and $r_q(x)=r_q(y)$. Therefore, $x$ admits a non-trivial $q$-th root. By~\eqref{p-rootsnumber}, this implies that there exists an element $z\in G$ of order $q$ that commutes with $x$. Up to replacing $z$ with a conjugate of the form $t^k z t^{-k} \in H$, we may assume that $z\in H$. Let $Q$ be the $q$-Sylow of $B$. Since $B\lhd N$ and $N/B$ is a $2$-group, $Q$ is also a $q$-Sylow of $N$. Moreover, $Q$ is normal in $N$ (because if $g\in N$ we have $gQg^{-1}\subset B$ and thus $gQg^{-1}=Q$). Finally $Q$ is also a $q$-Sylow of $H$ that is normal in $H$ (because again, if $g\in H$, we have $gQg^{-1}\subset N$, thus $gQg^{-1}=Q$). Hence, $z\in Q\subset B$. This proves that $x$ commutes with a non-trivial element of $B$ of odd order. In particular, $z\ne z^{-1}$, which implies $(\mathrm{Int}_x)_{|B}\ne \mathrm{Inv}_B$. Hence, $(\mathrm{Int}_x)_{|B}=\Id_B$. This proves that $B\subset \mathcal{C}_N(x)$. Again, because $N/\langle y\rangle \cong E\times (A\rtimes \langle \tau\rangle)$, any element of order $2$ in $N/\langle y\rangle$ that does not act by inversion on the odd part of $A$ is central in $N/\langle y\rangle$. Thus $\pi(x)\in \mathcal{Z}\bigl(N/\langle y\rangle\bigr)$.  
    
    Two cases arise:\\
    Case 1: Assume that $x$ commutes with at least $\left \lfloor \frac{5p}{4} \right \rfloor $ elements of the set $\{y_1,\dots,y_{2p}\}$. Thus, $(N\, \colon\,  \mathcal{C}_N(x))\le 3$. Moreover, $B\subset \mathcal{C}_N(x)$. Thus, $(N\, \colon\,  \mathcal{C}_N(x))=2$. Hence $\mathcal{C}_N(x)$ is a group of order $<2p$ and contains at least $\left \lfloor \frac{5p}{4} \right \rfloor $ square roots of $y$. Thus $\mathcal{C}_N(x)/\langle y\rangle$ is a group of order $<p$ containing more than $5p/8$ square roots of $1$. By Lemma~\ref{wallsquare-rootratio} $\mathcal{C}_N(x)$ is a $2$-group. This contradicts the fact that $B\subset \mathcal{C}_N(x)$. \\
    Case 2: Assume that $x$ commutes with less than $\left \lfloor \frac{5p}{4} \right \rfloor $ elements of the set $\{y_1,\dots,y_{2p}\}$. This implies that $x$ does not commute with at least $\left \lfloor \frac{3p}{4} \right \rfloor+1 $ elements of the set $\{y_1,\dots,y_{2p}\}$. If $xy_i\ne y_i x$, then since $\pi(xy_i x)=\pi(y_i)$, we have $x y_i x=y_i\, y=y_i^{-1}$. Thus, $(xy_i)^2=1$. The set $E=\{y_1\dots,y_{2p}\}\cup \{ xy_i\, :xy_i\ne y_ix,\ 1\le i\le 2p\}$ contains at least $2p+ \lfloor3p/4\rfloor+1>11p/4$ elements. Moreover, all elements of $E$ project onto involutions in $N/\langle y\rangle$. Thus $N/\langle y\rangle$ contains more than $11p/8$ square roots of $1$. Thus \[\frac{r_2^{N/\langle y\rangle}(1)}{|N/\langle y\rangle|}>\frac{11p/8}{2p}=\frac{11}{16}>\frac{5}{8}\, .\]
    Hence by Lemma~\ref{wallsquare-rootratio} $N/\langle y\rangle$ is a $2$-group. This contradicts that $B'\subset N/\langle y\rangle$ is a non-trivial subgroup. 
    This finishes the proof of the proposition.
\end{proof}
By Corollary~\ref{finalstructureofH}, there exists $s\in \mathcal{R}_2^H(y)\setminus N$ such that $\langle s\rangle \lhd H$. Let $s_0\in \mathcal{R}_2^H(x)\setminus N$ be an element given by Lemma~\ref{StructureofH}. Then, there exists $h\in N$ such that $s_0=sh$. Thus, $shsh=x$, which implies that $s^{-1}hsh^{-1}=yx h^{-2}\in N$. We also have $s^{-1}\cdot (hsh^{-1})\in \langle s\rangle$. Thus $yxh^{-2}=s^{-1}\cdot (hsh^{-1})\in H\cap \langle s\rangle =\langle y\rangle$. Thus $h^2\in \{x,xy\}$. Denote $z:=h^2\in \{x,xy\}$. Let us note that $z\notin \mathcal{Z}(N)$, since by Corollary~\ref{finalstructureofH}, $x\notin \mathcal{Z}(N)$.
\begin{lem}
    There are at least $\lfloor p/2\rfloor+1$ elements in $\{y_1,\dots,y_{2p}\}$ that do not commute with $z$.
\end{lem}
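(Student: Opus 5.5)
We argue by contradiction: suppose that at most $\lfloor p/2\rfloor$ of the elements $y_1,\dots,y_{2p}$ fail to commute with $z$. Then $z$ commutes with at least $2p-\lfloor p/2\rfloor\ge \lceil 3p/2\rceil$ of them. The aim is to show that this forces $\mathcal{C}_N(z)=N$, which contradicts $z\notin\mathcal{Z}(N)$.

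\emph{Index bound.} Put $C:=\mathcal{C}_N(z)$. Since $|G|<8p^2$ we have $|H|<8p$, and since $(H:N)=2$ by Lemma~\ref{StructureofH}, $|N|<4p$. The centralizer $C$ contains at least $\lceil 3p/2\rceil$ elements of $N$. If $C\ne N$, then $(N:C)\ge 2$, so $|C|\le |N|/2<2p$. We therefore need to rule out the possibility that a proper subgroup $C$ of order $<2p$ contains at least $3p/2$ of the square roots $y_i$.

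\emph{Ruling out a proper $C$.} Since $y$ is central of order $2$ and $y_i^2=y$, the remark following Lemma~\ref{orderreduction} shows that the images of the $y_i$ lying in $C$ give at least $3p/4$ distinct involutions in $C/\langle y\rangle$, a group of order $<p$. The proportion of involutions is then $>3/4$, so Lemma~\ref{3-4thinvolutions} shows that $C/\langle y\rangle$ is an elementary abelian $2$-group. We turn this into a contradiction by counting cosets. Because $C/\langle y\rangle$ is elementary abelian and $y$ is central, every element of $C$ squares into $\langle y\rangle$. Each coset of $\langle y\rangle$ contains at most $2$ of the $y_i$. Moreover $N$ is a $2$-group, so $|N|$ is a power of $2$ and $|C|\le |N|/2$. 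The ratio of square roots of $y$ in $C$ is at most $1/2$ of $|C|$ if some element of $C$ squares to $1$ outside $\langle y\rangle$; here we use that the identity coset contributes no square roots of $y$. Concretely, $|C|\ge 2\cdot(3p/4+1)$ while $|C|\le|N|/2<2p$. Combining these with the power-of-$2$ constraint on $|N|$ and $|N|>2p$ (from Lemma~\ref{StructureofH}), we get $|C|\le |N|/2$ and $|N|\in(2p,4p)$, so $|N|$ is the unique power of $2$ in $(2p,4p)$. Hence $|C|=|N|/2<2p$. At the same time $|C|\ge 2(\lceil 3p/4\rceil+1)$, since the identity coset contains no square root of $y$.

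\emph{Main obstacle.} The crude counting above does not immediately contradict $|C|<2p$. The key difficulty is to use finer information: that the $y_i$ outside $C$ number at most $p/2$, together with the index-$2$ structure of $C$ in $N$. If $C$ has index $2$, then $N\setminus C$ contains at most $\lfloor p/2\rfloor$ of the $2p$ roots. Applying Lemma~\ref{Wallcentralelement} to $N/\langle y\rangle$ shows that the involution $\pi(z)$, being $\pi(h)^2$, is central in $N/\langle y\rangle$. So $z$ normalises each coset, and for each $y_i$ we get $zy_iz^{-1}\in\{y_i,y_iy\}$. The set of $g\in N$ with $zgz^{-1}=g$ is exactly $C$, and the map $g\mapsto gz g^{-1}z^{-1}$ is a homomorphism $N\to\langle y\rangle$ with kernel $C$. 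Each coset of $C$ therefore contains equally many square roots of $y$, namely $(2p)/2$ in each when the index is $2$, provided the square roots of $y$ are permuted transitively enough.

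I would first try to show that multiplication by a fixed $y_{i_0}\notin C$ maps the roots in $C$ injectively to roots outside $C$. This uses that $y_iy_{i_0}^{-1}$, taken modulo $\langle y\rangle$, lies in an elementary abelian quotient. It would give at least $3p/2>\lfloor p/2\rfloor$ roots outside $C$, which is the required contradiction; making this injection land in the square roots of $y$ is the step I expect to need care.
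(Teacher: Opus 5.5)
There is a genuine gap: your argument stops one step short of the contradiction. Your opening coincides with the paper's proof. Assuming the contrary, $\mathcal{C}_N(z)$ contains at least $\lceil 3p/2\rceil$ of the $y_i$. It is a proper subgroup because $z\notin\mathcal{Z}(N)$, so $|\mathcal{C}_N(z)|<2p$. Lemma~\ref{3-4thinvolutions}, applied to $\mathcal{C}_N(z)/\langle y\rangle$ (order $<p$, at least $3p/4$ involutions), then shows this quotient is an elementary abelian $2$-group.

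From there you do not reach a contradiction, and the routes you try do not work as stated:
\begin{itemize}
\item The coset count gives nothing in general. For $p=17$ you get $|N|=64$ and $|C|=32$, while your lower bound is only $2(\lceil 51/4\rceil+1)=28$.
\item The claim that each coset of $C$ contains $p$ roots ``provided the square roots of $y$ are permuted transitively enough'' is unsupported.
\item The proposed injection $y_i\mapsto y_iy_{i_0}$ into $\mathcal{R}_2^N(y)\setminus C$ needs $(y_iy_{i_0})^2=y$. Nothing you have established guarantees this, since the elementary abelian information concerns only $C/\langle y\rangle$ and $y_{i_0}\notin C$.
\end{itemize}

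The missing step is a one-liner using the element $h\in N$ with $h^2=z$, introduced just before the lemma. Since $h$ commutes with its own square, $h\in\mathcal{C}_N(z)$. Since $z\in\{x,xy\}$ and $x\notin\{1,y\}$, we have $z\notin\langle y\rangle$. Hence $\pi(h)^2=\pi(z)\neq 1$, while $\pi(h)^4=\pi(z^2)=1$. So $\pi(h)$ has order $4$ in $\mathcal{C}_N(z)/\langle y\rangle$, contradicting that this quotient is an elementary abelian $2$-group.

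You already wrote down the relevant fact (``every element of $C$ squares into $\langle y\rangle$'') and noted $\pi(z)=\pi(h)^2$. Applying the former to $h$ closes the argument, and the whole ``Main obstacle'' paragraph becomes unnecessary.
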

\begin{proof}
    Assume for the sake of a contradiction that the statement of the lemma does not hold. Since $\lfloor p/2\rfloor+\lfloor 3p/2\rfloor+1=2p$, this means that $z$ commutes with at least $\lfloor 3p/2\rfloor+1$ elements in $\{y_1,\dots,y_{2p}\}$. Since $z\notin\mathcal{Z}(N)$, we have $|\mathcal{C}_N(z)|<2p$. Hence, $\mathcal{C}_N(z)/\langle y\rangle$ has order $<p$, and it contains at least $\lfloor 3p/4\rfloor+1$ square roots of $1$. By Lemma~\ref{3-4thinvolutions}, we deduce that $\mathcal{C}_N(z)/\langle y\rangle$ is an elementary abelian $2$-group. This contradicts the fact that there exists a square root $h\in N$ of $z$. Indeed, we have $h\in \mathcal{C}_N(z)$, and since $z\ne y$, $h$ projects onto an element of order $4$ in $\mathcal{C}_N(z)/\langle y\rangle$.
\end{proof}
\begin{cor}\label{FinalstructureN/y}
    Let $\pi\, \colon\,  N\to N/\langle y\rangle$ be the natural projection. We have $\pi(z)\in \mathcal{Z}(N/\langle y\rangle)$. Moreover, $N/\langle y\rangle$ is a Wall group of type \rom{1} or \rom{3}.
\end{cor}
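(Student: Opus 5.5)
The plan is to first show that $N/\langle y\rangle$ is a Wall group, then get centrality of $\pi(z)$ from Lemma~\ref{Wallcentralelement}, and finally produce enough involutions in the quotient to rule out types~\rom{2} and~\rom{4} (with $r\ge2$) by Lemma~\ref{wallsquare-rootratio}.

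\emph{Step 1: $N/\langle y\rangle$ is a Wall group.} Recall that $|N|<4p$, so $|N/\langle y\rangle|<2p$. Each $y_i$ has order $4$, since it is a non-trivial square root of the involution $y$. Hence $\pi(y_i)$ is an involution. The fibres of $\pi$ have size $2$, so by the elementary remark preceding these lemmas the set $\pi(\{y_1,\dots,y_{2p}\})$ has at least $p$ elements. Adding the identity gives $r_2^{N/\langle y\rangle}(1)\ge p+1>|N/\langle y\rangle|/2$. So $N/\langle y\rangle$ is a Wall group by Theorem~\ref{Wallthm}.

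\emph{Step 2: $\pi(z)$ is central.} We have $z\in\{x,xy\}$, where $x,y$ are distinct commuting involutions (recall $y$ is central in $H$). Hence $z$ has order $2$ and $z\notin\langle y\rangle$, so $\pi(z)$ is an involution. Moreover $\pi(z)=\pi(h)^2$ is a square in $N/\langle y\rangle$. By Lemma~\ref{Wallcentralelement}, $\pi(z)\in\mathcal{Z}(N/\langle y\rangle)$.

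\emph{Step 3: extra involutions.} By the preceding lemma, at least $\lfloor p/2\rfloor+1$ of the $y_i$ do not commute with $z$. Fix such a $y_i$. Since $\pi(z)$ is central, $zy_iz^{-1}\in\{y_i,y_iy\}$, and non-commutation forces $zy_iz=y_iy=y_i^{-1}$. Therefore $(zy_i)^2=zy_izy_i=y_i^{-1}y_i=1$. Also $zy_i\ne1$, because $y_i$ has order $4$ and $z$ has order $2$.

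Now set $E:=\{y_1,\dots,y_{2p}\}\cup\{zy_i : zy_i\ne y_iz\}$. The two pieces are disjoint, since their elements have orders $4$ and $2$ respectively. Hence $|E|\ge 2p+\lfloor p/2\rfloor+1>5p/2$. Every element of $\pi(E)$ is an involution, and $|\pi(E)|\ge|E|/2>5p/4$ because the fibres have size $2$. This gives
\[\frac{r_2^{N/\langle y\rangle}(1)}{|N/\langle y\rangle|}>\frac{5p/4}{2p}=\frac58\, .\]

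\emph{Step 4: conclusion.} By Lemma~\ref{wallsquare-rootratio}, $N/\langle y\rangle$ is neither of type~\rom{2} nor of type~\rom{4} with $r\ge2$. A type~\rom{4} group with $r=1$ is of type~\rom{1} by the remark following that lemma. Since $N$ is a $2$-group, only types~\rom{1} and~\rom{3} remain.

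The main point needing care is Step~3: the distinctness of $E$ must be justified in $N$ rather than in the quotient, and the loss of a factor $2$ under $\pi$ must still leave the ratio strictly above $5/8$.
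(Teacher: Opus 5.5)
Your proof is correct and follows the paper's argument. You first get centrality of $\pi(z)$ from Lemma~\ref{Wallcentralelement}, then use the involutions $zy_i$ to push the involution ratio of $N/\langle y\rangle$ above $5/8$, and conclude with Lemma~\ref{wallsquare-rootratio}. You are slightly more explicit than the paper in two places: you re-derive that $N/\langle y\rangle$ is a Wall group, and you account for the factor $2$ lost under $\pi$. The appeal to $N$ being a $2$-group in Step~4 is harmless but not needed.
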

\begin{proof}
    Since $\pi(z)$ is an element of order $2$ that is a square in the Wall group $N/\langle y\rangle$, Lemma~\ref{Wallcentralelement} shows that $\pi(z)\in \mathcal{Z}(N/\langle y\rangle )$. For each square root $y_i$ of $y$ that does not commute with $z$, since $\pi(zy_iz)=\pi(y_i)$, we have $zy_iz=y_i\cdot y=y_i^{-1}$. Thus, $\Ord(zy_i)=2$. Since there are at least $\lfloor p/2\rfloor+1$ such $y_i$, the group $N/\langle y \rangle $ contains at least $\lfloor 5p/4\rfloor+1$ square roots of $1$. Thus, \[\frac{r_2^{N/\langle y\rangle}(1)}{|N/\langle y\rangle|}>\frac{5}{8}\, .\]
    Thus, by Lemma~\ref{wallsquare-rootratio}, $N/\langle y\rangle$ is a Wall group either of type \rom{1} or \rom{3}.
\end{proof}
We are now ready to prove Theorem~\ref{lowerbound}.
\begin{proof}[Proof of Theorem~\ref{lowerbound}]
    Assume for the sake of a contradiction that there exists a group $G$ of order $|G|<8p^2$ satisfying the property $\mathcal{P}(2p;x,y)$. Following the notation used during the current subsection, we have, by Corollary~\ref{FinalstructureN/y}, that $N/\langle y\rangle$ is a Wall group of type~\rom{1} or~\rom{3}. We also have that $\pi(z)\in \mathcal{Z}(N/\langle y\rangle)$. Assume that $N/\langle y\rangle\cong E\times (A\rtimes \langle \tau\rangle)$ is a Wall group of type~\rom{1}, where $E$ is an elementary abelian $2$-group and $\tau$ is an element of order $2$ acting on the abelian group $A$ by conjugation (for simplicity of notation, we assume that the direct and semi-direct products are inner products). Let $\Psi\, \colon\,  N/\langle y\rangle\to E\times (A\rtimes \langle \tau\rangle) $ be an isomorphism, and denote $z':=\Psi(\pi(z))$. Since $z'$ is a square, we have that $z'\in A$ and there exists $h'\in A$ such that $(h')^2=z' $. Moreover, since $\Psi (\pi(h))$ is a square root of $z'$, we have $\Psi (\pi(h))=h'c$, where $c$ is a central element of order $2$. In this case, if $b\in N/\langle y\rangle$ is an element of order $2$, we have $\Psi(b\pi(h) b)\in \{\Psi(\pi(h)), \Psi(\pi(h^{-1}))\}$. This proves that for all $1\le i\le 2p$, we have \begin{equation}\label{commutarorhyi}
            \pi\bigl([h,y_i]\bigr)\in \{1,\pi(z)\}\, .
        \end{equation}
    If $N/\langle y\rangle$ is a Wall group of type~\rom{3}, its commutator group has order $2$ and is generated by the unique element of order $2$ that is a square. This implies that~\eqref{commutarorhyi} still holds in this case. Thus, in both cases, we have $[h,y_i]\in \{ 1, y, z ,zy\} $. Hence, for all $1\le i\le 2p$ we have $y_i h y_i^{-1}\in \{ h^{-1},yh^{-1},zh^{-1},zy h^{-1}\}$. Taking the squares of these elements, we deduce that for all $1\le i\le 2p$ we have \[y_i zy_i^{-1}=z\, .\]
    This contradicts the fact that $z\notin \mathcal{Z}(N)$.
\end{proof}
\subsection{Proof of Theorem~\ref{n3andn5}}
The following technical Lemma will be proved in the appendix. 
\begin{lem}\label{P(10)-generalcase}
        Let $G$ be a group satisfying $\mathcal{P}(10;x,y)$. Then, $|G|\notin\{240,270\}$.
\end{lem}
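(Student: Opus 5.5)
The plan is to combine the divisibility constraints from \S~\ref{lbounds} with a case split on the structure of the $5$-Sylow subgroup; no new counting formula should be needed. Let $G$ satisfy $\mathcal{P}(10;x,y)$ with $|G|\in\{240,270\}$, so $p=5$.

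For $|G|=270=2\cdot 3^3\cdot 5$ the argument is short. By Lemma~\ref{primitive2proots}, $10\,\Ord(y)$ divides $|G|$, hence $\Ord(y)\mid 27$. If $\Ord(y)>1$, then $\Ord(y)$ is a power of the prime $3$. Lemma~\ref{primitive2proots} also forces $\Ord(y)$ odd to give $\kappa_{10}(y)\ge 20$. The key input is $r_2(x)=r_2(y)$ together with the fact that a square root of an element of odd order $3^a$ has order $3^a$ or $2\cdot 3^a$. Since the $2$-part of $|G|$ is $2$, the Sylow $2$-subgroup has order $2$, so elements of order $2\cdot 3^a$ are scarce. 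I would then show that the required $2p(p-1)$ or $p(p-1)=20$ primitive $10$-th roots cannot fit: each generates a cyclic subgroup of order $10\cdot 3^a$, and distinct such subgroups meet only inside $\langle y\rangle$. This contradicts $|G|=270$ via Lemma~\ref{groupproductorder} in the form of Lemma~\ref{lowerboundsmallp}, which gives $|G|\ge 8p\Ord(y)=40\Ord(y)$, so $\Ord(y)\le 6$ and hence $\Ord(y)\in\{1,3\}$. The case $\Ord(y)=1$ is impossible because $x\neq y$ would both be the identity. So $\Ord(y)=3$, and Lemma~\ref{lowerboundsmallp} with $\langle x\rangle\cap\langle y\rangle=\{1\}$ gives $|G|\ge 4\cdot 5\cdot 9=180$, which is not yet a contradiction. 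I would finish with Lemma~\ref{nonnormalpsylow}. Since $25\nmid 270$, either the $5$-Sylow is non-normal, giving $|G|\ge\min(400,100\cdot 3)=300>270$, or it is normal of order $5$. In the latter case Proposition~\ref{rootcount-in-H} applies with $|H|=54$. Then $\kappa_{10}^G(y)=4\kappa_2^N(y)\ge 20$ forces at least $5$ primitive square roots of $y$ (elements of order $6$) in $N\subset H$, with $|N|\le 54$ and $|N|$ having $2$-part at most $2$. This is impossible, since a group whose Sylow $2$-subgroup has order $2$ has a normal $2$-complement, and the elements of order $6$ squaring to $y$ must be $y^{2}\cdot\tau$ for involutions $\tau$ centralizing $y$. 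Counting these against $r_2(x)=r_2(y)$ should give the contradiction.

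For $|G|=240=2^4\cdot 3\cdot 5$, $25\nmid|G|$, so Lemma~\ref{nonnormalpsylow} gives either $|G|\ge\min(400,100\,\Ord(y))$, or the $5$-Sylow is normal of order $5$. In the first alternative $\Ord(y)\le 2$ must hold, and $\Ord(y)=2$ gives $200\le 240$, so this case is not closed. There I would use Lemma~\ref{q-Sylow-divisibility}: $\Ord(y)=2$ is allowed since $2^4\mid 240$. I would then refine the argument of Lemma~\ref{nonnormalpsylow} using the six $5$-Sylows and an element $t_x$ of order $5$ commuting with $x$, obtaining a subgroup $\langle s,t_x x\rangle$ of order $>240$ or dividing $240$ but with index too small. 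This forces a contradiction via the $5$-Sylow count $n_5\equiv 1 \pmod 5$, $n_5\mid 48$, so $n_5\in\{6,16\}$. In the normal case, $|H|=48$ and $N=\mathcal{C}_G(t)\cap H$ has index $2$ in $H$, so $|N|=24$. I would rerun \S~\ref{ptlb}: Lemmas~\ref{orderreduction}, \ref{divisibilityby4} and Corollary~\ref{finalstructureofH} used only $|N|<4p=20$, whereas here $|N|=24$, so each step must be rechecked. I expect the substitute to be an explicit analysis of groups of order $24$ containing $10$ non-trivial square roots of a central involution $y$. Via $N/\langle y\rangle$ of order $12$ containing at least $5$ involutions, the candidates are $D_{12}$ or $C_2\times A_4$, using the small list of order-$12$ groups in place of Wall's theorem.

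The main obstacle is this order-$24$ case analysis, together with the non-normal-Sylow subcase for $240$. There I would enumerate the lifts ($C_3\rtimes Q_8$-type and $C_3\rtimes C_8$-type extensions, $\mathrm{SL}(2,3)$, and $C_2\times\dots$) and check directly that $r_2^G(x)=r_2^G(y)$ via~\eqref{square-rootcountinH} is incompatible with $r_2^H(y)>r_2^H(x)$.
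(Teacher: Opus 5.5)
Your preliminary reductions are fine: for $|G|=270$, Lemmas~\ref{lowerboundsmallp} and~\ref{nonnormalpsylow} leave only $\Ord(y)=3$ with a normal $5$-Sylow, and for $|G|=240$ with a non-normal $5$-Sylow they leave only $\Ord(y)=2$. However, every remaining case is only a plan, and the hardest one cannot be closed the way you suggest. In the non-normal case for $|G|=240$, we have $\Ord(x)=\Ord(y)=2$, so $\langle x\rangle\cap\langle y\rangle=\{1\}$. Lemma~\ref{groupproductorder} then gives only $|\langle s\rangle\langle t_xx\rangle|\ge 20\cdot 10=200$, i.e.\ $\langle s,t_xx\rangle=G$; this yields no contradiction in size or index. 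Sylow counting alone cannot rule out $n_5=6$. It does rule out $n_5=16$, but only once you note that every element of order $5$ centralizes an involution (a power of a primitive $10$-th root of $y$), so $10\mid|N_G(P)|$. The missing idea is the paper's structural argument, which runs as follows.
\begin{itemize}
\item Choose $P=\langle t\rangle$ with $t$ centralizing a square root $s$ of $y$, so $|N_G(P)|=40$ (index $6$).
\item $\langle s\rangle$ is characteristic in the abelian index-$2$ subgroup $\langle t,s\rangle$, hence normal in $N_G(P)$.
\item This caps the number of $10$-th roots of $y$ inside $N_G(P)$ at $20<40\le\kappa_{10}(y)$, so there is a primitive $10$-th root $z\notin N_G(P)$.
\item Then $\mathcal{C}_G(y)\supseteq\langle N_G(P),z^4\rangle$ forces $y\in\mathcal{Z}(G)$, so $x$ centralizes every element of order $5$.
\item One concludes by splitting on whether $s$ centralizes an element of order $5$ outside $\langle t\rangle$.
\end{itemize}

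The other two cases are also unproved. For a normal $5$-Sylow with $|G|=240$, you assert $|N|=24$ and that $y$ is a central involution of $N$ without argument. You note yourself that Lemmas~\ref{orderreduction}--\ref{divisibilityby4} no longer apply. Your candidate list is also off: $C_2\times A_4$ has order $24$, and among groups of order $12$ only $D_{12}$ has at least five involutions, so $\mathrm{SL}(2,3)$ and $C_3\rtimes C_8$ are irrelevant. In any case the enumeration is never carried out. The paper avoids it entirely:
\begin{itemize}
\item $\langle\mathcal{R}_2^N(y)\rangle=N$ makes $y$ central, and $|N/\langle y\rangle|=12$ forces $\Ord(y)=2$.
\item The Sylow $2$-subgroups, generated by a square root of $y$ and a conjugate of $x$, are $C_4\times C_2$ (a $D_8$ would give too few square roots).
\item There are three of them, sharing no square root of $y$ (via an element of order $3$ centralizing $x$).
\item Hence $r_2^N(y)=12$ and $r_2^N(x)=0$, and \eqref{square-rootcountinH} gives $r_2^G(x)\equiv 0\not\equiv 2\equiv r_2^G(y)\pmod 5$.
\end{itemize}
For $|G|=270$ with normal $P=\langle t\rangle$, ``counting should give the contradiction'' is not a proof, and the normal $2$-complement and $y^2\tau$ remarks do not supply one. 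The one-line argument is this: $\mathcal{C}_G(t)$ has even order, since it contains a power of a primitive $10$-th root of $y$. By Remark~\ref{noncommutativity-normalpsylow}, a square root of $x$ maps to an element of order $2$ in $G/\mathcal{C}_G(t)\hookrightarrow\Aut(P)$. Hence $4\mid|G|$, which is impossible. The paper also gets normality there directly: $10\mid|N_G(P)|$ forces $n_5=1$.
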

We now use Lemma~\ref{P(10)-generalcase} to prove Theorem~\ref{n3andn5}:
\begin{proof}[Proof of Theorem~\ref{n3andn5}]
        Let $G$ be a group satisfying the property $\mathcal{P}(6;x,y)$. If $\Ord(y)\ge 4$, then Lemma~\ref{lowerboundsmallp} shows that $|G|\ge 96$. If $\Ord(y)=3$, then Lemmas~\ref{primitive2proots} and~\ref{q-Sylow-divisibility} show that $6$ and $3^4=81$ divide $|G|$. This implies that $|G|\ge 162>96$. If $\Ord(y)=2$, combining Lemmas~\ref{primitive2proots} and~\ref{q-Sylow-divisibility}, we deduce that $48$ divides $|G|$. Moreover, Theorem~\ref{lowerbound} shows that $|G|\ge 8\times 3^2=72$. Thus $|G|\ge 48\times 2=96$.
        
        Let us assume now that $G$ is a group satisfying the property $\mathcal{P}(10;x,y)$. If $\Ord(y)\ge 8$, then by Lemma~\ref{lowerboundsmallp}, we have $|G|\ge 8\times 5\times 8=320$. If $\Ord(y)\in \{5,7\}$, then by Lemma~\ref{lowerboundsmallp}, we have that $|G|\ge 4\times 5\times (\Ord(y))^2\ge 20\times 25=500$. By Theorem~\ref{lowerbound}, we have $|G|\ge 8\times 25=200$. If $\Ord(y)\in\{2,4\}$, then by Lemma~\ref{q-Sylow-divisibility}, $|G|$ is a multiple of $5\times 16=80$. Since $|G|\ne 240$, we have that $|G|\ge 320$. If $\Ord(y)=3$, then $|G|$ is a multiple of $5\times 18=90$. Since $|G|\ne 270$, we have $|G|>360$. If $\Ord(y)=6$, by Lemma~\ref{primitive2proots}, we have that $60$ divides $|G|$. Since $|G|\ne 240$, we have $|G|\ge 300$. If $|G|=300$, then $25$ divides $|G|$. This contradicts Lemma~\ref{nonnormalpsylow}, which implies that $|G|\ge \min (16\times 25, 4\times 5^2 \times 6)\ge 400$. This finishes the proof of the theorem.
\end{proof}

\section{Concluding remarks and open questions}\label{sec:open_questions}

As proved in \S~\ref{ptlb}, restrictions on the order of a group $G$ satisfying the property $\mathcal{P}(2p;x,y)$ impose structural constraints on some of its subgroups and their quotients (such as those given by Wall's classification theorem). The proofs in \S~\ref{ptlb} suggest that a group of minimal order satisfying $\mathcal{P}(2p;x,y)$ must contain a subgroup $N$ whose quotient by an involution has a high proportion of involutions. These quotients, as implied by Wall's classification theorem, can be isomorphic to dihedral groups. Since the quotient of a generalized quaternion group by an involution is also a dihedral group, this structural similarity leads us to believe that for each $p$, there exists some group of minimal order satisfying $\mathcal{P}(2p;x,y)$ that contains a generalized quaternion group. The approach presented in Proposition~\ref{general-construction} provides a natural way to use generalized quaternion groups to build groups satisfying $\mathcal{P}(2p; x, y)$. For this reason, we expect that these constructions yield the minimal such structures.

Let $\mathcal{N}_p$ be the set of odd integers $n \ge 3$ such that $p \mid n(n-1)$. For any $n \in \mathcal{N}_p$, let $P_n$ denote the product of all odd primes strictly less than $p$ that divide $n(n-1)$ (with the convention that $P_n = 1$ if no such primes exist). We expect that the minimal Galois group order $n_p$ for an extension in $\mathcal{E}_p$ is given by:
\begin{equation}\label{expectednp}
n_p = \min_{n \in \mathcal{N}_p} 16n(n-1)P_n.
\end{equation}

This expected equality recovers the cases $p\in \{3,5\}$. If $\mathrm{rad}(p-1)=2$, meaning $p$ is a Fermat prime, the minimum is attained at $n=p$, yielding $n_p = 16p(p-1)$. If $p$ is a Sophie Germain prime, the minimum may shift to $n = 2p+1$. Evaluating this formula for the first few odd primes produces the following expected minimal orders:

\begin{table}[h!]
\centering
\begin{tabular}{|c|c|c|c|c|c|c|}
\hline
$p$ & 7 & 11 & 13 & 17 & 19 & 23 \\ \hline
Expected $n_p$ & 2016 & 8096 & 7488 & 4352 & 16416 & 34592 \\ \hline
\end{tabular}
\vspace{0.2cm}
\end{table}

\noindent \textbf{Question 1.} \emph{Does the expected equality~\eqref{expectednp} hold for all odd primes $p$? In particular, can one verify the values of $n_p$ for small primes, such as those listed in the table above?}

\noindent \textbf{Question 2.} \emph{Can one provide an explicit construction of groups satisfying $\mathcal{P}(d;x,y)$ for all squarefree integers $d\ge 2$? What is the minimal order of such a group? Are these minimal groups always solvable, and if so, can one relate the number of prime divisors of $d$ to the derived length of the group?}

\noindent \textbf{Question 3.} \emph{Let $d\ge 3$ be an odd squarefree integer. If a finite group $G$ satisfies $\mathcal{P}(2d;x,y)$, does there always exist an embedding of $G$ into some symmetric group $\mathfrak{S}_n$ such that the pair $(G,\mathfrak{S}_n)$ satisfies the property $\mathcal{Q}(2d;x,y)$?}

\section*{Appendix}
We prove Lemma~\ref{P(10)-generalcase}. It is natural to expect that proving that there exists no group $G$ of order $240$ satisfying the property $\mathcal{P}(10;x,y)$ should be technical since there are $208$ isomorphism classes of such groups. Let us first note that if $G$ is a group satisfying $\mathcal{P}(10;x,y)$ has order $|G|\in \{240,270\}$, then $\gcd(5,\Ord(y))=1$. Indeed, by Lemma~\ref{primitive2proots}, we have $5\Ord(y)$ divides $|G|$, which implies that $5$ does not divide $\Ord(y)$ since $25$ does not divide $|G|$. We start by proving the following partial result: 
\begin{lem}\label{P(10)-normalcase}
     Let $G$ be a group satisfying $\mathcal{P}(10;x,y)$ with a normal $5$-Sylow $P=\langle t\rangle$ of order $5$. Then, $|G|\ne 240$.
\end{lem}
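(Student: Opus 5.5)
Assume for contradiction that $|G|=240=2^4\cdot 3\cdot 5$, that $P=\langle t\rangle\lhd G$ has order $5$, and that $G$ satisfies $\mathcal{P}(10;x,y)$. As noted above, $\gcd(5,\Ord(y))=1$. Corollary~\ref{SZapplication} then lets us assume $x,y\in H$, where $H$ is a complement of $P$ of order $48$. Remark~\ref{noncommutativity-normalpsylow} shows that $x,y$ commute with $t$. Hence $x,y\in N:=H\cap\mathcal{C}_G(t)$, and there is a square root $s_0\in H\setminus N$ of $x$. Since $H/N$ embeds in $\Aut(P)\cong C_4$ and $s_0$ acts on $P$ by inversion, we get $(H:N)\in\{2,4\}$, so $|N|\in\{24,12\}$.

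The main input is Proposition~\ref{rootcount-in-H}: $\kappa_{10}^G(y)=4\kappa_2^N(y)$ and $r_{10}^G(g)=5r_2^H(g)$. Lemma~\ref{primitive2proots} gives $\kappa_{10}^G(y)\ge 20$, or $\ge 40$ if $\Ord(y)$ is even. So $\kappa_2^N(y)\ge 5$, or $\ge 10$ in the even case. I then split by $\Ord(y)\in\{2,3,4,6,8,12,\dots\}$, where $\Ord(y)$ must divide $|N|$ and $2\Ord(y)$ must divide $|N|$ because $y$ has primitive square roots in $N$.

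If $\Ord(y)$ is odd, then $\Ord(y)=3$ with $6\mid |N|$, and by the argument of Lemma~\ref{square-rootsinN} we get at least $10$ primitive square roots of $y$ or $y^{-1}$. Each lies in a cyclic subgroup of order $6$. Counting as in Lemma~\ref{orderreduction}, $N/\langle y\rangle$ then has order $\le 8$ and contains at least $5$ involutions. Lemma~\ref{3-4thinvolutions} and Wall's classification then make it a small explicit group. After that I would compare $r_3(x)=r_3(y)$ and $r_5$ to get a contradiction, much as in Lemma~\ref{orderreduction}.

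If $\Ord(y)$ is even, $N$ contains at least $10$ elements of order $2\Ord(y)$ squaring to $y$. With $|N|\le 24$, this forces $\Ord(y)\in\{2,4\}$ and $|N|=24$ when $\Ord(y)=4$, and each case leaves only a handful of candidate groups $N$. For $\Ord(y)=2$, $N$ has at least $10$ elements of order $4$ squaring to the same involution $y$. Together with $|N|\mid 24$ and the requirement that the $3$-part be compatible with $r_3(x)=r_3(y)$, this leaves $N\in\{C_3\rtimes C_8,\ \mathrm{SL}(2,3),\ Q_8\times C_3,\ C_4\times S_3,\ Q_{24},\ Q_{12}\times C_2,\dots\}$. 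For each candidate, I would reuse the parity argument of Lemma~\ref{divisibilityby4} and Corollary~\ref{finalstructureofH}: $|\mathcal{R}_2^H(y)\setminus N|\notin 4\Z$, while $x\notin\mathcal{Z}(N)$. I would also use the relation $r_2^N(y)-r_2^N(x)=4\cdot 5k$ derived there, now with the weaker bound $|N|\le 24<20\cdot 2$, which still forces $k=0$. Then $r_2^H(x)=r_2^H(y)$ contradicts $r_{10}^G(y)>r_{10}^G(x)$.

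The hardest step is the case $(H:N)=4$, where $|N|=12$ and $H/N\cong C_4$. Here $H=N\langle s_0\rangle$ fails, and the analogue of~\eqref{square-rootcountinH} must account for elements of $H$ acting on $P$ by an automorphism of order $4$. Such elements cannot square into $N$ to give $x$ or $y$, since their squares act as inversion, so they contribute to no square root of $x$ or $y$. I would first try to show that $r_2$ and $\kappa_2$ computations then reduce to the index-$2$ subgroup $N\langle s_0\rangle$, whose order $24<8\cdot 5$ is small enough for the order-$2$ argument of Lemma~\ref{orderreduction} to apply verbatim. If that fails, the fallback is a direct check of the few groups $N$ of order $12$ with at least $5$ primitive square roots of a single element ($C_{12}$, $Q_{12}$), which is a finite computation.
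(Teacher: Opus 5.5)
\textbf{There is a genuine gap: in the only case that survives, $\Ord(x)=\Ord(y)=2$ with $|N|=24$, you never actually reach a contradiction.}

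Your last two sentences reproduce the internal argument of Lemma~\ref{divisibilityby4}. The relation $r_2^N(y)-r_2^N(x)=20k$ holds only under the assumption that $|\mathcal{R}_2^H(x)\setminus N|$ and $|\mathcal{R}_2^H(y)\setminus N|$ both lie in $4\Z$. So ``$k=0$, hence $r_2^H(x)=r_2^H(y)$'' refutes that assumption, giving $|\mathcal{R}_2^H(y)\setminus N|\notin 4\Z$. It does not refute the existence of $G$. You also never say how this, or $x\notin\mathcal{Z}(N)$, contradicts any of your candidates.

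That parity machinery has further unverified inputs:
\begin{itemize}
\item Lemma~\ref{divisibilityby4} and Corollary~\ref{finalstructureofH} rest on $y\in\mathcal{Z}(H)$. In \S\ref{ptlb} this came from Lemma~\ref{orderreduction}, which needs $|G|<8p^2=200$. At $|G|=240$ you must re-prove it: a conjugate $gyg^{-1}\neq y$ with $g\in H$ would contribute another $10$ square roots inside $N\lhd H$, which together with $1$, $y$, $gyg^{-1}$ and two elements of order $3$ gives $25>24$ elements.
\item The bound $|N|\le 24<40$ alone does not exclude $|r_2^N(y)-r_2^N(x)|=20$. You need the same kind of extra count.
\item Your candidate list is inaccurate. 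Among groups of order $24$, only $Q_{24}$ and $C_2\times Q_{12}$ have an involution with at least $10$ square roots. The groups $C_3\rtimes C_8$, $\mathrm{SL}(2,3)$, $Q_8\times C_3$, $C_4\times S_3$ give only $2$, $6$, $6$, $8$.
\end{itemize}

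\textbf{The missing ingredient is an exact count of $r_2^N(x)$ and $r_2^N(y)$, combined with \eqref{square-rootcountinH} read modulo $5$.} For $g\in N$ that formula gives $r_2^G(g)\equiv r_2^N(g)\pmod 5$. The paper shows, without classifying groups of order $24$:
\begin{itemize}
\item $\langle\mathcal{R}_2^N(y)\rangle=N$, so $y$ is central in $N$, and $\Ord(y)=2$.
\item A Sylow $2$-subgroup $\langle s\rangle\langle x'\rangle$ of $N$ is $C_4\times C_2$, not $D_8$ (which would allow at most $6$ square roots of $y$), and there are exactly three of them.
\item An element of order $3$ commuting with $x$ shows that no two of these Sylow subgroups share a square root of $y$.
\end{itemize}
Hence $r_2^N(y)=12$ and $r_2^N(x)=0$, so $r_2^G(x)\equiv 0\not\equiv 2\equiv r_2^G(y)\pmod 5$, which contradicts $r_2^G(x)=r_2^G(y)$.

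Your classification route can be finished in the same way. $Q_{24}$ has a single involution, so there is no $x\neq y$. In $C_2\times Q_{12}$ you get $r_2^N(y)=12$ and $r_2^N(x)=0$, and the congruence above applies. Alternatively, once you have proved $y\in\mathcal{Z}(H)$, note that every involution of $C_2\times Q_{12}$ is central, contradicting $x\notin\mathcal{Z}(N)$.

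Two simplifications:
\begin{itemize}
\item The odd case $\Ord(y)=3$ is excluded at once by Lemma~\ref{q-Sylow-divisibility}, since $9\nmid 240$.
\item Your ``hardest step'' $(H:N)=4$ never arises. $N$ contains $1$, $x$, $y$ and at least $10$ square roots of $y$, so $|N|\ge 13$, which forces $|N|=24$.
\end{itemize}
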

\begin{proof}
    Assume for the sake of contradiction that $G$ has order $240$, and let $H$ be a complement of $P$ in $G$. By Lemma~\ref{SZapplication}, up to replacing $x$ and $y$ by their respective conjugates $x',y'\in H$, we may assume that $x,y\in H$. Since $|H|=240/5= 48=2^4\times 3$, the order of $y$ cannot be odd. Otherwise, we would have $\Ord(y)=3$, which implies, by Lemma~\ref{q-Sylow-divisibility}, that $9$ divides $|G|$, which is impossible. We deduce that $\Ord(y)$ is even. Since by Lemma~\ref{primitive2proots} we have $\kappa_{10}^G(y)\ge 2\times 5\times 4$, then by \eqref{2prootcountinH}, we deduce that $r_2^{H\cap \mathcal{C}_G(t)}(y)\ge 10$. Let us denote $N=H\cap \mathcal{C}_G(t)$. By Remark~\ref{noncommutativity-normalpsylow}, $x$ admits a square root in $H\setminus \mathcal{C}_G(t)$. The morphism given by the conjugation action of $H$ on $P$ shows that the index $(H\, \colon\,  N)$ is even (because its image contains the image of the square root of $x$ that does not commute with $t$). Moreover, $N$ contains $1$, $y$, $x$, and the elements of $\mathcal{R}_2^N(y)$, thus $|N|\ge 13$, which forces $|N|=24$. We now prove that $y$ admits exactly $12$ square roots in $N$ and $x$ has no square roots in $N$. This will provide the contradiction we are looking for. Indeed, by~\eqref{square-rootcountinH}, this implies that $5$ divides $r_2(x)$ whereas $r_2(y)\equiv 12\equiv2\pmod{5}$. 
    
    Firstly, since $\mathcal{R}_2^N(y)$ generates a group containing $1$, $y$, and at least $10$ square roots of $y$, then $\bigl|\bigl\langle\mathcal{R}_2^N(y)\bigr\rangle \bigr|\ge 12$. It cannot be of order $12$, because among the previously listed elements, there is none of order $3$. Thus, $\bigl\langle\mathcal{R}_2^N(y)\bigr\rangle=N$. In particular, $y$ is central in $N$. Let $\pi\, \colon\,  N\to N/\langle y\rangle$ be the canonical projection. We have that $\bigl|\pi(\mathcal{R}_2^N(y))\bigr|\ge 5$. Moreover, $\pi(x)\ne \pi(1)$ and $\{ \pi(1),\pi(x)\}\cap \pi\bigl(\mathcal{R}_2^N(y)\bigr)=\emptyset$. Thus, $|N/\langle y\rangle|\ge 7$. Furthermore, $|N/\langle y\rangle|$ divides $24$. Since $\Ord(y)$ is even, this implies that $|N/\langle y\rangle|=12$. Hence, $\Ord(y)=2$. Let $S$ be a $2$-Sylow of $N$. $S$ contains a conjugate $x'$ of $x$, and it also contains a square root $s$ of $y$ (because any conjugate of a square root of $y$ is itself a square root of $y$, since $y$ is central in $N$). Consequently, we can write $S=\langle s\rangle \cdot\langle x'\rangle$, which shows that either $S\cong D_8$ or $S\cong C_4\times C_2$. By Sylow's theorem, $N$ has at most $3$ $2$-Sylows. If $S\cong D_8$, $y$ would have $2$ square roots in $S$, and thus at most $6$ square roots in $N$, which contradicts that $r_2^N(y)\ge 10$. This proves that $S\cong C_4\times C_2$. Hence, $y$ has exactly $4$ square roots in each $2$-Sylow, and there must be $3$ $2$-Sylows.
    
    To see that $y$ admits exactly $12$ square roots in $N$, it suffices to prove that no two $2$-Sylows share the same square root of $y$. Let $S_1, S_2,$ and $S_3$ be the three $2$-Sylows of $N$. Because $G$ satisfies $\mathcal{P}(10;x,y)$ and $y$ commutes with an element of order $3$ in $N$, $x$ must also commute with an element of order $3$ in $G$. By the Schur--Zassenhaus theorem, all elements of order $3$ in $G$ lie in $N$ (because $N$ is a subgroup of all complements of $P$ in $G$, and its index is $2$, forcing it to contain all elements of order $3$). Let $z \in N$ be an element of order $3$ commuting with $x$. By Lemma~\ref{groupproductorder}, we have $N = S_1\langle z\rangle$. Thus, $zS_1z^{-1} \ne S_1$. 

    Assume that two $2$-Sylows, say $S_1$ and $S_2$, share a square root $s$ of $y$. Up to replacing $z$ with $z^2$, we may assume that $zS_1z^{-1}=S_2$. Thus, there exists $s' \in S_1$ such that $z s' z^{-1} = s$. Since $S_1 \cong C_4 \times C_2$, we have $\langle s', x\rangle = S_1$. Thus, \[z S_1 z^{-1} = \langle z s' z^{-1}, z x z^{-1}\rangle = \langle s, x\rangle=S_1\, .\] This is absurd.
    
    Finally, $x$ does not admit a square root in any $2$-Sylow of $N$, which implies that $x$ does not have any square root in $N$. This finishes the proof.
\end{proof}

The proof of Lemma~\ref{P(10)-generalcase} is organized as follows: 
\begin{enumerate}
    \item We prove that if $|G|\in \{240,270\}$ and $P$ is a $5$-Sylow of $G$, then $10$ divides $|N_G(P)|$.
    \item We prove that if $|G|=270$, then a $5$-Sylow $P$ satisfies $P\lhd G$. This implies that $4 \mid |G|$, which is impossible in this case. We may assume that $|G|=240$.
    \item We fix a $5$-Sylow $P=\langle t\rangle$ of $G$, where $t$ commutes with a square root $s$ of $y$. We prove that $|N_G(P)|=40$.
    \item We prove that $\Ord(y)=2$ and that $\langle s\rangle \lhd N_G(P)$. 
    \item We prove that there exists $z\in \mathcal{R}_{10}^G(y)\setminus N_G(P)$.
    \item We use the element $z$ to prove that $y\in \mathcal{Z}(G)$ and that $x$ commutes with all elements of order $5$ in $G$. This implies that $\mathcal{R}_2^{N_G(P)}(y)\subset \{s,s^{-1}, sx, s^{-1}x\}$.
    \item We conclude by obtaining a contradiction in both of the following two cases: the case where $s$ does not commute with any element of order $5$ outside $\langle t\rangle$, and the case where $s$ commutes with an element $t'\notin \langle t\rangle$ such that $t'$ has order $5$. 
\end{enumerate}
\begin{proof}[Proof of Lemma~\ref{P(10)-generalcase}]
    Assume for the sake of a contradiction that $|G|\in \{240, 270\}$. Note that since $25$ divides neither $240$ nor $270$, we have, by Sylow's theorem, that all elements of order $5$ in $G$ are in the same rational class. By Lemma~\ref{primitive2proots}, there exists an element of order $2\times 5\times \Ord(y)$ in $G$. Thus, any element of order $5$ in $G$ commutes with some element of order $2$. In particular, if $P$ is a $5$-Sylow of $G$, then the order of its normalizer $N_G(P)$ is a multiple of $10$.
        
    Assume that $|G|=270=5\times 2\times 3^3$. Since the index $(G\, \colon\, N_G(P))$ is exactly the number of $5$-Sylows of $G$, we have, by Sylow's theorem, that $(G\, \colon\,  N_G(P))\in\{1,6\} $, which implies that $(G\, \colon\,  N_G(P))=1$. Thus, $P$ is normal. By the Schur--Zassenhaus Theorem, there exists a complement $H$ of $P$. Let $\Phi\, \colon\, H\to \Aut(P)$ be the morphism given by the action of $H$ on $P$ by conjugation. Since, by Lemma~\ref{primitive2proots}, $\ker(\Phi)$ contains a non-trivial square root of $y$ and $\mathrm{Im}(\Phi)\setminus\{1\}$ contains the image of a non-trivial square root of $x$ (which has order $2$), we have that $4$ divides $|H|$, which gives a contradiction. 
        
    Let us assume now that $|G|=240=5\times 2^4\times 3$. Let $P$ be a $5$-Sylow generated by an element $t$ that commutes with a non-trivial square root $s$ of $y$. We have that $(G\, \colon\,  N_G(P))\in\{1,6, 16\}$. Since $|N_G(P)|$ is even, we have that $(G\, \colon\,  N_G(P))\in \{1,6\}$. Lemma~\ref{P(10)-normalcase} shows that $(G\, \colon\,  N_G(P))\ne 1$. Thus, $(G\, \colon\,N_G(P))=6$ and $|N_G(P)|=40$. 
        
    Let us note that since $s\in N_G(P)$ has order coprime to $5$, we have $\Ord(s)=2\Ord(y)\in\{4,8\}$. By Lemma~\ref{q-Sylow-divisibility}, this forces $\Ord(y)=2$. Let $S_0\subset N_G(P)$ be a $2$-Sylow of $N_G(P)$ containing $s$, and let $S\subset G$ be a $2$-Sylow of $G$ containing $S_0$. Since $S$ contains a conjugate of a non-trivial square root $\alpha$ of $x$, up to replacing $\alpha$ with this conjugate, we may assume $\alpha\in S$. Since $(S\, \colon\,  S_0)=2$, we have that $x\in S_0$ and $\alpha\notin S_0$ (because $S_0\lhd S$ and $S/S_0$ is a group of order $2$, thus for any $g\in S$ we have $g^2\in S_0$. Moreover, since $S=\langle s\rangle\langle \alpha \rangle$ and $s\in S_0$, we cannot have $\alpha \in S_0$). Since the subgroup $\langle t,s\rangle\subset N_G(P)$ has index $2$, and $\langle s\rangle $ is a characteristic subgroup of $\langle t,s\rangle$ (since it is the $2$-Sylow of the abelian group $\langle t,s\rangle$), we have that $\langle s\rangle$ is normal in $N_G(P)$, thus $N_G(P)\subset N_G(\langle s\rangle)$. 
        
    We now prove that inside $N_G(P)$ there are at most $20$ $10$-th roots of $y$: If $x$ does not commute with $s$, then $xsx=s^{-1}$, and hence $\langle s,x\rangle\cong D_8$ is a $2$-Sylow of $N_G(P)$. Since $\langle s\rangle$ is normal in $N_G(P)$, the unique square roots of $y$ in $N_G(P)$ are $s$ and $s^{-1}$. This means that $y$ admits at most $10$ $10$-th roots in $N_G(P)$ in the case where $x$ does not commute with $s$. Now assume that $x$ commutes with $s$. We distinguish two possibilities based on whether $x$ commutes with $t$:
    If $x$ commutes with $t$, then $N_G(P)\cong C_5 \times C_4\times C_2$. In this case, $y$ admits at most $4$ square roots (specifically $s, s^{-1}, sx,$ and $s^{-1}x$), meaning $y$ admits at most $20$ $10$-th roots in $N_G(P)$. If $x$ does not commute with $t$, then $xtx=t^{-1}$, which implies $N_G(P)\cong D_{10}\times C_4$. Here, the only square roots of $y$ that commute with $t$ are $s$ and $s^{-1}$, meaning $y$ admits at most $12$ $10$-th roots in $N_G(P)$. This proves the claim and implies, using Lemma~\ref{primitive2proots}, that there exists a primitive $10$-th root $z$ of $y$ such that $z\in G\setminus N_G(P)$. 
        
    Let us note that $z^{4}$ is an element of order $5$ that does not belong to $\langle t\rangle$. Otherwise, $\langle z^4\rangle=\langle t\rangle$, which implies that $z$ commutes with $t$, which is impossible since $z\notin N_G(P)$. This also proves that $z^4\notin N_G(P)$. The centralizer of $y$ contains $N_G(P)$ (since the normalizer of $s$ contains $N_G(P)$), and it also contains $z^4$. This forces $\bigl|\mathcal{C}_G(y)\bigr|\ge 5 |N_G(P)|$. Thus $y\in \mathcal{Z}(G)$. In particular, $y$ commutes with all elements of order $5$ in $G$, and since $r_5(x)=r_5(y)$, $x$ commutes with all elements of order $5$ in $G$. In particular, $x$ commutes with $t$, which, by the above, implies the two following possibilities: if $x$ commutes with $s$, then the only square roots of $y$ in $N_G(P)$ are $s,s^{-1}, sx,$ and $s^{-1}x$, and if $x$ does not commute with $s$, then the only square roots of $y$ in $N_G(P)$ are $s$ and $s^{-1}$. 
        
    To conclude we distinguish between two cases: 
    Case 1: $s$ does not commute with any element of order $5$ outside of $\langle t\rangle$. In this case, because $x$ commutes with all elements of order $5$, we have that square roots of $y$ in $N_G(P)$ do not commute with any element of order $5$ outside $\langle t\rangle$. We have that $z^5$ is a square root of $y$ that commutes with $z^4$. Since $z^4\notin N_G(P)$, we have that $\langle z\rangle \cap N_G(P)$ is a subgroup of $\langle z\rangle $ that does not contain $z^5$ and $z^4$ but contains $z^{10}=y$. Hence, $\langle z\rangle \cap N_G(P)=\langle y\rangle$. By Lemma~\ref{groupproductorder}, this implies that $|G|\ge \bigl| N_G(P)\,  \cdot\, \langle z\rangle\bigr|\ge \frac{40\times 20}{2}=400$, which contradicts $|G|=240$.
        
    Case 2: $s$ commutes with some element $t'\notin \langle t\rangle$ of order $5$. In this case, $N_G(\langle s\rangle)$ contains $N_G(P)$ and $\langle t'\rangle$, thus $N_G(\langle s\rangle)=G$. If $\alpha s\alpha ^{-1}=s$, then $x$ commutes with $s$. Thus, the centralizer of $s$ contains $N_G(P)$ and contains $t'$. Hence, in this case $s$ is central. If $\alpha s\alpha^{-1}=s^{-1}$, we have $\alpha s^k \alpha^{-1}=s^{-k}$ for all $k\in \Z$, in particular $\alpha s^{-1}\alpha^{-1}=s$. Hence $x sx=\alpha^2 s\alpha^{-2}=s$, which proves again that $x$ commutes with $s$ and thus the centralizer of $s$ contains $N_G(P)$ and $t'$, thus $s$ must be central contradicting that it does not commute with $\alpha$. This proves that $s\in \mathcal{Z}(G)$. In particular, $x$ commutes with $s$ and $t$, hence the centralizer of $x$ contains $N_G(P)$ and $t'$, thus $x\in \mathcal{Z}(G)$. But in this case any $2$-Sylow of $G$ is of the form $\langle s\rangle \cdot \langle d\rangle\cong C_4\times C_4$ where $d^2=x$. Thus, the only square roots of $y$ in $G$ are $s,s^{-1},sx, s^{-1}x$. Hence, in order to have $r_2(x)=r_2(y)$, this forces that there exists a unique $2$-Sylow that we denote $S$. Since the automorphism group of $C_4\times C_4$ has order $96$, we have that any element of order $5$ acts trivially on $C_4\times C_4$. This implies that all square roots of $x$ and $y$ commute with all elements of order $5$. Hence, $r_{10}(x)=r_{10}(y)$ contradicting the fact that $G$ satisfies $\mathcal{P}(10;x,y)$. \end{proof}

\end{document}